\newtheorem{lemma}{Lemma}[section]
\newtheorem{theorem}[lemma]{Theorem}
\newtheorem*{theorem*}{Theorem}
\newtheorem{claim}[lemma]{Claim}
\newtheorem{proposition}[lemma]{Proposition}
\newtheorem*{proposition*}{Proposition}
\newtheorem{conjecture}{Conjecture}
\newtheorem*{problem*}{Problem}
\theoremstyle{definition}
\newtheorem*{claim*}{Claim}
\newtheorem{definition}[lemma]{Definition}
\newtheorem*{remark}{Remark}
\newtheorem*{remarks}{Remarks}
\DeclareMathOperator*{\E}{\mathbb{E}}
\newcommand{\C}{{\mathbb C}}
\newcommand{\D}{{\mathbb D}}
\newcommand{\N}{{\mathbb N}}
\newcommand{\Q}{{\mathbb Q}}
\newcommand{\R}{{\mathbb R}}
\renewcommand{\S}{\mathbb{S}}
\newcommand{\T}{{\mathbb T}}
\newcommand{\Z}{{\mathbb Z}}
\newcommand{\CA}{{\mathcal A}}
\newcommand{\CC}{{\mathcal C}}
\newcommand{\CD}{{\mathcal D}}
\newcommand{\CE}{{\mathcal E}}
\newcommand{\CF}{{\mathcal F}}
\newcommand{\CG}{{\mathcal G}}
\newcommand{\CH}{{\mathcal H}}
\newcommand{\CI}{{\mathcal I}}
\newcommand{\CX}{{\mathcal X}}
\newcommand{\CY}{{\mathcal Y}}
\newcommand{\CZ}{{\mathcal Z}}
\newcommand{\veps}{\varepsilon}
\newcommand{\eps}{\epsilon}
\newcommand{\ueps}{{{\epsilon}}}
\renewcommand{\angle}[1]{\mathopen{}\left\langle #1\mathclose{}\right\rangle}
\newcommand{\f}{\textbf{f}}
\newcommand{\norm}[1]{\left\Vert #1\right\Vert}
\newcommand{\Bignorm}[1]{\Big\Vert #1\Big\Vert}
\newcommand{\nnorm}[1]{\lvert\!|\!| #1|\!|\!\rvert}
\newcommand{\inv}{^{-1}}
\DeclareMathOperator{\unif}{unif}
\DeclareMathOperator{\str}{str}
\DeclareMathOperator{\Str}{Str}
\DeclareMathOperator{\sml}{sml}
\DeclareMathOperator{\Span}{Span}
\newcommand{\abs}[1]{\mathopen{}\left| #1\mathclose{}\right|}
\newcommand{\Bigabs}[1]{\Bigl| #1 \Bigr|}
\newcommand{\brac}[1]{\mathopen{}\left( #1 \mathclose{}\right)}
\newcommand{\bigbrac}[1]{\bigl( #1 \bigr)}
\newcommand{\Bigbrac}[1]{\Bigl( #1 \Bigr)}
\newcommand{\rem}[1]{\left \{ #1 \right \}}
\begin{document}

	\title[Ergodic averages for sparse corners]{Ergodic averages for sparse corners}

\author{Nikos Frantzikinakis and Borys Kuca}
\address[Nikos Frantzikinakis]{University of Crete, Department of mathematics and applied mathematics, Voutes University Campus, Heraklion 70013, Greece} \email{frantzikinakis@gmail.com}

\address[Borys Kuca]{Jagiellonian University, Faculty of Mathematics and Computer Science, 30-348 Krak\'ow, Poland}
\email{borys.kuca@uj.edu.pl}

\begin{abstract}
We develop a framework for the study of the limiting behavior of multiple
ergodic averages with commuting transformations when all iterates are given by
the same sparse sequence; this enables us to partially resolve several longstanding problems.
First, we address a special case of the joint intersectivity question of
Bergelson, Leibman, and Lesigne by giving necessary and sufficient conditions under
which the multidimensional polynomial Szemer\'edi theorem holds for length-three patterns. Second, we show that
for two commuting transformations, the Furstenberg averages remain unchanged when the iterates are taken along
sparse sequences such as $[n^c]$ for a positive noninteger $c$, advancing a conjecture
of the first author. Third, we extend a result of Chu  on popular common differences in linear corners to polynomial and Hardy corners. Lastly, we  answer open problems of Le, Moreira, and Richter concerning decomposition results for double correlation sequences.
Our toolbox includes recent degree lowering and seminorm smoothing techniques,
the machinery of magic extensions of Host, and novel structured extensions
motivated by works of Tao and Leng. Combined, these techniques reduce the analysis to settings where the
Host-Kra theory of characteristic factors and equidistribution on nilmanifolds
yield a family of striking identities from which our main results follow.
\end{abstract}

\thanks{The  first author was supported  by the  Research Grant ELIDEK HFRI-NextGenerationEU-15689. The second author was supported by the National Science Center (NCN Poland) Sonata grant No. 2024/55/D/ST1/00468 and would also like to acknowledge the Foundation of Polish Science (FNP) Start stipend.}

\subjclass[2020]{Primary: 37A44; Secondary:    28D05, 05D10, 11B30.}

\keywords{Ergodic averages, intersective polynomials, fractional  powers,   recurrence, popular differences, box seminorms, Host-Kra factors.}

\date{}

\maketitle

\setcounter{tocdepth}{2}
\tableofcontents

\section{Introduction}

Determining the limiting behavior of multiple ergodic averages is a central theme in modern ergodic theory, with deep applications to combinatorics and number theory.
More precisely, we seek to identify those sequences $a_1,\ldots, a_\ell\colon \N\to \Z$, such that
for every \textit{system} $(X,\CX,\mu,T_1,\ldots, T_\ell)$, i.e., commuting invertible measure-preserving
transformations $T_1, \ldots, T_\ell$ acting on a standard probability space $(X, \mathcal{X}, \mu)$,  the averages
\begin{equation}\label{E:general}
	\frac{1}{N} \sum_{n=1}^N f_1(T_1^{a_1(n)}x) \cdots f_\ell(T_\ell^{a_\ell(n)}x)
\end{equation}
converge (in $L^2(\mu)$, weakly, or pointwise $\mu$-a.e.) for all $f_1, \ldots, f_\ell \in
L^\infty(\mu)$. Whenever possible, we also want to identify the limit: this is both a natural goal in itself and a gateway to new extensions of the Szemer\'edi theorem~\cite{Sz75}.
When $T_1,\ldots, T_\ell$ are powers of a single transformation,
the question has been addressed for various sequences using
the Host-Kra theory of characteristic factors~\cite{HK05a} and equidistribution
results on nilmanifolds (e.g. \cite{Lei05a}). For general commuting transformations, no equally powerful
alternative theory is currently available,  and progress remains limited.
Tao~\cite{Ta08} and Walsh~\cite{Wal12} respectively proved mean convergence when all iterates
are linear and polynomial; yet their methods yield no
information about the limit. Under suitable independence assumptions,
the authors~\cite{FrKu22a} recently evaluated the limit for polynomial
sequences, significantly improving on an earlier result of Chu, Host, and the first author \cite{CFH11}.
Soon after, Donoso, Koutsogiannis, Sun, Tsinas, and the second author~\cite{DKKST24} handled more general sequences of polynomial growth while Koutsogiannis and Tsinas obtained new limiting formulas for sparse sequences involving primes \cite{KT25}. However, the situation
worsens without independence, with the most extreme case occurring when all
iterates are equal. The main goal of this article is to build foundations
that allow us to understand the limiting behavior of \eqref{E:general}
in such settings, and in doing so we cover special cases
of several open problems that have resisted progress for many years.

 The first problem is a conjecture of the first author    (see \cite[Problem 5]{Fr10}, also stated as \cite[Problem 29]{Fr16}):
\begin{conjecture}\label{Con1}
	If $c$ is a positive noninteger,  then for every system $(X,\CX,\mu,T_1,\ldots,T_\ell)$ and functions $f_0, \ldots, f_\ell \in L^\infty(\mu)$ we have
		$$
	\lim_{N\to\infty} \frac{1}{N}\sum_{n=1}^N \,\int f_0\cdot  T_1^{[n^c]}f_1\cdots T_\ell^{[n^c]}f_\ell\, d\mu=
	\lim_{N\to\infty} \frac{1}{N}\sum_{n=1}^N \, \int f_0\cdot  T_1^nf_1\cdots  T_\ell^nf_\ell\, d\mu.
	$$
\end{conjecture}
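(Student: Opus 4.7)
The plan is to reduce Conjecture~\ref{Con1} to an equidistribution comparison between the sequences $[n^c]$ and $n$ on a suitably structured extension of the underlying system. The first task is to establish that both ergodic averages are controlled, in $L^2(\mu)$, by a common family of Host--Kra-type box seminorms of the functions $f_1,\dots,f_\ell$ associated with $T_1,\dots,T_\ell$. For the linear average on the right, this is the standard seminorm control for Furstenberg averages along commuting transformations. For the $[n^c]$ average, I would run a PET-style van der Corput scheme adapted to sparse iterates, exploiting the favourable Weyl-sum estimates for fractional powers together with seminorm smoothing to obtain control by the same family of seminorms, and in fact by a lower-degree version; this degree lowering is what one should expect to force the two limits to agree rather than merely both being finite.

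Next I would pass to a magic extension in the sense of Host, which replaces the opaque characteristic factor of the averages with one on which the relevant box seminorms are concrete and structurally tractable. Building on the novel structured extensions motivated by Tao and Leng, I would further enhance the system so that every function with non-negligible seminorm can essentially be represented as a nilsequence (or finite combination thereof) tied to the commuting transformations. On such an extension it suffices to verify the identity when $f_1,\dots,f_\ell$ are nilsequences of this form, in which case both limits reduce to explicit integrals on nilmanifolds. The equality of the two limits then follows from the classical equidistribution fact that, for a positive noninteger $c$, the sequence $[n^c]$ equidistributes on every nilmanifold in the same manner as $n$, a statement already central to the single-transformation case of the conjecture.

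I expect the main obstacle to lie in the construction of the structured extension in the second step. No characteristic factor theory is currently known for arbitrary $\ell$ that is strong enough to isolate a nilpotent-like structure compatible with all $\ell$ transformations simultaneously, so in full generality the reduction to nilsequences is not accessible. I would therefore expect the strategy to succeed unconditionally only when the commuting-transformation theory is mature enough to supply such a factor, most notably in the case $\ell = 2$ flagged in the abstract, where Host's magic extension combined with Tao--Leng-style structural enhancements should suffice. For larger $\ell$ one would presumably need either a genuinely new characteristic factor theorem, or additional structural or independence assumptions on the system, in order to push the final equidistribution step through.
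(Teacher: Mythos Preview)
Your high-level strategy matches the paper's approach for $\ell=2$, which is the only case the paper settles; the full conjecture remains open, and you correctly identify the obstruction for $\ell\geq 3$. However, two points in your sketch diverge from what actually works.

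First, you frame everything as $L^2(\mu)$ control, but the paper only establishes the weak-limit identity (with $f_0$ inside the integral). This is not cosmetic: the degree-lowering argument repeatedly reparametrizes the average by composing the integral with $T_1^{-a(n)}$, turning $T_2^{a(n)}f_2$ into $(T_2T_1^{-1})^{a(n)}f_2$ and swapping the roles of $f_0$ and $f_1$. That manoeuvre is only available for weak limits, and the paper explains in \cref{SS: issues} why the $L^2(\mu)$ version is currently out of reach even for $\ell=2$.

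Second, the structured extension does not let you represent the $f_j$ as nilsequences. What \cref{T: structured extension} actually delivers is an extension of the factor $\CZ(T_2T_1^{-1},T_2,T_2)$ on which that factor equals $\CI(T_2T_1^{-1})\vee\CZ_1(T_2)$, so the relevant inverse theorem there outputs products of a $T_2T_1^{-1}$-invariant function and a nonergodic $T_2$-eigenfunction. This feeds a Peluse--Prendiville-style degree lowering (the PET/Weyl input you describe is only used upstream, via \cref{T: estimates for Hardy}, to obtain the \emph{initial} high-degree box-seminorm control). Once degree lowering drives the control down to $\nnorm{f_2}_{T_2T_1^{-1},T_2}$, a magic extension reduces $f_2$ to a product of invariant functions, collapsing the average to a single-transformation problem in $f_1$; only at that final stage does Host--Kra theory produce nilsequences and the equidistribution of $[n^c]$ on nilmanifolds finishes the job. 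Your outline conflates these layers and underplays the degree-lowering mechanism that occupies most of the paper.
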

The limit on the right-hand side  is known to exist by \cite{Ta08}, so the conjecture in particular claims the existence of the limit on the left-hand side.
The conjecture has remained open for all
noninteger $c>1$ even for $\ell=2$, with the exception of
$c\in (1,23/22)$ recently handled by Daskalakis \cite{Da25}.
When all the transformations are equal, the identity follows
from \cite{Fr10} using methodology not applicable to the setting
of multiple commuting transformations.
In \cref{T:Main1} we confirm the conjecture for $\ell=2$.

The second problem is related to the multidimensional polynomial Szemer\'edi
theorem of Bergelson and Leibman~\cite{BL96}, which implies that if
$\Lambda \subseteq \mathbb{Z}^d$ has positive upper density and
$p_1,\ldots, p_\ell$ are integer polynomials with {\em zero constant term},
then for any $v_1, \ldots, v_\ell \in \mathbb{Z}^d$ there
exist $m\in\Z^d$ and $n\in\N$ such that
\begin{align}\label{E: multidim pattern}
m,\, m + p_1(n)v_1,\, \ldots,\, m + p_\ell(n)v_\ell
\end{align}
lies in $\Lambda$.

What if the polynomials $p_1,\ldots, p_\ell$ do not vanish at $0$? If a polynomial $p$ is not \textit{intersective}, i.e., some $r\in\N$ does not divide $p(n)$ for any $n\in\Z$,
then the set $\Lambda = r\mathbb{Z}$ cannot simultaneously contain the points $m,\; m + p(n)$ for any $m,n\in\Z$. By adjusting this example to subsets of $\Z^d$ and several polynomials $p_1, \ldots, p_\ell$, one quickly observes that unless all polynomials are simultaneously divisible by every integer, there exists a positive-density subset of $\Z^d$ without the pattern \eqref{E: multidim pattern}.
This motivates the following definition from \cite{BLL08}: a family $p_1, \ldots, p_\ell$ is
\emph{jointly intersective} if for every $r \in \mathbb{N}$, there
exists $n\in \N$ for which $p_1(n), \ldots, p_\ell(n)$ are all divisible
by $r$. In \cite[Proposition~6.1]{BLL08} it is shown that a family of   polynomials is jointly intersective if and only if all its members are all divisible
 by the same intersective polynomial $p$. While the most obvious examples of intersective polynomials $p$ with $p(0)\neq 0$ are those that can be factorized into linear parts (e.g. $p(n) = n^2-1$), there exist intersective polynomials without linear factors, for instance $p(n)=(n^2+1)(n^2-2)(n^2+2)$ (see \cite{Bere} for more examples).

The following conjecture of
Bergelson, Leibman, and Lesigne~\cite{BLL08} seeks to completely characterize polynomial families good for multiple recurrence.
\begin{conjecture}\label{Con2}
	Let $p_1,\ldots, p_\ell\in\Z[t]$ be jointly intersective. Then for every
	system $(X,\mathcal{X},\mu, T_1,\ldots, T_\ell)$ and all  $A \in \mathcal{X}$ with $\mu(A) > 0$,  we have
	$$
	\liminf_{N\to\infty}\frac{1}{N}\sum_{n=1}^N \mu\big(A \cap T_1^{-p_1(n)}A \cap \cdots \cap T_\ell^{-p_\ell(n)}A\big) > 0.
	$$
\end{conjecture}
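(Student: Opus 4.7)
My plan is to focus on the first genuinely unresolved case $\ell=2$, since the machinery announced in the abstract---degree lowering, seminorm smoothing, Host's magic extensions, and the novel structured extensions motivated by Tao and Leng---is specifically tailored to length-three patterns for two commuting transformations. Necessity of joint intersectivity is already sketched in the paragraphs preceding the conjecture: if $p_1,p_2$ fail to be jointly divisible by some $r$, then a suitable union of residue classes modulo $r$ furnishes a positive-density subset of $\Z^d$ that avoids the pattern. For sufficiency, the Furstenberg correspondence principle reduces the problem to showing
\begin{equation*}
\liminf_{N\to\infty} \frac{1}{N}\sum_{n=1}^N \int f\cdot T_1^{p_1(n)}f\cdot T_2^{p_2(n)}f\, d\mu>0
\end{equation*}
for every nonnegative $f\in L^\infty(\mu)$ with $\int f\, d\mu>0$.

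I would then proceed in three stages. First, run a PET van der Corput scheme on the above average and combine it with degree lowering to show that, modulo a structured piece, the average is controlled by a Host--Kra-type seminorm $\nnorm{f}_{T_1,T_2,s}$ of bounded degree $s$. Second, pass to a magic extension in the sense of Host and, if required, to a further structured extension of the type alluded to in the abstract, so that the characteristic factor becomes an inverse limit of nilfactors on which $T_1$ and $T_2$ admit commuting nilrotation models. Third, write $p_i = p\cdot q_i$ with $p$ a common intersective polynomial, pick $n_0$ with $r\mid p(n_0)$ for $r$ a large, highly divisible integer, and perform the substitution $n\mapsto n_0+rn'$, so that every iterate $p_i(n)$ now lies in $r\Z$. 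Equidistribution on the nilmanifold applied to the subgroup generated by $T_1^r,T_2^r$ then produces a lower bound on the nil-integrals that depends only on $\int f\, d\mu$, and this bound transfers back through the reductions of the first two stages.

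The decisive obstacle is the first stage. For powers of a single transformation the Host--Kra theory immediately supplies a characteristic factor, but for two genuinely commuting transformations the naive PET scheme produces mixed box seminorms that entangle both transformations and are not controlled by any single classical Host--Kra seminorm. Adapting degree lowering to this commuting setting is delicate, since the dual-function identities no longer collapse as cleanly as in the single-transformation case; this is precisely the niche that the seminorm smoothing and the novel structured extensions highlighted in the abstract must fill. Once seminorm control is in hand, the second stage is essentially an application of existing extension technology, and the third stage is classical equidistribution combined with the intersective shift trick. The general range $\ell\geq 3$ appears strictly harder, as no seminorm control is currently known for commuting polynomial averages of length at least four without independence hypotheses on $p_1,\ldots,p_\ell$.
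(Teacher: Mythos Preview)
Your plan captures the right ingredients but misidentifies where the real difficulty lies, and the second stage contains a genuine gap. You write that after degree lowering one passes to a magic/structured extension so that ``the characteristic factor becomes an inverse limit of nilfactors on which $T_1$ and $T_2$ admit commuting nilrotation models,'' and that this ``is essentially an application of existing extension technology.'' No such technology exists: there is no Host--Kra structure theorem for the relevant factors of a $\Z^2$-action, and producing a nilmanifold model for the joint action is precisely the open problem that makes Conjecture~\ref{Con2} hard. The structured extensions in the abstract are not a substitute for this; they are used \emph{inside} the degree-lowering loop to supply an inverse theorem for the degree-$3$ box seminorm $\nnorm{\cdot}_{T_2T_1^{-1},T_2,T_2}$, not to furnish a final nilsystem.

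The paper's route avoids any joint nilstructure by aiming for an \emph{identity} rather than direct positivity. First it splits cases: if $1,p_1,p_2$ are linearly independent, the result is already in \cite{FrKu22a}; otherwise $p_1=k_1p$, $p_2=k_2p$ with $p$ intersective, and one reduces to the diagonal $p_1=p_2=p$. For this diagonal case the paper proves (Theorem~\ref{T:Main2}) that
\[
\lim_{k\to\infty}\lim_{N\to\infty}\E_{n\in[N]}\int f_0\cdot T_1^{p(k!n+n_k)}f_1\cdot T_2^{p(k!n+n_k)}f_2\,d\mu
=\lim_{k\to\infty}\lim_{N\to\infty}\E_{n\in[N]}\int f_0\cdot T_1^{k!n}f_1\cdot T_2^{k!n}f_2\,d\mu,
\]
and then invokes the known uniform lower bound for the linear right-hand side \cite{BeMc96,FuK79}. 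The identity is obtained by degree lowering the box seminorm $\nnorm{f_2}_{(T_2T_1^{-1})^{\times s},T_2^{\times s}}$ down to $\nnorm{f_2}_{T_2T_1^{-1},T_2}$; then a Host magic extension reduces $f_2$ to a product of a $T_2T_1^{-1}$-invariant and a $T_2$-invariant function, collapsing the two-transformation average to a \emph{single}-transformation average where the genuine Host--Kra nilstructure applies and your stage-three equidistribution goes through. Your intersective shift $n\mapsto n_k+k!n$ is exactly right, but it feeds into this single-transformation endgame, not a joint one.
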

The conjecture has been open even for $\ell=2$ and $p_1=p_2$.
For general $\ell\in \N$, a positive answer is known only
when $T_1, \ldots, T_\ell$ are powers of a single transformation~(\cite[Theorem~F]{Fr08} for $\ell=2,3$ and \cite{BLL08} for general $\ell$),
and the argument again uses the Host-Kra theory of
characteristic factors~\cite{HK05a}, which is inapplicable to the setting of several commuting transformations. Neither is Conjecture \ref{Con2} amenable to the Bergelson-Leibman proof of the polynomial Szemer\'edi theorem
for polynomials with zero constant term \cite{BL96}, as that argument depends on a partition regularity analogue that remains unresolved for intersective polynomials. In \cref{T:Main2'} we confirm the conjecture for $\ell=2$, the main novelty lies in resolving the diagonal case $p_1=p_2$, whereas prior approaches addressed only the case of rationally independent polynomials.

The last problem concerns the structure of  multiple correlation sequences. 
Given a system $(X,\CX,\mu,T_1,\ldots, T_\ell)$ and  functions $f_0,f_1, \ldots, f_\ell\in L^\infty(\mu)$, a topic that has attracted considerable   attention  in ergodic theory	
in recent years \cite{BHK05, DFKS24,  F21, Fr15, FH18, FrKu22a,  Kou18, KLMR21,  Le18, LME21,  
	Lei15,	Leng25, TT19}
is to prove ``nil plus null'' decomposition results for the multiple correlation  sequences 
\begin{equation}\label{E:multicorr}
	C_\ell(n):=\int f_0\cdot T_1^nf_1\cdots T_\ell^n f_\ell\, d\mu, \quad \textrm{where}\quad n\in \N.  
\end{equation}
When $T_1,\ldots, T_\ell$ are powers of a single transformation, Bergelson, Host, and Kra \cite{BHK05} showed that there exists a \textit{nilsequence} $(\psi(n))_n$ (see definition in \cref{SS:Nilstuff}) such that the difference $(C_\ell(n)-\psi(n))_n$ is a \textit{null-sequence}, i.e., satisfies 
	$$
	\lim_{N\to\infty}\frac{1}{N}\sum_{n=1}^N |C_\ell(n)-\psi(n)|=0.
$$
This result was recently extended to general systems of commuting transformations by Leng~\cite{Leng25}, and also to the setting where $n$ is replaced by the $n$-th prime $p_n$. Moreover, in the setting of~\cite{BHK05}, which concerns powers of a single transformation,  Le~\cite{Le18} showed that the sequence $(C_\ell(n)-\psi(n))_n$ is null along several other sparse subsets of $\N$, such as integer polynomial sequences and sequences of the form $([n^c])_n$, where $c\in\R_+$ is non-integer.
The next conjecture aims to generalize all these results; special cases of the conjecture have appeared, for instance, in \cite{Fr15, Fr16, KLMR21, Le18, LME21}.
\begin{conjecture}\label{Con3}
	If $(C_\ell(n))_{n}$ is as in \eqref{E:multicorr}, then we have a decomposition 
	$$
	C_\ell(n)=\psi(n)+ \eta(n), 
	$$
	where
 $(\psi(n))_n$ is a nilsequence  and 	
	 $
	 \lim\limits_{N\to\infty}\frac{1}{N}\sum\limits_{n=1}^N|\eta(a(n))|=0,
	 $ whenever $a\in \Z[t]$ is a polynomial, or $a(n)=[n^c]$ for some $c\in \R_+$, or any of these sequences taken along the $n$-th prime $p_n$. 
\end{conjecture}
Optimally, we would like the nilsequence  $(\psi(n))_n$ to be chosen independently of $(a(n))_n$. In \cref{T:Main4} we verify that the conjecture holds for $\ell=2$ in a  broader setting.

\subsection{Limiting formulas}
Our first main result establishes Conjecture~\ref{Con1} for $\ell=2$
in a broader setting that, for example, also accommodates sequences
like $[n\log{n}]$, $[n^2\sqrt{2}+n]$, or $[n\alpha+(\log{n})^2]$ where
$\alpha\in \R$. All the definitions regarding Hardy fields can be found in Section \ref{SS: Hardy}.
\begin{theorem}[Limiting formula for Hardy corners]\label{T:Main1}
	Let $a\colon \R_+\to \R$ be a Hardy-field function of polynomial growth that satisfies
    \begin{align}\label{E: log away}
      \lim_{t\to\infty} \frac{|a(t)-cp(t)|}{\log t}= \infty\quad \textrm{for every}\;\; c\in\R \;\;\textrm{and}\;\;   p\in \Z[t].
    \end{align}
	Then for every system $(X,\CX,\mu,T_1, T_2)$ and functions $f_0,f_1,f_2 \in L^\infty(\mu)$, we have the identity
 	\begin{align}\label{E: main identity}
 	    \lim_{N\to\infty} \frac{1}{N}\sum_{n=1}^N \, \int f_0\cdot T_1^{[a(n)]}f_1\cdot T_2^{[a(n)]}f_2\, d\mu =
	\lim_{N\to\infty} \frac{1}{N}\sum_{n=1}^N\,\int f_0\cdot   T_1^n f_1\cdot  T_2^n f_2\, d\mu;
 	\end{align}
    in particular, both limits above exist.
\end{theorem}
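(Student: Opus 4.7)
The plan is to reduce the identity \eqref{E: main identity} to a comparison of equidistribution on nilmanifolds for the sequences $\{n\}_{n\in\N}$ and $\{[a(n)]\}_{n\in\N}$, by successively passing to extensions of the system in which the relevant characteristic factors for both averages become nilpotent. The existence of the right-hand side limit is already guaranteed by Tao's theorem \cite{Ta08}, so it suffices to show that the left-hand side limit exists and agrees with it, which I would do by computing both via a common nilmanifold model.

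First I would pass to a magic extension of $(X,\CX,\mu,T_1,T_2)$ in the sense of Host so that both averages appearing in \eqref{E: main identity} are controlled, via a van der Corput argument in the $n$-variable, by the individual Host--Kra seminorms $\nnorm{f_i}_{s,T_i}$ for some bounded $s$ depending on the growth of $a$. Applying the degree lowering and seminorm smoothing techniques mentioned in the abstract, together with the novel structured extensions inspired by Tao and Leng, one reduces further so that the factor generated by $f_1,f_2$ may be assumed to be an inverse limit of $T_i$-nilfactors of bounded step coupled through a common sub-factor. This step is the heart of the argument and, in my view, the main obstacle: for two commuting transformations one has no direct Host--Kra type reduction, so one must combine the magic extension with sparse-sequence-compatible seminorm smoothing and verify that the same factor controls the linear and the Hardy-field averages simultaneously.

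On such a factor the two integrals in \eqref{E: main identity} reduce to finite linear combinations of nilmanifold averages of the form
\[
   \frac{1}{N}\sum_{n=1}^{N} F\bigl(g_1^{b(n)}x_1\Gamma_1,\, g_2^{b(n)}x_2\Gamma_2\bigr),
\]
where $F$ is continuous on a product of nilmanifolds, $g_1,g_2$ are nilrotations, and $b(n)$ equals $n$ for the right-hand side and $[a(n)]$ for the left-hand side of \eqref{E: main identity}. To conclude, I would invoke equidistribution of Hardy-field sequences on nilmanifolds: the polynomial growth of $a$ combined with the log-gap condition \eqref{E: log away} ensures, via Weyl-type arguments in the spirit of \cite{Fr10}, that $\{[a(n)]\}_{n\in\N}$ equidistributes on the same sub-nilmanifold as $\{n\}_{n\in\N}$ with the same limit distribution; the condition \eqref{E: log away} is exactly what is needed to rule out resonances between $a(n)$ and polynomial phases that could otherwise collapse the orbit into a proper sub-nilmanifold. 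This simultaneously yields equality of the two limits in \eqref{E: main identity} and existence of the limit on the left-hand side.
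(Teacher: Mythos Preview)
Your sketch has the right ingredients listed but the architecture is off, and the gap is not a minor one. The claim that a magic extension plus van der Corput yields control by the \emph{individual} Host--Kra seminorms $\nnorm{f_i}_{s,T_i}$ is false: what one actually obtains for commuting transformations (and what the paper takes as input) is control by \emph{box} seminorms of the form $\nnorm{f_2}_{(T_2T_1^{-1})^{\times s},\, T_2^{\times s}}$. For such seminorms there is no known structure theorem placing the characteristic factor inside an inverse limit of nilsystems, so your reduction to a product nilmanifold model $F(g_1^{b(n)}x_1\Gamma_1, g_2^{b(n)}x_2\Gamma_2)$ is unjustified. Indeed, the absence of a Host--Kra theory for several commuting transformations is precisely the obstruction the paper is designed to circumvent.

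The paper's route is different in a way that matters. Degree lowering and seminorm smoothing are not used to reach a nilpotent factor for $(T_1,T_2)$; they are used to push the box seminorm all the way down to the minimal degree $\nnorm{f_2}_{T_2T_1^{-1},\,T_2}$. Only at this point does a magic extension help: it forces $f_2$ to be (approximately) a product $g_1 g_2$ with $g_1\in I(T_2T_1^{-1})$ and $g_2\in I(T_2)$, and the invariance properties collapse the two-transformation average to a \emph{single}-transformation average, where Host--Kra theory and nilmanifold equidistribution apply. The structured extension of Tao--Leng type and the equidistribution of $[a(n)]$ on nilmanifolds are invoked \emph{inside} the degree-lowering loop (in the base case where $f_2$ is a nonergodic eigenfunction), not as a final step after a nilpotent reduction. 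So while you correctly identify all the tools, your proposal applies them in the wrong order and assumes a structural endpoint that is not available.
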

\begin{remarks}
The existence of the limit on the right-hand side follows
from work of  Conze and Lesigne~\cite{CL84}.  The difficult part is to show
that the limit on the left-hand side exists and
that the two limits coincide.

{Condition \eqref{E: log away} is motivated by equidistribution results on the circle due to Boshernitzan~\cite{Bos94}. When this condition fails, the identity \eqref{E: main identity} generally does not hold; this can be seen e.g. when $a(t)$ equals  $2t$, $t^2$, or $\log t$ by considering rotations on the circle.}

We expect Theorem \ref{T:Main1} to hold for an arbitrary number of commuting transformations; same with Theorems \ref{T:Main2} and \ref{T:Main2'} below. Likewise, we expect \eqref{E: main identity} (and similarly \eqref{E: main identity 2} below) to hold at the level of $L^2(\mu)$ rather than weak limits. We shall explain in Section \ref{SS: issues} why we are unable to accomplish this at the moment.
\end{remarks}

Using a recent comparison result of Koutsogiannis and Tsinas~\cite[Theorem~1.2]{KT25} (which builds on number-theoretic input from \cite{MSTT22}),
we also obtain analogous statements with $[a(p_n)]$ in place
of $[a(n)]$, where $p_n$ denotes the $n$-th prime; for instance we get the identity
$$
\lim_{N\to\infty} \frac{1}{N}\sum_{n=1}^N \, \int f_0\cdot T_1^{[p_n^c]}f_1\cdot T_2^{[p_n^c]}f_2\, d\mu =
\lim_{N\to\infty} \frac{1}{N}\sum_{n=1}^N\,\int f_0\cdot   T_1^n f_1\cdot  T_2^n f_2\, d\mu
$$
for all $c\in \R_+\setminus \Z$.

Our second main result is a version of Theorem \ref{T:Main1} for integer polynomials in the form needed to address Conjecture~\ref{Con2} for $\ell=2$.
\begin{theorem}[Limiting formula for polynomial corners]\label{T:Main2}
	Let 	$p\in \Z[t]$ be a nonzero intersective polynomial and  for every $k\in \N$, take $n_k\in \N_0$ for which $k!\mid p(n_k)$.
	Then for every system $(X,\CX,\mu,T_1,T_2)$ and functions $f_0,f_1, f_2 \in L^\infty(\mu)$, we have the identity
 	\begin{multline}\label{E: main identity 2}
 	 \lim_{k\to \infty}\lim_{N\to\infty}\frac{1}{N}\sum_{n=1}^N\,  \int f_0\cdot T_1^{p(k!n+n_k)}f_1\cdot T_2^{p(k!n+n_k)}f_2\; d\mu\\
     =\lim_{k\to \infty}\lim_{N\to\infty}\frac{1}{N}\sum_{n=1}^N\,  \int f_0\cdot T_1^{k!n}f_1\cdot T_2^{k!n}f_2\, d\mu;
 	\end{multline}
 	     in particular, both limits above exist. Moreover, we can replace the  Ces\`aro average over $n\in \N$ with an average along an arbitrary F\o lner  sequence in $\N$.\footnote{A {\em F\o lner sequence in $\N$} is a sequence $(I_N)_{N}$ of finite subsets of $\N$ that is  asymptotically invariant under translation, in the sense that $\lim\limits_{N\to\infty}\big|I_N \triangle  (x+ I_N)\big|/|I_N|=0$ for all  $x\in \N$.}
\end{theorem}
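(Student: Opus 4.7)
The plan is to mimic the strategy developed for \cref{T:Main1}, substituting Leibman's polynomial equidistribution on nilmanifolds for Hardy-field equidistribution and exploiting the intersectivity hypothesis through the divisibility $k!\mid p(k!n+n_k)$. First, I would invoke the magic extension machinery of Host together with the structured extensions in the spirit of Tao and Leng advertised in the abstract to pass to an extension of $(X,\CX,\mu,T_1,T_2)$ on which both multiple averages in \eqref{E: main identity 2} are controlled by Host-Kra-type seminorms attached to the commuting actions of $T_1$, $T_2$, and $T_1T_2^{-1}$. This turns the problem into an equidistribution question on (inverse limits of) nilsystems, where the characteristic factors for both the polynomial average on the left and the linear average on the right are described by the same nilpotent structure.

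Next I would apply the degree-lowering and seminorm-smoothing tools of the paper to bound the difference between the two sides of \eqref{E: main identity 2} by a low-degree box seminorm of $f_0, f_1, f_2$, up to an error that vanishes as $k\to\infty$. The intersectivity hypothesis enters crucially here: writing $p(k!n+n_k) = p(n_k) + \sum_{j\geq 1} (k!)^j n^j a_{j,k}$ for integers $a_{j,k}$ and using $k!\mid p(n_k)$, one sees that the substitution $n \mapsto k!n+n_k$ makes $p(k!n+n_k)$ divisible by $k!$, with the coefficient of $n^j$ divisible by $(k!)^j$ for $j\geq 1$. This high divisibility is precisely what permits the iteration of degree lowering without incurring uncontrollable constant-term obstructions, and it forces the polynomial $p(k!n+n_k)/k!$ to behave, as $k\to\infty$, like a polynomial with zero constant term.

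Once reduced to nilsystems, I would identify both limits using Leibman's polynomial equidistribution theorem. For commuting nilrotations $g_1, g_2$ on a nilmanifold $G/\Gamma$, the left-hand side of \eqref{E: main identity 2} integrates a continuous function against the $n$-equidistribution of $(g_1^{p(k!n+n_k)}\Gamma, g_2^{p(k!n+n_k)}\Gamma)$, while the right-hand side integrates against the $n$-equidistribution of $(g_1^{k!n}\Gamma, g_2^{k!n}\Gamma)$. Leibman's theorem, combined with the divisibility structure above, shows that for each sufficiently large $k$ both orbits equidistribute on the same subnilmanifold and that their $k\to\infty$ limits coincide. The extension to averaging along an arbitrary F\o lner sequence in $\N$ is then automatic, since equidistribution on nilmanifolds is insensitive to the choice of F\o lner sequence.

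The main obstacle will be the seminorm control in the second step. The two commuting transformations are evaluated at the same polynomial iterate $p(k!n+n_k)$, so the classical PET-van der Corput scheme yields seminorm estimates of unboundedly high degree and produces shifted correlation terms that are hard to untangle without an independence assumption between the iterates. The essential input is the marriage of seminorm smoothing with the divisibility granted by intersectivity, which effectively suppresses the constant-term obstructions that otherwise block polynomial Szemer\'edi-type arguments for nonzero-constant polynomials. This is the key technical novelty required to push the argument through to the nilpotent regime and close the identity in \eqref{E: main identity 2}.
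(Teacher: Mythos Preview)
Your proposal names the right ingredients but assembles them in an order that does not work. The crucial gap is the sentence ``This turns the problem into an equidistribution question on (inverse limits of) nilsystems'' followed by an appeal to Leibman for ``commuting nilrotations $g_1,g_2$''. No Host--Kra-type structure theorem is known for box seminorms attached to a commuting $\Z^2$-action; the factors $\CZ\bigl((T_2T_1^{-1})^{\times s_1},T_2^{\times s_2}\bigr)$ are \emph{not} known to be (inverse limits of) nilsystems, and sidestepping exactly this obstacle is the whole point of the paper. The structured extension of \cref{T: structured extension} does not produce a nilsystem for the $\Z^2$-action; it only yields the identity $\CZ(S_2S_1^{-1},S_2,S_2)=\CI(S_2S_1^{-1})\vee\CZ_1(S_2)$, which is used as an inverse theorem \emph{inside} the degree-lowering loop, not as a global reduction to a nilpotent setting.

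The actual chain is: (i) start from the known high-degree box-seminorm control (\cref{T: estimates for Hardy}); (ii) iterate degree lowering and seminorm smoothing to reach the \emph{optimal} control by $\nnorm{f_2}_{T_2T_1^{-1},T_2}$; (iii) pass to a magic extension with respect to $T_2T_1^{-1},T_2$, so that $f_2$ may be taken as a product of a $T_2T_1^{-1}$-invariant and a $T_2$-invariant function, which collapses the two-transformation average to a \emph{single}-transformation one; (iv) only then does nilsystem theory (for one transformation) enter, via the base case \cref{P: base case 2}. This also corrects your second misplacement: intersectivity plays no role in the degree-lowering iteration---that step is purely seminorm-theoretic and insensitive to the arithmetic of $p$. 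Intersectivity is used solely in verifying property~\eqref{I:3.2ii} of \cref{D: good properties 2}: for $k$ large enough that $r_0\mid k!$ and $r_0\mid p(n_k)$, the polynomial $r_0^{-1}p(k!n+n_k)$ has integer coefficients, so $(T^{p(k!n+n_k)}x)_n$ and $(T^{k!n}x)_n$ both equidistribute on the connected component $X_0$ of a single nilsystem, giving~\eqref{E:nk}.
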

\begin{remarks}
The existence of the limit on the right-hand side follows
from Lemma~\ref{L:convergence} below, whose proof uses the machinery of
magic extensions. The difficult part, again, is to show
that the limit on the left-hand side exists and
that the two limits coincide.

If $p(0)=0$, then we can simply take $n_k=0$ for every
$k\in \N$. Also, in place of $k!$ we can use
any other highly divisible sequence, i.e., a sequence $(c_k)_k$ with the property that for every $r\in \N$, we have $r\mid c_k$
for all sufficiently large $k\in \N$.
Our argument also gives that for every nonconstant polynomial $p\in \Z[t]$ and all $j_k\in \Z$,  we have the identity
	\begin{multline}\label{E:pkj}
	\lim_{k\to \infty}\Big|\lim_{N\to\infty}\frac{1}{N}\sum_{n=1}^N\,  \int f_0\cdot T_1^{p(k!n+j_k)}f_1\cdot T_2^{p(k!n+j_k)}f_2\; d\mu-\\
	\lim_{N\to\infty}\frac{1}{N}\sum_{n=1}^N\,  \int f_0\cdot T_1^{k!n+p(j_k)}f_1\cdot T_2^{k!n+p(j_k)}f_2\, d\mu\Big|=0.
\end{multline}
This, together with \cref{P: linear} and the scaling property~\eqref{I:scaling} of the box seminorms from \cref{SS:boxseminorms}, easily yields the following implication
\begin{equation}\label{E:cfpolies}
\nnorm{f_2}_{T_2 T_1\inv, T_2}=0\implies \lim_{N\to\infty}\frac{1}{N}\sum\limits_{n=1}^N\,  \int f_0\cdot T_1^{p(n)}f_1\cdot T_2^{p(n)}f_2\; d\mu=0.
\end{equation}

A quantitative, finitary counterpart of \eqref{E:cfpolies}, under additional assumptions on the polynomial $p$, was recently established by Kravitz, Leng, and the second author \cite[Theorem~1.2]{KKL24b}; related results were obtained earlier in \cite{PSS23} and later in \cite{AS25}. Our overall proof strategy for Theorems \ref{T:Main1} and \ref{T:Main2} is  partly inspired by \cite{KKL24b}; however, the difficulties in the combinatorial and ergodic setting tend to lie in different places, with various steps being easy in one setting but difficult in the other. See \cref{SS:proofsketch} for a summary of differences.
\end{remarks}

\subsection{Szemer\'edi-type theorems}
We deduce several applications from the identities in Theorems \ref{T:Main1} and \ref{T:Main2}, starting with a uniform-limit version of Conjecture~\ref{Con2} for  $\ell=2$.
\begin{theorem}[Existence of length-three jointly intersective polynomial patterns]\label{T:Main2'}
	Let $p_1$, $p_2\in \Z[t]$ be jointly intersective polynomials. Then for every system $(X,\mathcal{X},\mu, T_1,T_2)$ and  sets $A \in \mathcal{X}$ with $\mu(A) > 0$,  we have
$$
\lim_{N-M\to\infty}\frac{1}{N-M}\sum_{M\leq n<N} \mu\big(A \cap T_1^{-p_1(n)}A \cap T_2^{-p_2(n)}A\big) > 0
$$
(the limit exists by \cite{Wal12,Zo15a}).
Moreover, for any choice of direction vectors $v_1,v_2\in \Z^d$, every subset $\Lambda\subseteq\Z^d$ of positive upper Banach density contains patterns of the form
\begin{align*}
    m,\; m+v_1 p_1(n),\; m+v_2 p_2(n)
\end{align*}
for some $m\in\Z^d$ and $n\in\N$.
\end{theorem}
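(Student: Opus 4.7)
The plan is to deduce the combinatorial corners statement from the ergodic recurrence statement via the Furstenberg correspondence principle, so I focus first on proving the ergodic statement. The uniform Ces\`aro limit
\[
L:=\lim_{N-M\to\infty}\frac{1}{N-M}\sum_{M\leq n<N}\mu\bigl(A\cap T_1^{-p_1(n)}A\cap T_2^{-p_2(n)}A\bigr)
\]
is already known to exist by \cite{Wal12,Zo15a}, so it suffices to show $L>0$. I would split into cases according to whether $p_1$ and $p_2$ are rationally dependent: the rationally independent case is already known from prior work, as noted in the introduction, so the new contribution lies in the rationally dependent case.

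When $p_1,p_2$ are rationally dependent, they can be written as $p_1=c_1 q$ and $p_2=c_2 q$ for some nonzero $q\in\Z[t]$ and coprime integers $c_1,c_2$, and the joint intersectivity of $(p_1,p_2)$ is equivalent to the intersectivity of $q$. Setting $S_i:=T_i^{c_i}$ yields commuting invertible measure-preserving transformations, and the ergodic average reduces to the diagonal form
\[
\frac{1}{N-M}\sum_{M\leq n<N}\mu\bigl(A\cap S_1^{-q(n)}A\cap S_2^{-q(n)}A\bigr),
\]
placing us squarely in the setting of \cref{T:Main2}.

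For each $k\in\N$ I would choose $n_k\in\N$ with $k!\mid q(n_k)$ (possible by intersectivity of $q$) and restrict the average to the arithmetic progression $\{k!n+n_k:n\in\N\}$. Let
\[
L_{n_k}:=\lim_{N\to\infty}\frac{1}{N}\sum_{n=1}^N\mu\bigl(A\cap S_1^{-q(k!n+n_k)}A\cap S_2^{-q(k!n+n_k)}A\bigr),
\]
which exists by Walsh's convergence theorem. Decomposing the uniform Ces\`aro average into residue classes modulo $k!$ gives $L\geq L_{n_k}/k!$, so it suffices to show $L_{n_k}>0$ for some $k$. Applying \cref{T:Main2} to the triple $(S_1,S_2,q)$ with $f_0=f_1=f_2=\one_A$ yields
\[
\lim_{k\to\infty}L_{n_k}=\lim_{k\to\infty}\lim_{N\to\infty}\frac{1}{N}\sum_{n=1}^N\mu\bigl(A\cap S_1^{-k!n}A\cap S_2^{-k!n}A\bigr).
\]
For each fixed $k$, the inner limit is bounded below by a positive quantity depending only on $\mu(A)$, by the Furstenberg--Katznelson multidimensional Szemer\'edi theorem applied to the subsystem $(X,\mu,S_1^{k!},S_2^{k!})$. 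Hence $\lim_k L_{n_k}>0$, so $L_{n_k}>0$ for all sufficiently large $k$, giving $L>0$.

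For the combinatorial statement, I would apply the Furstenberg correspondence principle to $\Lambda\subseteq\Z^2$ of positive upper Banach density to produce a measure-preserving $\Z^2$-action $(X,\mathcal{X},\mu,(T_v)_{v\in\Z^2})$ and $A\in\mathcal{X}$ with $\mu(A)=d^*(\Lambda)$ satisfying
\[
d^*\bigl(\Lambda\cap(\Lambda-v_1 p_1(n))\cap(\Lambda-v_2 p_2(n))\bigr)\geq \mu\bigl(A\cap T_{v_1}^{-p_1(n)}A\cap T_{v_2}^{-p_2(n)}A\bigr),
\]
and then invoke the ergodic statement with $T_1:=T_{v_1}$ and $T_2:=T_{v_2}$ (which commute since $\Z^2$ is abelian) to obtain some $n\in\N$ for which the right-hand side is positive, yielding the desired pattern. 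The main obstacle throughout is the rationally dependent case, which is precisely where \cref{T:Main2} is indispensable; the subtlest point is interchanging the limits in $k$ and $N$ while retaining a uniform lower bound, for which the uniform-in-the-system character of the Furstenberg--Katznelson theorem on the subsystems $(S_1^{k!},S_2^{k!})$ is essential.
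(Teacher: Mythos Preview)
Your proof is correct and follows essentially the same route as the paper's: split into the independent case (handled by \cite[Corollary~2.11]{FrKu22a}) and the proportional case, where one reduces to a single intersective polynomial, invokes \cref{T:Main2}, and appeals to the uniform Furstenberg--Katznelson lower bound $c=c(\mu(A))$ for the $k!n$ averages. One small point worth making explicit: your case split (``rationally independent'' vs.\ ``rationally dependent'') coincides with the paper's split (``$1,p_1,p_2$ linearly independent'' vs.\ dependent) only because joint intersectivity forces $\beta=0$ in any relation $p_2=\alpha p_1+\beta$, so you should justify that the rationally independent case really is the one covered by \cite{FrKu22a}.
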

\begin{proof}[Proof of \cref{T:Main2'} assuming \cref{T:Main2}]
The combinatorial statement follows from the ergodic statement via the Furstenberg correspondence principle \cite{Fu77,Fu81}, so we just prove the first part.
We consider two cases. If the polynomials $1,p_1,p_2$ are
linearly independent, then the result was shown in \cite[Corollary~2.11]{FrKu22a}.
If they are linearly dependent, then the joint intersectivity of
the polynomials $p_1,p_2$ implies they have the form
$p_1=k_1 p$, $p_2=k_2 p$, for some $k_1,k_2\in \Z$ and an
intersective polynomial $p$. Thus, letting $T_1':=T_1^{k_1}$ and
$T_2':=T_2^{k_2}$, we are reduced to the case where $p_1=p_2=p$.
	
We will cover this case using \cref{T:Main2}. Let $A\in \CX$ with $\mu(A)>0$. It is known~\cite{BeMc96, FuK79}
that there exists $c=c(\mu(A))$ (it is important that $c$
does not depend on $k$) such that for every $k\in \N$, we have
$$
\liminf_{N-M\to\infty}\frac{1}{N-M}\sum_{M\leq n<N}  \mu(A\cap T_1^{-k!n}A\cap  T_2^{-k!n}A)\geq c.
$$
By Theorem~\ref{T:Main2} there exist $k,n_k\in \N$ such that
\begin{multline*}
	\limsup_{N-M\to\infty}\Big|\frac{1}{N-M}\sum_{M\leq n<N}
	 \mu(A\cap T_1^{-p(k!n+n_k)}A \cap T_2^{-p(k!n+n_k)}A) -\\ \frac{1}{N-M}\sum_{M\leq n<N}   \mu(A\cap T_1^{-k!n}A \cap T_2^{-k!n}A)\Big|\leq c/2.
\end{multline*}
Combining the above, we deduce that
$$
\liminf_{N-M\to\infty}\frac{1}{N-M}\sum_{M\leq n<N}  \mu(A\cap T_1^{-p(k!n+n_k)}A\cap  T_2^{-p(k!n+n_k)}A)\geq c/2,
$$
completing the proof.
\end{proof}

\subsection{Popular common differences}

Our next application concerns lower bounds for multiple recurrence, which in the combinatorial setting translate into the existence of popular common differences for Hardy and polynomial corners. By combining a result of Chu~\cite{Chu11} with Theorems \ref{T:Main1} and \ref{T:Main2} respectively, we obtain the following two results. In the ergodic setting, we study quantitative recurrence  for  actions $(X,\CX,\mu,T_1,T_2)$ that are ergodic, i.e., $T_1f=T_2f=f$ implies that $f$ is constant $\mu$-a.e..  Below, $d^*(\Lambda)$ denotes the \textit{upper Banach density}\footnote{The \textit{upper Banach density} of $\Lambda\subseteq\Z^d$ is defined to be $d^*(\Lambda) = \sup\limits_{(I_N)_N}\limsup\limits_{N\to\infty}\frac{|\Lambda\cap I_N|}{|I_N|}$, where the supremum is taken over all F\o lner sequences $(I_N)_N$ on $\Z^d$.} of $\Lambda$.
\begin{theorem}[Popular common differences for Hardy corners]\label{T:Main3}
    Let $a\colon \R_+\to \R$ be a Hardy-field function of polynomial growth that satisfies \eqref{E: log away}. Then for every ergodic system $(X,\mathcal{X},\mu, T_1,T_2)$, set  $A\in \CX$, and $\varepsilon>0$,
	the set
	\begin{equation*}
	\big\{n\in \N\colon  \mu(A\cap T_1^{-[a(n)]}A\cap T_2^{-[a(n)]}A)\geq (\mu(A))^4-\varepsilon\big\}
	\end{equation*}
	has positive lower density. So does the set
	\begin{equation*}
	\big\{n\in \N\colon   d^*(\Lambda\cap (\Lambda-v_1[a(n)])\cap (\Lambda-v_2[a(n)])\geq ({d}^*(\Lambda))^4-\varepsilon\big\}
	\end{equation*}
    for any directions $v_1, v_2\in\Z^d$, a set $\Lambda\subseteq\Z^d$, and $\veps>0$.
\end{theorem}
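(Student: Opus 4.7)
The plan is to combine the mean convergence identity of \cref{T:Main1} with Chu's Khintchine-type lower bound for linear corners, and then to pass from the resulting average lower bound to a positive-lower-density statement via a simple Markov-type inequality.

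For the ergodic statement, set $g(n):=\mu(A\cap T_1^{-[a(n)]}A\cap T_2^{-[a(n)]}A)$. Taking $f_0=f_1=f_2=\one_A$ in \cref{T:Main1} gives
$$L := \lim_{N\to\infty}\frac{1}{N}\sum_{n=1}^N g(n) = \lim_{N\to\infty}\frac{1}{N}\sum_{n=1}^N \mu(A\cap T_1^{-n}A\cap T_2^{-n}A),$$
and for a $\Z^2$-ergodic system Chu's theorem \cite{Chu11} provides $L\geq \mu(A)^4$. Writing $d_N:=\frac{1}{N}\#\{1\leq n\leq N \colon g(n)\geq \mu(A)^4-\varepsilon\}$ and exploiting $0\leq g(n)\leq 1$, we obtain
$$\frac{1}{N}\sum_{n=1}^N g(n) \;\leq\; d_N+(1-d_N)\bigl(\mu(A)^4-\varepsilon\bigr),$$
which on letting $N\to\infty$ rearranges to
$$\liminf_{N\to\infty} d_N \;\geq\; \frac{L-\mu(A)^4+\varepsilon}{1-\mu(A)^4+\varepsilon} \;\geq\; \frac{\varepsilon}{1-\mu(A)^4+\varepsilon} \;>\; 0,$$
yielding the desired positive-lower-density conclusion.

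For the combinatorial statement, apply the Furstenberg correspondence principle to $\Lambda$ with direction vectors $v_1,v_2$, producing a (possibly non-ergodic) commuting system $(X,\CX,\mu,S_1,S_2)$ and $A\in\CX$ with $\mu(A)=d^*(\Lambda)$ such that
$$d^*\bigl(\Lambda\cap(\Lambda-v_1m)\cap(\Lambda-v_2m)\bigr) \;\geq\; \mu\bigl(A\cap S_1^{-m}A\cap S_2^{-m}A\bigr) \qquad \text{for all } m\in\Z.$$
\cref{T:Main1} applies without any ergodicity hypothesis. Decomposing $\mu=\int \mu_x\, dP(x)$ into its $\Z^2$-ergodic components, invoking Tao's mean convergence \cite{Ta08} fibrewise, and combining dominated convergence (the integrands are bounded by $1$) with Chu's theorem applied to each $\mu_x$ together with Jensen's inequality yields
$$L \;=\; \int \lim_{N\to\infty}\frac{1}{N}\sum_{n=1}^N \mu_x\bigl(A\cap S_1^{-n}A\cap S_2^{-n}A\bigr)\, dP(x) \;\geq\; \int \mu_x(A)^4\, dP(x) \;\geq\; d^*(\Lambda)^4.$$
The same Markov argument then produces a positive lower density of $n$ with $\mu(A\cap S_1^{-[a(n)]}A\cap S_2^{-[a(n)]}A)\geq d^*(\Lambda)^4-\varepsilon$, and the Furstenberg correspondence transfers this to the desired combinatorial conclusion.

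The main technical point to be careful about is that we genuinely need the Khintchine (average lower bound) form of Chu's theorem; the weaker popular-common-differences statement alone does not imply the bound $L\geq\mu(A)^4$. This stronger form is, however, available in \cite{Chu11} as a consequence of the characteristic-factor analysis carried out there, so the strategy goes through without obstruction.
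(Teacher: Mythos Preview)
Your argument has a genuine gap: the claimed inequality
\[
L:=\lim_{N\to\infty}\frac{1}{N}\sum_{n=1}^{N}\mu(A\cap T_1^{-n}A\cap T_2^{-n}A)\ \geq\ \mu(A)^4
\]
for ergodic $\Z^2$-systems is \emph{false}, and it is not what Chu proves. A clean counterexample: take $X=\Z/N\Z$ with $N$ prime, $T_1 x=x+1$, $T_2 x=x+2$ (so the joint action is ergodic), and let $A$ be a Behrend set with no nontrivial $3$-term progressions and density $\mu(A)\gg\exp(-c\sqrt{\log N})$. Then $\mu(A\cap T_1^{-n}A\cap T_2^{-n}A)$ equals $\mu(A)$ when $N\mid n$ and $0$ otherwise, so $L=\mu(A)/N$, which for large $N$ is far below $\mu(A)^4$ since $\mu(A)\gg N^{-1/3}$. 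Chu's syndetic conclusion holds here (the good set is $N\Z$), but your averaged form does not.

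What Chu's characteristic-factor analysis actually yields, and what the paper uses, is a \emph{weighted} lower bound: there exist a compact abelian group $Z$, an element $\alpha\in Z$, and a nonnegative $\phi\in C(Z)$ with $\phi=1$ near the identity, such that
\[
\lim_{N\to\infty}\frac{1}{N}\sum_{n=1}^{N}\phi(n\alpha)\,\mu(A\cap T_1^{-n}A\cap T_2^{-n}A)\ \geq\ \delta\cdot\mu(A)^4,
\qquad \delta:=\lim_{N\to\infty}\frac{1}{N}\sum_{n=1}^{N}\phi(n\alpha)>0.
\]
The paper (mimicking its proof of \cref{T:Main3'}) then approximates $\phi$ by characters, applies \cref{T:Main1} to the product system $(X\times\T,\,T_1\times R_\beta,\,T_2\times\mathrm{id})$ to replace $n$ by $[a(n)]$ inside the weighted sum, and deduces the same weighted bound with $[a(n)]$ in place of $n$. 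A Markov argument against the weighted average (where both the weight and the correlation are bounded by $1$) then gives positive lower density. Your overall architecture is salvageable, but the weight $\phi(n\alpha)$ must be carried through; dropping it is precisely where the argument breaks.
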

By mimicking the proof of Theorem \ref{T:Main2'}, we could obtain the weaker statement that
\begin{align}\label{E: multiple recurrence Hardy}
    \lim_{N\to\infty}\frac{1}{N}\sum_{n=1}^N \mu\big(A \cap T_1^{-[a(n)]}A \cap T_2^{-[a(n)]}A\big) > 0
\end{align}
for every system $(X, \CX, \mu, T_1, T_2)$ and every positive-measure set $A\in \CX$, together with the corollary on the existence of patterns
\begin{align*}
    m,\; m+v_1[a(n)],\; m+v_2[a(n)]
\end{align*}
in subsets $\Lambda\subseteq\Z^d$ of positive upper density for any choice of direction vectors $v_1, v_2\in\Z^d$. Even these weaker statements are new for $a(n) = n^c$ with a nonrational $c\geq 23/22.$

For Hardy sequences staying away from polynomials, we cannot in general replace the Ces\`aro averages in  \eqref{E: multiple recurrence Hardy} by averages over arbitrary F{\o}lner sequences in $\Z$; nor can the sets of $n$'s obtained in Theorem \ref{T:Main3} be taken to be syndetic. See \cite{BMR20} for more discussion of this issue.

We skip the proof of Theorem \ref{T:Main3} due to its resemblance to the proof of our next result, which strengthens Theorem \ref{T:Main2'} when $p_1 = p_2$.
\begin{theorem}[Popular common differences for polynomial corners]\label{T:Main3'}
	Let $p\in \Z[t]$  be intersective. Then for every ergodic system  $(X,\CX,\mu, T_1,T_2)$, every  $A\in \CX$,  and   every $\varepsilon>0$,
	the set
	\begin{equation}\label{E:syndetic}
	\big\{n\in \N\colon  \mu(A\cap T_1^{-p(n)}A\cap T_2^{-p(n)}A)\geq (\mu(A))^4-\varepsilon\big\}
	\end{equation}
	is syndetic. So is the set
	\begin{equation*}
	\big\{n\in \N\colon  d^*(\Lambda\cap (\Lambda-v_1 p(n))\cap (\Lambda-v_2p(n)))\geq (d^*(\Lambda))^4-\varepsilon\big\}
	\end{equation*}
    for any directions $v_1, v_2\in\Z^d$, a set $\Lambda\subseteq\Z^d$, and $\veps>0$.
\end{theorem}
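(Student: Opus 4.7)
The plan is to combine the limit identity of \cref{T:Main2} with Chu's lower bound for linear corners, using the reparametrization $n\mapsto k!n+n_k$ to transport the bound $\mu(A)^4$ from linear averages to polynomial ones, and then to transfer syndeticity from the arithmetic progression $k!\N+n_k$ back to $\N$. The combinatorial statement on popular common differences reduces to the ergodic one via the Furstenberg correspondence principle in the standard way, as in the proof of \cref{T:Main2'}, so I focus on the ergodic half.

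First, Chu's theorem \cite{Chu11} applied to the commuting (but generally non-ergodic) pair $(T_1^{k!},T_2^{k!})$ yields
\[
  \liminf_{N\to\infty}\frac{1}{N}\sum_{n=1}^N\mu\bigl(A\cap T_1^{-k!n}A\cap T_2^{-k!n}A\bigr)\geq \mu(A)^4
\]
for every $k\in\N$, with the bound surviving the ergodic decomposition of $(T_1^{k!},T_2^{k!})$ by Jensen's inequality. Combining this with the identity of \cref{T:Main2} applied to $f_0=f_1=f_2=\one_A$ gives $\lim_{k\to\infty}\ell_k\geq \mu(A)^4$, where
\[
  \ell_k:=\lim_{N\to\infty}\frac{1}{N}\sum_{n=1}^N\mu\bigl(A\cap T_1^{-p(k!n+n_k)}A\cap T_2^{-p(k!n+n_k)}A\bigr);
\]
in particular I may fix $k=k(\veps)$ large enough that $\ell_k\geq \mu(A)^4-\veps/2$.

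Next, I claim that
\[
  S_k:=\bigl\{n\in\N:\mu(A\cap T_1^{-p(k!n+n_k)}A\cap T_2^{-p(k!n+n_k)}A)\geq \mu(A)^4-\veps\bigr\}
\]
is syndetic in $\N$. Suppose otherwise: then there exist intervals $J_j\subseteq\N$ with $|J_j|\to\infty$ disjoint from $S_k$, and every term of the average indexed by $n\in J_j$ is strictly less than $\mu(A)^4-\veps$. Since $(J_j)_j$ is a F{\o}lner sequence in $\N$ and the F{\o}lner version of \cref{T:Main2} ensures that the limit of such averages again equals $\ell_k$, this forces $\ell_k\leq\mu(A)^4-\veps$, a contradiction. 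Having established the syndeticity of $S_k$ with some gap constant $C$, I observe that its image $\{k!n+n_k:n\in S_k\}$ is syndetic in $\N$ with gap at most $k!(C+1)$, and this image is contained in the set from the theorem statement, giving the desired syndeticity.

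The main obstacle I expect to negotiate is the genuine F{\o}lner-invariance of the inner limit on the left-hand side of \cref{T:Main2} --- a point on which the entire contradiction in the previous paragraph hinges --- but that invariance is precisely what the F{\o}lner sentence of \cref{T:Main2} provides. The other ingredient, namely the survival of Chu's $\mu(A)^4$ bound after ergodic decomposition of the non-ergodic pair $(T_1^{k!},T_2^{k!})$, is immediate from Jensen. Once \cref{T:Main2} is in hand, everything above is routine.
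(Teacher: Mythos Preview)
Your reduction from Step~2 onward is sound, and the F{\o}lner-invariance argument in Step~3 (using uniform convergence of polynomial averages from \cite{Wal12,Zo15a}) is a clean way to extract syndeticity from a Ces\`aro lower bound. The gap is Step~1: Chu's result in~\cite{Chu11} is the \emph{syndeticity} of $\{n:\mu(A\cap T_1^{-n}A\cap T_2^{-n}A)\geq\mu(A)^4-\veps\}$, not the Ces\`aro bound
\[
\liminf_{N\to\infty}\frac{1}{N}\sum_{n=1}^N\mu(A\cap T_1^{-n}A\cap T_2^{-n}A)\geq\mu(A)^4,
\]
and the latter is false in general. On the ergodic system $X=(\Z/N\Z)^2$ with coordinate shifts $T_1,T_2$, take $A$ to be a corner-free set of density $\rho$; then the triple intersection vanishes for $n\not\equiv0\pmod N$, so the Ces\`aro average equals $\rho/N$, which falls below $\rho^4$ whenever $\rho>N^{-1/3}$---and Behrend-type constructions supply such $A$ with $\rho=\exp(-O(\sqrt{\log N}))$. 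So ``Chu on ergodic components plus Jensen'' cannot deliver the input you need: already on a single ergodic component the average bound fails.

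This is why the paper's proof takes a different route. It extracts from the \emph{proof} in~\cite{Chu11} the weighted inequality \eqref{E:k!}, where $\phi$ is a bump on the relevant Kronecker group $Z$ and $\inf_k\delta_k>0$ by Khintchine recurrence. The weight $\phi(k!n\alpha)$ is then carried through the comparison of \cref{T:Main2} by applying that theorem not to $(X,T_1,T_2)$ but to the product system $X\times\T$ (with a circle rotation encoding each character of $Z$), which transfers the weighted inequality from $k!n$ to $p(k!n+n_k)$; syndeticity of \eqref{E:syndetic} is then read off from the transferred weighted bound. The product-system maneuver is thus essential, not cosmetic: it is the device that preserves the Bohr-set localization through the limit identity---information that an unweighted Ces\`aro average, as the example above shows, simply does not carry.
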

\begin{remarks}
When $p(n)=n$, this was established by Chu~\cite{Chu11} using
the machinery of magic extensions of Host~\cite{H09}. Mandache \cite{Ma21} later gave a purely finitary proof of the combinatorial statement.
Donoso and Sun \cite{DS18} then showed that the lower bounds are optimal,
in the sense that no power of $\mu(A)$ smaller than $4$ works. Later, Fox et. all \cite{FSSSZ20} provided an alternative proof of the combinatorial counterpart of this statement, and Berger \cite{Ber21} extended the aforementioned results to corners in general compact Abelian groups.

None of these methodologies apply to general polynomial corners; Theorem \ref{T:Main3'} has previously been unknown even for $p(n)=n^2$. In the general case of two jointly intersective polynomials $p_1,p_2$, the lower bounds still hold. Indeed, if they are independent, this was established by the authors in \cite[Corollary~2.11]{FrKu22a} with the lower bound $(\mu(A))^3$ in place of $(\mu(A))^4$ and without any ergodicity assumptions; if they are dependent, it follows from \eqref{E:syndetic}.

The phenomenon presented in Theorems \ref{T:Main3} and \ref{T:Main3'} is very specific to length-three corners: indeed, the lower bounds fail if we add a third transformation \cite{DS18}. In general, the existence and nonexistence of popular common difference is a vastly more intricate phenomenon than multiple recurrence, governed by more obscure principles and therefore harder to characterize. In fact, it fails for most linear configurations - see \cite{SSZ21} for a comprehensive discussion of the topic.
\end{remarks}
\begin{proof}[Proof of \cref{T:Main3'} assuming \cref{T:Main2}]
Let $\varepsilon>0$. We assume, as we may, that $\mu(A)>0$ and $\varepsilon <(\mu(A))^4$.  Let also $f:={\bf 1}_A$.

First, we extract from \cite[Section 4]{Chu11} (by combining Proposition 4.6-Corollary 4.9) the existence of a compact Abelian group $Z$, an element $\alpha \in Z$, a nonempty open subset $U$ of $Z$ containing the identity, and nonnegative $\phi\in C(Z)$ with $\phi=1$ on $U$, such that for every $k\in \N$, we have
\begin{equation}\label{E:k!}
	\lim_{N-M\to\infty} \frac{1}{N-M}\sum_{M\leq n<N}   \phi(k! n \alpha)\cdot \int f\cdot T_1^{k!n}f\cdot T_2^{k!n}f\, d\mu \geq \delta_k\cdot \Big(\int f\, d\mu\Big)^4,
\end{equation}
where $$\delta_k:=\lim\limits_{N-M\to\infty}\frac{1}{N-M}\sum_{M\leq n<N} \phi(k! n \alpha).$$
(While the results in \cite{Chu11} are stated for $k=1$, the same argument works for any $k\in\N$.)
From the Khintchine recurrence theorem for the rotation by $k!\alpha$, we infer that $\inf\limits_{k\in \N} \delta_k>0$.

Next, we claim that
\begin{multline}\label{E:diffpk!}
	\lim_{k\to\infty}\lim_{N-M\to\infty} \frac{1}{N-M}\sum_{M\leq n<N}
	 \left(\phi(p(k! n + n_k) \alpha)\cdot \int f\cdot T_1^{p(k! n + n_k)}f\cdot T_2^{p(k!n + n_k)}f\, d\mu\right.\\
	\left.-\phi(k! n \alpha)\cdot \int f\cdot T_1^{k!n}f\cdot T_2^{k!n}f\, d\mu\right)=0
\end{multline}
for a sequence $(n_k)_k$ chosen so that $k!$ divides $p(n_k)$ for every $k\in\N$.
Indeed, after approximating $\phi$ uniformly by linear combinations of characters,
we see that it suffices to verify \eqref{E:diffpk!} when $e(p(k!n + n_k)\beta)$ and $e(k!n\beta)$
take the place of $\phi(p(k! n +n_k) \alpha)$ and $\phi(k! n \alpha)$ for some $\beta\in [0,1)$.\footnote{In performing this reduction, we combine the following facts: 1) every compact Abelian group is an inverse limit of compact Abelian Lie groups; 2) every compact Abelian Lie group $G$ is isomorphic to $\T^d\times H$ for some $d\in\N_0$ and a finite group $H$, and hence can be embedded $\iota_G\colon G\to\T^{d'}$ for some $d'\in\N$; 3) every character on a compact Abelian Lie group $G$ takes the form $e(\eta\cdot\iota_G(x))$ for some $\eta\in\Z^{d'}$; 4) and hence by the Stone-Weierstrass theorem, $\phi$ can be approximated uniformly by the countable collection of functions $\chi(x) = e(\eta\cdot\iota_G(\pi_G (x)))$, where $G$ belongs to a countable collection of compact Abelian Lie groups, $\eta\in\Z^{d'}$, and $\pi_G\colon Z\to G$ is the factor map. For any $m\in\Z$, we then have $\chi(m\alpha) = e(m\beta)$ for $\beta:=\eta\cdot\iota_G(\pi_G (\alpha))$.}
To verify this, we apply \cref{T:Main2} to the (not necessarily ergodic)
system $(\tilde{X},\CX, \tilde{\mu},\tilde{T}_1,\tilde{T}_2)$ defined by
\begin{align*}
 \tilde{X}:=X\times \T,\quad \tilde{\mu}:=\mu\times m_\T,\quad \tilde{T}_1:=T_1\times R, \quad \textrm{where}\quad
R y:=y+\beta,\quad \tilde{T}_2:=T_2\times \text{id},
\end{align*}
and the functions
\begin{align*}
    \tilde{f}_0:=f\otimes \overline{g}\quad \textrm{with} \quad g(y):=e(y),\quad
\tilde{f}_1:=f\otimes g,\quad \tilde{f}_2:=f\otimes 1.
\end{align*}
Combining \eqref{E:k!}
and \eqref{E:diffpk!},
we deduce that
\begin{multline*}
\lim_{N-M\to\infty} \frac{1}{N-M}\sum_{M\leq n<N}\phi(p(k! n + n_k) \alpha)\cdot
\int f\cdot T_1^{p(k! n + n_k)}f\cdot T_2^{p(k! n + n_k)}f\, d\mu\\
\geq \delta_k\cdot
\Big( \Big(\int f\, d\mu\Big)^4-\varepsilon\Big)
\end{multline*}
for all sufficiently large $k\in \N$. The claim then follows from the identity
\begin{align*}
    \lim_{k\to\infty} \lim_{N-M\to\infty} \frac{1}{N-M}\sum_{M\leq n<N}\phi(p(k! n + n_k) \alpha) = \lim_{k\to\infty} \delta_k
\end{align*}
and the previously established fact that the limit is positive. This identity can in turn be verified by approximating $\phi(p(k! n + n_k) \alpha)$ and $\phi(k! n \alpha)$ with linear combinations of $e(p(k! n + n_k)\beta)$ and $e(k!n\beta)$ and noting that
\begin{align*}
        \lim_{k\to\infty} \lim_{N-M\to\infty} \frac{1}{N-M}\sum_{M\leq n<N}e(p(k! n + n_k) \beta) = \textbf{1}_\Q(\beta) =  \lim_{k\to\infty} \delta_k;
\end{align*}
the intersectivity of $p$ and the choice of $n_k$ crucially ensure that every $r\in\N$ divides $p(k! n + n_k)$ for all $n\in\Z$ and sufficiently large $k\in\N$ depending on $r$.

Recalling that $f:={\bf 1}_A$, this implies that the set \eqref{E:syndetic} is syndetic, and the combinatorial statement then follows from a multi-dimensional variant of the uniform Furstenberg correspondence principle for ergodic systems \cite[Proposition 3.1]{BHK05}.
\end{proof}

We remark that, unlike the results proved in \cite{Fr21, FrKu22b,FrKu22a},
where the theory of characteristic factors and extensions was sidestepped,
the current results crucially rely on this theory. In fact, we
use a broad array of techniques developed over the past two decades
in ergodic theory: recent degree lowering  and seminorm smoothing techniques,
the machinery of magic extensions of Host, and new structured extension results
motivated by work of Tao and Leng, as well as foundational tools
such as the Host-Kra theory of characteristic factors and
equidistribution on nilmanifolds. The reader will find a detailed
proof sketch explaining the main technical novelties of this article
in \cref{SS:proofsketch}, after the appropriate terminology is introduced. In \cref{SS: issues} we then discuss the obstructions preventing us from extending Theorems \ref{T:Main1} and \ref{T:Main2} to longer averages and mean convergence.

\subsection{Decomposition results}
Our final application is to verify \cref{Con3} for $\ell=2$ by combining the limit formulas of Theorems~\ref{T:Main1} and~\ref{T:Main2} with the decomposition result of Leng~\cite{Leng25}; the part concerning primes also relies on the work of Koutsogiannis and Tsinas~\cite{KT25}.
	  This also partially resolves  problems posed  by Le~\cite[Section~1.4]{Le18} and Le, Moreira, and Richter~\cite[Questions~1 and~2]{LME21}.
\begin{theorem}\label{T:Main4}
	If $(C_2(n))_n$ is as in \eqref{E:multicorr}, 
	then there exists a nilsequence $(\psi(n))_n$  such that if  	$a\in \Z[t]$  is a nonconstant polynomial, or $a\colon \R_+\to \R$  is a Hardy-field function of polynomial growth 
  that satisfies \eqref{E: log away},    we have
	\begin{equation}\label{E:Cpsi}
	\lim_{N\to\infty}\frac{1}{N}\sum_{n=1}^N |C_2(a(n))-\psi(a(n))|=0.
	\end{equation}
Furthermore, \eqref{E:Cpsi} still holds when $n$ is replaced by the $n$-th prime $p_n$.
\end{theorem}
\begin{remark}
	 This result, combined with the nil plus  null decomposition theorem from \cite[Theorem~2.17]{FrKu22a} for sequences of the form $\int f_0 \cdot T_1^{p_1(n)} f_1 \cdot T_2^{p_2(n)} f_2\, d\mu$, which handles the case of rationally independent polynomials $p_1, p_2$, allows us to obtain nil plus  null decomposition results for any pair of integer polynomials. Extending such results to general families of three or more integer polynomials remains a challenging open problem.
\end{remark}
\begin{proof}
By \cite{Leng25}, there exists a nilsequence $(\psi(n))_n$ satisfying \eqref{E:Cpsi} for $a(n):=n$. We work with this nilsequence from now on and show that it also satisfies \eqref{E:Cpsi}  for all asserted choices of $a\colon \R_+\to \R$.
	
	Suppose first that  $a\colon \R_+\to \R$ is a Hardy-field function of polynomial growth 
	that satisfies \eqref{E: log away}.
	It suffices to show that 
	\begin{equation}\label{E:corid}
	\lim_{N\to\infty}\frac{1}{N}\sum_{n=1}^N |C_2(a(n))-\psi(a(n))|^2=
	\lim_{N\to\infty}\frac{1}{N}\sum_{n=1}^N |C_2(n)-\psi(n)|^2,
	\end{equation}
since the right-hand side vanishes by our choice of $\psi$. 
We do this by expanding the squares on both sides and verifying that the three resulting terms coincide one by one.
We start by applying \cref{T:Main1} to the product system $(X\times X,\mathcal{X}\times\mathcal{X},\mu\times\mu,T_1\times T_1,T_2\times T_2)$ for the functions $f_j\otimes \overline{f_j}$ $j=0,1,2$;  
	we get in a straightforward way that 
	$$
	 	\lim_{N\to\infty}\frac{1}{N}\sum_{n=1}^N |C_2(a(n))|^2=\lim_{N\to\infty}\frac{1}{N}\sum_{n=1}^N |C_2(n)|^2.
	$$ 
	 Similarly, 
	 using 	 \cite[Proposition~2.4]{Fr15} to approximate $\psi(n)$ uniformly by  multiple correlation sequences of the form \eqref{E:multicorr} with $\ell=2$,  and 	applying \cref{T:Main1} to another  product system, we deduce  that 
	 $$
 	\lim_{N\to\infty}\frac{1}{N}\sum_{n=1}^N C_2(a(n))\cdot \overline{\psi(a(n))}=
 	\lim_{N\to\infty}\frac{1}{N}\sum_{n=1}^N C_2(n)\cdot \overline{\psi(n)}.
	 $$ 
	 Lastly,  it follows by \cite{Fr09} (see \cref{T: Hardy equidistribute} below) that 
	 $$
	 	\lim_{N\to\infty}\frac{1}{N}\sum_{n=1}^N |\psi(a(n))|^2=	\lim_{N\to\infty}\frac{1}{N}\sum_{n=1}^N |\psi(n)|^2.
	 $$
	 (Alternatively, one could approximate $\psi$ by a multiple correlation sequence and apply \cref{T:Main1} again.)
	 Combining the above, we obtain \eqref{E:corid}.

Moreover, using the corresponding refinement of \cref{T:Main1} along the primes, mentioned earlier, we similarly obtain that if $a\colon \R_+ \to \R$ is a Hardy-field function of polynomial growth satisfying \eqref{E: log away}, then equation \eqref{E:Cpsi} remains valid when $n$ is replaced by the $n$-th prime $p_n$.

	 Suppose now that $a\in \Z[t]$ is a nonconstant polynomial. 
	 We first claim that for all $j_k\in \N$ we have  
	 $$
	\lim_{k\to\infty}  \lim_{N\to\infty}\frac{1}{N}\sum_{n=1}^N |C_2(a(k!n+j_k))-\psi(a(k!n+j_k))|^2=
	0.
	 $$
To see this, we use \eqref{E:pkj} and argue as before to deduce that the left-hand side equals
$$\lim_{k\to\infty}\lim_{N\to\infty}\frac{1}{N}\sum_{n=1}^N |C_2(k!n+a(j_k))-\psi(k!n+a(j_k))|^2,$$
which vanishes by our assumption that \eqref{E:Cpsi} holds for $a(n)=n$ (in fact, the inner limit vanishes for every $k\in\N$). We deduce that 
	  $$
	 \lim_{k\to\infty}  \frac{1}{k!}\sum_{j=1}^{k!}\lim_{N\to\infty}\frac{1}{N}\sum_{n=1}^N |C_2(a(k!n+j))-\psi(a(k!n+j))|^2=
	 0,
	 $$
	 hence,
	 \begin{equation}\label{E:Cpsiqn}
	  \lim_{N\to\infty}\frac{1}{N}\sum_{n=1}^N |C_2(a(n))-\psi(a(n))|^2=
	 0,
	 \end{equation}
 as required.

 Lastly, we claim that if $a\in \Z[t]$ is a nonconstant polynomial, then 
\begin{equation}\label{E:Cpsipn}
	\lim_{N\to\infty}\frac{1}{N}\sum_{n=1}^N |C_2(a(p_n))-\psi(a(p_n))|^2=0.
\end{equation}
 Expanding the square,  using  \cite[Lemma~2.1 and Proposition 3.6]{FrHK11}
	(which builds on the Gowers uniformity of the bon Mangoldt function \cite[Theorem~7.2]{GT09b}),  and arguing as before, we get that the left-hand side equals 
$$
\lim_{k\to\infty}\frac{1}{\phi(k!)}\sum_{b\leq k!, \,  (b,k!)=1}	\lim_{N\to\infty}\frac{1}{N}\sum_{n=1}^N |C_2(a(k!n+b))-\psi(a(k!n+b))|^2	
$$
($k!$ plays the same role as $W$ in \cite[Proposition 3.6]{FrHK11}), where $\phi$ is Euler's totient function.
The last limit vanishes, since by \eqref{E:Cpsiqn}  the inner limit vanishes for every $k\in\N$.  This proves \eqref{E:Cpsipn}. 
 Alternatively, for this last part one could use \eqref{E:Cpsiqn} and argue as in \cite[Proposition~4.5]{TT19}, which relies on softer input about the primes, such as \cite[Proposition~8.1]{GT08}, and the elementary estimate \cite[Lemma~3.5]{FrHK11}. 
\end{proof}

\subsection{Notation}
The letters $\C, \R, \R_+, \Q, \Z, \N, \N_0$ stand for the set of complex numbers, reals, positive reals, rationals, integers, positive integers, and nonnegative integers. With  $\D := \{z\in\C: |z|\leq 1\}$ we denote the closed complex unit disc, $\S^1 := \{z\in \C: |z| = 1\}$ is the unit circle, while $\T$ stands for the one dimensional torus, which we often identify with $\R/\Z$ or  with $[0,1)$. We let $[N]:=\{1, \ldots, N\}$ and $[\pm N]:=\{-N, \ldots, N\}$ for any $N\in\N$.  For a ring $R$, we use $R[t]$ to denote the collection of single-variable polynomials with coefficients in $R$.

We denote the indicator function of a set $E$ by $\mathbf{1}_E$.

For an element $t\in \R$, we let $e(t):=e^{2\pi i t}$.

  If $a\colon \N^s\to \C$ and $A$ is a nonempty finite subset of $\N^s$ for some $s\in\N$,  we denote the average of $a$ over $A$ via
  $$
  \E_{n\in A} a(n):=\frac{1}{|A|}\sum_{n\in A}\, a(n).
  $$

 We use the notation $(a(n))_{n\in I}$ for an $I$-indexed sequence; when $I = \N^s$ for some $s\in\N$ clear from the context, we employ the shorthand notation $(a(n))_n$. If $I = \{0,1\}^s$ for some $s\in\N$, we also write $a(\ueps)_\ueps$ for $(a(\ueps))_{\ueps\in\{0,1\}^s}$.

  Given a system $(X, \CX, \mu, T_1, \ldots, T_\ell)$, we denote the group of transformations generated by $T_1, \ldots, T_\ell$ with $\angle{T_1, \ldots, T_\ell}$. For a vector $h=(h_1,\ldots, h_\ell)\in \Z^\ell$, we denote by $T^h$ the element of $\angle{T_1, \ldots, T_\ell}$ given by
 $$
 T^{h}:=T_1^{h_1}\cdots T_\ell^{h_\ell}.
 $$
 We also let $e_1, \ldots, e_\ell$ be the unit coordinate vectors in $\Z^\ell$ so that $T^{e_j} = T_j$ for every $j\in[\ell]$.

 For a system $(X, \CX, \mu, T)$, we denote by $\CI(T)$ the $\sigma$-algebra of $T$-invariant sets, and we set $I(T):=L^\infty(X, \CI(T),\mu)$ to be the (closed, $T$-invariant) algebra of bounded $T$-invariant functions.

We let $\CC z := \overline{z}$ be the complex conjugate of $z\in \C$.

We use the standard asymptotic notation: $f\ll g$, $g\gg f$, or $f=O(g)$ whenever there exists $C>0$ so that $|f(t)|\leq C g(t)$ for all $t$ in the domain of $f,g$. If $C>0$ depends on some parameter, we record it in the subscript, e.g., $f\ll_k g$ if $C=C(k)$. We also write $f = o_{k_1; \ldots; k_r; m\to \infty}(g)$ to denote that $\lim\limits_{m\to\infty}\frac{f(k_1, \ldots, k_r, m)}{g(k_1, \ldots, k_r, m)} = 0$ for some parameters $k_1, \ldots, k_r, m$.

\subsection{Acknowledgments}
We thank J. Leng for explanations regarding his work \cite{Leng25}, and for sharing with us the construction given in Section \ref{SS: extension} before it appeared online. {We also thank J. Griesmer for useful remarks.} 
For the purpose of Open Access, the authors have applied a CC-BY public copyright licence to any Author Accepted Manuscript (AAM) version arising from this submission.

\section{Background in ergodic theory}
This section is dedicated to basic notions of the Host-Kra theory, which will then be used to state generalizations of our main results in the following section.  We also collect essential facts on
magic extensions for later use. All systems in this article are assumed to be regular, i.e., they are defined on a complete separable metric space endowed with Borel $\sigma$-algebra and a Borel probability measure.
\subsection{Box seminorms}\label{SS:boxseminorms}
Let $(X, \CX, \mu, T_1, \ldots, T_\ell)$ be a system and $f\in L^{\infty}(\mu)$. For each $R\in\angle{T_1, \ldots, T_\ell}$, we define the \textit{multiplicative derivative}
 $$
 \Delta_{R} f :=  f \cdot R \overline{f}
 $$
   and for $R_1, \ldots, R_s\in\angle{T_1, \ldots, T_\ell}$, we denote the \textit{iterated multiplicative derivative} via
$$
\Delta_{R_1, \ldots, R_s} f  := \Delta_{R_1}\cdots\Delta_{R_s} f = \prod_{\ueps\in\{0,1\}^s} \CC^{|\ueps|} R_1^{\eps_1}\cdots R_s^{\eps_s}f.
$$
If each transformation $R_j$ appears $k_j$ times, we also denote
\begin{align*}
    \Delta_{R_1^{\times k_1}, \dots, R_s^{\times k_s}}f := \Delta_{\underbrace{R_1, \ldots, R_1}_{k_1},\; \ldots,\; \underbrace{R_s, \ldots, R_s}_{k_s}}f.
\end{align*}
Lastly, we employ the conventions
\begin{align*}
    \Delta_{R_1, \ldots, R_s; h}f &:=\Delta_{R_1^{h_1},\ldots, R_s^{h_s}}f\quad &&\textrm{for}\quad h\in\Z^s\\
    \textrm{and}\quad \Delta_{R_1^{\times k_1}, \ldots, R_s^{\times k_s}; h}f &:=\Delta_{R_1^{h_{11}},\ldots, R_1^{h_{1k_1}}}\cdots\Delta_{R_s^{h_{s1}},\ldots,  R_s^{h_{sk_s}}}f\quad &&\textrm{for}\quad h\in\Z^{k_1+\cdots+k_s}
\end{align*}
whenever convenient.

Following Host~\cite{H09}, we inductively define \textit{box seminorms} by letting
$$
\nnorm{f}_{\emptyset}:=\int f\, d\mu
$$
and
\begin{align}\label{ergodic identity}
	\nnorm{f}_{R_1, \ldots, {R_s}}^{2^{s}}:=\lim_{H\to\infty}\E_{h\in[H]}\nnorm{\Delta_{R_s^h}f}_{R_1, \ldots, R_{s-1}}^{2^{s-1}}
\end{align}
for $s\in\N$ and $R_1, \ldots, R_s\in\angle{T_1, \ldots, T_\ell}$. Expanding the definition, we thus have
\begin{align*}
    \nnorm{f}_{R_1, \ldots, {R_s}}^{2^{s}} = \lim_{H_s\to\infty}\E_{h_s\in[H_s]}\cdots \lim_{H_1\to\infty}\E_{h_1\in[H_1]}\int \Delta_{R_1, \ldots, R_s; h}f\; d\nu.
\end{align*}
As explained in \cite[Lemma 1]{H09}, we can replace $[H_i]$ in the average by any F{\o}lner sequence in $\Z$. Combined with \cite[Lemma 1.1]{BL15}, this allows us to replace any $s'$ iterated limits by the single limit $\lim\limits_{H\to\infty}\E_{h\in[H]^{s'}}$.

In the special case when $R:=R_1, \ldots, R_s$, we obtain the \textit{Host-Kra seminorm} of $R$ of degree $s$, denoted by
\begin{align*}
    \nnorm{f}_{s, R}:=\nnorm{f}_{R_1, \ldots, R_s}
\end{align*}
and originally defined by Host and Kra in their groundbreaking work on norm convergence of multiple ergodic averages \cite{HK05a} {(an alternative proof of the latter result was given by Ziegler~\cite{Zi07}).}
While the Host-Kra seminorms predate box seminorms by several years, the latter appear more naturally in the context of multiple ergodic averages of commuting transformations and therefore occupy more prominent place in this work.

Sometimes, we may wish to emphasize the measure with respect to which the seminorm is defined; if so, we write $\nnorm{f}_{R_1, \ldots, R_s;\mu}$ and $\nnorm{f}_{s, R;\mu}$. For instance, if $\mu = \int \mu_x\; d\mu(x)$ is the ergodic decomposition of $\mu$ with respect to the joint action of $R_1, \ldots, R_s$, then we have the easy-to-derive formula
\begin{align}\label{E: ergodic decomposition}
    \nnorm{f}_{R_1, \ldots, R_s;\mu}^{2^s} = \int  \nnorm{f}_{R_1, \ldots, R_s;\mu_x}^{2^s}\; d\mu(x).
\end{align}

We now record several other standard properties of box seminorms freely used throughout the paper. Their proofs can be found e.g. in \cite{DKKST24, FrKu22b, H09}. In what follows, we take $R_1, \ldots, R_{s}\in\angle{T_1, \ldots, T_\ell}$ and $f\in L^\infty(\mu)$.
\begin{enumerate}
    \item (Permutation invariance) For
    any permutation $\sigma:[s]\to[s]$,  let
    \begin{align*}
        \nnorm{f}_{R_1, \ldots, R_s} = \nnorm{f}_{R_{\sigma(1)},\ldots, R_{\sigma(s)}}.
    \end{align*}
    \item (Monotonicity) We have
    \begin{align*}
    \nnorm{f}_{R_1}\leq \nnorm{f}_{R_1, R_2}\leq \cdots\leq    \nnorm{f}_{R_1, \ldots, R_s}.
    \end{align*}
    \item (Inductive formula) For any $1\leq s'\leq s$, let 
    \begin{align*}
        \nnorm{f}_{R_1, \ldots, R_s}^{2^s} = \lim_{H_s\to\infty}\E_{h_s\in[H_s]}\cdots \lim_{H_{s'+1}\to\infty}\E_{h_{s'+1}\in[H_{s'+1}]}\nnorm{\Delta_{R_{s'+1}, \ldots, R_s; h'}f}_{R_1, \ldots, R_{s'}}^{2^{s'}},
    \end{align*}
    where $h' = (h_{s'+1}, \ldots, h_s)$. Moreover, the iterated limit can be replaced by the single limit $\lim\limits_{H\to\infty}\E_{h'\in [H]^{s-s'}}$.
    \item (Gowers-Cauchy-Schwarz inequality) For any $(f_\ueps)_{\ueps\in\{0,1\}^s}\subseteq L^\infty(\mu)$, we have 
    \begin{align*}
        \Big|\lim_{H_s\to\infty}\E_{h_s\in[H_s]}\cdots \lim_{H_1\to\infty}\E_{h_1\in[H_1]}\int \prod_{\ueps\in\{0,1\}^s}\CC^{|\ueps|} R_1^{\eps_1}\cdots R_s^{\eps_s}f_\ueps\; d\mu\Big|\leq \prod_{\ueps\in\{0,1\}^s}\nnorm{f_\ueps}_{R_1, \ldots, R_s}.
    \end{align*}
    Once again, the iterated limit on the left-hand side can be replaced by the single limit $\lim\limits_{H\to\infty}\E_{h\in [H]^{s}}$.
    \item \label{I:scaling} (Scaling) For any nonzero $r_1, \ldots, r_s\in\Z$, let 
    \begin{align*}
        \nnorm{f}_{R_1, \ldots, R_s}\leq \nnorm{f}_{R_1^{r_1}, \ldots, R_s^{r_s}}.
    \end{align*}
    (We caution that $R_i^{r_i}$ denotes the $r_i$-th power of $R_i$, not $r_i$ copies of $R_i$, denoted by $R_i^{\times r_i}$.) Conversely, if $s\geq 2$, then
    \begin{align*}
        \nnorm{f}_{R_1^{r_1}, \ldots, R_s^{r_s}}\leq |r_1\cdots r_s|^{1/2^s}\nnorm{f}_{R_1, \ldots, R_s}.
    \end{align*}
\end{enumerate}

\subsection{Box factors and dual functions}
Let $(X, \CX, \mu, T_1, \ldots, T_\ell)$ be a system, $s\in\N$, and $R_1, \ldots, R_s\in\angle{T_1, \ldots, T_\ell}$. Set also $\{0,1\}^s_* = \{0,1\}^s\setminus\{{0}\}$. For $(f_\ueps)_{\ueps\in \{0,1\}^s_*}\subseteq L^\infty(\mu)$, we define the \textit{dual function of  $(f_\ueps)_{\ueps\in \{0,1\}^s_*}$ along $R_1, \ldots, R_s$} via
\begin{align*}
    \CD_{R_1, \ldots, R_s}((f_\ueps)_\ueps) := \lim_{H_s\to\infty}\E_{h_s\in[H_s]}\cdots \lim_{H_1\to\infty}\E_{h_1\in[H_1]} \prod_{\ueps\in\{0,1\}^s_*}\CC^{|\ueps|} R_1^{\eps_1}\cdots R_s^{\eps_s}f_\ueps.
\end{align*}
The limit exists in $L^2(\mu)$ (see e.g. \cite[Proposition 2.2]{TZ16}) and can be replaced by the single limit $\lim\limits_{H\to\infty}\E_{h\in [H]^{s}}$ thanks to \cite[Lemma 1.1]{BL15}. We call $s$ the \textit{degree} of the dual function. If $f_\ueps = f$ for all $\ueps\in\{0,1\}^s_*$, we simply denote
\begin{align*}
    \CD_{R_1, \ldots, R_s}(f) := \CD_{R_1, \ldots, R_s}((f_\ueps)_\ueps)
\end{align*}
and observe that
\begin{align}\label{E: dual identity}
        \nnorm{f}_{R_1, \ldots, R_s}^{2^s} = \int f \cdot \CD_{R_1, \ldots, R_s}(f)\, d\mu.
\end{align}

We then define
\begin{align*}
    Z(R_1, \ldots, R_s) := \overline{\Span_\C\{\CD_{R_1, \ldots, R_s}((f_\ueps)_\ueps)\colon  (f_\ueps)_{\ueps\in \{0,1\}^s_*}\subseteq L^\infty(\mu)\}}
\end{align*}
to be the closed, $T_1, \ldots, T_\ell$-invariant subspace of $L^2(\mu)$, generated by dual functions along $R_1, \ldots, R_s$. By \cite[Proposition 2.3]{TZ16}, $Z({R_1, \ldots, R_s})$ defines an  algebra (i.e., it is closed under multiplication), and hence (since it is also closed under  conjugation) there exists a factor $\CZ({R_1, \ldots, R_s})\subseteq \CX$ satisfying
\begin{align*}
    Z(R_1, \ldots, R_s) = L^\infty(X, \CZ({R_1, \ldots, R_s}), \mu).
\end{align*}
It is then easy to see that the factor $\CZ({R_1, \ldots, R_s})$ satisfies the well-known property
\begin{align}\label{E: factor property}
    \nnorm{f}_{R_1, \ldots, R_s} = 0 \quad \Longleftrightarrow\quad \E(f|\CZ(R_1, \ldots, R_s)) = 0;
\end{align}
one direction follows from \eqref{E: dual identity} while the other one is a consequence of the Gowers-Cauchy-Schwarz inequality.

When $R:=R_1 = \cdots = R_s$, we also denote
\begin{align*}
    Z_{s-1}(R):=Z(R_1, \ldots, R_s)\quad\textrm{and}\quad \CZ_{s-1}(R):=\CZ(R_1, \ldots, R_s)
\end{align*}
(or $Z_{s-1,\mu}(R), \CZ_{s-1,\mu}(R)$ if we want to emphasize the measure),
calling the latter the \textit{Host-Kra factor of degree $s-1$}.

Finally, we observe that
\begin{align*}
  Z(R) = I(R) = \{f\in L^\infty(\mu)\colon Rf = f\} \quad\textrm{and}\quad  \CZ(R) = \CI(R) = \{E\in \CX\colon  R\inv E = E\}
\end{align*}
are the algebra/$\sigma$-algebra of $R$-invariant functions/sets. Here and elsewhere, all the equalities are understood to hold up to sets of measure 0.

\subsection{The factor $\CZ_1(T)$ and nonergodic eigenfunctions}
Let $(X, \CX, \mu, T)$ be a (not necessarily ergodic) system. The factor $\CZ_1(T) = \CZ(T,T)$ admits a particularly nice structure that can be expressed in terms of the following notion.

\begin{definition}[Nonergodic eigenfunctions]
A function $\chi\in L^\infty(\mu)$ is called a \textit{nonergodic eigenfunction} of $T$ if it satisfies the following properties:
    \begin{enumerate}
        \item $T\chi = \lambda \chi$ for some $\lambda\in I(T)$ (which we call the \textit{nonergodic eigenvalue} of $f$);
        \item $|\chi(x)|\in\{0,1\}$ for $\mu$-a.e. $x\in X$, and $\lambda(x)=0$ whenever $\chi(x)=0$.
\end{enumerate}
We denote the set of nonergodic eigenfunctions of $T$ by $\CE(T)$.
\end{definition}

		 For ergodic systems, a nonergodic eigenfunction is either the zero function or a classical  unit modulus eigenfunction. For general systems, each function $\chi\in \CE(T)$ satisfies
		 \begin{equation}\label{E:nonergodiceigen}
		 	\chi(Tx)={\bf 1}_E(x) \, e(\phi(x))\, \chi(x)
		 \end{equation}
		  for some $T$-invariant set $E\in \CX$ and measurable $T$-invariant function $\phi \colon X\to \T$.

By \cite[Theorem 5.2]{FH18}, the factor $Z_1(T)$ is a closed linear span of nonergodic eigenfunctions.

Later in the paper, we will make use of the following lemma, which shows that if we slightly modify the conditions defining nonergodic eigenfunctions, we still get an element of $Z_1(T)$.
\begin{lemma}\label{L:NonErgodicEigen}
	Let $(X, \CX, \mu, T)$ be a system and $\chi\in L^\infty(\mu)$ be a function satisfying $|\chi|=1$ and $T\chi = \lambda \chi$ for some $\lambda\in I(T)$.
     Then $\chi\in Z_1(T)$.
\end{lemma}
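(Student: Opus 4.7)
The plan is to simply verify that the hypotheses of the lemma place $\chi$ directly into the class $\mathcal{E}(T)$ of nonergodic eigenfunctions of $T$, after which the result is immediate from the cited fact (\cite[Theorem~5.2]{FH18}) that $Z_1(T)$ is the closed linear span of $\mathcal{E}(T)$.

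Unpacking the definition of $\mathcal{E}(T)$, I need to check two conditions for the given $\chi$. Condition (i), that $T\chi = \lambda\chi$ with $\lambda \in I(T)$, is given by assumption. For condition (ii), the requirement $|\chi(x)| \in \{0,1\}$ for $\mu$-a.e.\ $x$ is satisfied trivially since the hypothesis $|\chi|=1$ means $\chi$ takes values in the unit circle (so never in $\{0\}$ except possibly on a null set). Consequently, the clause ``$\lambda(x) = 0$ whenever $\chi(x) = 0$'' is vacuous $\mu$-a.e. Thus $\chi \in \mathcal{E}(T)$, and invoking \cite[Theorem~5.2]{FH18} gives $\chi \in Z_1(T)$.

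For completeness, I would note in passing that the assumptions force $|\lambda|=1$ almost everywhere: since $T$ preserves $\mu$, the pointwise identity $|T\chi| = T|\chi| = 1$ combined with $|T\chi| = |\lambda|\cdot|\chi| = |\lambda|$ yields $|\lambda| = 1$ a.e.; this is consistent with $\lambda \in I(T) \subseteq L^\infty(\mu)$ but is not actually needed for the proof. There is no real obstacle here—the lemma is essentially a restatement of the definition in the special case where $\chi$ is unimodular, packaged for later convenient use.
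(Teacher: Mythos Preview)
Your proof is correct: you observe that the hypotheses $|\chi|=1$ and $T\chi=\lambda\chi$ with $\lambda\in I(T)$ make $\chi$ a nonergodic eigenfunction outright (the second clause of the definition being vacuous), and then invoke \cite[Theorem~5.2]{FH18}.

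The paper takes a different, more self-contained route: rather than appealing to the structure theorem for $Z_1(T)$, it verifies directly that $\chi$ is a dual function. Using $|\lambda|=1$ and $T^h\chi=\lambda^h\chi$, one checks the pointwise identity
\[
\chi = T^{h_1+h_2}\overline{\chi}\cdot T^{h_1}\chi\cdot T^{h_2}\chi \quad \text{for all } h_1,h_2\in\Z,
\]
and averaging over $h_1,h_2$ exhibits $\chi$ as $\CD_{T,T}((f_\ueps)_\ueps)$ for suitable $f_\ueps$, hence $\chi\in Z_1(T)$ by definition. Your approach is shorter but leans on the external citation; the paper's argument avoids this dependency and gives an explicit dual representation, which is in the spirit of how $Z_1(T)$ is defined in the article. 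Either is perfectly acceptable here.
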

\begin{proof}
    	Our assumptions give that $|\lambda|=1$ and
	$$
	\chi = T^{h_1+h_2}\overline{\chi}\cdot T^{h_1}\chi\cdot T^{h_2}\chi \quad \text{for all}\quad h_1,h_2\in \Z.
	$$
	 Averaging over $h_1,h_2\in \N$, we get that
	\begin{equation}\label{E:chi}
		\chi=\lim_{H\to\infty} \E_{h_1,h_2\in [H]}T^{h_1+h_2}\overline{\chi}\cdot T^{h_1}\chi\cdot T^{h_2}\chi	\in Z_1(T),
	\end{equation}
completing the proof. 
\end{proof}

\subsection{Nilsystems and the Host-Kra structure theorem}\label{SS:Nilstuff}
An {\em $s$-step  nilmanifold} is a compact homogeneous space $X=G/\Gamma$ where $G$ is an $s$-step nilpotent Lie group and $\Gamma$ is a discrete cocompact subgroup.  We denote by  $e_X$ the image  of the identity element  of $G$ in $X$. For $b\in G$, the transformation $T\colon X\to X$ defined by $Tx=bx$ is called an {\em $s$-step  nilrotation} on $X$ and the system $(X,m_X,T)$, where $m_X$ is the projection of the Haar measure of $G$ on $X$,  is called an {\em $s$-step nilsystem.}
If $X=G/\Gamma$ is a nilmanifold and $b\in G$, $x\in X$, then the closure $Y$ of the sequence $(b^n\cdot x)_n$ admits the structure of a nilmanifold, and the action  of $b$ on $Y$ is uniquely ergodic  (\cite[Chapter~11, Theorem~17]{HK18} or \cite[Section~2]{Lei05a}).  Following \cite{BHK05}, we say that $(\psi(n))_n$ is a {\em nilsequence} if it is a uniform limit of sequences of the form $(F(b^nx))_n$ for some  $b\in G$, $x\in X$, and $F\in C(X)$. 

\begin{definition}[Equidistribution on nilmanifolds]
	We say that a sequence $(x_n)_n$ is \textit{equidistributed} on a nilmanifold $X$ if
	$$
	\lim_{N\to\infty}\E_{n\in[N]} F(x_n)=\int F\, dm_X
	$$ for every $F\in C(X)$.
\end{definition}
Nilsystems and their equidistribution properties are relevant to our study
because of the following groundbreaking structural result from \cite{HK05a} (see also
\cite[Chapter~16, Theorem~2]{HK18}).
  \begin{theorem}[Host-Kra structure theorem]\label{T:HK}
 	Let $(X, \CX, \mu,T)$ be an ergodic system. Then for every $s\in \N$, the factor $\CZ_s(T)$ is an inverse limit of $s$-step ergodic nilsystems.
 \end{theorem}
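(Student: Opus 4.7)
The plan is to prove this by induction on $s$, with the inductive hypothesis packaging together the structural conclusion and a precise description of the cocycles governing the extension $\CZ_s(T)\to \CZ_{s-1}(T)$. For the base case $s=1$, one first checks that $Z_1(T)$ is spanned by eigenfunctions (this follows from the identity \eqref{E:chi} applied to eigenvalues on the maximal equicontinuous factor), so the associated factor coincides with the Kronecker factor; by the Halmos--von Neumann theorem this factor is measurably conjugate to a rotation on a compact abelian group, which is an inverse limit of finite-dimensional tori and hence of $1$-step nilsystems.

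For the inductive step, I would first build the cubic measures $\mu^{[s]}$ on $X^{2^s}$ recursively, as the relative independent product of $\mu^{[s-1]}$ with itself over the invariant $\sigma$-algebra of the $2^{s-1}$-fold product system. Standard identities show that $\mu^{[s]}$ is invariant under a large group of face transformations, and by \eqref{E: dual identity} the factor $\CZ_s(T)$ is characterised as the maximal factor for which a suitable ``diagonal'' of $\mu^{[s+1]}$ sees all its mass. A Mackey-type analysis of the fibers of the projection to $\CZ_{s-1}(T)$ then shows that $\CZ_s(T)$ is an isometric extension of $\CZ_{s-1}(T)$ by a compact abelian group $U$, given by a skew product with a measurable cocycle $\rho \colon \CZ_{s-1}(T)\to U$ of type $s$.

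The hardest step, and the main obstacle, is to show that $\rho$ satisfies a Conze--Lesigne type functional equation: for a.e.\ $t$ in $\CZ_{s-1}(T)$ (viewed as an inverse limit of nilmanifolds by the inductive hypothesis), there exist a measurable map $F_t\colon \CZ_{s-1}\to U$ and a cocycle $c_t$ of type $s-1$ such that
$$
\rho(tx)\cdot\overline{\rho(x)} = F_t(Tx)\cdot\overline{F_t(x)} \cdot c_t(x)\qquad \text{for $\mu$-a.e.\ } x.
$$
This equation encodes, in cohomological form, the additional symmetries of $\mu^{[s+1]}$ extracted in the previous step, and proving it requires a delicate functional-analytic argument controlling the failure of $\rho$ to be a coboundary by ``lower-order'' data coming from $\CZ_{s-1}(T)$. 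Once the Conze--Lesigne equation is in hand, the inductive description of $\CZ_{s-1}(T)$, combined with a lifting argument using the Mal'cev structure of nilpotent Lie groups and the unique ergodicity of nilrotations recalled in \cref{SS:Nilstuff}, allows one to realise $\CZ_s(T) \to \CZ_{s-1}(T)$ as an inverse limit of $s$-step nilsystem extensions, closing the induction.
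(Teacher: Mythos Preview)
The paper does not prove this theorem; it is quoted as a background result from the literature, with references to \cite{HK05a} and \cite[Chapter~16, Theorem~2]{HK18}, and the paper explicitly remarks that it does not even use \cref{T:HK} directly but only a consequence (\cite[Proposition 3.1]{CFH11}). So there is no ``paper's own proof'' to compare against.

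Your outline is a faithful high-level sketch of the original Host--Kra argument: induction on $s$, construction of the cubic measures $\mu^{[s]}$, the Mackey analysis showing $\CZ_s(T)\to\CZ_{s-1}(T)$ is an abelian isometric extension, the Conze--Lesigne type functional equation for the cocycle, and the final lifting to a nilpotent structure. Each of these steps is genuinely substantial---the Conze--Lesigne step in particular occupies the bulk of the original paper and of the relevant chapters in \cite{HK18}---so what you have written is a roadmap rather than a proof, but it is the correct roadmap. One minor point: the appeal to \eqref{E:chi} for the base case is a bit circular in spirit, since that identity in the paper is used to show certain functions lie in $Z_1(T)$, not to establish the Kronecker structure of $\CZ_1(T)$ itself; for the latter one typically argues directly from the definition of $\nnorm{\cdot}_{2,T}$ and the spectral theorem.
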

We will not use \cref{T:HK} explicitly in this article;
rather, we rely on a consequence, namely the decomposition
result in \cite[Proposition 3.1]{CFH11}, which is used
crucially in the proof of \cref{P: base case}.

\subsection{Hardy fields}\label{SS: Hardy}
Let $B$ be the collection of equivalence classes (``germs'') of real valued functions defined on some halfline $(t_0,\infty)$ for $t_0\geq 0$, where we identify two functions that eventually agree.  A \emph{Hardy field} is a subfield $\CH$ of the ring $(B, +, \cdot)$ that is closed under differentiation~\cite{Hardy12}. We call $a\colon (t_0,\infty)\to\R$ a \textit{Hardy-field function} if it belongs to some Hardy field $\CH$. 
 By abuse of notation, we speak of ``functions'' rather than ``germs'', understanding that all the operations defined and statements made for elements of $\CH$ are considered only for sufficiently large values of $t\in \mathbb{R}$. 
  We say that $a\in\CH$ has \emph{polynomial growth} if there exists $d>0$ such that $a(t)\ll t^d$.   
 Basic properties of Hardy-field functions can be found in \cite{Bos94, Fr10}.

A typical example of a Hardy field is the Hardy field $\mathcal{LE}$ of \textit{logarithmico-exponential functions}, i.e., functions constructed using a finite combination of symbols $+, -, \times, \div$, $\exp,$ $\log$ acting on the real variable $t$ and on real constants. These include, for example,  the functions $t^b (\log{t})^c$, where $b,c\in \R$.
{\em In this article, we focus on Hardy-field functions in $\mathcal{LE}$.} Our results also apply beyond
this setting, to more general Hardy-fields, provided they satisfy the mild assumptions in \cite[Section 2.2]{DKKST24}.
{In our statements, we also assume that the Hardy-field functions are defined on all of $\R_+$, but all arguments remain valid without modification if they are defined only on a halfline.}

To prove \cref{T:Main1}, we will need the following results on the equidistribution of Hardy-field functions on nilmanifolds and seminorm estimates for multiple ergodic averages along Hardy-field functions.
\begin{theorem}[{\cite[Theorem 1.2]{Fr09}}]\label{T: Hardy equidistribute}
Let $a\colon \R_+\to\R$ be a Hardy-field function of polynomial growth satisfying \eqref{E: log away}. {Then for every nilmanifold $X=G/\Gamma$ and $b\in G$, $x\in X$, the sequence $(b^{[a(n)]}x)_n$  is equidistributed in $\overline{(b^nx)_n}$.}
\end{theorem}

\begin{theorem}[Box seminorm control for Hardy-field functions]\label{T: estimates for Hardy}
    Let $(X, \CX, \mu, T_1, \ldots, T_\ell)$ be a system, $m\in[\ell]$, and $a\colon \R_+\to\R$ be a Hardy-field function of polynomial growth satisfying
        \begin{align}\label{E: beats log}
        \lim_{t\to\infty}\abs{\frac{a(t)}{\log t}} = \infty.
    \end{align}
    Then there exists $s\in\N$ such that for all functions $f_1, \ldots, f_\ell\in L^\infty(\mu)$ with $f_{m+1}\in\CE(T_{m+1})$,  $\ldots, f_\ell\in\CE(T_\ell)$, we have
    \begin{align*}
        \lim_{N\to\infty}\norm{\E_{n\in[N]}T_1^{[a(n)]}f_1\cdots T_\ell^{[a(n)]}f_\ell}_{L^2(\mu)} = 0
    \end{align*}
    whenever $\nnorm{f_m}_{(T_mT_1\inv)^{\times s}, \ldots (T_mT_{m-1}\inv)^{\times s}, T_m^{\times s}} = 0$.
\end{theorem}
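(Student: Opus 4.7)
The plan is to prove this bound via an iterated van der Corput argument combined with the nonergodic eigenfunction hypothesis on $f_{m+1},\ldots,f_\ell$. First, for each $j>m$, since $f_j\in\CE(T_j)$ has nonergodic eigenvalue $\lambda_j\in I(T_j)$ with $|\lambda_j|\in\{0,1\}$, we have $T_j^k f_j=\lambda_j^k f_j$ for every $k\in\Z$. Setting $\Lambda:=\prod_{j>m}\lambda_j$, $F:=\prod_{j>m}f_j$, and introducing the auxiliary transformations $S_j:=T_j T_m^{-1}$ (so that $S_m=\id$), one obtains the factorization
\begin{equation*}
    \prod_{j=1}^\ell T_j^{[a(n)]}f_j \;=\; F\cdot \Lambda^{[a(n)]}\cdot T_m^{[a(n)]}\Bigl(f_m\prod_{j<m}S_j^{[a(n)]}f_j\Bigr).
\end{equation*}
Since $\norm{F}_\infty$ is finite, it suffices to control the $L^2$-norm of the reduced average $\E_{n\in[N]}\Lambda^{[a(n)]}T_m^{[a(n)]}\bigl(f_m\prod_{j<m}S_j^{[a(n)]}f_j\bigr)$.

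Next, I would apply van der Corput in $n$ iteratively. A single step, after using the identity $T_m^{c_h(n)}S_j^{[a(n+h)]}=S_j^{[a(n)]}T_j^{c_h(n)}$ (which follows from $T_mS_j=T_j$) together with the $T_m$-invariance of $\mu$, transforms the squared norm into an $h$-averaged integral whose integrand equals
\begin{equation*}
T_m^{-[a(n)]}(\Lambda^{c_h(n)})\cdot \Delta_{T_m^{c_h(n)}}\bar f_m\cdot \prod_{j<m}S_j^{[a(n)]}\Delta_{T_j^{c_h(n)}}\bar f_j,
\end{equation*}
where $c_h(n):=[a(n+h)]-[a(n)]$. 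Iterating this step $s$ times and applying Gowers--Cauchy--Schwarz yields, after invoking the scaling property to replace each $S_j=T_jT_m^{-1}$ by $S_j^{-1}=T_mT_j^{-1}$ (which leaves the box seminorm unchanged), a bound of the form $C\cdot\nnorm{f_m}_{(T_mT_1^{-1})^{\times s},\ldots,(T_mT_{m-1}^{-1})^{\times s},T_m^{\times s}}$. Here the $T_mT_j^{-1}$ factors for $j<m$ come from the $S_j^{-1}$ shifts, while the $T_m$ factors come from the differences $T_m^{c_{h_i}(n)}$ appearing in the iterated multiplicative derivative on $f_m$.

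The main obstacle is managing the multiplicative weights $T_m^{-[a(n)]}(\Lambda^{c_h(n)})$, and their analogues accumulating through further van der Corput steps: although $\Lambda$ is $T_j$-invariant for each $j>m$ and has unit modulus on the support of $F$, it is generally not $T_m$-invariant, so these weights depend nontrivially on both $n$ and $x$. The key observation is that they have absolute value bounded by $1$, so the Gowers--Cauchy--Schwarz inequality absorbs them as bounded multiplicative factors independent of $f_m$. A secondary subtlety is that the hypothesis \eqref{E: beats log} is strictly weaker than the equidistribution condition \eqref{E: log away} used in \cref{T: Hardy equidistribute}, so the argument cannot rely on nilmanifold equidistribution of $a(n)$; instead, one uses the elementary Hardy-field fact that $c_h(n)\to\infty$ in $n$ for each fixed nonzero $h$ whenever $a\gg\log$, which suffices for the van der Corput averaging to generate the requisite seminorm control.
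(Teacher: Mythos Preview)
Your sketch has a genuine structural gap. After one van der Corput step you correctly arrive at an integrand of the form
\[
T_m^{-[a(n)]}(\Lambda^{c_h(n)})\cdot \Delta_{T_m^{c_h(n)}}\bar f_m\cdot \prod_{j<m}S_j^{[a(n)]}\Delta_{T_j^{c_h(n)}}\bar f_j,
\]
but observe that every derivative landing on $f_m$ is along a power of $T_m$, while the shifts $S_j^{[a(n)]}$ act on the $f_j$'s for $j<m$, not on $f_m$. Iterating van der Corput in $n$ only piles further $T_m$-derivatives onto $f_m$; it never produces the $T_mT_j^{-1}$-derivatives your target seminorm requires. A terminal Gowers--Cauchy--Schwarz can transfer at most one copy of each $S_j$ onto $f_m$, not $s$ copies. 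Moreover, once the first step removes the global $T_m^{[a(n)]}$, the residual $n$-dependence lives in $c_h(n)$ and in $S_j^{[a(n)]}$, so ``iterating this step'' no longer has the same shape: you now face a different average whose control is exactly the PET-type problem for Hardy sequences you were hoping to bypass. The accumulated weights $T_m^{-[a(n)]}(\Lambda^{c_h(n)})$, which depend jointly on $n$ and on $x$ and are not $T_m$-invariant, compound the difficulty rather than being absorbed by Gowers--Cauchy--Schwarz.

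The claim that ``$c_h(n)\to\infty$ suffices'' is also too optimistic: even for $a(n)=n^2$ the link between iterated differencing and the correct box seminorm goes through a PET induction, and for general Hardy functions of polynomial growth this is substantially harder and is the content of \cite[Theorem~10.2 and Proposition~6.3]{DKKST24}. The paper does not attempt a direct proof here. It invokes those results, which give the seminorm control when $f_{m+1},\ldots,f_\ell$ are bounded-degree \emph{dual} functions, and then reduces from nonergodic eigenfunctions to dual functions via $L^2$-approximation, using that $\CE(T_j)\subseteq Z_1(T_j)$ and that $Z_1(T_j)$ is spanned by degree-$2$ duals. Your factorization through the eigenvalues $\lambda_j$ is a reasonable opening move, but what remains after it is essentially the full strength of the cited theorems.
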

We will use Theorem \ref{T: estimates for Hardy} for $(m,\ell) = (2,2)$ and $(m,\ell) = (1,2)$. Theorem \ref{T: estimates for Hardy} follows from \cite[Theorem 10.2]{DKKST24} when $a(t)\gg t^\delta$ for some $\delta>0$ and from \cite[Proposition 6.3]{DKKST24} whenever $a(t) \ll t^\delta$ for all $\delta>0$. When $a\in\R[t]$, the derivation of Theorem \ref{T: estimates for Hardy} from \cite[Theorem 10.2]{DKKST24} implicitly uses the scaling property of box seminorms from \cref{SS:boxseminorms}.
Both aforementioned results cover multiple ergodic averages along Hardy sequences in which $f_{m+1}, \ldots, f_\ell$ are bounded-degree dual functions of the respective transformations rather than nonergodic eigenfunctions. Since degree-2 dual functions of a transformation $T_j$ span $Z_1(T_j)$, hence $\CE(T_j)$, we can derive Theorem \ref{T: estimates for Hardy} from the cited results by approximating nonergodic eigenfunctions in $L^2(\mu)$ by degree-2 dual functions.

 In the simpler case of $m=\ell$, Theorem \ref{T: estimates for Hardy} could also be derived from (much more general) \cite[Theorem 10.1]{DKKST24}, and the two special cases when $a$ is (i) an integer polynomial, and (ii) stays  logarithmically away from real polynomials (this includes fractional powers) had previously been established in \cite[Theorem 2.5]{DFKS24} and \cite[Theorem 5.1]{DKS23} respectively.

\subsection{Magic extensions} We will later need to consider structured extensions of a system
on which certain inverse theorems take a particularly simple form.
One such extension is a slight variant of the magic
systems and extensions defined by Host \cite{H09}.
\begin{definition}[Magic systems and extensions]
	We call a system $(X, \CX, \mu, T_1, \ldots, T_\ell)$ \textit{magic} with respect to $R_1, \ldots, R_s\in\angle{T_1, \ldots, T_\ell}$ if
	\begin{align}\label{E: magic property}
		\CZ(R_1, \ldots, R_s) = \CI(R_1)\vee\cdots\vee\CI(R_s).
	\end{align}
	
	Suppose that the transformations $R_1, \ldots, R_s$ are given by $$R_i := T^{b_i} = T_1^{b_{i1}}\cdots T_\ell^{b_{i\ell}}$$ for some $b_i=(b_{i1}, \ldots, b_{i\ell})\in\Z^\ell$. We call an extension $(X^*, \CX^*, \mu^*, T_1^*, \ldots, T_\ell^*)$  of $(X, \CX, \mu, T_1, \ldots, T_\ell)$ \textit{magic} with respect to $R_1, \ldots, R_s$ if  $(X^*, \CX^*, \mu^*, T_1^*, \ldots, T_\ell^*)$ is a magic system with respect to the transformations $R_1^*, \ldots, R_s^*\in \angle{T_1^*, \ldots, T_\ell^*}$ given by $R_i^* = (T^*)^{b_i}$.
\end{definition}
Our definition differs from Host's in that we want the magic property \eqref{E: magic property} to hold for a fixed subset of $\angle{T_1, \ldots, T_\ell}$ rather than for $T_1, \ldots, T_\ell$ themselves. The next result concerns the existence of magic extensions under certain conditions on $R_1, \ldots, R_s$ that always arise in our applications.
\begin{proposition}[Existence of magic extensions]\label{P: magic extensions exist}
	Let $(X, \CX, \mu, T_1, \ldots, T_\ell)$ be a system, and suppose that $R_1, \ldots, R_s\in\angle{T_1, \ldots, T_\ell}$ generate $\angle{T_1, \ldots, T_s}$ for some $s\in[\ell]$.
	Then $(X, \CX, \mu, T_1, \ldots, T_\ell)$ admits a magic extension $(X^*, \CX^*, \mu^*, T_1^*, \ldots, T_\ell^*)$ with respect to $R_1, \ldots, R_s$.
\end{proposition}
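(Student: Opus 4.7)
The plan is to adapt Host's magic extension construction \cite{H09} to our setting, where we wish the magic property to hold for a specified subset $R_1, \ldots, R_s$ of commuting transformations rather than for the original generators of the ambient abelian group $\angle{T_1, \ldots, T_\ell}$. The strategy has two parts: build a magic extension with respect to $R_1, \ldots, R_s$ viewed as a standalone $s$-tuple of commuting measure-preserving transformations on $X$, and then promote this extension so it carries the full $\angle{T_1, \ldots, T_\ell}$-action compatibly.

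For the first part, I would apply Host's original construction to the $s$-tuple $(R_1, \ldots, R_s)$ acting on $(X, \CX, \mu)$, which is legitimate since the $R_i$ pairwise commute. This yields an extension $\pi \colon (X^*, \CX^*, \mu^*) \to (X, \CX, \mu)$ together with commuting measure-preserving lifts $R_1^*, \ldots, R_s^*$ of $R_1, \ldots, R_s$ satisfying
\begin{align*}
\CZ(R_1^*, \ldots, R_s^*) = \CI(R_1^*) \vee \cdots \vee \CI(R_s^*).
\end{align*}

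For the second part, I would lift each $T_j$ to a measure-preserving transformation $T_j^*$ on $X^*$. Host's magic extension is canonical: it is constructed as an inverse limit of iterated relatively independent self-joinings of $X$ over the invariant $\sigma$-algebras $\CI(R_i)$. Any measure-preserving transformation of $X$ that commutes with all of $R_1, \ldots, R_s$ preserves each $\CI(R_i)$ and hence preserves each joining in the tower, lifting uniquely to an automorphism of $X^*$ that commutes with $R_1^*, \ldots, R_s^*$. Because the group $\angle{T_1, \ldots, T_\ell}$ is abelian, this applies to each $T_j$, producing commuting lifts $T_1^*, \ldots, T_\ell^*$. Uniqueness of the lifts forces the $T_j^*$ to satisfy the same group relations as the $T_j$; in particular, the identities $R_i = T^{b_i}$ on the base pass to $R_i^* = (T^*)^{b_i}$ on the extension, as the definition of a magic extension requires. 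The magic property established in the first step is unaffected by recording the additional transformations $T_{s+1}^*, \ldots, T_\ell^*$ in the signature.

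The main obstacle is to justify carefully the functorial lifting of commuting transformations through the inverse-limit construction. One route is to quote an explicit statement of this kind from \cite{H09}, where the extension is already built via canonical joinings and is therefore functorial with respect to morphisms of $\Z$-systems commuting with the generators. A self-contained route is to re-execute the iterated-joining construction from the outset within the category of $\angle{T_1, \ldots, T_\ell}$-systems: at each step, the relatively independent joining over $\CI(R_i)$ is automatically invariant under every element of $\angle{T_1, \ldots, T_\ell}$, since every such element commutes with $R_i$ and hence preserves $\CI(R_i)$. The hypothesis that $R_1, \ldots, R_s$ generate $\angle{T_1, \ldots, T_s}$ enters mildly here: it guarantees that $T_1, \ldots, T_s$ are themselves expressible as words in $R_1, \ldots, R_s$, so that lifting the $R_i$ (rather than the $T_i$) through the inverse limit produces an action of the full group $\angle{T_1, \ldots, T_\ell}$ on $X^*$ compatible with the prescribed expressions $R_i^* = (T^*)^{b_i}$.
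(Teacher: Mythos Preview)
Your overall strategy matches the paper's: apply Host's construction to the $s$-tuple $(R_1,\ldots,R_s)$, then lift the full $\angle{T_1,\ldots,T_\ell}$-action. But your primary route contains a genuine gap, and your description of Host's extension is inaccurate in a way that matters.

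Host's magic extension in \cite{H09} is not an inverse limit of iterated relatively independent joinings; it is the single space $X^*=X^{2^s}$ with the cubic measure $\mu^*$, where $R_i^*$ acts as $R_i$ on coordinates $\ueps$ with $\eps_i=0$ and as the identity elsewhere, and the factor map is projection onto the $\underline{0}$-coordinate. With this in hand, your uniqueness claim is false: both the ``side'' transformation $R_i^*$ and the diagonal $R_i\times\cdots\times R_i$ preserve $\mu^*$, project to $R_i$, and commute with every $R_j^*$, yet they are distinct. So you cannot deduce $R_i^*=(T^*)^{b_i}$ from uniqueness of lifts; if you lifted every $T_j$ diagonally, you would get $(T^*)^{b_i}=R_i\times\cdots\times R_i\neq R_i^*$, and the extension would fail the definition.

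The paper resolves this by treating the two ranges of $j$ differently, which your last paragraph gestures at but does not pin down. For $j\in[s]$, the generation hypothesis gives $T_j=R_1^{c_{j1}}\cdots R_s^{c_{js}}$, and one \emph{defines} $T_j^*:=(R_1^*)^{c_{j1}}\cdots(R_s^*)^{c_{js}}$; this forces $R_i^*=(T^*)^{b_i}$ by construction. For $j\in[s+1,\ell]$ one takes the diagonal lift $T_j^*:=T_j\times\cdots\times T_j$ and verifies, using the inductive cubic structure of $\mu^*$ together with the $T_j$-invariance of $\mu$, that this preserves $\mu^*$. That last verification is the only remaining nontrivial point, and your proposal does not address it.
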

\begin{proof}
	Consider the system $(X, \CX, \mu, R_1, \ldots, R_s)$. By \cite[Theorem~2]{H09}, it admits an extension $(X^*, \CX^*, \mu^*, R_1^*, \ldots, R_s^*)$ satisfying
	\begin{align*}
		\CZ(R^*_1, \ldots, R^*_s) = \CI(R_1^*)\vee\cdots\vee\CI(R_s^*),
	\end{align*}
	constructed as follows: $X^* := X^{2^s}$, $\CX^* := \CX^{\otimes 2^s}$, $\mu$ is the $s$-dimensional cubic measure induced by $R_1, \ldots, R_s$ (see \cite[Section 2]{H09} for the details), and the transformations $R_1^*, \ldots, R_s^*$ are given by
	\begin{align*}
		R_i^*((x_\ueps)_\ueps) := \begin{cases}
			R_i x_\ueps,\; &\eps_i = 0\\
			x_\ueps,\; &\eps_i = 1.
		\end{cases}
	\end{align*}
	Then the factor map is given by the projection onto the coordinate indexed by $0$.
	
	It remains to define $T_1^*, \ldots, T_\ell^*$ so that $(X^*, \CX^*, \mu^*, T_1^*, \ldots, T_\ell^*)$ is also an extension of $(X, \CX, \mu, T_1, \ldots, T_\ell)$ with respect to the same factor map. By assumption,
	for every $j\in[s]$, we can express $T_j$ uniquely as
	\begin{align*}
		T_j = R_1^{c_{j1}}\cdots R_s^{c_{js}}
	\end{align*}
	for some $c_{ji}\in\Z$. Then we simply define
	\begin{align*}
		T_j^* := (R_1^*)^{c_{j1}}\cdots(R_s^*)^{c_{js}};
	\end{align*}
	in particular, the invariance of $\mu^*$ with regards to $R_1^*, \ldots, R_s^*$ immediately gives the invariance with respect to $T_1^*, \ldots, T_s^*$.
	For $j\in[s+1,\ell]$, we lift $T_j$ to $T^*_j := \underbrace{T_j \times \cdots \times T_j}_{2^s}$; its invariance can then be deduced inductively from the cubic structure of the measure $\mu^*$ and the $T_j$-invariance of $\mu$.
\end{proof}

\begin{proposition}[Soft inverse theorem for magic systems]\label{P: inverse theorem magic}
	Let $(X, \CX, \mu, T_1, \ldots, T_\ell)$ be a magic system with respect to $R_1, \ldots, R_s\in\angle{T_1, \ldots, T_\ell}$. Then for any $\veps>0$ there exists $\delta>0$ (depending on $\veps$, the system, and $R_1, \ldots, R_s$) such that if a 1-bounded function $f\in L^\infty(\mu)$ satisfies
	$$
	\nnorm{f}_{R_1, \ldots, R_s}\geq \veps,
	$$ then there exist 1-bounded functions $g_1\in I(R_1), \ldots, g_s\in I(R_s)$ for which the integral below is real and
	\begin{align*}
		\int f\cdot g_1\cdots g_s\; d\mu \geq \delta.
	\end{align*}
\end{proposition}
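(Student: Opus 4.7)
The plan is to exploit the defining identity $\nnorm{f}_{R_1,\ldots,R_s}^{2^s}=\int f\cdot \CD_{R_1,\ldots,R_s}(f)\,d\mu$ from \eqref{E: dual identity} and then leverage the magic property to approximate the dual function by a finite linear combination of products of $R_i$-invariant functions. To start, set $F:=\CD_{R_1,\ldots,R_s}(f)$, which lies in $Z(R_1,\ldots,R_s)=L^\infty(X,\CZ(R_1,\ldots,R_s),\mu)$; since $f$ is $1$-bounded, so is $F$ (it is a limit of averages of products of $2^s-1$ shifts of $f$), and \eqref{E: dual identity} supplies the nonnegative real lower bound $\int f\cdot F\,d\mu\geq \varepsilon^{2^s}$.

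The crucial step is the approximation of $F$. Because the system is magic, $F$ is measurable with respect to $\CI(R_1)\vee\cdots\vee\CI(R_s)$. This $\sigma$-algebra is generated by the $\pi$-system of intersections $A_1\cap\cdots\cap A_s$ with $A_i\in\CI(R_i)$, and each such indicator factors as $\mathbf{1}_{A_1}\cdots\mathbf{1}_{A_s}$, a product of $1$-bounded functions in $I(R_1),\ldots,I(R_s)$. A standard monotone-class argument then shows that the algebra $\mathcal{A}$ of finite linear combinations of products $g_1\cdots g_s$ with $g_i\in I(R_i)$ and $\|g_i\|_\infty\leq 1$ is dense in $L^1(X,\CI(R_1)\vee\cdots\vee\CI(R_s),\mu)$. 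Setting $\eta:=\varepsilon^{2^s}/2$, one picks $h=\sum_{j=1}^J c_j g_{1,j}\cdots g_{s,j}\in \mathcal{A}$ with $\|F-h\|_{L^1(\mu)}\leq \eta$; the integer $J$ and the complex coefficients $c_j$ depend only on $\varepsilon$, the system, and the choice of $R_1,\ldots,R_s$.

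Combining $\bigl|\int f\cdot(F-h)\,d\mu\bigr|\leq\eta$ with the lower bound on $\int f\cdot F\,d\mu$ and taking real parts gives
$$
\Re\sum_{j=1}^J c_j\int f\cdot g_{1,j}\cdots g_{s,j}\,d\mu\;\geq\;\varepsilon^{2^s}/2,
$$
so by pigeonhole some index $j_0$ satisfies $\bigl|\int f\cdot g_{1,j_0}\cdots g_{s,j_0}\,d\mu\bigr|\geq \varepsilon^{2^s}/(2J\max_j|c_j|)=:\delta$. To make the integral real and nonnegative, I would finally replace $g_{1,j_0}$ by $\lambda g_{1,j_0}$ for a suitable unit scalar $\lambda\in\S^1$ that rotates the integral onto the positive real axis; the modified function still lies in $I(R_1)$ and remains $1$-bounded, so the functions $g_1:=\lambda g_{1,j_0}$ and $g_i:=g_{i,j_0}$ for $i\geq 2$ fulfil the conclusion. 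The only nontrivial ingredient is the density statement, which is classical once one recognises that intersections of invariant sets form a $\pi$-system generating the join $\sigma$-algebra; everything else reduces to a clean pigeonhole followed by a phase adjustment.
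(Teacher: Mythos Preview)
Your argument has a genuine gap at the uniformity step. When you approximate $F=\CD_{R_1,\ldots,R_s}(f)$ in $L^1(\mu)$ by an element $h=\sum_{j=1}^J c_j\, g_{1,j}\cdots g_{s,j}$ of the algebra $\mathcal A$, the complexity data $(J,\max_j|c_j|)$ depend on $F$, and $F$ depends on $f$. Your sentence ``the integer $J$ and the complex coefficients $c_j$ depend only on $\varepsilon$, the system, and the choice of $R_1,\ldots,R_s$'' is asserted but not justified, and in fact it is exactly the nontrivial content of the proposition. Density of $\mathcal A$ in $L^1(X,\CI(R_1)\vee\cdots\vee\CI(R_s),\mu)$ gives for each individual $F$ an approximant of some finite complexity, but there is no reason this complexity is bounded uniformly over the unit ball of $L^\infty(\mu)$: the factor $\CI(R_1)\vee\cdots\vee\CI(R_s)$ is typically infinite-dimensional, and $1$-bounded functions measurable with respect to it need not be $\eta$-approximable by combinations of products with a fixed number of terms and bounded coefficients. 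The $\delta$ you produce is therefore $\varepsilon^{2^s}/(2J\max_j|c_j|)$ with $J$ and $\max_j|c_j|$ secretly depending on $f$, which defeats the conclusion.

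The paper avoids this by working on the dual side. One introduces the seminorm $\nnorm{f}^*:=\sup\bigl|\int f\, g_1\cdots g_s\,d\mu\bigr|$ over $1$-bounded $g_i\in I(R_i)$, observes the purely qualitative implication $\nnorm{f_0}^*=0\Rightarrow\nnorm{f_0}_{R_1,\ldots,R_s}=0\Rightarrow A((f_\ueps)_\ueps)=0$ for the $2^s$-linear cube form $A$ (here using the magic property and the Gowers--Cauchy--Schwarz inequality), and then invokes a general compactness-type principle \cite[Proposition~A.2]{FrKu22a} that upgrades qualitative control of a bounded multilinear form by a seminorm to soft quantitative control. That black box is precisely what supplies the missing uniformity in $f$; if you wish to repair your direct approach, you would effectively have to reprove a version of that proposition.
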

\begin{proof}
	Define the seminorm on $L^\infty(\mu)$ by
	$$
	\nnorm{f}^*:=\sup\rem{\abs{\int f\cdot g_1\cdots g_s\; d\mu}\colon g_1\in I(R_1), \ldots, g_s\in I(R_s)\textrm{ all } 1\textrm{-bounded}}
$$
and the multilinear functional $A\colon (L^{2^s}(\mu))^{2^s}\to \C$ by
\begin{align*}
	A((f_\ueps)_\ueps):= \lim_{H\to\infty}\E_{h\in[H]^s}\int \prod_{\ueps\in\{0,1\}^s}\CC^{|\ueps|} R_1^{\eps_1}\cdots R_s^{\eps_s}f_\ueps\; d\mu.
\end{align*}
Note that the limit is known to exist by combining results from \cite{BL15, H09}.

Crucially, \eqref{E: factor property} and our assumption gives us \textit{qualitative control} of $A$ by the norm $\nnorm{\cdot}^*$ at the index 0: if
$\nnorm{f_0}^*=0$, then $\nnorm{f_0}_{R_1, \ldots, R_s}=0$, and hence also  $A((f_\ueps)_\ueps)=0$ by the  Gowers-Cauchy-Schwarz inequality. To get the claimed \textit{soft quantitative control}, we then apply \cite[Proposition~A.2]{FrKu22a} with $X_j:=L^{2^s}(\mu)$, $X_j':=L^\infty(\mu)$, $\norm{\cdot}_{X_j'}:=\norm{\cdot}_{L^\infty(\mu)}$, $\norm{\cdot}_{X_j}:=\norm{\cdot}_{L^{2^s}(\mu)}$ for $j\in[2^s]$ and the seminorm $\nnorm{\cdot}:= \nnorm{\cdot}^*$. For every $\veps>0$ it gives us $\delta>0$ (depending only on $\veps$, the system and $R_1, \ldots, R_s$) such that if $f_\ueps$ is 1-bounded for every $\ueps\in\{0,1\}^s$, then $\nnorm{f_0}^*\leq \delta$ implies $|A((f_\ueps)_\ueps)|\leq \veps^{2^s}$. Applying this result with $f_\ueps := f$ for every $\ueps\in\{0,1\}^s$, we deduce that $\nnorm{f}_{R_1, \ldots, R_s}\leq \veps$. The claim then follows by contrapositive and multiplying $g_1$ with an appropriate complex number on the unit circle.
\end{proof}

\section{General framework, proof strategy, and obstructions}
To prove \cref{T:Main1}, we will work in a more general setting that covers sequences satisfying the following two properties.
\begin{definition}\label{D: good properties}
	Let $a\colon \N\to \Z$ be a sequence.
    We say that
	\begin{enumerate}
		\item    \label{I:gp1i}  $(a(n))_n$ \textit{admits box seminorm control} if there exists $s\in\N$ such that for every system $(X, \CX, \mu, T_1, T_2)$ and all functions $f_0, f_1, f_2\in L^\infty(\mu)$, we have
		\begin{equation}\label{E:vanish}
            \lim_{N\to\infty}\E_{n\in[N]}\int \, f_0\cdot T_1^{a(n)}f_1\cdot T_2^{a(n)}f_2\; d\mu = 0
		\end{equation}
		whenever $\nnorm{f_2}_{(T_2T_1\inv)^{\times s}, T_2^{\times s}} = 0$, and if additionally $f_2\in\CE(T_2)$, then \eqref{E:vanish} also holds whenever $\nnorm{f_1}_{T_1^{\times s}} = 0$;
		\item $(a(n))_n$    \textit{equidistributes on nilsystems} if for every nilsystem $(X, \CX, \mu, T)$ and every $x\in X$, the sequence $(T^{a(n)}x)_n$ is equidistributed in $\overline{(T^n x)_n}$.
	\end{enumerate}
\end{definition}
Examples of sequences that satisfy these properties are the fractional powers, i.e., $a(n) = [n^c]$ with $c\in \R_+\setminus \Z$; see \cref{SS: Hardy} for a detailed explanation.

Unfortunately, polynomial sequences fail to equidistribute on nilsystems; to prove Theorem~\ref{T:Main2}, we therefore proceed to
work with sequences that satisfy the following variant of the preceding properties.
\begin{definition}\label{D: good properties 2}
	Let $a\colon \N\to \Z$ be a sequence and $n_k\in \N_0$ for $k\in \N$.     We say that
	\begin{enumerate}
		\item   \label{I:gp2i} $(a(k!n+n_k))_{n,k}$ \textit{admits box seminorm control} if there exists $s\in\N$ such that for every system $(X, \CX, \mu, T_1, T_2)$ and all functions $f_0, f_1, f_2\in L^\infty(\mu)$, we have
		\begin{equation}\label{E:vanish'}
			\lim_{k\to\infty}\limsup_{N\to\infty}\abs{\E_{n\in[N]}\int f_0\cdot T_1^{a(k!n+n_k)}f_1\cdot T_2^{a(k!n+n_k)}f_2\; d\mu} = 0
		\end{equation}
		whenever $\nnorm{f_2}_{(T_2T_1\inv)^{\times s}, T_2^{\times s}} = 0$, and if additionally $f_2\in\CE(T_2)$, then \eqref{E:vanish'} also holds whenever $\nnorm{f_1}_{T_1^{\times s}} = 0$;
		\item \label{I:3.2ii}  $(a(k!n+n_k))_{n,k}$    \textit{equidistributes on nilsystems} if  for every nilsystem $(X, \CX, \mu, T)$, function $F\in C(X)$,  and every $x\in X$, we have
		\begin{equation}\label{E:nk}
			\lim_{k\to\infty}\lim_{N\to\infty} \E_{n\in [N]}F(T^{a(k!n+n_k)}x)=  \lim_{k\to\infty}\lim_{N\to\infty} \E_{n\in [N]}F(T^{k!n}x).
		\end{equation}
	\end{enumerate}
{Moreover, analogous definitions apply if we  replace the  Ces\`aro average over $n\in \N$ with an average along an arbitrary F\o lner  sequence in $\N$.}
\end{definition}
\begin{remarks}
    The limit on the right-hand side of \eqref{E:nk} can be shown to exist; for example, see the argument in the second part of Section~\ref{SS:pf1.2}.

    It suffices to verify \eqref{E:nk} when $x=e_X$. Indeed, if $X=G/\Gamma$, $Tx=bx$ for some $b\in G$, and   $x=g\cdot e_X$, note that $F(b^nx)=\widetilde F(c^n\cdot e_X)$ for every $n\in \N$, where $\widetilde F(x):=F(gx)$ is in $C(X)$ and  $c:=g^{-1}\, b\, g\in G$.
\end{remarks}
 We explain in  \cref{SS:pf1.2} that nonzero intersective polynomials satisfy these properties.

The next result will be used to prove
\cref{T:Main1}.
\begin{theorem}[Equality of limits I]\label{T: limiting formula}
	Let $a\colon \N\to\Z$ be a sequence that admits box seminorm control and equidistributes on nilsystems. Then for every system $(X, \CX, \mu, T_1, T_2)$ and all functions $f_0,f_1, f_2\in L^\infty(\mu)$, we have the identity
	\begin{align}\label{E: equality of limits}
		\lim_{N\to\infty}\E_{n\in[N]}\int f_0\cdot T_1^{a(n)}f_1\cdot T_2^{a(n)}f_2 \, d\mu = \lim_{N\to\infty}\E_{n\in[N]} \int f_0\cdot T_1^n f_1\cdot T_2^n f_2\, d\mu;
	\end{align}
    in particular, both limits above exist. {Moreover, an analogous result holds when, in both the hypothesis
    and the conclusion, we replace the Ces\`aro average over $n\in\N$
    by an average along an arbitrary F\o lner sequence in $\N$.}
\end{theorem}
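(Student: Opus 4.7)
The strategy is to reduce the three-term commuting average on the left-hand side of \eqref{E: equality of limits} to a single-transformation two-term average, and then to invoke the equidistribution on nilsystems. Let $s$ denote the constant supplied by the box seminorm control hypothesis and assume all $f_j$ are $1$-bounded. By that hypothesis, combined with its linear specialization $a(n)=n$ (whose characteristic factor $\CZ(T_2T_1^{-1},T_2)$ is classically contained in $\CZ((T_2T_1^{-1})^{\times s}, T_2^{\times s})$), both limits in \eqref{E: equality of limits} vanish when $\nnorm{f_2}_{(T_2T_1^{-1})^{\times s}, T_2^{\times s}} = 0$. It therefore suffices to analyze the averages when $f_2$ projects nontrivially onto $\CZ((T_2T_1^{-1})^{\times s}, T_2^{\times s})$.

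Next, I would pass to a magic extension $\pi \colon (X^*, \CX^*, \mu^*, T_1^*, T_2^*) \to (X, \CX, \mu, T_1, T_2)$ with respect to the pair $(T_2T_1^{-1}, T_2)$, which generates $\angle{T_1, T_2}$, via \cref{P: magic extensions exist}; on the extension, $\CZ(T_2^*(T_1^*)^{-1}, T_2^*) = \CI(T_2^*(T_1^*)^{-1}) \vee \CI(T_2^*)$. A degree-lowering argument---iterated applications of the soft inverse theorem (\cref{P: inverse theorem magic}) combined with Gowers--Cauchy--Schwarz and dual-function correlation manipulations, possibly together with the nonergodic-eigenfunction description of \cref{L:NonErgodicEigen}---replaces control by the high-degree seminorm $\nnorm{\cdot}_{(T_2T_1^{-1})^{\times s}, T_2^{\times s}}$ with control by the degree-$2$ seminorm $\nnorm{\cdot}_{T_2T_1^{-1}, T_2}$, modulo an arbitrarily small error. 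Via the soft inverse theorem I may then assume $\pi^* f_2$ is well-approximated by a finite sum of products $g_1 \cdot g_2$ with $g_1 \in I(T_2^*(T_1^*)^{-1})$ and $g_2 \in I(T_2^*)$. The key algebraic identity is that $T_2^* g_1 = T_1^* g_1$ and $T_2^* g_2 = g_2$, so that $(T_2^*)^{a(n)}(g_1 g_2) = (T_1^*)^{a(n)} g_1 \cdot g_2$, which collapses the three-term average into the single-transformation average
\[
\int (\pi^* f_0 \cdot g_2) \cdot (T_1^*)^{a(n)}(\pi^* f_1 \cdot g_1)\, d\mu^*.
\]

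For this reduced average I would decompose $\mu^*$ into $T_1^*$-ergodic components and invoke the Host--Kra structure theorem (\cref{T:HK}) to approximate $\pi^* f_1 \cdot g_1$, on each component, by continuous functions $F$ on a nilfactor of $T_1^*$. The equidistribution of $(a(n))_n$ on nilsystems then yields, pointwise $\mu^*$-a.e. and for any such $F$,
\[
\lim_{N\to\infty} \E_{n\in[N]} F((T_1^*)^{a(n)} x) = \lim_{N\to\infty} \E_{n\in[N]} F((T_1^*)^{n} x).
\]
Integrating, reversing the algebraic identity above with $n$ in place of $a(n)$ (using $(T_1^*)^{n} g_1 = (T_2^*)^{n} g_1$ by $T_2^* T_1^{*-1}$-invariance of $g_1$), and descending via $\pi$ yields equality of the two limits in \eqref{E: equality of limits} and, in particular, the existence of the limit on the left. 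The F\o lner-sequence variant follows identically, since every averaging ingredient (box seminorms, the equidistribution hypothesis, Host--Kra theory) already accommodates F\o lner averages in $\N$.

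The principal obstacle is the second step: the box seminorm control supplied by \cref{D: good properties}(\ref{I:gp1i}) allows $s$ to be arbitrarily large, whereas the magic-extension framework and \cref{P: inverse theorem magic} yield product decompositions only for the degree-$2$ seminorm with a single copy of each of $T_2T_1^{-1}$ and $T_2$. Bridging this gap---recasting high-degree seminorm correlation in terms of $(T_2T_1^{-1}, T_2)$-joint invariants on the magic extension---is where the degree-lowering and seminorm-smoothing techniques mentioned in the introduction, drawing on the works of Tao and Leng, are essential.
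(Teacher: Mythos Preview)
Your overall architecture matches the paper's: lower the degree of the controlling box seminorm from $\nnorm{\cdot}_{(T_2T_1^{-1})^{\times s}, T_2^{\times s}}$ down to $\nnorm{\cdot}_{T_2T_1^{-1}, T_2}$, pass to a magic extension with respect to $(T_2T_1^{-1},T_2)$, collapse to a single-transformation average, and finish with equidistribution on nilsystems. The final two steps are exactly as in the paper.

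However, the degree-lowering step---which you rightly single out as the principal obstacle---is the entire content of the theorem, and your sketch of it does not supply the key ideas. The magic-extension inverse theorem (\cref{P: inverse theorem magic}) only handles seminorms with \emph{distinct} transformations; it says nothing about $\nnorm{\cdot}_{T_2T_1^{-1},T_2,T_2}$, which is the first seminorm one must attack when $s_2\geq 2$. The paper's resolution is not a further magic extension but the \emph{structured extension} of \cref{T: structured extension}, on which one has $\CZ(S_2S_1^{-1},S_2,S_2)=\CI(S_2S_1^{-1})\vee\CZ_1(S_2)$---note $\CZ_1$, not $\CI$. The corresponding inverse theorem (\cref{P: inverse theorem}) then produces a nonergodic eigenfunction $\chi\in\CE(S_2)$ rather than an invariant function, and the whole argument pivots on a separately proved base case (\cref{P: base case}): when $f_2\in\CE(T_2)$, the identity \eqref{E: equality of limits} already holds, because the second clause of box-seminorm control lets one replace $f_1$ by a pointwise nilsequence via the nonergodic Host--Kra decomposition of \cite{CFH11}. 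The degree-lowering itself (\cref{P: degree lowering}) then combines stashing, passage to the structured extension (as a relatively independent joining, since it extends only the factor $\CZ$), the inverse theorem, dual--difference interchange, the base case to swap $a(n)$ for $n$, the optimal linear estimate (\cref{P: linear}), and a further seminorm-smoothing pass through a second magic extension. None of this structure is visible in your proposal; in particular, invoking only \cref{P: inverse theorem magic} and Gowers--Cauchy--Schwarz cannot bridge the gap, because those tools never produce the eigenfunction input needed to trigger the equidistribution hypothesis inside the induction.
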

\begin{remark}
	Our assumptions  are close to  optimal. Indeed, if \eqref{E: equality of limits} holds, then $(a(n))_n$ admits (optimal) box seminorm control  and \cite[Proposition~8.2]{FrKu22c} gives that it equidistributes on $2$-step nilsystems.
\end{remark}

The next result
  will be used to prove \cref{T:Main2}.
\begin{theorem}[Equality of limits II]\label{T: limiting formula 2}
	Let $a\colon \N\to\Z$ be a sequence and $n_k\in \N_0$ be such that $(a(k!n+n_k))_{n,k}$ admits box seminorm control and  equidistributes on nilsystems.
	Then for every system $(X, \CX, \mu, T_1, T_2)$ and all functions $f_0,f_1, f_2\in L^\infty(\mu)$, we have the identity
		\begin{multline}\label{E: equality of limits'}
		\lim_{k\to\infty} 	\lim_{N\to\infty}\E_{n\in[N]}\int f_0\cdot T_1^{a(k!n+n_k)}f_1\cdot T_2^{a(k!n+n_k)}f_2 \, d\mu\\
        = \lim_{k\to\infty} \lim_{N\to\infty}\E_{n\in[N]}\int f_0\cdot
		T_1^{k!n}f_1\cdot T_2^{k!n}f_2 \, d\mu;
	\end{multline}
        in particular, both iterated limits above exist. {Moreover, an analogous result holds when, in both the hypothesis
        	and the conclusion, we replace the Ces\`aro average over $n\in\N$
        	by an average along an arbitrary F\o lner sequence in $\N$.}
\end{theorem}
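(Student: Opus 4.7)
The plan is to adapt the four-step strategy used for Theorem \ref{T: limiting formula} — box-seminorm reduction on $f_2$, magic-extension decomposition and absorption, box-seminorm reduction on $f_1$, and nilsystem equidistribution — with the equidistribution hypothesis \ref{I:3.2ii} of Definition \ref{D: good properties 2} playing the role of genuine nilsystem equidistribution in the nested $k\to\infty$ limit. Writing $a_k(n):=a(k!n+n_k)$, the aim is to identify each inner $N$-average, up to an $o_{k\to\infty}(1)$ error, with its $a(n)=n$ counterpart. Because the sequence $a(n)=n$ itself trivially satisfies both properties of Definition \ref{D: good properties 2}, the same argument applied to the right-hand side of \eqref{E: equality of limits'} simultaneously yields existence of that iterated limit; the whole argument is insensitive to whether one averages along $[N]$ or an arbitrary F{\o}lner sequence in $\N$, thanks to the corresponding clauses built into Definition \ref{D: good properties 2}.

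Hypothesis \ref{I:gp2i} lets me assume $f_2$ is measurable with respect to $\CZ((T_2T_1\inv)^{\times s}, T_2^{\times s})$. Since $T_2T_1\inv$ and $T_2$ generate $\angle{T_1,T_2}$, Proposition \ref{P: magic extensions exist} produces a magic extension on which this factor equals $\CI(T_2T_1\inv)\vee\CI(T_2)$; passing to that extension (harmless since both the hypotheses and the conclusion lift), Proposition \ref{P: inverse theorem magic} combined with an $L^2$-density argument reduces to the case $f_2=g\cdot h$ with $g\in I(T_2T_1\inv)$ and $h\in I(T_2)$, both $1$-bounded. The invariances $T_2g=T_1g$ and $T_2h=h$ then yield, for every $m\in\Z$,
\begin{align*}
    \int f_0\cdot T_1^m f_1\cdot T_2^m(gh)\,d\mu = \int (f_0 h)\cdot T_1^m(f_1 g)\,d\mu,
\end{align*}
so that, writing $F_0:=f_0 h$ and $F_1:=f_1 g$, both iterated limits in \eqref{E: equality of limits'} reduce to iterated limits of $\E_{n\in[N]}\int F_0\cdot T_1^{m(n,k)}F_1\,d\mu$ with $m(n,k)\in\{a_k(n),k!n\}$.

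I then invoke hypothesis \ref{I:gp2i} a second time with dummy $f_2:=\mathbf{1}_X\in \CE(T_2)$: the three-term average collapses to the two-term one produced by absorption, the second clause of \ref{I:gp2i} gives vanishing whenever $\nnorm{F_1}_{T_1^{\times s}}=\nnorm{F_1}_{s,T_1}=0$, and hence the outer $k$-limit of the left-hand side of \eqref{E: equality of limits'} depends on $F_1$ only through its projection onto $\CZ_{s-1}(T_1)$. Decomposing $\mu$ into its $T_1$-ergodic components via \eqref{E: ergodic decomposition} and invoking Theorem \ref{T:HK}, I reduce to $F_1=F\circ\pi$ for a continuous $F$ on an ergodic $(s-1)$-step nilsystem $(Y,m_Y,S)$ with factor map $\pi\colon X\to Y$. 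For each $x\in X$, unique ergodicity of $S$ on the orbit closure of $\pi(x)$ produces the inner $N$-limits of $\E_{n\in[N]}F(S^{a_k(n)}\pi(x))$ and $\E_{n\in[N]}F(S^{k!n}\pi(x))$, and hypothesis \ref{I:3.2ii} forces them to coincide as $k\to\infty$; integrating against $F_0(x)$ and sending $k\to\infty$ completes the argument. I expect the main obstacle to lie in the first step: ensuring that the magic extension produced by Proposition \ref{P: magic extensions exist} respects the entire commuting action $(T_1,T_2)$ rather than only the chosen generators $R_1=T_2T_1\inv$, $R_2=T_2$ is precisely what enables the subsequent absorption into a single-transformation average, and this compatibility — built into our variant of magic extensions — is the crux on which the whole reduction rests.
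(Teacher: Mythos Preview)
There is a genuine gap in your second step. You claim that passing to a magic extension with respect to $R_1=T_2T_1^{-1}$ and $R_2=T_2$ forces the factor $\CZ((T_2T_1^{-1})^{\times s},T_2^{\times s})$ to coincide with $\CI(T_2T_1^{-1})\vee\CI(T_2)$. This is false: a magic extension with respect to $R_1,R_2$ guarantees only that the \emph{degree-two} factor $\CZ(R_1^*,R_2^*)$ equals $\CI(R_1^*)\vee\CI(R_2^*)$; it says nothing about the strictly larger factor $\CZ((R_1^*)^{\times s},(R_2^*)^{\times s})$ when $s>1$, which in general contains $\CZ_{s-1}(R_1^*)\vee\CZ_{s-1}(R_2^*)$ and hence functions well beyond products of invariants. (Since seminorms of lifted functions are preserved under factor maps, the lift $f_2^*$ lands only in the degree-$2s$ factor on $X^*$, not the degree-$2$ one.) Nor can you rescue the step by applying Host's construction to the $2s$ repeated transformations $R_1,\ldots,R_1,R_2,\ldots,R_2$: Proposition~\ref{P: magic extensions exist} requires the $R_i$'s to generate $\langle T_1,T_2\rangle$, ruling out repetitions, and even in Host's original formulation the $2s$ lifts become genuinely distinct transformations on the cube space, so the resulting decomposition involves $2s$ different invariant algebras rather than the two you need for the absorption trick.

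This gap is not a technicality---bridging it is the main content of the paper. Reducing control by $\nnorm{f_2}_{(T_2T_1^{-1})^{\times s},T_2^{\times s}}$ to control by $\nnorm{f_2}_{T_2T_1^{-1},T_2}$ (at which point your absorption and nilsystem steps would indeed finish the job, essentially as in \cref{SS:end} and \cref{P: base case 2}) is precisely the degree-lowering argument of \cref{SS: degree lowering}, which in turn rests on the structured extensions of \cref{T: structured extension}, the base case \cref{P: base case 2}, the linear estimate \cref{P: linear 2}, and several rounds of stashing and dual-difference interchange. Your final paragraph misidentifies the crux: compatibility of the magic extension with the full $(T_1,T_2)$-action is a minor bookkeeping matter already built into Proposition~\ref{P: magic extensions exist}; the real obstacle is that no single magic-extension step can collapse a degree-$2s$ box factor to a degree-$2$ one.
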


We remark that all the previous results in this section remain valid, with no essential changes in the proofs, for multivariable sequences $a\colon \N^d \to \Z$, provided the corresponding definitions and averaging procedures are adjusted in the natural way.

Combining Theorem \ref{T: limiting formula} with \cite[Proposition 1]{H09} (restated as Proposition \ref{P: linear} below), we obtain the following corollary.
\begin{theorem}[Optimal seminorm control]\label{T: optimal control}
	Let $a\colon \N\to\Z$ be a sequence that admits box seminorm control and equidistributes on nilsystems. Then for every system $(X, \CX, \mu, T_1, T_2)$ and all 1-bounded functions $f_0,f_1, f_2\in L^\infty(\mu)$, we have
	\begin{align*}
		\lim_{N\to\infty}\abs{\E_{n\in[N]} \int f_0\cdot T_1^{a(n)}f_1\cdot T_2^{a(n)}f_2\; d\mu}\leq \min\bigbrac{\nnorm{f_0}_{T_1, T_2},\nnorm{f_1}_{T_1, T_2 T_1\inv},\nnorm{f_2}_{T_2 T_1\inv, T_2}}.
	\end{align*}
\end{theorem}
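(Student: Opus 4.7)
The proof is a two-step reduction. The plan is to first apply Theorem~\ref{T: limiting formula} to replace the sparse corner average by the linear corner average, and then to invoke the cited Host-type bound (Proposition~\ref{P: linear}) to control the resulting linear limit by each of the three box seminorms that appear on the right-hand side.

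For the first step, since the hypotheses of Theorem~\ref{T: limiting formula} on $(a(n))_n$ are exactly the standing assumptions, for all $1$-bounded $f_0,f_1,f_2\in L^\infty(\mu)$ I obtain
\begin{equation*}
\lim_{N\to\infty}\E_{n\in[N]} \int f_0\cdot T_1^{a(n)}f_1\cdot T_2^{a(n)}f_2\, d\mu
= \lim_{N\to\infty}\E_{n\in[N]} \int f_0\cdot T_1^{n}f_1\cdot T_2^{n}f_2\, d\mu,
\end{equation*}
and in particular both limits exist. Thus the task is reduced to bounding the linear corner limit.

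For the second step, I apply Proposition~\ref{P: linear} (Host's Proposition~1 from \cite{H09}, specialized to the two-transformation corner at iterate $n$). It gives the bound
\begin{equation*}
\Bigabs{\lim_{N\to\infty}\E_{n\in[N]}\int f_0\cdot T_1^{n}f_1\cdot T_2^{n}f_2\, d\mu}\leq \nnorm{f_0}_{T_1,T_2}.
\end{equation*}
The analogous bounds by $\nnorm{f_1}_{T_1,T_2T_1^{-1}}$ and $\nnorm{f_2}_{T_2T_1^{-1},T_2}$ follow from the same proposition after symmetric substitutions: changing variables $y=T_1^n x$ (using $T_1$-invariance of $\mu$) rewrites the linear average as $\int (T_1^{-n}f_0)\cdot f_1\cdot (T_2T_1^{-1})^{n}f_2\, d\mu$, which is a corner average with transformations $T_1^{-1}$ and $T_2T_1^{-1}$ and the role of the ``zeroth'' function played by $f_1$; applying Host's bound with these transformations yields control by $\nnorm{f_1}_{T_1^{-1},T_2T_1^{-1}}=\nnorm{f_1}_{T_1,T_2T_1^{-1}}$ (using property (\ref{I:scaling}) of box seminorms). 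The bound on $f_2$ is obtained analogously by substituting $z=T_2^n x$.

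Taking the minimum of the three bounds and combining with the equality of limits above gives the claimed estimate. There is essentially no obstacle: both ingredients are already in hand, and the only point that requires a moment of care is matching Host's bounds to the three seminorms listed in the statement via the substitutions described, which is routine once the scaling property of box seminorms from \cref{SS:boxseminorms} is invoked.
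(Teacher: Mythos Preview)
Your proposal is correct and follows essentially the same two-step strategy as the paper: invoke Theorem~\ref{T: limiting formula} to replace $a(n)$ by $n$, then use Host's linear bound (Proposition~\ref{P: linear}) together with reparametrizations to obtain all three seminorm estimates. One minor inaccuracy: Proposition~\ref{P: linear} as stated bounds the $L^2$-norm of $\E_n T_1^n f_1\cdot T_2^n f_2$ by $\nnorm{f_2}_{T_2T_1^{-1},T_2}$, so the direct application (via Cauchy--Schwarz against $f_0$) gives the $f_2$ bound, not the $f_0$ bound; the $\nnorm{f_0}_{T_1,T_2}$ estimate is the one requiring a substitution (e.g.\ composing with $T_2^{-n}$), which is exactly how the paper obtains it. Your substitutions are correct and the argument goes through once this labeling is adjusted.
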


\subsection{Proof of \cref{T:Main1} assuming \cref{T: limiting formula}}\label{SS:pf1.1}
By \cref{T: limiting formula}, it suffices to verify that $(a(n))_n$ admits optimal box seminorm control and equidistributes on nilsystems. The first property follows from Theorem \ref{T: estimates for Hardy} while the second one is given by \cref{T: Hardy equidistribute}.

\subsection{Proof of \cref{T:Main2}  assuming \cref{T: limiting formula 2}}\label{SS:pf1.2} {We give the argument for Ces\`aro averages; the proof is identical
for averages along arbitrary F\o lner sequences in $\N$.}

By \cref{T: limiting formula 2}, it once again suffices to ascertain that $(a(k!n+n_k))_{n,k}$ admits box seminorm control and  equidistributes on nilsystems. The first of these properties follows from \cref{T: estimates for Hardy} by applying the seminorm control from the latter to the average
\begin{align*}
    \limsup_{N\to\infty}\abs{\E_{n\in[N]}\int f_0\cdot T_1^{a(k!n+n_k)}f_1\cdot T_2^{a(k!n+n_k)}f_2\; d\mu}
\end{align*}
separately for every $k\in\N$.

It remains to show that $(p(k!n+n_k))_{n,k}$ equidistributes
on nilsystems. Fix a nilsystem $(X,\CX,\mu,T)$. By the second remark following
\cref{D: good properties 2} we can assume that $x=e_X$. There exists $r_0\in \N$
such that the nilmanifold $\overline{(T^{r_0n}x)_n}$
is the connected component $X_0$ of the identity element $e_X$ in  $X$ (see for example \cite[Corollary~8, page 182]{HK18}). By \cite[Proposition~2.7]{Fr08},
for every nonconstant polynomial $q\in \Z[t]$, the sequence
$(T^{r_0q(n)}x)_n$ is equidistributed in $X_0$.
Let now $k\in \N$ with $k\geq r_0$, in which case we have
$r_0\mid p(n_k)$ and $r_0\mid k!$, and as a consequence, the polynomial
$q(n):=r_0^{-1}p(k!n+n_k)$ has integer coefficients.
We deduce that for $k\geq r_0$ the sequences
$(T^{p(k!n+n_k)}x)_n$ and $(T^{k!n}x)_n$
are both equidistributed in $X_0$. This implies that for every $k\geq r_0$ we have
$$
\lim_{N\to\infty} \E_{n\in [N]}F(T^{p(k!n+n_k)}x)=
\lim_{N\to\infty} \E_{n\in [N]}F(T^{k!n}x).
$$
Letting $k\to\infty$ gives \eqref{E:nk}.

\subsection{Proof strategy of Theorems~\ref{T: limiting formula} and \ref{T: limiting formula 2}} \label{SS:proofsketch}
We give a pretty detailed description of the proof strategy for \cref{T: limiting formula}. The proof of \cref{T: limiting formula 2} is similar up to a few additional technical details that are addressed in \cref{SS:lineark!}.

\smallskip
{\bf The single transformation case.}
Our starting point is the proof of \cref{T: limiting formula} for a single transformation, i.e., when $f_2=1$, in which case the result follows
from an application of the spectral theorem and from
the fact that the sequence $(a(n))_n$ equidistributes for
all circle rotations.

\smallskip
{\bf Optimal box seminorm control and magic extensions.}
For two transformations, our strategy amounts to establishing the \textit{optimal box seminorm control} for the averages
\begin{equation}\label{E:averages}
	\E_{n\in[N]}\int f_0\cdot T_1^{a(n)}f_1\cdot T_2^{a(n)}f_2\; d\mu;
\end{equation}
with respect to the function $f_2$, i.e., we show that if $\nnorm{f_2}_{T_2T_1^{-1},T_2}=0$, then the averages
\eqref{E:averages} vanish as $N\to\infty$. This is highly nontrivial and constitutes the main advance of this article. Once this is done, we use the machinery of magic extensions due to Host~\cite{H09} to reduce matters to the case where $f_2$ is a product of  a $T_2$-invariant function and  a $T_2T_1^{-1}$-invariant function. This immediately puts us in the single transformation case.
We explain next how to get optimal box seminorm control.

\smallskip
{\bf Known box seminorm control.} 
Our starting point is the assumption that the averages \eqref{E:averages}
admit box seminorm control, i.e., there exist $s_1,s_2\in \N$, possibly large, such that
if $\nnorm{f_2}_{(T_2T_1\inv)^{\times s_1},\, T_2^{\times s_2}}=0$, then
the averages \eqref{E:averages} vanish as $N\to\infty$. This property
holds for sequences covered in Theorems~\ref{T:Main1} and \ref{T:Main2}, a highly
nontrivial fact established in great pain within the last few years (see \cref{T: estimates for Hardy}).

Our next step is to perform a degree lowering argument that will eventually yield to optimal box seminorm control. Recently, an analogous degree reduction for box seminorms has been carried out quantitatively for polynomial corners in the finitary setting by Kravitz, Leng, and the second author \cite{KKL24b}. Our argument follows, in part, the overall philosophy of \cite{KKL24b}. However, some of the quantitative arguments from \cite{KKL24b} become significantly simpler in the ergodic setting (mostly those related to equidistribution on nilmanifolds) whereas certain easy steps in the finitary setting (such as obtaining inverse theorems for degree-3 box norms) require substantial effort in the ergodic universe.

\smallskip

{\bf Degree lowering.}
We explain how we reduce the degree $s_1+s_2$ of the seminorm in three representative test cases.

\smallskip

{\bf Case 1: $(s_1, s_2) = (1,2)$.}
Suppose that $\nnorm{f_2}_{T_2T_1\inv, T_2, T_2}=0$ implies
vanishing of the averages \eqref{E:averages} as $N\to\infty$. Our goal is to
show a similar property under the weaker hypothesis
$\nnorm{f_2}_{T_2T_1\inv , T_2}=0$. The crucial maneuver is
to pass to a suitable structured extension on which the following
inverse theorem holds:
\begin{equation}\label{E:invthm}
	f_2 \,\bot\, \bigbrac{\CI(T_2 T_1^{-1}) \vee \CZ_1(T_2)}
	\implies \nnorm{f_2}_{T_2T_1\inv , T_2, T_2}=0.
\end{equation}
For simplicity, assume that the original system
$(X,\CX,\mu,T_1,T_2)$ satisfies \eqref{E:invthm}. 
Given \eqref{E:invthm}, to analyze
\eqref{E:averages} we may assume that $f_2$ is a product of
a function in $I(T_2 T_1^{-1})$ and $\CE(T_2)$. In this
case, invoking our box seminorm control assumption again,
the averages \eqref{E:averages} vanish whenever
$\nnorm{f_1}_{T_1^{\times s}}=0$ and $s$ is  sufficiently large. Using this and the decomposition result
\cite[Proposition 3.1]{CFH11} (a nonergodic variant of the
Host-Kra structural result in \cref{T:HK}), we establish the identity
\eqref{E: equality of limits} (and hence the
optimal box seminorm control) as long as
$$
\lim_{N\to\infty}\E_{n\in [N]} \psi(a(n))=
\lim_{N\to\infty}\E_{n\in [N]} \psi(n)
$$
holds for every nilsequence $\psi$. The latter follows from our
assumption that $(a(n))_n$ equidistributes on nilsystems; in
the context of \cref{T:Main1} this follows from Theorem \ref{T: Hardy equidistribute}
(\cite[Theorem~1.2]{Fr09}). Details appear in \cref{SS:base} and
in Steps $1$-$3$ of \cref{SS: degree lowering}.

\smallskip
{\bf Case 2: $(s_1, s_2) = (2,1)$.}   Suppose that the averages \eqref{E:averages} vanish as $N\to\infty$ whenever $\nnorm{f_2}_{T_2T_1\inv , T_2T_1\inv, T_2}=0$; we want to obtain the same conclusion under the weaker assumption that $\nnorm{f_2}_{T_2T_1\inv , T_2}=0$. The difference from the previous case is that now, we want to lower the degree of $T_2T_1\inv$ in the seminorm controlling our average even though it is the transformation $T_2$ that acts on $f_2$.
This incompatibility can be fixed by composing the integral with $T_1^{-a(n)}$, which reparametrizes our average \eqref{E:averages} as
\begin{align*}
    	\E_{n\in[N]}\int f_1\cdot T_1^{-a(n)}f_0\cdot (T_2T_1\inv)^{a(n)}f_2\; d\mu.
\end{align*}
After this change of variables, we can simply invoke the previous case for the system $(X, \CX, \mu, T_1\inv, T_2T_1\inv)$. The need to perform this reparametrization is the main reason why we study the weak rather than strong limit of our averages.

\smallskip

{\bf Case 3: $(s_1, s_2) = (2,2)$.}
Suppose that $\nnorm{f_2}_{T_2T_1\inv , T_2T_1\inv, T_2, T_2}=0$
implies vanishing of the averages \eqref{E:averages} in the limit. Our goal is
to show a similar property under the weaker hypothesis
$\nnorm{f_2}_{T_2T_1\inv , T_2T_1\inv,T_2}=0$ (which can be further
weakened to $\nnorm{f_2}_{T_2T_1\inv , T_2}=0$ by Case $2$). Again, upon passing to a structured extension of the system
we can assume that \eqref{E:invthm} holds. This time, however, the
deduction is more intricate and we rely on the degree lowering
argument pioneered by Peluse and Prendiville~\cite{P19a,P19b,PP19} in the finitary universe, and specifically on its version due to Kravitz, Leng, and the second author \cite{KKL24b}, which we adapt
to our ergodic setting (much like the arguments in \cite{Fr21,FrKu22a}). Suppose that
\begin{equation}\label{E:positiveT12}
	\limsup_{N\to\infty}\Big|\E_{n\in[N]}\int \, f_0\cdot T_1^{a(n)}f_1\cdot
	T_2^{a(n)}f_2\; d\mu\Big|>0,
\end{equation}
then our goal is to show that
$\nnorm{f_2}_{T_2T_1\inv , T_2T_1\inv,T_2}>0$.

\smallskip

{\bf Step 1: Degree lowering (incomplete).}
 Using simple
maneuvers, we deduce from \eqref{E:positiveT12} that
$$
\limsup_{N\to\infty}\abs{\E_{n\in[N]} \int f_0\cdot T_1^{a(n)}{f}_1
	\cdot T_2^{a(n)}F_2\; d\mu} > 0,
$$
where
$$
F_2 := \lim_{N\to\infty}\E_{n\in[N]}T_2^{-a(n)}\overline{f}_0\cdot
(T_1T_2\inv)^{a(n)}\overline{f}_1.
$$
For the sake of exposition, we assume that the limit above exists in
$L^2(\mu)$; since we do not know this to hold in general, we will work with a weak subsequential limit later on.
Our standing box seminorm assumption gives
$$
\nnorm{F_2}_{T_2T_1\inv , T_2T_1\inv, T_2, T_2} > 0.
$$
Using the inductive definition of these seminorms given in
\eqref{ergodic identity}, and a soft quantitative variant of
\eqref{E:invthm} as in \cref{P: inverse theorem} (which is
nontrivial to deduce but follows from a general principle
developed in \cite{FrKu22a}), we deduce that there exist
$g_h\in I(T_2T_1^{-1})$ and $\chi_h\in \CE(T_2)$ such that the
integrals below are real and
$$
\liminf_{H\to\infty} \E_{h\in [H]} \int (T_2T_1^{-1})^hF_2\cdot
\overline{F}_2\cdot g_h\cdot \chi_h\, d\mu>0.
$$
Using some Cauchy-Schwarz maneuvering (known as ``dual-difference
interchange'') we deduce that
$$
\liminf_{H\to\infty} \E_{h.h'\in [H]} \limsup_{N\to\infty}\Big|
\E_{n\in[N]} \int f_{0,h,h'}\cdot T_1^{a(n)}f_{1,h,h'}\cdot
T_2^{a(n)}(g_{h,h'}\cdot \chi_{h,h'})\, d\mu \Big|>0,
$$
where for $h,h'\in\N$ and $j=0,1$, we consider 1-bounded functions
$$
f_{j,h,h'}:=\Delta_{T_2T_1^{-1}; {h-h'}} f_j,
$$
$g_{h,h'}\in I(T_2T_1^{-1})$, and $\chi_{h,h'}\in \CE(T_2)$. The advantage now is that the
functions associated to the transformation $T_2$ have very
special form, so working with the innermost average and arguing
as in the second part of Case 1, we get that the
limit of this average remains unchanged if we replace $a(n)$ by
$n$. Upon doing that, we can use the elementary estimate of Host \cite{H09} (see
\cref{P: linear}) to deduce
$$
\liminf_{H\to\infty} \E_{h.h'\in [H]}
\nnorm{f_{1,h,h'}}_{T_1,T_1T_2^{-1}}>0.
$$
(We get this with $f_{1,h,h'}\cdot g_{h,h'}$ in place of
$ f_{1,h,h'}$ but the functions $g_{h,h'}$ can easily be
disposed.) Using the inductive definition of the seminorms in
\eqref{ergodic identity}, it is easy to infer from this that
$$
\nnorm{f_1}_{T_1,T_1T_2^{-1},T_1T_2^{-1}}>0.
$$
 Details for this step appear
in Steps $4$-$7$ of \cref{SS: degree lowering}.

At this point we note that although we have produced a positivity
property for a box seminorm of smaller complexity than the one we started with, it is not exactly of the form we wanted. It involves the function $f_1$ (instead of $f_2$) and the role of the transformations $T_1$ and $T_2T_1^{-1}$ is reversed.
Although in this case we can use a reparametrization trick that
allows us to overcome this problem, such a trick does not
generalize to the case $s_1,s_2\geq 3$ (see the discussion at
the beginning of Step 8 in \cref{SS: degree lowering}).

\smallskip

{\bf Step 2: Seminorm smoothing.}
 To alleviate the problem just mentioned, we use a smoothing technique,
 which amounts to maneuvering similar to Step 1, but now we pass  to a magic
  extension and use an inverse theorem for the simpler
 box seminorms $\nnorm{\cdot}_{T_1,T_1T_2^{-1}}$, in the
 soft quantitative form of \cref{P: inverse theorem}. This move
 will not yield positivity for seminorms of even lower degree
 (that was achieved in Step $1$), but it will ``smooth''
 them, producing the needed positivity
 $$
 \nnorm{f_2}_{T_2T_1\inv, T_2T_1\inv,T_2}>0.
 $$
 Details for this step appear in Steps $8$-$9$ of
 \cref{SS: degree lowering} (see the Interlude for a demonstration
 of this maneuver in the simplest possible setting).

\smallskip

{\bf Concluding the argument.}
For general $s_1,s_2\in \N$, an argument very similar to
the one described in the case $(s_1,s_2)=(2,2)$ enables us to reduce seminorm control from 
$\nnorm{f_2}_{(T_2T_1\inv)^{\times s_1}, T_2^{\times s_2}}$  to one in terms of
$\nnorm{f_2}_{(T_2T_1\inv)^{\times s_1-1}, T_2^{\times s_2}}$ and
$\nnorm{f_2}_{(T_2T_1\inv)^{\times s_1}, T_2^{\times s_2-1}}$,
as long as $s_1\geq 2$ and $s_2\geq 2$ respectively. Applying this
reduction $s_1+s_2-2$ times we conclude with the strived-for control in terms of $\nnorm{f_2}_{T_2T_1\inv , T_2}$. Thus, we have effectively reduced matters to the case where the
magic extension machinery of Host is applicable, and we conclude the
proof of  \cref{T: limiting formula 2}  using the standard argument described in \cref{SS:end}.

\smallskip

{\bf Existence of structured extensions.}
Lastly, we briefly describe the proof strategy for establishing the existence
of an extension on which \eqref{E:invthm} holds; the details of this argument appear in \cref{S: structured extension}. The maneuver of passing to a structured extension in order to facilitate the proof of mean convergence results for multiple ergodic averages has long history. Originating in the work of Furstenberg and Weiss \cite{FW96}, it has later been refined e.g. by Austin \cite{ Au09, Au15a, Au15b}, Host \cite{H09}, Leng 
\cite{Leng25}, and others. Of particular relevance to us are the works of the last three authors. Austin \cite{Au09} showed that on a suitable extension, the characteristic factors for various averages under consideration admit a simpler description; moreover, such a description cannot in general be attained on the original system. Host \cite{H09} then provided a simpler alternative to Austin's extension, introducing the notion of magic extensions. Lastly, building on ideas of Tao \cite{Tao15}, Leng \cite{Leng25} used a finitary inverse theorem for some multidimensional box norms to construct yet a different type of extensions that served as a direct inspiration for our construction.

Our goal thus is to prove that
 every system $(X,\CX,\mu,T_1,T_2)$ can be extended to a system
$(Y, \CY, \nu,S_1,S_2)$ on which
$\CZ({S_1,S_2,S_2})=\CI(S_1)\vee \CZ_1(S_2)$; then \eqref{E:invthm} will follow from \eqref{E: factor property}. Unlike
Host's proof of the existence of magic extensions \cite{H09}, which yields a similar
result  for the seminorms $\nnorm{f_2}_{T_1,T_2}$, we did not manage
to give a proof entirely within ergodic theory. Instead, we rely on a
method pioneered by Tao~\cite{Tao15} and recently utilized by Leng~\cite{Leng25} in a
setting closer to ours. The first step is to find a corresponding inverse
theorem in the finitary world (see \cref{P: inverse theorem}); somewhat
surprisingly, this is straightforward and is the main reason for the
method's effectiveness. From this we deduce a decomposition result for
sequences $f\colon [\pm N]^2\to \C$ (see \cref{P: regularity lemma}) as a sum
of structured, box seminorm uniform, and negligible components, which then implies that
finitary variants of dual sequences can be well approximated by these
structured components (see \cref{P: structured & anti-uniform}). We use
this to show that a countable dense set $\CD\subseteq L^2(\CZ)$ of dual
functions, such as
$$
\CD f:=\lim_{N\to\infty}\E_{h_1,h_2\in [N]} T_1^{h_1}\overline{f}\cdot
T_2^{h_2}\overline{f}\cdot T_1^{h_1}T_2^{h_2}f,
$$
where $f\in L^\infty(\mu)$ and the limit is taken in $L^2(\mu)$, can be
approximated pointwise on all finite scales by these structured sequences.
We then use these structured sequences to build a system  $(\widetilde X,\widetilde\CX,\widetilde\mu,\widetilde T_1,\widetilde T_2)$ in which all
correlations of functions in $\CD$ are reproduced; this makes it an
extension of $(X,\CZ,\mu,T_1,T_2)$. The advantage of the extended system
is that, because it is built from simple structured components, it is
straightforward to show that it satisfies $\CZ({\widetilde T_1,\widetilde T_2,\widetilde T_2})=\CI(\widetilde T_1)\vee \CZ_1(\widetilde T_2)$, as required.
To give a better sense of the argument, note that the structured
components are restrictions to $[\pm N]^2$ of linear combinations of sequences
of the form
$$
a(m,n):=e(\phi(m)n)\cdot b(m)\cdot c(n) , \quad m,n\in \N,
$$
where $\phi\colon\N\to \C$ is arbitrary and $b,c\colon \N\to \C$ are
$1$-bounded. It is not hard to verify that the Furstenberg system (i.e.,
the measure preserving system  $(X:=\mathbb{D}^{\Z^2},\mu,T_1,T_2)$ associated to
$(a(m,n))_{m,n}$ via the Furstenberg correspondence principle) of these infinite sequences is indeed spanned by
$\CI(T_1)\vee \CZ_1(T_2)$. The system $(\widetilde X,\widetilde\CX,\widetilde\mu,\widetilde T_1,\widetilde T_2)$ we build is the
joint Furstenberg system of infinitely many such
sequences. {A minor nuisance is that this system extends the factor $(X,\CZ,\mu,T_1,T_2)$, not the full original system. However, by taking an infinite tower of such extensions following Leng \cite[Appendix C]{Leng25}, we obtain a system $(Y, \CY, \nu, S_1, S_2)$ which both extends $(X,\CX,\mu,T_1,T_2)$ and satisfies \eqref{E:invthm}.} 

\subsection{Obstructions for longer averages and mean convergence}\label{SS: issues}
Having explained the proof strategy for Theorem \ref{T:Main1}, we move on to explain where the argument above fails if we aim to establish the (highly plausible) $L^2(\mu)$ identity
\begin{align*}
    \lim_{N\to\infty}\norm{\E_{n\in[N]} T_1^{a(n)}f_1\cdot T_2^{a(n)}f_2-\E_{n\in[N]} T_1^nf_1\cdot T_2^nf_2}_{L^2(\mu)} = 0.
\end{align*}
The argument presented in Case 1, which reduces the seminorm control by $\nnorm{f_2}_{T_2T_1\inv, T_2, T_2}$ to the seminorm control by $\nnorm{f_2}_{T_2T_1\inv, T_2}$, can be adapted in a straightforward way to the mean convergence setting, i.e., to get good seminorm control for the limit
\begin{equation}\label{E:averages 2}
	\limsup_{N\to\infty}\norm{\E_{n\in[N]} T_1^{a(n)}f_1\cdot T_2^{a(n)}f_2}_{L^2(\mu)}.
\end{equation}
But this no longer holds for Case 2, in which the goal is to derive the seminorm control in terms of $\nnorm{f_2}_{T_2T_1\inv, T_2T_1\inv, T_2}$ from one by $\nnorm{f_2}_{T_2T_1\inv, T_2}$. We recall that the reduction of Case 2 to Case 1 proceeds by composing the integral with $T_1^{-a(n)}$, a trick that can only be applied in the setting of weak convergence.\footnote{One could try to circumvent this by recasting the square of \eqref{E:averages 2} as $\limsup\limits_{N\to\infty}\E\limits_{n\in[N]}\int f_{0,N}\cdot T_1^{a(n)}f_1\cdot T_2^{a(n)}f_2\; d\mu;$ however, it is essential to our approach that $f_{0,N}$ be independent of $N$ (since we need intermediate seminorm estimates in terms of $f_0$), which we cannot ensure without knowing the $L^2(\mu)$ convergence of the average in \eqref{E:averages 2}.
}

Hence, Case 2 for \eqref{E:averages 2} is genuinely different than Case 1, unlike in the weak-limit realm. One therefore needs to attack Case 2 head-on. By passing to a structured extension of the factor $\CZ(T_2T_1\inv, T_2T_1\inv, T_2)$, we can reduce the entire complexity of Case 2 to the situation where $f_2$ is a {nonergodic eigenfunction} of $T_2T_1\inv$, i.e., $T_2T_1\inv f_2 = \lambda\,  f_2$ for some $\lambda\in I(T_2T_1\inv)$. Plugging this formula back to \eqref{E:averages 2}, we end up with the $L^2(\mu)$ norm of
\begin{align}\label{E:averages 3}
    \E_{n\in[N]} T_1^{a(n)}(f_1f_2)\cdot (T_1^{a(n)}\lambda)^{a(n)}.
\end{align}
It is highly unclear how to deal with this expression, not least because it is not linear in $\lambda$ and $\lambda$ may not be constant. If $a(n) = n$, then an argument of Lesigne (private communication) shows that \eqref{E:averages 3} vanishes unless $\lambda$ is constant on a positive-measure set. For nonlinear sequences such as $n^2$ or $[n^{3/2}]$, the likely conclusion one could derive is that \eqref{E:averages 3} is nonzero only if $\lambda$ agrees on a positive-measure set with a function in $Z_s(T_1)$ for some $s\in\N$. However, it is neither clear how to reach this conclusion for general polynomials or Hardy sequences, nor how to apply it. The nonlinearity of \eqref{E:averages 3} in $\lambda$ makes it difficult to follow the instinct and perform the standard decomposition of $\lambda$ into structured, uniform, and small components.

Even more issues arise if we aim to adapt our arguments to more transformations. In the process of running the degree lowering argument for the triple average
\begin{equation}\label{E:averages 4}
	\E_{n\in[N]}\int f_0\cdot T_1^{a(n)}f_1\cdot T_2^{a(n)}f_2\cdot T_3^{a(n)}f_3\; d\mu,
\end{equation}
one would reduce the problem to the scenario when $f_2\in\CE(T_2)\cup\CE(T_2T_1\inv)$ whereas $f_3\in\CE(T_3)\cup\CE(T_3T_1\inv)\cup\CE(T_3T_2\inv)$. If, say, $f_2\in\CE(T_2)$ and $f_3\in\CE(T_3T_1\inv)$, then we run again into the problem of incompatibility: $f_3$ is acted on by $T_3$ despite being a nonergodic eigenfunction of $T_3T_1\inv$. If in turn we compose \eqref{E:averages 4} with $T_1^{-a(n)}$, getting
\begin{align*}
    \E_{n\in[N]}\int f_1\cdot T_1^{-a(n)}f_0\cdot (T_2T_1\inv)^{a(n)}f_2\cdot (T_3T_1\inv)^{a(n)}f_3\; d\mu,
\end{align*}
then $f_3$ is compatible with the transformation that acts on it, but this is no longer the case for $f_2$. It is clear that there is no reparametrization of the integral that makes all the eigenfunctions compatible, and we currently do not know how to handle this problem.

In summary, while our approach opens a path toward resolving
Conjecture~\ref{Con1} and Conjecture~\ref{Con2} (when all polynomials coincide),
formidable obstacles remain for the case of $\ell\geq 3$ transformations or mean convergence analogs of Theorems \ref{T:Main1} and \ref{T:Main2}.

\section{Structured extensions}\label{S: structured extension}
 Let $(X, \CX, \mu, T_1, \ldots, T_\ell)$ be an ergodic system. In this section, we construct its structured extension on which a certain box factor of interest takes a particularly pleasing form.
Specifically, we show the following result, which we will only use for $\ell=2$ in this article.
\begin{theorem}[Existence of a structured extension]\label{T: structured extension}
    Let $(X, \CX, \mu, T_1, \ldots, T_\ell)$ be an ergodic system. Then it admits an ergodic extension $(Y, \CY, \nu, S_1, \ldots, S_\ell)$ satisfying
    \begin{align}\label{E: structural property}
        \CZ(S_1, \ldots, S_\ell, S_\ell) = \CI(S_1)\vee \cdots \vee \CI(S_{\ell-1})\vee\CZ_1(S_{\ell}).
    \end{align}
\end{theorem}
The proof of Theorem \ref{T: structured extension} consists of several steps of vastly differing complexity:
\begin{enumerate}
    \item First, we derive a (pretty straightforward) inverse theorem for a certain finitary box norm analogous to $\nnorm{\cdot}_{S_1, \ldots, S_\ell, S_\ell}$ for compactly supported functions on $\Z^\ell$ (\cref{SS: finitary box norms}).
    \item Second, we use a variant of the arithmetic regularity lemma to obtain a decomposition of an arbitrary function on $\Z^\ell$ into terms that are ``structured'' and ``uniform'' with respect to the aforementioned finitary box norm as well as a small error term (\cref{SS: ARL}).
    \item Third (this is where the bulk of the work goes), we construct an ergodic extension $(Y, \CY, \nu, S_1, \ldots, S_\ell)$ of the factor 
    \begin{align*}
        \CZ := \CZ(T_1, \ldots, T_\ell, T_\ell)
    \end{align*}
    in such a way that any $\CZ$-measurable function on $X$ lifts to a function in \eqref{E: structural property} (\cref{SS: main construction}).
    \item Fourth, we upgrade the construction from an extension of the factor to the extension of the full system (\cref{SS: extension}).
\end{enumerate}
Our argument is heavily inspired by a blog post of Tao \cite{Tao15} and recent work of Leng \cite[Section 9]{Leng25}. 
It would be interesting to see if a structured extension satisfying \cref{T: structured extension} can be constructed by purely ergodic means, in a finite number of steps and without resorting to finitary tools, in a manner similar to Host's construction of magic systems \cite{H09}.

The utility of working inside the structured extension  of \cref{T: structured extension} comes from the following inverse theorem
that will play a key role in our degree lowering argument later on. Its proof is very close to the proof of Proposition \ref{P: inverse theorem magic} and so we skip it.
\begin{proposition}[Soft quantitative inverse theorem]\label{P: inverse theorem}
    Let $(X, \CX, \mu, T_1, \ldots, T_\ell)$ be a system with the property that
    \begin{align*}
        \CZ(T_1, \ldots, T_\ell, T_\ell) = \CI(T_1)\vee\cdots\vee\CI(T_{\ell-1})\vee\CZ_1(T_\ell).
    \end{align*}
    Then for any $\veps>0$ there exists $\delta>0$ (depending on $\veps$ and the system) such that if a 1-bounded function $f\in L^\infty(\mu)$ satisfies
    $$
    \nnorm{f}_{T_1, \ldots, T_\ell, T_\ell}\geq \veps,
    $$
    then there exist 1-bounded functions $g_1\in I(T_1), \ldots, g_{\ell-1}\in I(T_{\ell-1}), \chi\in\CE(T_\ell)$ for which the integral below is real and
    \begin{align*}
        \int f\cdot g_1\cdots g_{\ell-1}\cdot\chi\; d\mu \geq \delta.
    \end{align*}
\end{proposition}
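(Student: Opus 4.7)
The argument will faithfully transcribe the proof of Proposition \ref{P: inverse theorem magic}, with the algebra $I(T_\ell)$ replaced by $Z_1(T_\ell)$ via its spanning by nonergodic eigenfunctions. I will introduce the auxiliary seminorm
$$
\nnorm{f}^*:=\sup\rem{\abs{\int f\cdot g_1\cdots g_{\ell-1}\cdot\chi\; d\mu}\colon g_j\in I(T_j),\ \chi\in\CE(T_\ell),\text{ all } 1\text{-bounded}}
$$
on $L^\infty(\mu)$, together with the multilinear form
$$
A((f_\ueps)_\ueps):=\lim_{H\to\infty}\E_{h\in[H]^{\ell+1}}\int\prod_{\ueps\in\{0,1\}^{\ell+1}}\CC^{|\ueps|}\, T_1^{\eps_1 h_1}\cdots T_\ell^{\eps_\ell h_\ell+\eps_{\ell+1}h_{\ell+1}}f_\ueps\; d\mu
$$
on $(L^{2^{\ell+1}}(\mu))^{2^{\ell+1}}$, whose existence (as a single limit replacing the iterated limits in the definition of $\nnorm{\cdot}_{T_1,\ldots,T_\ell,T_\ell}^{2^{\ell+1}}$) is guaranteed by \cite{BL15,H09}.

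The crucial point is that $\nnorm{\cdot}^*$ qualitatively controls $A$ at the coordinate indexed by $0$. Indeed, if $\nnorm{f_0}^* = 0$, then $\int f_0\cdot g_1\cdots g_{\ell-1}\cdot\chi\; d\mu = 0$ for every admissible test tuple; since $\CZ_1(T_\ell)$ is the closed linear span of $\CE(T_\ell)$ by \cite[Theorem 5.2]{FH18} and the hypothesis gives $\CZ(T_1,\ldots,T_\ell,T_\ell)=\CI(T_1)\vee\cdots\vee\CI(T_{\ell-1})\vee\CZ_1(T_\ell)$, a routine density argument (approximate each $\mathbf{1}_{E_j}$ with $E_j\in\CI(T_j)$ as itself and each $\mathbf{1}_F$ with $F\in\CZ_1(T_\ell)$ in $L^2$ by linear combinations of nonergodic eigenfunctions, rescaling to the 1-bounded class) shows that finite linear combinations of the admissible products $g_1\cdots g_{\ell-1}\cdot\chi$ are dense in $L^2(\CZ(T_1,\ldots,T_\ell,T_\ell))$. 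Hence $\E(f_0\mid\CZ(T_1,\ldots,T_\ell,T_\ell))=0$, which by \eqref{E: factor property} is equivalent to $\nnorm{f_0}_{T_1,\ldots,T_\ell,T_\ell}=0$, and then the Gowers-Cauchy-Schwarz inequality forces $A((f_\ueps)_\ueps)=0$. This is precisely the qualitative input required by the soft quantitative principle \cite[Proposition A.2]{FrKu22a}, applied with $X_j=L^{2^{\ell+1}}(\mu)$, $X_j'=L^\infty(\mu)$, $\norm{\cdot}_{X_j'}=\norm{\cdot}_{L^\infty(\mu)}$, $\norm{\cdot}_{X_j}=\norm{\cdot}_{L^{2^{\ell+1}}(\mu)}$, and seminorm $\nnorm{\cdot}:=\nnorm{\cdot}^*$.

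The principle furnishes, for every $\veps>0$, a $\delta>0$ depending only on $\veps$ and on the system such that $|A((f_\ueps)_\ueps)|\leq\veps^{2^{\ell+1}}$ whenever all $f_\ueps$ are 1-bounded and $\nnorm{f_0}^*\leq\delta$. Specializing to $f_\ueps=f$ for every $\ueps$ and invoking the dual identity \eqref{E: dual identity}, this reads $\nnorm{f}^*\leq\delta\Rightarrow\nnorm{f}_{T_1,\ldots,T_\ell,T_\ell}\leq\veps$. Taking the contrapositive and rotating $g_1$ by a unit-modulus scalar to make the integral real and nonnegative yields the desired conclusion. The only genuinely nontrivial step is the qualitative implication $\nnorm{f}^* = 0 \Rightarrow \nnorm{f}_{T_1,\ldots,T_\ell,T_\ell} = 0$, and that step is exactly where the structural hypothesis on the system enters; the rest is verbatim the proof of Proposition \ref{P: inverse theorem magic}, so I anticipate no further obstacle.
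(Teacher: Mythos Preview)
Your proposal is correct and matches the paper's intent: the paper explicitly states that the proof ``is very close to the proof of Proposition~\ref{P: inverse theorem magic} and so we skip it,'' and you have correctly filled in the adaptation, replacing $I(T_\ell)$ by $\CE(T_\ell)$ and using the fact that $Z_1(T_\ell)$ is spanned by nonergodic eigenfunctions to establish the qualitative implication $\nnorm{f_0}^*=0\Rightarrow\nnorm{f_0}_{T_1,\ldots,T_\ell,T_\ell}=0$.
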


\subsection{Defining factor maps from correlation identities}
To prove \cref{T: structured extension}, we plan to construct
a system $(Y, \CY, \nu, S_1, \ldots, S_\ell)$ and
a collection of $L^\infty(\nu)$ functions that imitate
the correlations of a  family of $L^\infty(\mu)$ functions that generate a dense subalgebra of $L^2(\CZ,\mu)$. We then need a
result that allows us to extract a factor map
from this property, and we carry out this standard
preliminary step in this subsection. This will give an extension of the factor $(X, \CZ, \mu, T_1, \ldots, T_\ell)$, and   with additional maneuvering, it will produce an extension of the original system. 
Recall that all systems in this article are assumed to be regular, and that $T^{h}=T_1^{h_1}\cdots T_\ell^{h_\ell}$ for every $h=(h_1,\ldots, h_\ell)\in \Z^\ell$. 

 We will use the following variant of a well-known result in operator theory
 (see for example \cite[Theorem 12.15]{EFHN15} that covers the case  $F=L^\infty(\mu)$).
 \begin{lemma}\label{L:Phi}
 		Let $(X, \CX, \mu, T_1,\ldots,T_\ell)$ and $(Y, \CY, \nu, S_1,\ldots, S_\ell)$ be systems and  $F \subseteq L^\infty(\mu)$ be  a conjugation-closed subalgebra that contains $1$,
 	 is invariant under $T_1, \ldots, T_\ell$,  and is
 		 dense in $L^2(\mu)$. Furthermore, let $\Phi \colon F \to L^\infty(\nu)$
 		 be a map that satisfies
\begin{enumerate}
	\item \label{I:E1} $\Phi$ is an algebra  homomorphism on $F$ and $\Phi(\overline{f})=\overline{\Phi (f)}$ for all $f\in F$;
	
	\item  \label{I:E2}	$\int \Phi(f)\, d\nu=\int f\, d\mu$ for all  $f\in F$;

	\item   \label{I:E3} $\Phi(T^hf)=S^h(\Phi(f))$  for all  $f\in F$ and  $h\in \Z^\ell$.
\end{enumerate}
Then  there exists a factor map
$\phi\colon Y\to X$, i.e., $\phi_*\nu=\mu$, $\phi\circ S^h=T^h\circ \phi$ for all $h\in \Z^\ell$.
 \end{lemma}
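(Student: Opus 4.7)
The plan is to first extend $\Phi$ to a $*$-algebra homomorphism on all of $L^\infty(\mu)$, then to extract the point map $\phi$ from the induced Boolean $\sigma$-algebra homomorphism on $\CX$, using the regularity of $X$. Combining (i) and (ii), for every $f\in F$ and $k\in\N$ one computes
\begin{align*}
\int |\Phi(f)|^{2k}\,d\nu=\int \Phi\bigbrac{(f\bar f)^k}\,d\nu=\int |f|^{2k}\,d\mu.
\end{align*}
Setting $k=1$ shows that $\Phi$ is an $L^2$-isometry; since $(Y,\CY,\nu)$ is a probability space, combining the above identity for all $k$ with a Chebyshev argument on the level sets $\{|\Phi(f)|>\norm{f}_{L^\infty(\mu)}+\veps\}$ yields the $L^\infty$-contractivity $\norm{\Phi(f)}_{L^\infty(\nu)}\leq \norm{f}_{L^\infty(\mu)}$.

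By $L^2$-density of $F$, $\Phi$ extends uniquely to a linear $L^2$-isometry $\tilde\Phi\colon L^2(\mu)\to L^2(\nu)$. A routine approximation argument, using the $L^\infty$-contractivity to produce uniformly bounded approximants and the $L^2$-continuity of multiplication on such sequences, promotes $\tilde\Phi$ to a $*$-algebra homomorphism $L^\infty(\mu)\to L^\infty(\nu)$ still satisfying the analogues of (ii) and (iii). For $A\in\CX$, the function $\mathbf{1}_A$ is a self-adjoint idempotent of $L^\infty(\mu)$, so $\tilde\Phi(\mathbf{1}_A)$ is a self-adjoint idempotent of $L^\infty(\nu)$, hence equals $\mathbf{1}_{\sigma(A)}$ for a set $\sigma(A)\in\CY$ defined uniquely modulo $\nu$-null sets. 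The assignment $\sigma\colon\CX\to\CY$ inherits countable additivity from the $L^2$-continuity of $\tilde\Phi$, preserves measure by (ii), and intertwines $T^h$ with $S^h$ by (iii).

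Since $(X,\CX,\mu)$ is regular, hence a standard Borel probability space, the classical realization theorem for measure-preserving Boolean $\sigma$-algebra homomorphisms between such spaces (see e.g.\ the exposition in Glasner's \emph{Ergodic Theory via Joinings}) produces a measurable point map $\phi\colon Y\to X$, unique $\nu$-a.e., with $\phi_*\nu=\mu$ and $\phi^{-1}(A)=\sigma(A)$ mod $\nu$-null sets for every $A\in\CX$. The intertwining $\sigma\circ T^{-h}=S^{-h}\circ\sigma$ translates into $\phi\circ S^h=T^h\circ\phi$ on a $\nu$-conull set for each fixed $h\in\Z^\ell$, and intersecting these conull sets over the countable group $\Z^\ell$ delivers the intertwining simultaneously on a single set of full $\nu$-measure. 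The main obstacle is the careful extension argument of the first two paragraphs: multiplicativity of $\tilde\Phi$ on arbitrary bounded functions does not follow for free from the $L^2$-isometry property, and must be secured via the $L^\infty$-contractivity obtained from the moment identity; once $\tilde\Phi$ is in place as a $*$-homomorphism, the remainder of the proof is standard measure-theoretic bookkeeping.
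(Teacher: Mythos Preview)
Your proof is correct and follows essentially the same approach as the paper: extend $\Phi$ to an $L^2$-isometry, show indicators go to indicators, obtain a measure-preserving measure-algebra $\sigma$-homomorphism, and invoke the standard realization theorem for standard Borel spaces. The only variation is in the middle step: the paper approximates $\mathbf{1}_A$ directly by $[0,1]$-valued elements of $F$ via Bernstein polynomials and concludes $(\Phi(\mathbf{1}_A))^2=\Phi(\mathbf{1}_A)$ by a subsequence argument, whereas you establish $L^\infty$-contractivity of $\Phi$ through the moment identity and then extend $\Phi$ to a $*$-homomorphism on all of $L^\infty(\mu)$. One small imprecision in your write-up: producing uniformly bounded approximants in $F$ for a general $f\in L^\infty(\mu)$ requires polynomial truncation using that $F$ is an algebra (not the $L^\infty$-contractivity of $\Phi$, which only controls the \emph{images}); the contractivity is then what lets you pass multiplicativity to the limit.
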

\begin{proof}
The argument is rather standard, so we only sketch it.

Our assumptions give that $\Phi$ is linear and $\norm{\Phi(f)}_{L^2(\nu)}=\norm{f}_{L^2(\mu)}$ for $f\in F$, i.e., $\Phi$ is a linear isometry on a dense subspace of $L^2(\mu)$ and hence extends uniquely to a linear isometry $\Phi\colon L^2(\mu)\to L^2(\nu)$. The goal is to show that there exists a measure-preserving
measure-algebra $\sigma$-homomorphism $\alpha\colon \CX\to \CY$~\footnote{I.e., $\nu(\alpha(A))=\mu(A)$,  $\alpha(A^c) =\alpha(A)^c$, for all $A\in \CX$, and  $\alpha\Bigbrac{\bigcup\limits_{i\in \N}A_i}=\bigcup\limits_{i\in \N}\, \alpha(A_i)$ for all $A_i\in \CX$.} with $\alpha(T^{-h}A)=S^{-h}\alpha(A)$ for all $A\in \CX$ and $h\in \Z^\ell$ and such that $\Phi({\bf 1}_A)={\bf 1}_{\alpha(A)}$ for all $A\in \CX$. Since we
are working with standard probability spaces, this then implies (see for example  \cite[Theorem~12.14]{EFHN15} or \cite[Theorem~2.2]{Wa82}) the existence of a point map $\phi\colon Y\to X$ that satisfies the required properties ($\phi$ is related to $\alpha$ via $\alpha(A)=\phi^{-1}(A)$ for $A\in \CX$).

Let $A\in \CX$. Since $F \subseteq L^\infty(\mu)$ is dense in $L^2(\mu)$ and closed under conjugation, there 
exist $u_n\in F$ taking values in $[-n,n]$ and such that $u_n\to {\bf 1}_A$ in $L^2(\mu)$. 
If  $\psi\colon \R\to [0,1]$ is given by $\psi(t):= \min\{t\cdot {\bf 1}_{[0,+\infty)}(t),1\}$,
using  the Weierstrass approximation theorem, we get that  there exist 
  Bernstein polynomials $p_n\colon [-n,n]\to [0,1]$ 
 such that 
$ \norm{\psi-p_n}_{L^\infty[-n,n]}\to 0$. 
So if for  $n\in \N$,  we let
$f_n:=p_n(u_n)$, then 
 $0\leq f_n\leq 1$, $f_n\in F$ (since $F$ is  an algebra that contains $1$), and 
 \begin{equation}\label{E:uniform}
	\lim_{n\to\infty} \norm{\psi(u_n)-f_n}_{L^\infty(\mu)}=0.
\end{equation} 
  Furthermore,  since  $\psi({\bf 1}_A)={\bf 1}_A$ and $\psi$ has Lipschitz constant $1$, we have 
$$
\norm{\psi(u_n)-{\bf 1}_A}_{L^2(\mu)} =
\norm{\psi(u_n)-\psi({\bf 1}_A)}_{L^2(\mu)} 
\leq \norm{u_n-{\bf 1}_A}_{L^2(\mu)} \to 0.
	$$ 
Combining this with \eqref{E:uniform}, we deduce that $f_n\to {\bf 1}_A$ in $L^2(\mu)$, and since 
$0\leq f_n\leq 1$, this implies that $f_n^2\to {\bf 1}_A$ in $L^2(\mu)$. From this and the
multiplicativity of the linear isometry $\Phi$ on $F$, we deduce that
$\norm{(\Phi({\bf 1}_A))^2-\Phi({\bf 1}_A)}_{L^1(\nu)}=0$, hence,
$(\Phi({\bf 1}_A))^2=\Phi({\bf 1}_A)$. As a consequence, there exists $\alpha\colon \CX\to \CY$ such that  $\Phi({\bf 1}_A)={\bf 1}_{\alpha(A)}$ for all $A\in \CX$.

 Using a similar argument, we get that   for all $A,B\in \CX$ we have 
$$\Phi({\bf 1}_{A\cap B})={\bf 1}_{\alpha(A)\cap \alpha(B)} \quad \text{and} \quad  \Phi({\bf 1}_{A^c})={\bf 1}_{\alpha(A)^c},
$$ showing that $\alpha$ is a measure-algebra homomorphism. Using the monotone convergence theorem, we deduce that $\alpha$ is a $\sigma$-homomorphism. In a similar
fashion, we show that $\Phi({\bf 1}_{T^{-h}A})={\bf 1}_{S^{-h}\alpha(A)}$, hence
$\alpha(T^{-h}A)=S^{-h}\alpha(A)$ for all $A\in \CX$ and $h\in \Z^\ell$. Lastly, 
$$
\nu(\alpha(A))=\norm{\Phi({\bf 1}_A)}_{L^2(\nu)}=\norm{{\bf 1}_A}_{L^2(\mu)}=\mu(A),
$$
for all $A\in \CX$.  This  completes  the proof of the asserted properties.
\end{proof}

\begin{lemma}\label{L:factor}
	Let $(X, \CX, \mu, T_1,\ldots,T_\ell)$ and $(Y, \CY, \nu, S_1,\ldots, S_\ell)$ be systems and $(f_n)_n\subseteq L^\infty(\mu)$ a collection of functions, closed under conjugation,  that generates a dense subalgebra of $L^2(\mu)$.
	Suppose that there exist  functions
	$(g_n)_n\subseteq L^\infty(\nu)$  such that
	\begin{equation}\label{E:fg}
		\int T^{h_1}f_{n_1}\cdots T^{h_r}f_{n_r}\, d\mu=
		\int S^{h_1}g_{n_1}\cdots S^{h_r}g_{n_r}\, d\nu
	\end{equation}
	for all $r\in \N$, $h_1,\ldots, h_r\in \Z^\ell$, and $n_1,\ldots, n_r\in \N$.  Then
	$(X, \CX, \mu, T_1,\ldots, T_\ell)$ is a factor of  $(Y, \CY, \nu, S_1,\ldots, S_\ell)$.
\end{lemma}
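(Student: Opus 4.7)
The plan is to reduce \cref{L:factor} to \cref{L:Phi} by constructing an appropriate algebra homomorphism. Let $F \subseteq L^\infty(\mu)$ denote the unital $\C$-subalgebra generated by the set $\{T^h f_n : h \in \Z^\ell, n \in \N\}$. Writing $\overline{f_n} = f_{\sigma(n)}$ for the involution $\sigma \colon \N \to \N$ encoded by the conjugation closure of $(f_n)_n$, each generator $T^h f_n$ of $F$ has its conjugate $T^h f_{\sigma(n)}$ also in $F$; hence $F$ is conjugation-closed. It is $T_j$-invariant because each $T_j$ is an algebra automorphism of $L^\infty(\mu)$, and dense in $L^2(\mu)$ because $(f_n)_n$ already generates a dense subalgebra. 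Thus $F$ satisfies the domain hypotheses of \cref{L:Phi}.

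I would then define $\Phi \colon F \to L^\infty(\nu)$ on a generic element by
\[
\Phi\Big(\sum_i c_i \prod_j T^{h_{i,j}} f_{n_{i,j}}\Big) := \sum_i c_i \prod_j S^{h_{i,j}} g_{n_{i,j}}.
\]
The crucial and main technical step is proving that $\Phi$ is well-defined as a map into $L^\infty(\nu)$, equivalently as a map into $L^2(\nu)$ modulo a.e. equality. Concretely, if $F = \sum_i c_i \prod_j T^{h_{i,j}} f_{n_{i,j}}$ vanishes in $L^\infty(\mu)$, then for every $G \in F$ one has $\int F \bar{G} \, d\mu = 0$; using the conjugation closure of $(f_n)_n$ to re-express $\bar{G}$ as another element of the same product form (with $f_{\sigma(n)}$'s replacing $\bar{f}_n$'s), and applying \eqref{E:fg} termwise to transport each integral to the $\nu$-side, one obtains $\int \Phi(F) \cdot \Phi_\sigma(\bar{G}) \, d\nu = 0$, where $\Phi_\sigma$ is the notational analogue of $\Phi$ that uses $g_{\sigma(n)}$ in place of $\overline{g_n}$. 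Under the $\ast$-compatibility $\overline{g_n} = g_{\sigma(n)}$ in $L^2(\nu)$ this reads $\int \Phi(F) \cdot \overline{\Phi(G)} \, d\nu = 0$; specializing to $G = F$ yields $\|\Phi(F)\|_{L^2(\nu)} = 0$, hence $\Phi(F) = 0$ a.e.

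This conjugation compatibility is the heart of the argument and the main expected obstacle. It can be arranged either by construction (in the settings where the lemma is applied in this paper, such as Furstenberg-type systems built from sequences of bounded functions, the $g_n$'s come with a natural compatible conjugation) or as a preliminary normalization step, for instance by replacing each $g_n$ with $\tilde{g}_n := (g_n + \overline{g_{\sigma(n)}})/2$ and verifying that \eqref{E:fg} persists via the $r$-fold identity combined with complex conjugation. Once $\Phi$ is well-defined, the three properties of \cref{L:Phi} follow routinely from the construction: (i) from multiplicativity and $\ast$-compatibility; (ii) from \eqref{E:fg} extended linearly, since the integral of a product generator $\prod_j T^{h_j} f_{n_j}$ equals the integral of its image $\prod_j S^{h_j} g_{n_j}$; and (iii) from $\Phi(T^{h+h'} f_n) = S^{h+h'} g_n = S^h \Phi(T^{h'} f_n)$ extended multiplicatively and linearly to all of $F$. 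Applying \cref{L:Phi} then produces the desired factor map $\phi \colon Y \to X$.
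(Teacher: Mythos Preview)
Your approach is essentially the paper's: define $\Phi$ on the shift-invariant unital algebra $F$ generated by $(f_n)_n$, verify well-definedness, then invoke \cref{L:Phi}. The paper packages the well-definedness check as the single isometry identity
\[
\int \Bigl|\sum_i c_i \prod_j T^{h_{i,j}} f_{n_{i,j}}\Bigr|^2 d\mu
= \int \Bigl|\sum_i c_i \prod_j S^{h_{i,j}} g_{n_{i,j}}\Bigr|^2 d\nu,
\]
asserting that this follows by expanding both squares and using \eqref{E:fg} together with conjugation closure of $(f_n)_n$; your argument unfolds the same computation by testing against $\bar G$ for $G\in F$. Properties \eqref{I:E1}--\eqref{I:E3} of \cref{L:Phi} are then routine in both write-ups.

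You are right that the $\ast$-compatibility $\overline{g_n}=g_{\sigma(n)}$ is the crux: expanding the right-hand side above (equivalently, in your version, identifying $\Phi_\sigma(\bar G)$ with $\overline{\Phi(G)}$) requires exactly this, and the paper is terser about it than you are. However, your proposed normalization $\tilde g_n:=(g_n+\overline{g_{\sigma(n)}})/2$ does \emph{not} fill the gap. Expanding $\int \prod_{j=1}^r S^{h_j}\tilde g_{n_j}\,d\nu$ produces $2^r$ terms indexed by subsets $A\subseteq[r]$; the extreme cases $A=[r]$ and $A=\emptyset$ are handled by \eqref{E:fg} and its complex conjugate respectively, but the genuinely mixed terms (some factors $g_{n_j}$, others $\overline{g_{\sigma(n_j)}}$) are not controlled by either identity, so there is no reason \eqref{E:fg} should persist for $(\tilde g_n)_n$. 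Your option~(1) --- that in the paper's single application the $g_n$'s are constructed so that $\overline{g_n}=g_{\sigma(n)}$ can be checked directly --- is the operative resolution.
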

\begin{proof}
	 We let
	 $F$ be the $T_1,\ldots, T_\ell$-invariant subalgebra generated by $(f_n)_n\cup \{1\}$ and define
	$\Phi\colon F\to L^\infty(\nu)$ via $\Phi(1):=1$ and 
	\begin{equation}\label{E:algebraid}
		\Phi\Big(	\sum_{i=1}^l c_i\, T^{h_{i,1}}f_{n_{i,1}}\cdots T^{h_{i,r}}f_{n_{i,r}}\Big):=
		\sum_{i=1}^l c_i\,  S^{h_{i,1}}g_{n_{i,1}}\cdots S^{h_{i,r}}g_{n_{i,r}},
	\end{equation}
	for all
	$l\in \N$, $h_{i,1}\in \Z^\ell$, $n_{i,1}\in \N$, $c_i\in \C$.  We claim that $\Phi$ is well defined.
	To this end, since $\Phi$ is linear, 	it suffices to  establish the implication
	$$
	\sum_{i=1}^l c_i\, T^{h_{i,1}}f_{n_{i,1}}\cdots T^{h_{i,r}}f_{n_{i,r}}=0 \,   \implies
	\sum_{i=1}^l c_i\,  S^{h_{i,1}}g_{n_{i,1}}\cdots S^{h_{i,r}}g_{n_{i,r}}=0,
	$$
	where
	$l\in \N$, $h_{i,1}\in \Z^\ell$, $n_{i,1}\in \N$, $c_i\in \C$,  the first identity holds $\mu$-a.e. and the second $\nu$-a.e.  Equivalently, it suffices to show    that
	$$
	\int	\Bigabs{\sum_{i=1}^l c_i\, T^{h_{i,1}}f_{n_{i,1}}\cdots T^{h_{i,r}}f_{n_{i,r}}}^2\, d\mu =0 \,   \implies
	\int \Bigabs{\sum_{i=1}^l c_i\,  S^{h_{i,1}}g_{n_{i,1}}\cdots S^{h_{i,r}}g_{n_{i,r}}}^2\, d\nu=0.
	$$
	This is a consequence of the identity
	\begin{equation}\label{E:isometry}
		\int	\Bigabs{\sum_{i=1}^l c_i\, T^{h_{i,1}}f_{n_{i,1}}\cdots T^{h_{i,r}}f_{n_{i,r}}}^2\, d\mu =
		\int \Bigabs{\sum_{i=1}^l c_i\,  S^{h_{i,1}}g_{n_{i,1}}\cdots S^{h_{i,r}}g_{n_{i,r}}}^2\, d\nu,
	\end{equation}
	which can be easily established by 	 expanding the squares on the left and right  hand side and using \eqref{E:fg} (we also used here that the set $F$ is closed under conjugation).

	Using \eqref{E:algebraid} we easily get that Properties \eqref{I:E1}-\eqref{I:E3} of \cref{L:Phi} are satisfied, and we deduce that there exists a factor map $\phi\colon Y\to X$, as desired. 	
\end{proof}

\subsection{Finitary box norms}\label{SS: finitary box norms}
We now switch gears to the finitary setting in order to prove a finitary inverse theorem that will later play a key part in the proof of Theorem \ref{T: structured extension}.
Let $f\colon \Z^\ell\to\C$ be finitely supported. For $h,h'\in\Z$, we define its \textit{(symmetric and asymmetric) multiplicative derivatives} to be
\begin{align*}
    \Delta'_{(h,h')}f(x):=f(x+h)\, \overline{f}(x+h')\quad \textrm{and}\quad \Delta_h f(x) := f(x)\, \overline{f}(x+h),
\end{align*}
and then construct higher-degree derivatives via
\begin{align*}
    \Delta'_{(h_1,h_1'), \ldots, (h_s,h_s')}f:= \Delta_{(h_1,h_1')}'\cdots\Delta_{(h_s,h_s')}'f\quad \textrm{and}\quad \Delta_{h_1, \ldots, h_s}f := \Delta_{h_1}\cdots\Delta_{h_s}f.
\end{align*}
Given finite sets $E_1, \ldots, E_s\subseteq\Z^\ell$, we define the \textit{box norm} of $f$ along $E_1, \ldots, E_s$ to be
\begin{align*}
    \norm{f}_{E_1, \ldots, E_s} &:= \Big(\E_{h_1,h_1'\in E_1}\cdots \E_{h_s,h_s'\in E_s} \sum_{x\in\Z^\ell}\Delta'_{(h_1,h'_1), \ldots, (h_s,h'_s)}f(x)\Big)^{\frac{1}{2^s}}\\
    &=\Big(\sum_{m_1, \ldots, m_s\in\Z^\ell}\mu_{E_1, \ldots, E_s}(m) \sum_{x\in\Z^\ell}\Delta_{m_1, \ldots, m_s}f(x)\Big)^{\frac{1}{2^s}},
\end{align*}
where
\begin{align}\label{E: weight}
    \mu_{E_1, \ldots, E_s}(m) :=  \prod_{j=1}^s \E_{h_j,h'_j\in E_j}\textbf{1}_{m_j = h_j'-h_j}
\end{align}
for any $m=(m_1, \ldots, m_s)\in\Z^{\ell s}$.
We will need the following two finitary inverse theorems. The first one plays a merely auxiliary role and can be found e.g. in \cite[Lemma A.5]{KKL24b}.
\begin{lemma}[Finitary $U^2$ inverse theorem]\label{L: U^2 inverse}
    Let $\delta>0$, $N\in\N$, and let $f\colon\Z^2\to\C$ be 1-bounded and supported on $[\pm N]^2$. Then there exist $\phi, \psi\colon \Z\to\T$ such that
    \begin{align*}
        \norm{f}_{e_1[\pm N], e_1[\pm N]}^4\geq \delta N^2 \quad \Longrightarrow \quad \sum_{x,y\in\Z}f(x,y)\, e(\phi(y)x+\psi(y)) \gg \delta^{3/2}N^2.
    \end{align*}
\end{lemma}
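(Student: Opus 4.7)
The plan is to reduce the two-dimensional statement to the classical one-dimensional $U^2$ inverse theorem by exploiting that both sets $E_1 = E_2 = e_1[\pm N]$ lie along the first coordinate axis. In the first step, I would observe that the multiplicative derivatives appearing in the definition of $\norm{f}_{e_1[\pm N], e_1[\pm N]}$ only shift the $x$-coordinate of their argument, so that with $f_y(x) := f(x,y)$ the box norm factorizes along horizontal slices as
\begin{equation*}
    \norm{f}_{e_1[\pm N], e_1[\pm N]}^4 = \sum_{y \in [\pm N]} \norm{f_y}_{[\pm N], [\pm N]}^4.
\end{equation*}
The hypothesis becomes $\sum_y \norm{f_y}^4 \geq \delta N^2$.

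The second step is pigeonholing: since each slice integrand is $1$-bounded with $O(N)$ nontrivial terms, there is the trivial upper bound $\norm{f_y}^4 \lesssim N$, and since the $y$-support lies in $[\pm N]$, I would extract a set $Y_0 \subseteq [\pm N]$ with $|Y_0| \gtrsim \delta N$ and $\norm{f_y}^4 \gtrsim \delta N$ for every $y \in Y_0$.

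The third step is to apply a one-dimensional Fourier analysis to each slice in $Y_0$. Using the factorization $\mu_{[\pm N], [\pm N]}(m_1, m_2) = \mu_{[\pm N]}(m_1)\mu_{[\pm N]}(m_2)$ together with the positive-definiteness of $\mu_{[\pm N]}$, whose Fourier transform is (up to normalization) the nonnegative Fej\'er kernel $|\widehat{\mathbf{1}_{[\pm N]}}(\theta)|^2$, one obtains an expression for $\norm{g}_{[\pm N], [\pm N]}^4$ controlled by $\int_{\T} |\hat g|^4$. The bound $\int |\hat g|^4 \leq \norm{\hat g}_\infty^2 \cdot \norm{g}_2^2 \leq (2N+1)\norm{\hat g}_\infty^2$ via Plancherel then yields
\begin{equation*}
    \norm{g}_{[\pm N], [\pm N]}^4 \lesssim N^{-1}\norm{\hat g}_\infty^2 \quad \text{for } 1\text{-bounded } g \text{ supported on } [\pm N].
\end{equation*}
Consequently, for each $y \in Y_0$ there exists $\phi(y) \in \T$ with $\abs{\sum_x f(x,y) e(\phi(y) x)} \gtrsim (\delta N \cdot N)^{1/2} = \delta^{1/2}N$. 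For $y \notin Y_0$ I would set $\phi(y) := 0$, and in all cases I would pick $\psi(y) \in \T$ so that $e(\psi(y))\sum_x f(x,y) e(\phi(y) x) \geq 0$.

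Summing the resulting nonnegative slice contributions then yields
\begin{equation*}
    \sum_{x,y \in \Z} f(x,y) e(\phi(y) x + \psi(y)) \geq \sum_{y \in Y_0} \Bigabs{\sum_x f(x,y) e(\phi(y) x)} \gtrsim |Y_0| \cdot \delta^{1/2} N \gtrsim \delta^{3/2} N^2,
\end{equation*}
as required. The main technical point is the Fourier inequality in the third step; the positive-definiteness of $\mu_{[\pm N]}$ is what allows one to cleanly extract $\norm{\hat g}_\infty$ after integrating out the weight, and everything else reduces to careful accounting of $N$-factors and pigeonhole.
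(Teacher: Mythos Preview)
Your proof is correct, and the paper itself does not prove this lemma---it simply cites \cite[Lemma~A.5]{KKL24b}, so there is no in-paper argument to compare against. Your slice-then-Fourier strategy is exactly the standard route: the factorization $\norm{f}_{e_1[\pm N], e_1[\pm N]}^4 = \sum_y \norm{f_y}_{[\pm N],[\pm N]}^4$ is immediate, the pigeonhole is clean, and the final summation is straightforward.

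The only place that deserves a bit more care is your third step. The claimed inequality $\norm{g}_{[\pm N],[\pm N]}^4 \lesssim N^{-2}\int_\T |\hat g|^4$ is true but not as immediate as your sketch suggests; one way to see it is to write $\norm{g}^4 = \int_\T \hat\mu(\theta)\bigl[\sum_m \mu(m)|c_m(Q_\theta)|^2\bigr]\,d\theta$, where $Q_\theta(\alpha)=\hat g(\alpha)\overline{\hat g(\alpha-\theta)}$ and $c_m$ denotes the $m$-th Fourier coefficient, then bound $\sum_m \mu(m)|c_m(Q_\theta)|^2 \le \mu(0)\norm{Q_\theta}_{L^2(\T)}^2$ by Parseval and finally $\norm{Q_\theta}_{L^2}^2 \le \int|\hat g|^4$ by Cauchy--Schwarz. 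With this in hand your chain $\norm{g}^4 \lesssim N^{-2}\int|\hat g|^4 \le N^{-1}\norm{\hat g}_\infty^2$ goes through and the rest follows. (Incidentally, if you skip the pigeonhole and instead sum $\norm{\hat f_y}_\infty^2 \gtrsim N\norm{f_y}^4$ directly over $y$ and then use $\norm{\hat f_y}_\infty \le 2N+1$, you get the stronger conclusion $\gg \delta N^2$; the stated $\delta^{3/2}$ is just what the application needs.)
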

The second one is what we really need; it is a straightforward generalization of \cite[Lemma A.8]{KKL24b}.
\begin{proposition}[Finitary box norm inverse theorem]\label{P: finitary inverse theorem}
    Let $\delta>0$, $N\in\N$, and let $f\colon\Z^\ell\to\C$ be 1-bounded and supported on $[\pm N]^\ell$. Then there exist $\phi\colon\Z^{\ell-1}\to\T$ and 1-bounded $b_1, \ldots, b_\ell\colon\Z^{\ell-1}\to\C$ supported on $[\pm N]^\ell$ such that
    \begin{align*}
        \norm{f}_{e_1[\pm N], \ldots, e_\ell[\pm N], e_\ell[\pm N]}^{2^{\ell+1}}\geq \delta N^\ell \quad \Longrightarrow \quad \sum_{x\in\Z^\ell}f(x)\, e(\phi(\hat x_\ell)x_\ell)\, b_1(\hat x_1)\cdots b_\ell(\hat x_\ell) \gg_\ell \delta^{3/2}N^\ell,
    \end{align*}
    where $\hat x_i := (x_1, \ldots, x_{i-1}, x_{i+1}, \ldots, x_\ell)$.
\end{proposition}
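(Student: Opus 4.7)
The plan is to adapt the strategy of \cite[Lemma~A.8]{KKL24b} (which handles the $\ell=2$ case in a closely related setting) to arbitrary $\ell$. The argument has three conceptual stages, followed by some bookkeeping.

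\emph{Reduction to a parametric $U^2$ problem.} Exploiting the commutativity of multiplicative derivatives in distinct coordinate directions and the product structure $\Z^\ell = \Z^{\ell-1} \times \Z$, I first rewrite
\begin{equation*}
\|f\|_{e_1[\pm N], \ldots, e_\ell[\pm N], e_\ell[\pm N]}^{2^{\ell+1}} = \E_{\vec{h}, \vec{h}' \in [\pm N]^{\ell-1}} \sum_{\hat{x}_\ell \in \Z^{\ell-1}} \|\Delta'_{\vec{h}, \vec{h}'} f(\hat{x}_\ell, \cdot)\|_{U^2([\pm N])}^4,
\end{equation*}
where $\Delta'_{\vec{h}, \vec{h}'}$ is the iterated symmetric derivative in the first $\ell-1$ coordinates. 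The hypothesis places this sum above $\delta N^\ell$, so by pigeonhole a $\gtrsim \delta$-fraction of tuples $(\vec{h}, \vec{h}', \hat{x}_\ell)$ satisfy $\|\Delta'_{\vec{h}, \vec{h}'} f(\hat{x}_\ell, \cdot)\|_{U^2}^4 \gtrsim \delta N$.

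\emph{Pointwise inversion.} For each such tuple, the one-dimensional $U^2$ inverse theorem yields a linear phase $e(\alpha(\vec{h}, \vec{h}', \hat{x}_\ell)\, x_\ell)$ correlating with $\Delta'_{\vec{h}, \vec{h}'} f(\hat{x}_\ell, \cdot)$ at level $\gtrsim \delta^{1/2} N$. Summing over the good tuples and absorbing the unimodular sign factors into a 1-bounded weight $c$, I obtain
\begin{equation*}
\E_{\vec{h}, \vec{h}'} \sum_{x} \Delta'_{\vec{h}, \vec{h}'} f(\hat{x}_\ell, x_\ell)\, e(\alpha(\vec{h}, \vec{h}', \hat{x}_\ell)\, x_\ell)\, c(\vec{h}, \vec{h}', \hat{x}_\ell) \gg \delta^{3/2} N^\ell,
\end{equation*}
the $\delta^{3/2}$ arising by multiplying the $\delta$-fraction of good tuples with the $\delta^{1/2}$ from each pointwise inversion.

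\emph{Degree lowering and extraction of product structure.} Setting $\vec{m} = \vec{h}' - \vec{h}$ and translating the summation variable, $\Delta'_{\vec{h}, \vec{h}'} f$ becomes the cubic product $\Delta'_{(0, m_1 e_1), \ldots, (0, m_{\ell-1} e_{\ell-1})} f$, one of whose $2^{\ell-1}$ factors is $f(x)$ itself. I then perform $\ell - 1$ consecutive Cauchy--Schwarz inequalities, one in each $m_i$-variable, to dualize the remaining $2^{\ell-1}-1$ shifted copies of $f$ into a product $\prod_{i=1}^\ell b_i(\hat{x}_i)$ of 1-bounded functions, each missing a single coordinate. The key algebraic fact enabling this is that the shift $m_i e_i$ alters only the $x_i$-slot, so the Cauchy--Schwarz dualization intrinsically produces a function independent of $x_i$---exactly the factor $b_i(\hat{x}_i)$. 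After a final averaging/pigeonholing that eliminates residual $\vec{m}$-dependence in the phase, I arrive at a $\phi\colon \Z^{\ell-1}\to\T$ and 1-bounded $b_1, \ldots, b_\ell$ witnessing the stated correlation $\gg_\ell \delta^{3/2} N^\ell$.

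\emph{Main obstacle.} The delicate part is the last stage: propagating the $\delta^{3/2}$ savings through the $\ell - 1$ Cauchy--Schwarz rounds without further degrading the $\delta$-exponent (only incurring an $\ell$-dependent combinatorial loss absorbed into the $\ll_\ell$), and verifying that the extracted phase $\phi$ depends only on $\hat{x}_\ell$ rather than on the auxiliary shift variables $\vec{m}$. The bookkeeping in \cite[Lemma~A.8]{KKL24b} handles the $\ell = 2$ instance in detail; the generalization amounts to iterating the same Cauchy--Schwarz peeling $\ell - 1$ times and carefully merging the resulting auxiliary phases into a single phase depending only on the coordinates $\hat{x}_\ell$.
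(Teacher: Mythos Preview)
Your first two stages are essentially correct and match the paper: reduce to an average of $U^2$ norms of the multiplicative derivatives in the first $\ell-1$ directions, then apply the $U^2$ inverse theorem fiberwise to pick up a phase and a $1$-bounded weight at cost $\delta^{3/2}$.

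The gap is in your third stage. You propose to run $\ell-1$ rounds of Cauchy--Schwarz in the shift variables $m_i$ to peel off the $2^{\ell-1}-1$ shifted copies of $f$, and you assert that this can be done ``without further degrading the $\delta$-exponent''. That assertion is not justified, and in the standard execution it is false: each Cauchy--Schwarz application squares the correlation, so after $\ell-1$ rounds the exponent $3/2$ would blow up to something like $3\cdot 2^{\ell-2}$, not $3/2$ with only an $\ell$-dependent constant. You correctly flag this as the ``delicate part'' and the ``main obstacle'', but you do not actually resolve it; saying the $\ell=2$ case is in \cite{KKL24b} does not help, since for $\ell=2$ there is only a single shifted copy and no iteration is needed.

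The paper sidesteps this entirely with a much cheaper maneuver: no Cauchy--Schwarz at all. After arranging positivity (so the kernel weight $\mu_N(m)$ can be dropped and the $m$-sum extended to all of $\Z^{\ell-1}$), one performs the change of variables $m_i \mapsto m_i - x_i$ for each $i\in[\ell-1]$ \emph{inside the $m$-sum}. Under this substitution, the factor $\mathcal C^{|\epsilon|} f(x+\sum_i \epsilon_i m_i e_i)$ becomes $\mathcal C^{|\epsilon|} f$ evaluated at a point whose $i$-th coordinate equals $m_i$ whenever $\epsilon_i=1$ and $x_i$ otherwise. Thus every factor with $\epsilon\neq 0$ is independent of at least one $x_i$, and for each fixed $m$ the product of all these factors (together with the $e(\psi)$ contribution) can be absorbed into $1$-bounded weights $b_{1,m}(\hat x_1),\ldots,b_{\ell,m}(\hat x_\ell)$; the linear phase becomes $e(\tilde\phi_m(\hat x_\ell)\,x_\ell)$ for some new $\tilde\phi_m$. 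One then simply pigeonholes over the $O_\ell(N^{\ell-1})$ values of $m$ that contribute to select a single $m$, yielding the claimed $\gg_\ell \delta^{3/2} N^\ell$ with no further loss in $\delta$. This also dissolves your second worry (dependence of the phase on $\vec m$): after pigeonholing, $m$ is fixed and $\phi:=\tilde\phi_m$ is a genuine function of $\hat x_\ell$ alone.
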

\begin{proof}
By the inductive formula for the box norms (e.g. \cite[Lemma 3.5]{KKL24a}), we have
\begin{align*}
    \sum_{m\in\Z^{\ell-1}}\mu_N(m)\norm{\Delta_{m_1, \ldots, m_{\ell-1}}f}_{e_\ell[\pm N], e_\ell[\pm N]}^4 = \delta N^\ell,
\end{align*}
where $\mu_N := \mu_{\underbrace{[\pm N]^\ell, \ldots, [\pm N]^\ell}_{\ell-1}}$ as in \eqref{E: weight}.
    Applying Lemma \ref{L: U^2 inverse} to each multiplicative derivative, we get that
    \begin{align*}
        \sum_{m\in\Z^{\ell-1}}\mu_N(m) \sum_{x\in\Z^\ell}\Delta_{m_1, \ldots, m_{\ell-1}}f(x)\, e(\phi_m(\hat x_\ell)x_\ell + \psi_m(\hat x_\ell)) \gg \delta^{3/2}N^\ell
    \end{align*}
     for some $\phi_m, \psi_m\colon \Z^{\ell-1}\to\T$.

    By modifying the phase function $\psi_{m}$ appropriately, we can assume that
    \begin{align}\label{E: positivity fin}
        \sum_{x\in\Z^\ell}\Delta_{m_1, \ldots, m_{\ell-1}}f(x)\, e(\phi_m(\hat x_\ell)x_\ell + \psi_m(\hat x_\ell))\geq 0
    \end{align}
    for all $m$. Using the positivity property \eqref{E: positivity fin} as well as the pointwise bound $$0\leq \mu_N(m)\leq\frac{1}{(2N+1)^{\ell-1}},$$ we remove the weight $\mu_N(m)$ and extend the range of $m$ to all of $\Z^{\ell-1}$ so that
    \begin{align*}
        \sum_{m\in\Z^{\ell-1}}\sum_{x\in\Z^\ell}\Delta_{m_1, \ldots, m_{\ell-1}}f(x)\, e(\phi_m(\hat x_\ell)x_\ell + \psi_m(\hat x_\ell)) \gg_\ell \delta^{3/2}N^{2\ell-1}.
    \end{align*}
    We then
    shift $m_i\mapsto m_i-x_i$ for every $i\in[\ell-1]$. Under this change of variables, $\Delta_{m_1, \ldots, m_{\ell-1}}f(x)$ becomes a product of $f(x)$ and $\ell-1$ functions, each of which does not depend on one of the coordinates $x_1, \ldots, x_{\ell-1}$. As a result, we obtain 1-bounded weights $b_{1,m}, \ldots, b_{\ell,m}\colon \Z^{\ell-1}\to\C$, which are supported on $[\pm N]^{\ell-1}$ and are identically 0 whenever $\max_i |m_i|>N$, as well as a phase $\tilde \phi_m\colon\Z^\ell\to\C$ such that
    \begin{align*}
        \sum_{m\in\Z^{\ell-1}}\sum_{x\in\Z^\ell}f(x)\, e(\tilde\phi_m(\hat x_\ell)x_\ell)b_{1,m}(\hat{x}_1)\cdots b_{\ell,m}(\hat{x}_\ell) \gg_\ell \delta^{3/2}N^{2\ell-1}.
    \end{align*}
    The result then follows on pigeonholing in the $O_\ell(N^{\ell-1})$ values $m\in\Z^{\ell-1}$ for which the sum over $x$ does not vanish and setting $b_i:= b_{i,m}$ for the choice of $m$ given by the pigeonhole principle.
\end{proof}

\subsection{Decomposition results and anti-uniformity}\label{SS: ARL}

Having established an inverse theorem for the finitary box norm under consideration, we proceed to establish a version of the arithmetic regularity lemma for this norm. In what follows, we will use the following definitions.
\begin{definition}
    Let $M, N\in\N$, and $\veps>0$. We say that $f\colon \Z^\ell\to\C$ is
    \begin{enumerate}
        \item $M$-\textit{structured}
if
\begin{align*}
    f(x) = \sum_{i=1}^M c_i \, e(\tilde\phi_i(\hat x_\ell)x_\ell)\, b_{i,1}(\hat{x}_1)\cdots b_{i,\ell}(\hat{x}_\ell)
\end{align*}
for some weights $c_i\in\C$ satisfying $\max\limits_{i\in[M]}|c_i|\leq M$, phase functions $\phi_i\colon \Z^\ell\to\T$, and 1-bounded functions $b_{i,j}\colon \Z^{\ell-1}\to\C$;
\item \textit{$(M,N)$-anti-uniform} if
\begin{align*}
    \abs{\E_{x\in[\pm N]^\ell} f(x)\, g(x)}\leq \frac{M}{ N^{\ell/2^{\ell+1}}}\cdot \norm{g}_{e_1[\pm N], \ldots, e_\ell[\pm N], e_\ell[\pm N]}
\end{align*}
for any $g\colon \Z^\ell\to\C$ {supported on $[\pm N]^\ell$};
\item \textit{$(\veps, N)$-uniform} if
\begin{align*}
    \norm{f}_{e_1[\pm N], \ldots, e_\ell[\pm N], e_\ell[\pm N]}^{2^{\ell+1}}\leq \veps N^\ell;
\end{align*}
\item \textit{$(\veps, N)$-small} if $\norm{f}_{L^2([\pm N]^\ell)}\leq \veps$.
    \end{enumerate}
\end{definition}

Proposition \ref{P: finitary inverse theorem} can then be upgraded to the following decomposition result.
\begin{proposition}[Arithmetic regularity lemma]\label{P: regularity lemma}
    Let $N\in\N$, $\veps>0$, and $\CF\colon\R_+\to\R_+$ be increasing.
    Then there exists $1\leq M\ll_{\veps, \CF}1$ such that for every 1-bounded $f\colon\Z^\ell\to\C$ supported on $[\pm N]^\ell$,
    we have a decomposition
    \begin{align}\label{E: ARL}
        f = f_{\str}+f_{\sml}+f_{\unif},
    \end{align}
     where
    \begin{enumerate}
        \item $f_{\str}$ is   $M$-structured and {1-bounded};
        \item $f_{\sml}$ is $(\veps, N)$-small;
        \item $f_{\unif}$ is $(1/\CF(M), N)$-uniform and supported on $[\pm N]^\ell$.\footnote{The assumption on the support of $f_{\unif}$ is needed in the proof of Proposition \ref{P: structured & anti-uniform} below to properly use the anti-uniformity assumption. It can be ensured because $f_{\unif}$ is the difference of $f$ and a conditional expectation of $f$ onto some structured factor, both of which are supported on $[\pm N]^\ell$. Note that we cannot in general assume that $f_{\str}$ is supported on $[\pm N]^\ell$: if $\ell=1$, then the structured term would be a bounded-degree trigonometric polynomial, and these are typically not supported on finite intervals.}
    \end{enumerate}
    Furthermore, all terms in \eqref{E: ARL} are 4-bounded.
\end{proposition}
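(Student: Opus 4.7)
The plan is to establish this by a standard energy-increment argument driven by the finitary inverse theorem in \cref{P: finitary inverse theorem}. I will call a function $a\colon \Z^\ell\to\C$ \emph{basic} if it takes the form $e(\phi(\hat x_\ell)x_\ell)\, b_1(\hat x_1)\cdots b_\ell(\hat x_\ell)$ with $\phi\colon \Z^{\ell-1}\to\T$ and 1-bounded $b_j\colon \Z^{\ell-1}\to\C$ supported on $[\pm N]^{\ell-1}$. Basic functions are 1-bounded, are supported on $[\pm N]^\ell$, and are closed under pointwise multiplication and complex conjugation, so any polynomial expression in basic functions is itself a sum of basic functions of effectively bounded complexity, and is therefore $M$-structured for a controlled value of $M$.

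First I would build a structured approximation of $f$ by iteration. Equip $L^2([\pm N]^\ell)$ with the normalized inner product $\langle u,v\rangle := \E_{x\in[\pm N]^\ell} u(x)\overline{v(x)}$, and at stage $k$, having constructed basic functions $a_1, \ldots, a_k$, let $g_k$ be the orthogonal projection of $f$ onto $V_k := \Span_\C\{a_1,\ldots, a_k\}$ and set $r_k := f - g_k$. If $r_k$ is $(1/\CF(M'), N)$-uniform for a parameter $M' = M'(k,\veps,\CF)$ overshooting the final output $M$, stop. Otherwise, \cref{P: finitary inverse theorem}, applied to a rescaling of $r_k$ to $L^\infty$-norm $1$, yields a basic function $a_{k+1}$ with $|\langle r_k, a_{k+1}\rangle|$ bounded below by an explicit function of $1/\CF(M')$, and I would append it to the collection. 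By Pythagoras each step gains a definite amount of $L^2$-energy, and since $\norm{g_k}_{L^2}\leq \norm{f}_{L^2}\leq 1$, the iteration terminates after $M\ll_{\veps,\CF}1$ steps, giving $f_{\unif} := r_M$ satisfying both the uniformity and the support conditions.

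Next I would convert the projection $g_M$ into a 1-bounded $M$-structured function $f_{\str}$, absorbing the discrepancy into $f_{\sml}$. A priori $g_M$ is a linear combination of $M$ basic functions with coefficients controlled by the Gram matrix of $\{a_1, \ldots, a_M\}$, so it is $M_0$-structured and pointwise bounded by some $C_0 = C_0(M)$. The Stone--Weierstrass theorem supplies a polynomial $P$ approximating the truncation $\tau(z) := z/\max(1,|z|)$ to error $\leq \veps/4$ on the closed disc $\{|z|\leq C_0\}$; since basic functions form an algebra under pointwise product, $P(g_M)$ is again $M'$-structured with $M' \ll_{M_0,\veps}1$, and $f_{\str} := P(g_M)/(1+\veps/4)$ is simultaneously 1-bounded and $M$-structured.

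The main technical hurdle is verifying that $f_{\sml} := g_M - f_{\str}$ is $(\veps, N)$-small in $L^2$: although $|P(g_M) - \tau(g_M)|$ is pointwise at most $\veps/4$, the piece $g_M - \tau(g_M)$ need not be $L^2$-small because $g_M$ can in principle be concentrated on a small set of large values. The standard remedy, implicit in the works of Tao \cite{Tao15} and Leng \cite{Leng25} that motivate the argument, is either to truncate at a high threshold $A\gg 1$ chosen via an $L^4$-type estimate on $g_M$ and then to rescale by $1/A$, or, more robustly, to replace the orthogonal projection by a conditional expectation onto a $\sigma$-algebra generated by a fine discretization of $\{a_1,\ldots,a_M\}$, which yields 1-boundedness automatically at the price of an additional polynomial-approximation step to recover the explicit structured form. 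Either route is the natural adaptation of the classical finitary arithmetic regularity lemma to the box norm controlled by \cref{P: finitary inverse theorem}.
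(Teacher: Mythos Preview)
Your overall strategy—an energy-increment iteration driven by \cref{P: finitary inverse theorem}, exploiting that basic functions form a graded conjugation-closed algebra—is exactly what the paper's sketch has in mind (it simply cites \cite[Proposition~2.7]{GT10a} and notes the algebra property). The substantive difference is your initial choice to project orthogonally onto $\Span_\C\{a_1,\ldots,a_k\}$ rather than to take a conditional expectation onto the $\sigma$-algebra generated by (discretized level sets of) the $a_i$'s. With orthogonal projection, the coefficients of $g_k$ are governed by a potentially ill-conditioned Gram matrix, so $\|g_k\|_\infty$ and hence $\|r_k\|_\infty$ can blow up with $k$; this contaminates the energy gain (the inverse theorem must be applied to $r_k/\|r_k\|_\infty$, and the resulting correlation lower bound degrades by a negative power of $\|r_k\|_\infty$) and outright breaks the $4$-boundedness conclusion for $f_{\unif}$ and $f_{\sml}$. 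You correctly diagnose this at the end and propose the conditional-expectation route as a remedy—this is precisely what the paper does (see the footnote in the statement: $f_{\unif}$ is $f$ minus a conditional expectation), and it is cleaner to adopt it from the outset rather than as a repair. With conditional expectation, $g_k$ is automatically $1$-bounded, so $r_k$ is $2$-bounded, the inverse theorem applies without rescaling, the energy increment terminates after $O_{\veps,\CF}(1)$ steps via the standard two-scale Green--Tao hierarchy, and the $4$-boundedness of all three pieces is immediate; the polynomial-approximation step you describe then recovers the explicit $M$-structured form of $f_{\str}$.
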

\begin{proof}[Sketch of proof]
	Proposition \ref{P: regularity lemma} can be proved just like a similar decomposition result for Gowers norms \cite[Proposition 2.7]{GT10a} (except that we do not need our structured term to be ``irrational'' in any sense, as given by \cite[Theorem 1.2]{GT10a}). The place of polynomial nilsequences of complexity $M$ is taken by $M$-structured functions. An important property used in the proof is that these functions form a graded conjugation-closed algebra in the sense that if $f,g$ are $M$-structured,  then  $\overline{f}$, $cf$,  $f+g$,  and $f\cdot g$ are $M^2$-structured functions for all $c\in \C$ with $|c|\leq M$.
	\end{proof}

A simple argument based on the Cauchy-Schwarz and van der Corput inequalities shows that every $M$-structured function is $(O_\ell(M^2), N)$-anti-uniform, and so a priori anti-uniformity can be conceived as a weak notion of structure. The next result shows the converse and formalizes the heuristic that the classes of structured and anti-uniform functions are in fact the same.
\begin{proposition}[Structured functions approximate anti-uniform functions]\label{P: structured & anti-uniform}
Let $\veps,A>0$ and $N\in\N$. Then there exists $M=O_{\veps, A,\ell}(1)$ such that if a 1-bounded $f\colon \Z^\ell\to\C$ is $(A,N)$-anti-uniform, then we can find a 1-bounded, $M$-structured $g\colon\Z^\ell\to\C$ satisfying
\begin{align*}
    \norm{f - g}_{L^2([\pm N]^\ell)}\leq \veps.
\end{align*}
\end{proposition}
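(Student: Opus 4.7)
The plan is to apply the arithmetic regularity lemma \cref{P: regularity lemma} to $f$ with a small parameter $\veps_0>0$ and a growth function $\CF\colon\R_+\to\R_+$, both of which I shall select at the end in terms of $\veps$, $A$, and $\ell$. This produces a decomposition $f = f_\str + f_\sml + f_\unif$ with some complexity parameter $M = M(\veps_0,\CF)$. My candidate approximant is simply $g := f_\str$, which is automatically $1$-bounded and $M$-structured by the conclusion of the regularity lemma. Using the triangle inequality,
\[
\|f - g\|_{L^2([\pm N]^\ell)} \leq \|f_\sml\|_{L^2([\pm N]^\ell)} + \|f_\unif\|_{L^2([\pm N]^\ell)} \leq \veps_0 + \|f_\unif\|_{L^2([\pm N]^\ell)},
\]
so the main task is to control the $L^2$-norm of the uniform part.

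To this end, I will expand the square as
\[
\|f_\unif\|_{L^2([\pm N]^\ell)}^2 = \langle f_\unif, f\rangle - \langle f_\unif, f_\str\rangle - \langle f_\unif, f_\sml\rangle,
\]
where $\langle\cdot,\cdot\rangle$ denotes the normalized inner product on $[\pm N]^\ell$, and estimate each summand in turn. The last summand is bounded by $O(\veps_0)$ via Cauchy--Schwarz together with the $4$-boundedness of $f_\unif$. For the first summand, I will invoke the $(A,N)$-anti-uniformity of $f$ against $f_\unif$; since $f_\unif$ is supported on $[\pm N]^\ell$ and $(1/\CF(M),N)$-uniform by construction, this yields $|\langle f_\unif, f\rangle| \leq A/\CF(M)^{1/2^{\ell+1}}$. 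Finally, the remark preceding the proposition---that every $M$-structured function is $(O_\ell(M^2),N)$-anti-uniform via a routine Cauchy--Schwarz and van der Corput argument---applied to $f_\str$ reduces the middle summand to $|\langle f_\unif, f_\str\rangle| = O_\ell(M^2/\CF(M)^{1/2^{\ell+1}})$.

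Combining these estimates gives $\|f_\unif\|_{L^2([\pm N]^\ell)}^2 \ll \veps_0 + (A + M^2)/\CF(M)^{1/2^{\ell+1}}$. The parameters are then chosen in the standard order: first fix $\veps_0 := c\veps^2$ for a small absolute constant $c$, and then prescribe the growth rate $\CF(M) \geq ((M^2+A)/\veps)^{2^{\ell+2}}$ for every $M \geq 1$. These choices force $\|f_\unif\|_{L^2([\pm N]^\ell)} \leq \veps/2$, and hence $\|f - g\|_{L^2([\pm N]^\ell)} \leq \veps$ after adjusting absolute constants. Since the regularity lemma outputs $M = O_{\veps_0,\CF,\ell}(1) = O_{\veps,A,\ell}(1)$, the conclusion follows. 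The only substantive technical input beyond the regularity lemma itself is the anti-uniformity of $M$-structured functions; although this is advertised as routine in the paper's preceding discussion, its careful verification---reducing each summand $c\, e(\phi(\hat x_\ell) x_\ell)\, b_1(\hat x_1)\cdots b_\ell(\hat x_\ell)$ to a dual function for the box norm $\|\cdot\|_{e_1[\pm N],\ldots,e_\ell[\pm N],e_\ell[\pm N]}$ via multilinear Cauchy--Schwarz---is the one place where one must be careful that the shift directions $e_1,\ldots,e_\ell,e_\ell$ align correctly with the product structure, and thus constitutes the main bookkeeping obstacle of the argument.
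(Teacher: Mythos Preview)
Your proof is correct and follows the same approach as the paper: apply the regularity lemma, take $g:=f_\str$, and bound $\|f_\unif\|_{L^2}^2$ by splitting it into inner products against $f$, $f_\str$, and $f_\sml$, using respectively the anti-uniformity of $f$, the anti-uniformity of $M$-structured functions, and Cauchy--Schwarz with the $4$-boundedness of $f_\unif$. Your tracking of the exponent $1/2^{\ell+1}$ on $\CF(M)$ is slightly more careful than the paper's, but since $\CF$ is chosen freely at the end this is immaterial.
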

\begin{proof}
Let $\veps_0>0$ and $\CF\colon\R_+\to\R_+$ be an increasing function, both to be specified later.
By Proposition \ref{P: regularity lemma}, there exists $M =O_{\veps, \CF}(1)$ and
 a decomposition
\begin{align*}
    f  = f_{\str} + f_{\sml} + f_{\unif},
\end{align*}
where
$f_{\str}, f_{\sml}, f_{\unif}\colon \Z^\ell\to \C$ are
such that $f_{\str}$ is 1-bounded and $M$-structured, $f_{\sml}$ is $(\veps_0, N)$-small,
and $f_{\unif}$ is $(1/\CF(M), N)$-uniform.
Hence,
\begin{align*}
    \norm{f-f_{\str}}_{L^2([\pm N]^\ell)}^2\leq \norm{f_{\unif}}_{L^2([\pm N]^\ell)}^2+O(\veps_0).
\end{align*}
To evaluate the term $\norm{f_{\unif}}_{L^2([\pm N]^\ell)}^2$, we use the properties of $f, f_{\str}, f_{\sml}, f_{\unif}$. Expanding and using the triangle inequality, we get
\begin{align*}
    \norm{f_{\unif}}_{L^2([\pm N]^\ell)}^2 &\leq \abs{\angle{f_{\unif}, f}_{L^2([\pm N]^\ell)}}+ \abs{\angle{f_{\unif}, f_{\sml}}_{L^2([\pm N]^\ell)}} + \abs{\angle{f_{\unif},  f_{\str}}_{L^2([\pm N]^\ell)}}.
\end{align*}
The first term is at most $A/\CF(M)$ by the anti-uniformity of $f$ (here, we use the fact that $f_{\unif}$ is supported on $[\pm N]^\ell$).
{The second one can be bounded from above by $4\veps_0$ using the Cauchy-Schwarz inequality and the 4-boundedness of $f_{\unif}$.}
Lastly, the third term is at most $O_\ell(M^2/\CF(M))$ by the previously mentioned anti-uniformity of structured functions.
Hence,
\begin{align*}
    \norm{f_{\unif}}_{L^2([\pm N]^\ell)}^2 &\ll_\ell \veps_0 + \max(A,M^2)/\CF(M).
\end{align*}
By choosing $\CF$ growing sufficiently fast with $\veps_0, A, \ell$, this is at most $O_\ell(\veps_0)$, and the result follows on taking $g:=f_{\str}$ and
$\veps_0 := c_\ell \veps$ for some small $c_\ell>0$.
\end{proof}

\subsection{Topology of structured functions}
Let $M\in\N$, and denote the space of $M$-structured functions on $\Z^\ell$ via $\Str_M$. Embedding $\Str_M$ into $\C^{\Z^\ell}$ in the obvious way, we can endow $\Str_M$ with the product topology.
We also consider the \textit{parameter space}
$$V_{M} := ((M\cdot \D)\times \T^{\Z^{\ell-1}}\times (\D^{\Z^{(\ell-1)}})^{\ell})^M.$$
Recalling that a $M$-structured function $\psi$ takes the form
\begin{align}\label{E: M-structured}
    \psi(x) = \sum_{i=1}^{M}c_{i} \, e(\phi_{i}(\hat x_\ell)x_\ell)\, b_{i,1}(\hat{x}_1)\cdots b_{i,\ell}(\hat{x}_\ell)
\end{align}
for $|c_i|\leq M$, $b_{i,1}, \ldots, b_{i,\ell}\colon \Z^{\ell-1}\to\D$, and $\phi_i\colon \Z^{\ell-1}\to\T$, we can define $\pi\colon V_M \to \Str_M$ via
\begin{align*}
    \pi((c_i, \phi_i, (b_{i,j})_{j\in[\ell]})_{i\in [M]})(x)
    := \sum_{i=1}^{M}c_{i} \, e(\phi_{i}(\hat x_\ell)x_\ell)\, b_{i,1}(\hat{x}_1)\cdots b_{i,\ell}(\hat{x}_\ell).
\end{align*}
This map is surjective by the very definition of $M$-structured functions, but it is not injective; for instance, permuting the indices $i\in[M]$ in the domain does not affect the output.
We endow $V_M$ with the product topology; then it is compact and metrizable, hence sequentially compact.

We will use the following fact.
\begin{lemma}[Sequential compactness of structured functions]\label{L: sequential compactness}
    Let $M\in\N$ and $(\psi_m)_m\subseteq\Str_M$. Then there exists a subsequence $(\psi_{m_k})_k$ and $\psi\in\Str_M$ such that $$\sup_{N\in\N}\lim\limits_{k\to\infty}\norm{\psi_{m_k}-\psi}_{L^2([\pm N]^\ell)} = 0.$$
\end{lemma}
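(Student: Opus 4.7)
The plan is to exploit the surjectivity of the parametrization map $\pi \colon V_M \to \Str_M$ together with the sequential compactness of the parameter space. For each $m$, choose a preimage $v_m = ((c_{i,m}, \phi_{i,m}, (b_{i,j,m})_{j \in [\ell]}))_{i \in [M]} \in \pi^{-1}(\psi_m)$, which exists because $\pi$ is surjective. Since $M\cdot\D$, $\T$, and $\D$ are compact metric spaces and $\Z^{\ell-1}$ is countable, each factor of $V_M$ is a countable product of compact metrizable spaces, hence compact metrizable; consequently $V_M$ itself is compact metrizable, and in particular sequentially compact. Extract a subsequence $(v_{m_k})_k$ converging in the product topology to some $v = ((c_i, \phi_i, (b_{i,j})_{j})_{i}) \in V_M$, and define $\psi := \pi(v)$. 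By construction $\psi \in \Str_M$.

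Next I would verify that $\psi_{m_k}(x) \to \psi(x)$ pointwise for every $x \in \Z^\ell$. Convergence in the product topology on $V_M$ is componentwise, so for every $y \in \Z^{\ell-1}$ one has $\phi_{i,m_k}(y) \to \phi_i(y)$ in $\T$ and $b_{i,j,m_k}(y) \to b_{i,j}(y)$ in $\D$, while $c_{i,m_k} \to c_i$. Fix $x = (\hat x_\ell, x_\ell) \in \Z^\ell$. Continuity of $e(\cdot)$, of multiplication by the integer $x_\ell$ on $\T$, and of finite sums and products of complex numbers then gives
\[
\psi_{m_k}(x) = \sum_{i=1}^M c_{i,m_k}\, e(\phi_{i,m_k}(\hat x_\ell) x_\ell)\, b_{i,1,m_k}(\hat x_1)\cdots b_{i,\ell,m_k}(\hat x_\ell) \;\longrightarrow\; \psi(x).
\]

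Finally, I would upgrade this pointwise convergence to the desired $L^2$ statement. Fix $N \in \N$. The set $[\pm N]^\ell$ is finite, and each $|\psi_{m_k}(x) - \psi(x)|$ is uniformly bounded (by $2M \cdot M = 2M^2$), so pointwise convergence on $[\pm N]^\ell$ trivially gives
\[
\lim_{k \to \infty} \|\psi_{m_k} - \psi\|_{L^2([\pm N]^\ell)}^2 = \lim_{k \to \infty} \E_{x \in [\pm N]^\ell} |\psi_{m_k}(x) - \psi(x)|^2 = 0.
\]
Since this holds for every $N$, taking the supremum over $N$ of a family of quantities all equal to zero yields the claim.

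There is essentially no deep obstacle here; the statement is a soft compactness-plus-continuity exercise. The only conceptual point worth highlighting is the metrizability of $V_M$, which crucially uses that $\Z^{\ell-1}$ is countable: this is what permits extraction of a convergent subsequence rather than merely a convergent subnet, and is what makes the parametrization by elements of $\T^{\Z^{\ell-1}}$ and $\D^{\Z^{\ell-1}}$ tractable in the sequential setting of the lemma.
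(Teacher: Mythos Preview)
Your proof is correct and follows essentially the same approach as the paper: lift to the parameter space $V_M$, use sequential compactness there to extract a convergent subsequence, and push back down via $\pi$ to obtain pointwise (hence $L^2$ on each finite box) convergence. You supply more detail on why $V_M$ is sequentially compact and on the pointwise convergence step, but the argument is the same.
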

\begin{proof}
    Since $\pi$ is surjective, for any $m\in\N$ there exists $\eta_m\in V_M$ for which $\pi(\eta_m) = \psi_m$. By the sequential compactness of $V_M$, we can find a subsequence $(\eta_{m_k})_k$ that converges pointwise to some $\eta\in V_M$. Let $\psi:=\pi(\eta)$.
    It is easy to verify that for every $N\in\N$ we have $\psi_{m_k}\to \psi$ in $L^\infty([\pm N]^\ell)$, and hence also in $L^2([\pm N]^\ell)$.
\end{proof}

We shall also require the following lemma.

\begin{lemma}[Total boundedness of structured functions]\label{L: total boundedness}
    Let $M,N\in\N$. Then the set of $M$-structured, 1-bounded functions is totally bounded with respect to the $L^2([\pm N]^\ell)$ metric. More specifically, for every $\veps>0$ there exists a finite subset $\CA_{M,N}\subseteq \Str_M$ consisting of 1-bounded functions such that for any 1-bounded $\psi\in\Str_M$, we have
    \begin{align*}
        \min_{\gamma\in \CA_{M,N}}\norm{\psi-\gamma}_{L^2([\pm N]^\ell)}\leq \veps.
    \end{align*}
\end{lemma}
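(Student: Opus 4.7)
The plan is to restrict attention to the values of the parameters that actually influence $\psi$ on $[\pm N]^\ell$, and exploit the fact that this reduced parameter space is compact.

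For a point $x=(x_1,\ldots,x_\ell)\in [\pm N]^\ell$ and any $j\in[\ell]$, the tuple $\hat x_j$ lies in $[\pm N]^{\ell-1}$, so the $L^2([\pm N]^\ell)$-norm of an $M$-structured function \eqref{E: M-structured} depends only on the restrictions $\phi_i|_{[\pm N]^{\ell-1}}$ and $b_{i,j}|_{[\pm N]^{\ell-1}}$. Define therefore the \emph{truncated parameter space}
\begin{equation*}
    V_{M,N}:=\Bigbrac{(M\cdot\D)\times \T^{[\pm N]^{\ell-1}}\times \bigbrac{\D^{[\pm N]^{\ell-1}}}^{\ell}}^M,
\end{equation*}
which is a finite product of compact metric spaces, hence a compact metric space. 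The evaluation map $\pi_N\colon V_{M,N}\to \C^{[\pm N]^\ell}$ sending a tuple $(c_i,\phi_i,(b_{i,j})_{j\in[\ell]})_{i\in[M]}$ to the function
\begin{equation*}
    x\mapsto \sum_{i=1}^{M}c_i\, e(\phi_i(\hat x_\ell)x_\ell)\, b_{i,1}(\hat x_1)\cdots b_{i,\ell}(\hat x_\ell),\qquad x\in[\pm N]^\ell,
\end{equation*}
is continuous (it is a finite sum of products of continuous functions on finitely many parameters) when $\C^{[\pm N]^\ell}$ is endowed with the $L^2([\pm N]^\ell)$ metric. Consequently its image is compact in $L^2([\pm N]^\ell)$.

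Let $K_{M,N}$ denote the intersection of this image with the closed unit ball of $L^\infty([\pm N]^\ell)$; this is the set of $[\pm N]^\ell$-restrictions of all $1$-bounded $M$-structured functions. As the intersection of a compact set with a closed set in the $L^2([\pm N]^\ell)$-topology (the $L^\infty$ unit ball is closed in $L^2$ for a finite measure space), $K_{M,N}$ is itself compact, and thus totally bounded. Fix therefore a finite set $\CA'_{M,N}\subseteq K_{M,N}$ that forms an $\veps$-net: every $1$-bounded $M$-structured function, restricted to $[\pm N]^\ell$, lies within $L^2([\pm N]^\ell)$-distance $\veps$ of some element of $\CA'_{M,N}$.

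It remains to lift $\CA'_{M,N}$ to a finite subset $\CA_{M,N}\subseteq\Str_M$ of \emph{globally} $1$-bounded functions on $\Z^\ell$. For each $\gamma'\in\CA'_{M,N}$ pick a preimage $(c_i,\phi_i,(b_{i,j})_{j\in[\ell]})_{i\in[M]}\in V_{M,N}$ under $\pi_N$, and extend each $b_{i,j}$ to a function on $\Z^{\ell-1}$ by setting it equal to zero whenever any of its coordinates has absolute value exceeding $N$ (and the $\phi_i$ to zero likewise). The resulting function $\gamma\in\Str_M$ agrees with $\gamma'$ on $[\pm N]^\ell$; moreover for any $x\notin[\pm N]^\ell$ some coordinate $x_k$ of $x$ satisfies $|x_k|>N$, and then $\hat x_j$ contains $x_k$ for every $j\neq k$, forcing $b_{i,j}(\hat x_j)=0$ and hence $\gamma(x)=0$. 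Therefore $\gamma$ is $1$-bounded on all of $\Z^\ell$. Taking $\CA_{M,N}$ to be the collection of these extensions yields the desired finite $\veps$-net, completing the proof. The only substantive step is the compactness of $K_{M,N}$, and this follows immediately once one observes that for fixed $N$ only finitely many parameter values matter.
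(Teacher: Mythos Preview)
Your argument is correct and rests on the same core observation as the paper's: on $[\pm N]^\ell$ everything lives in a compact finite-dimensional space. The paper's proof is a one-liner, however: the restrictions of $1$-bounded $M$-structured functions to $[\pm N]^\ell$ sit inside the compact set $\D^{[\pm N]^\ell}$, and any subset of a compact metric space is totally bounded; the $\veps$-net can then be taken inside this set of restrictions itself, each element of which is \emph{by definition} the restriction of some globally $1$-bounded element of $\Str_M$. Your detour through the truncated parameter space $V_{M,N}$ and the continuity of $\pi_N$ is therefore unnecessary, though not wrong. One minor caveat: your explicit lifting by extending the $b_{i,j}$ to zero outside $[\pm N]^{\ell-1}$ requires $\ell\geq 2$ (so that every $x\notin[\pm N]^\ell$ has some $j\neq k$ with $b_{i,j}(\hat x_j)=0$); for $\ell=1$ there is nothing to zero out, and an element of your $K_{M,N}$ need not be the restriction of a \emph{globally} $1$-bounded trigonometric polynomial. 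The paper's argument sidesteps this by choosing the net directly within the restrictions of globally $1$-bounded functions.
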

\begin{proof}
Note that $$\Str_{M,N} := \{f|_{[\pm N]^\ell}\colon  f\in\Str_M,\; 1\textrm{-bounded}\}$$ is a subset of the compact set $\D^{[\pm N]^\ell}$. The claim then follows from the fact that every subset of a compact set is totally bounded.
\end{proof}

 \subsection{Ergodicity of structured extensions}\label{SS: ergodicity}
We now move away from the finitary world in order to prove one more lemma. It will be used towards the end of the proof of \cref{P: structured extension} in the next section to ensure that the structured extension can be taken to be ergodic.
\begin{lemma}\label{L:magicergodeco}
 Let $(X, \CZ, \mu, T_1, \ldots, T_\ell)$ be a system
where  $\CZ := \CZ_\mu(T_1, \ldots, T_\ell, T_\ell)$. Suppose that
$$
	\CZ = \CI_\mu(T_1)\vee \cdots \vee \CI_\mu(T_{\ell-1})\vee\CZ_{1,\mu}(T_{\ell}).
$$
If $\mu=\int \mu_x\, d\mu(x)$ is the ergodic disintegration of $\mu$ with respect to the joint action of $T_1,\ldots, T_\ell$, then for $\mu$-a.e. $x\in X$, we have
$$
	\CZ_{\mu_x} = \CI_{\mu_x}(T_1)\vee \cdots \vee \CI_{\mu_x}(T_{\ell-1})\vee\CZ_{1,\mu_x}(T_{\ell}).
$$
	\end{lemma}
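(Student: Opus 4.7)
Write $A_\nu := \CI_\nu(T_1)\vee\cdots\vee\CI_\nu(T_{\ell-1})\vee\CZ_{1,\nu}(T_\ell)$ for any $\angle{T_1,\ldots,T_\ell}$-invariant probability measure $\nu$, so that the hypothesis reads $\CZ_\mu = A_\mu$ and the goal is to show $\CZ_{\mu_x} = A_{\mu_x}$ for $\mu$-a.e.\ $x$. The plan is to establish both containments by first recording the relevant comparisons between factors under $\mu$ and under the ergodic components $\mu_x$. The inclusion $A_{\mu_x}\subseteq\CZ_{\mu_x}$ holds for every $x$ as a general feature of box seminorms: monotonicity yields $\CZ_{1,\mu_x}(T_\ell)=\CZ_{\mu_x}(T_\ell,T_\ell)\subseteq\CZ_{\mu_x}$, and for $j\in[\ell-1]$ the identity $\nnorm{g}_{T_j;\mu_x}=\norm{g}_{L^2(\mu_x)}$ for $T_j$-invariant $g$, applied to $g := f - \E_{\mu_x}(f|\CZ_{\mu_x})$ (which remains $T_j$-invariant because $\CZ_{\mu_x}$ is $T_i$-invariant for every $i$), together with monotonicity, forces any $T_j$-invariant $f$ to lie in $L^\infty(\CZ_{\mu_x},\mu_x)$, giving $\CI_{\mu_x}(T_j)\subseteq\CZ_{\mu_x}$. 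Conversely, for $\mu$-a.e.\ $x$ one has $\CI_\mu(T_j)\subseteq\CI_{\mu_x}(T_j)$ (null sets of $\mu$ remain null under $\mu_x$), $\CZ_{1,\mu}(T_\ell)\subseteq\CZ_{1,\mu_x}(T_\ell)$ (by \cite[Theorem 5.2]{FH18} combined with a fast-convergent $L^2$-approximation of elements of $Z_{1,\mu}(T_\ell)$ by linear combinations of nonergodic eigenfunctions, each of which persists as such under $\mu_x$), and $\CZ_\mu\subseteq\CZ_{\mu_x}$ (applying \eqref{E: ergodic decomposition} and \eqref{E: factor property} to $f - \E_\mu(f|\CZ_\mu)$). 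Altogether $A_\mu\subseteq A_{\mu_x}\subseteq\CZ_{\mu_x}$ for $\mu$-a.e.\ $x$.

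Next, fix a countable family $(f_n)\subseteq C(X)$ dense in $C(X)$, hence dense in $L^2(\mu_x)$ for every $x$, and set $g_n := \E_\mu(f_n|A_\mu) = \E_\mu(f_n|\CZ_\mu)$ (the two agree by the hypothesis) together with $h_n := f_n - g_n$. Then $\E_\mu(h_n|\CZ_\mu)=0$ implies $\nnorm{h_n}_{T_1,\ldots,T_\ell,T_\ell;\mu}=0$ by \eqref{E: factor property}; invoking \eqref{E: ergodic decomposition} yields $\nnorm{h_n}_{T_1,\ldots,T_\ell,T_\ell;\mu_x}=0$ for $\mu$-a.e.\ $x$, hence $\E_{\mu_x}(h_n|\CZ_{\mu_x})=0$ and, using $A_{\mu_x}\subseteq\CZ_{\mu_x}$, also $\E_{\mu_x}(h_n|A_{\mu_x})=0$. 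Since $g_n$ is $A_\mu$-measurable, it is $A_{\mu_x}$- and $\CZ_{\mu_x}$-measurable for $\mu$-a.e.\ $x$ by the comparisons above, and we conclude
$$\E_{\mu_x}(f_n|A_{\mu_x}) = g_n = \E_{\mu_x}(f_n|\CZ_{\mu_x})$$
for $\mu$-a.e.\ $x$, simultaneously for all $n$ upon taking a countable intersection of full-measure sets.

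For each such $x$, the density of $(f_n)_n$ in $L^2(\mu_x)$ and the continuity of the conditional expectation imply that $(g_n)_n$ is dense in both $L^2(A_{\mu_x},\mu_x)$ and $L^2(\CZ_{\mu_x},\mu_x)$; combined with $A_{\mu_x}\subseteq\CZ_{\mu_x}$ this forces the two closed subspaces to coincide, yielding the desired identity $A_{\mu_x}=\CZ_{\mu_x}$ modulo $\mu_x$-null sets. The main technical subtlety lies in the lifting $\CZ_{1,\mu}(T_\ell)\subseteq\CZ_{1,\mu_x}(T_\ell)$ recorded in the first paragraph: while each nonergodic eigenfunction of $T_\ell$ under $\mu$ trivially remains one under $\mu_x$, transferring the inclusion across the closed-span description of \cite[Theorem 5.2]{FH18} is handled by approximating in $L^2(\mu)$ at rate $2^{-k}$ along a countable generating subfamily and appealing to the $L^2$-version of \eqref{E: ergodic decomposition} to pass almost-sure convergence to $L^2(\mu_x)$ for $\mu$-a.e.\ $x$.
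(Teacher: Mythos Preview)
Your proof is correct and follows a genuinely different route from the paper's. The paper simply invokes the argument of \cite[Section~3.3]{Chu11}, modifying the definition of the generating functions $g_{\ell,n}$: for $j<\ell$ these are pointwise ergodic averages $g_{j,n}(x):=\lim_N \E_{h\in[N]}T_j^h f_n(x)$, while for $j=\ell$ they are the pointwise-defined degree-$2$ dual functions $g_{\ell,n}(x):=\lim_N \E_{h_1,h_2\in[N]} T_\ell^{h_1}\overline{f}_n\cdot T_\ell^{h_2}\overline{f}_n\cdot T_\ell^{h_1+h_2}f_n(x)$. The point of Chu's scheme is that these functions are defined via pointwise limits, hence represent the same measurable function regardless of whether one works with $\mu$ or with $\mu_x$; they are then automatically dense in the relevant factors for \emph{both} measures, and the rest of Chu's Lemma~3.14 applies verbatim.

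Your argument instead takes $g_n:=\E_\mu(f_n|A_\mu)$ and transfers the $A_\mu$-measurability of $g_n$ to $A_{\mu_x}$-measurability by a fast-rate $L^2(\mu)$ approximation combined with a Borel--Cantelli passage to $L^2(\mu_x)$. This is a legitimate and more self-contained substitute for Chu's pointwise machinery, at the cost of the extra approximation step you flag at the end. Two small remarks: the inclusion $\CZ_\mu\subseteq\CZ_{\mu_x}$ you record in the first paragraph is not actually used (you only need $A_{\mu_x}\subseteq\CZ_{\mu_x}$, which you establish directly), and its justification via ``applying \eqref{E: ergodic decomposition} to $f-\E_\mu(f|\CZ_\mu)$'' as written tells you about the complement rather than the factor itself; you may simply drop that clause. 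Also, when passing the approximants $P_k^{(n)}$ to $\mu_x$, be explicit that only countably many invariant factors and eigenfunctions are involved, so a single countable intersection of full-measure sets suffices.
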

\begin{proof}
	By \eqref{E: factor property}, we have
	$$
	f\bot  \CZ_\mu(T_1, \ldots, T_\ell, T_\ell)\quad \Longleftrightarrow \quad
 \nnorm{f}_{T_1,\ldots, T_\ell,T_\ell,\mu}=0.	
	$$
    Hence, our assumption is equivalent to the following implication
	 $$
	 f\, \bot \, \CI_\mu(T_1)\vee \cdots \vee \CI_\mu(T_{\ell-1})\vee\CZ_{1,\mu}(T_{\ell})\quad \implies\quad
	 \nnorm{f}_{T_1,\ldots, T_\ell,T_\ell,\mu}=0
	 $$
	  holding for every $f\in L^\infty(\mu)$. We would like to conclude that
	  for $\mu$-a.e. $x\in X$, the   following implication
	 $$
	 f\, \bot\,  \CI_{\mu_x}(T_1)\vee \cdots \vee \CI_{\mu_x}(T_{\ell-1})\vee\CZ_{1,\mu_x}(T_{\ell})\quad \implies\quad
	 \nnorm{f}_{T_1,\ldots, T_\ell,T_\ell,\mu_x}=0
	 $$
	 holds for every $f\in L^\infty(\mu_x)$.

     This can be proved by arguing as in \cite[Section~3.3]{Chu11} where a similar property was established for the seminorms $\nnorm{f}_{T_1,\ldots, T_\ell}$. We can repeat the same argument verbatim, the only difference is in the proof of  \cite[Lemma~3.14]{Chu11}
	where if $(f_n)_n$ is a countable dense family in $C(X)$, we define the functions
	$g_{1,n},\ldots, g_{\ell-1,n}$ (for $n\in \N$)  exactly as in \cite[Lemma~3.14]{Chu11} by the formula
	$$
	g_{j,n}(x):=\lim_{N\to\infty} \E_{h\in [N]} T_j^hf_n(x) \quad \textrm{for}\quad j=1,\ldots, \ell-1,
	$$
	and  we change the definition of the functions
	 $g_{\ell,n}$ (for $n\in \N$)  to
	$$
	g_{\ell,n}(x):=\lim_{N\to\infty}\E_{h_1,h_2\in[N]} T_\ell^{h_1}\overline{f}_n(x)\cdot T_\ell^{h_2}\overline{f}_n(x)\cdot T_\ell^{h_1+h_2}f_n(x)
	$$
	for those $x\in X$ for which the limit exists and $0$ otherwise. The reason for this change is that we want the collection $(g_{\ell,n})_n$ to be dense in $L^2(X, \CZ_{1,\mu}(T_\ell), \mu)$. The rest of the argument is exactly the same.
\end{proof}

\subsection{Structured extension of the factor}\label{SS: main construction}
Having prepared all the ingredients in the preceding sections, we are in the position
to prove Proposition \ref{P: structured extension}, which is the most laborious step in the proof of Theorem \ref{T: structured extension}.
\begin{proposition}[Structured extension of the factor]\label{P: structured extension}
    Let $(X, \CX, \mu, T_1, \ldots, T_\ell)$ be an ergodic system, and let $\CZ := \CZ(T_1, \ldots, T_\ell, T_\ell)$. Then $(X, \CZ, \mu, T_1, \ldots, T_\ell)$ admits an ergodic extension $(Y, \CY, \nu, S_1, \ldots, S_\ell)$ satisfying 
    \begin{align}\label{E: structural property 2}
    \CY = \CI(S_1)\vee \cdots \vee \CI(S_{\ell-1})\vee\CZ_1(S_{\ell}).
\end{align}
\end{proposition}
\begin{proof}
Our argument follows closely Tao \cite{Tao15} and Leng \cite[Section 9]{Leng25}. Throughout this proof, let $$\CZ := \CZ(T_1, \ldots, T_\ell, T_\ell).$$ Recall also that
${T}^h := T_1^{h_1}\cdots T_\ell^{h_\ell}$ for $h\in\Z^\ell$. For $\f:=(f_\ueps)_{\ueps\in \{0,1\}^{\ell+1}_*}\subseteq L^\infty(\mu)$ and $N\in\N$, we define the finite truncation
\begin{align*}
    \CD_N \f := \E_{h_1, \ldots, h_{\ell+1}\in[N]}\prod_{\ueps\in\{0,1\}^{\ell+1}_*}\CC^{|\ueps|} T_1^{\eps_1 h_1}\cdots T_{\ell-1}^{\eps_{\ell-1} h_{\ell-1}}T_\ell^{\eps_\ell h_\ell+\eps_{\ell+1} h_{\ell+1}}f_\ueps.
\end{align*}
By construction, the space $L^2(X, \CZ,\mu)$ is equal to the closure of the linear span of all   dual functions of the form
\begin{align*}
	\CD \f := \lim_{N\to\infty}\CD_N \f
\end{align*}
for $\f\in (L^\infty(\mu))^{2^{\ell+1}-1}$ (the limit is taken in $L^2(\mu)$).
By separability, we can find a countable
collection $(\f_{n})_{n}\subseteq (L^\infty(\mu))^{2^{\ell+1}-1}$ that contains $1$,  is closed under conjugation,  and such that the linear span of  $(\CD \f_n)_n$ is dense in $L^2(X,\CZ, \mu)$. Our goal is to find a system $(Y, \CY, \nu, S_1, \ldots, S_\ell)$ with the needed structural property
and  {1-bounded} functions $(\widetilde f_{n})_{n}\subseteq L^\infty(\nu)$ such that for any $r\in\N$, $n_1,\ldots, n_r\in \N$,
and shifts $h_1, \ldots, h_r\in\Z^\ell$, we have
\begin{align}\label{E: equal integrals}
    \int T^{h_1}\CD \f_{n_1}\cdots T^{h_r}\CD \f_{n_r}\; d\mu = \int S^{h_1}\widetilde f_{n_1}\cdots  S^{h_r}\widetilde f_{n_r}\; d\nu.
\end{align}
If we do this, then Lemma~\ref{L:factor} guarantees that the system
$(X, \CZ, \mu, T_1, \ldots, T_\ell)$ is a factor of the system $(Y, \CY, \nu, S_1, \ldots, S_\ell)$.

The construction of the system $(Y, \CY, \nu, S_1, \ldots, S_\ell)$ and functions $(\widetilde f_n)_n$ involves several steps:
\begin{enumerate}
    \item We approximate $\CD \f_n$ in $L^2(\mu)$ by finite truncations $\CD_N \f_n$;
specifically, for all $n,m\in\N$ we approximate $\CD \f_n$ by $\CD_{N_{n,m}}\f_n$ with an error $o_{n; m\to\infty}(1)$.
\item We use the maximal ergodic theorem to show that for $\mu$-a.e. $x\in X$, the discrete function $h\mapsto \CD \f_n(T^h x)-\CD_{N_{n,m}}\f_n(T^h x)$ is $o_{n; m\to\infty; x}(1)$ in $L^2([\pm N]^\ell)$ uniformly in $N\in\N$.
\item We apply the finitary inverse theorem to show that for $\mu$-a.e. $x\in X$, the function $h\mapsto \CD_{N_{n,m}}\f_n(T^h x)$ can be approximated in $L^2([\pm N_{n,m}]^\ell)$ by a $O_{n,q}(1)$-structured function $\psi_{n,m,q,x}$ with an error $o_{n;q\to\infty}(1)$. To address measurability issues, we ensure that for every $n,m,q\in\N$, there are only finitely many structured functions to consider.
\item We then use the Hardy-Littlewood maximal inequality to ensure that the approximation in fact holds uniformly in $L^2([\pm N]^\ell)$ for all $N\in\N$. In other words, we pass from a single-scale approximation to an estimate at all scales. Combining all of the above with a compactness argument, we deduce that for $\mu$-a.e. $x\in X$, the discrete function $h\mapsto \CD \f_n(T^h x)$  can be approximated by a $O_{n,q}(1)$-structured function $\psi_{n,q,x}$ with an error $o_{n;q\to\infty}(1)$ in $L^2([\pm N]^\ell)$ uniformly for all $N\in\N$.
\item Fixing a generic point $x_0\in X$ for which all of the above holds\footnote{By the ergodicity of the joint action of $T_1, \ldots, T_\ell$, $\mu$-a.e. point $x\in X$ is generic.}, we use weak-* compactness in the spirit of the Furstenberg correspondence principle to view $(\psi_{n,q,x_0})_{n,q}$ as a point in a (massive) system $(Y, \CY, \nu, S_1, \ldots, S_\ell)$.
\item We show that the new system satisfies the claimed structural property \eqref{E: structural property 2} and also the extension can be chosen to be ergodic. Lastly, we construct the functions $(\widetilde f_{n})_{n}$ that satisfy \eqref{E: equal integrals}.
\end{enumerate}

Throughout, all implicit constants are allowed to depend on $\ell$.

\smallskip
\textbf{Step 1: Approximating by finitary dual functions.}
\smallskip
For any $n,m\in\N$, we may find a scale $N_{n,m}\geq m$ for which
\begin{align*}
    \norm{\CD \f_n - \CD_{N_{n,m}}\f_n}_{L^2(\mu)}\leq 2^{-100(n+m)}.
\end{align*}
Using the maximal ergodic theorem, we will show that for $\mu$-a.e. $x\in X$, the proportion of $h\in\Z^\ell$ for which $\CD \f_n(T^h x)$ is poorly approximable by $\CD_{N_{n,m}}\f_n(T^h x)$ is very small.

Observe first that each set
\begin{align*}
    E_{n,m}:=\{x\in X\colon \abs{\CD \f_n(x) - \CD_{N_{n,m}}\f_n(x)}\geq 2^{-(n+m)}\}
\end{align*}
has measure at most $2^{-99(n+m)}$, so that
\begin{align*}
    F := \sum_{n,m}2^{9(n+m)}\mathbf{1}_{E_{n,m}}
\end{align*}
is in $L^1(\mu)$. By the maximal ergodic theorem, the maximal function
\begin{align*}
    F^*(x) := \sup_{N\in\N}\E_{h\in[\pm N]^\ell}F(T^hx)
\end{align*}
satisfies
\begin{align*}
    t\, \mu(F^*>t) \leq \norm{F}_{L^1(\mu)}<\infty \quad \textrm{for\; every}\quad t>0.
\end{align*}
Hence, for $\mu$-a.e. $x\in X$ we can find $C_x>0$ so that $F^*(x)\leq C_x$. By definition,
\begin{align*}
    F^*(x) \geq 2^{9(n+m)}\sup_{N\in\N}\frac{\abs{\{h\in[\pm N]^\ell\colon  T^h x\in E_{n,m}\}}}{(2N+1)^\ell}
\end{align*}
for every $n,m\in\N$, and so
\begin{align*}
    \sup_{N\in\N}\frac{\abs{\{h\in[\pm N]^\ell\colon T^h x\in E_{n,m}\}}}{(2N+1)^\ell} \leq C_x 2^{-9(n+m)}.
\end{align*}
In other words, for every $N\in\N$ and all but a $O_x(2^{-9(n+m)})$ proportion of $h\in[\pm N]^\ell$, we have $$\abs{\CD \f_n(T^h x)-\CD_{N_{n,m}}\f_n(T^h x)}\leq 2^{-(n+m)}.$$
Consequently, by splitting into $h$'s for which $T^h x\in E_{n,m}$ and the rest, we obtain for $\mu$-a.e. $x\in X$ a multiscale bound
\begin{align}\label{E: second moment bound}
    \sup_{N\in\N}{\E_{h\in[\pm N]^\ell}\abs{\CD \f_n(T^h x)-\CD_{N_{n,m}}\f_n(T^h x)}^2}\leq C_x 2^{-9(n+m)} + 2^{-(n+m)}
\end{align}
that we shall need later.

\smallskip
\textbf{Step 2: Applying the finitary inverse theorem.}
\smallskip
Consider now the function $\CD_{n,m,x}(h):=\CD_{N_{n,m}}\f_n(T^h x)$.
A simple application of the Gowers-Cauchy-Schwarz inequality gives that it is $(O_\ell(1), N_{n,m})$-anti-uniform and so by Proposition \ref{P: structured & anti-uniform} (applied with $\veps := 2^{-200(n+q)}$ for some $q\in\N$), we can find $D_{n,q}>0$ and a 1-bounded, $D_{n,q}$-structured function  $\psi^0_{n,m,q,x}$ for which
\begin{align*}
    \norm{\CD_{n,m,x}-\psi^0_{n,m,q,x}}_{L^2([\pm N_{n,m}]^\ell)}^2\leq 2^{-200(n+q)}.
\end{align*}

A priori, there could be uncountably many choices of $(\psi^0_{n,m,q,x})_{x\in X}$ for each fixed $n,m,q\in\N$. For reasons related to measurability, it is convenient if we only need to deal with finitely many distinct structured functions. Using Lemma \ref{L: total boundedness}, for every $n,m,q\in\N$ we can find a finite subset $\CA^0_{n,m,q}$ of  1-bounded, $D_{n,q}$-structured functions which are $2^{-201(n+q)}$-dense in the set of all 1-bounded, $D_{n,q}$-structured functions with respect to the $L^2([\pm N_{n,m}]^\ell)$ metric. {For a later convenience, we extend each $\CA^0_{n,m,q}$ to the countable, translation-invariant set
\begin{align*}
    \CA_{n,m,q} := \{\psi(g+\cdot)\colon  \psi\in\CA^0_{n,m,q},\; g\in\Z^\ell\}.
\end{align*}
}
By the triangle equality, we therefore have
\begin{align}\label{E: regularity bound 0}
        \inf_{\psi\in\CA_{n,m,q}}\norm{\CD_{n,m,x}-\psi}_{L^2([\pm N_{n,m}]^\ell)}^2< 2^{-100(n+q)}.
\end{align}

For later convenience, we define $\CA_{n,q} := \bigcup_{m\in\N}\CA_{n,m,q}$.

\smallskip
\textbf{Step 3: Approximation by structured functions at all scales.}
\smallskip
Our next step is to ensure that for $\mu$-a.e. $x\in X$, the function $\CD_{n,m,x}$ can be well-approximated by
a structured function not only at scale $N_{n,m}$, but in fact at all scales $N\ll N_{n,m}$.
To this end, we construct ``error sets'' $E'_{n,m,q}$ of points $x\in X$ such that each element of $\CA_{n,q}$ fails to approximate $\CD_{n,m,x}$ at some scale. Specifically, we let
\begin{equation}\label{E:Enmx}
E'_{n,m,q}:=\big\{x\in X\colon     \inf_{\psi\in\CA_{n,q}}\sup_{N\in[N_{n,m}/2]}\norm{\CD_{n,m,x}-\psi}_{L^2([\pm N]^\ell)}^2> 2^{-10(n+q)}\big\}.
\end{equation}
Since for fixed $\psi$ and $N$ the set of points $x$ satisfying $\norm{\CD_{n,m,x}-\psi}_{L^2([\pm N]^\ell)}^2> 2^{-10(n+q)}$ is measurable (due to the measurability of $\CD_{N_{n,m}}\f_n$), the set $E'_{n,m,q}$ is also measurable by virtue of being a countable intersection of countable unions of measurable sets.

We now show that for every $m\in\N$, the union of the error sets $\bigcup_{n,q\in\N} E'_{n,m,q}$ is small, in the sense made precise by \eqref{E: smallness of E'} later on. By \eqref{E: regularity bound 0}, for every $n,m,q\in\N$ and $\mu$-a.e. $x\in X$ we can find $\psi_{n,m,q,x}\in\CA_{n,m,q}$ satisfying
\begin{align}\label{E: regularity bound}
        \norm{\CD_{n,m,x}-\psi_{n,m,q,x}}_{L^2([\pm N_{n,m}]^\ell)}^2 \leq 2^{-100(n+q)}.
\end{align}
Note that we do not claim any measurability of $x\mapsto \psi_{n,m,q,x}$. Since for any $g, h\in\Z^\ell$ we have
\begin{align}\label{E: shifted dual}
 \CD_{n,m,x}(h+g) = \CD_{n,m,T^g x}(h),
\end{align}
we use the translation invariance of $\CA_{n,m,q}$ to ensure that the functions  $\psi$ satisfy
\begin{align}\label{E: shifted structure}
  \psi_{n,m,q,x}(h+g) = \psi_{n,m,q,T^g x}(h).
\end{align}

Now,
define $$M_{n,m,q,x}(h):=\abs{\CD_{n,m,x}(h) - \psi_{n,m,q,x}(h)}^2\cdot \mathbf{1}_{[\pm N_{n,m}]^\ell}(h).$$
{Its discrete Hardy-Littlewood maximal function is defined via
\begin{align*}
    M_{n,m,q,x}^*(g) &:= \sup_{N\in\N_0}\norm{(\CD_{n,m,x}-\psi_{n,m,q,x})\cdot \mathbf{1}_{[\pm N_{n,m}]^\ell}}_{L^2(g+[\pm N]^\ell)}\\
    &= \sup_{N\in\N_0}\norm{(\CD_{n,m,T^g x}-\psi_{n,m,q,T^g x})\cdot \mathbf{1}_{[\pm N_{n,m}]^\ell-g}}_{L^2([\pm N]^\ell)}
\end{align*}
 for any $g\in\Z^\ell$, where the second line follows from \eqref{E: shifted dual} and \eqref{E: shifted structure}.
The discrete Hardy-Littlewood maximal inequality asserts that
\begin{align*}
    \sup_{t>0}\brac{t \cdot |\{g\in\Z^\ell\colon  M_{n,m,q,x}^*(g)>t\}|}\ll \sum_{g\in\Z^\ell}M_{n,m,q,x}(g);
\end{align*}
hence by the definition of $M_{n,m,q,x}$ and \eqref{E: regularity bound}, we have
\begin{align*}
    \sup_{t>0}\brac{t \cdot |\{g\in\Z^\ell\colon M_{n,m,q,x}^*(g)>t\}|} \ll (2N_{n,m}+1)^\ell\cdot 2^{-100(n+q)}.
\end{align*}}
Letting  $t:= 2^{-20(n+q)}$, we get that at most $\ll(2N_{n,m}+1)^\ell\cdot 2^{-80(n+q)}$ values of $g\in\Z^\ell$ satisfy
\begin{align}\label{E: G_x}
    \sup_{N\in\N}\norm{(\CD_{n,m,T^g x}-\psi_{n,m,q,T^g x})\cdot \mathbf{1}_{[\pm N_{n,m}]^\ell-g}}_{L^2([\pm N]^\ell)}
    \gg 2^{-20(n+q)}.
\end{align}

Let $G_{n,m,q,x}$ be the set of all $g\in[\pm N_{n,m}/2]^\ell$ satisfying \eqref{E: G_x}; by the considerations above, we have
\begin{align}\label{E: G_x 2}
    |G_{n,m,q,x}|\ll (2N_{n,m}+1)^\ell\cdot 2^{-80(n+q)}.
\end{align}
If $T^g x\in E'_{n,m,q}$, then
\begin{align}\label{E: E'}
    \sup_{N\in[N_{n,m}/2]}\norm{\CD_{n,m,T^g x}-\psi_{n,m,q,T^g x}}_{L^2([\pm N]^\ell)}^2> 2^{-10(n+q)}
\end{align}
since $\psi_{n,m,q,T^g x}\in\CA_{n,q}$ by assumption. If additionally $g\in [\pm N_{n,m}/2]^\ell$, then for any $N\in[N_{n,m}/2]$ and any $h\in[\pm N]^\ell$ we have $g+h\in [\pm N_{n,m}]^\ell$. Hence, we can modify \eqref{E: E'} by inserting an indicator function as follows:
\begin{align*}
        \sup_{N\in[N_{n,m}/2]}\norm{(\CD_{n,m,T^g x}-\psi_{n,m,q,T^g x})\cdot \mathbf{1}_{[\pm N_{n,m}]^\ell-g}}_{L^2([\pm N]^\ell)}^2> 2^{-10(n+q)}.
\end{align*}
It is immediate from this, \eqref{E: G_x}, and the definition of $G_{n,m,q,x}$, that $g\in G_{n,m,q,x}$. From this and the upper bound \eqref{E: G_x 2} on the size of $G_{n,m,q,x}$, it follows that
\begin{align*}
    \E_{g\in[\pm N_{n,m}/2]^\ell}\mathbf{1}_{E'_{n,m,q}(T^g x)}\leq \E_{g\in[\pm N_{n,m}/2]^\ell} \mathbf{1}_{G_{n,m,q,x}}(g)\ll 2^{-80(n+q)}.
\end{align*}
We can use this to finally upper bound the size of $E'_{n,m,q}$ via
\begin{align*}
    \mu(E'_{n,m,q}) = \E_{g\in[\pm N_{n,m}/2]^\ell}\int T^g\mathbf{1}_{E'_{n,m,q}}\; d\mu
    \ll 2^{-80(n+q)}.
\end{align*}

The bound on the size of $E'_{n,m,q}$ thus obtained implies that
\begin{align}\label{E: smallness of E'}
    \sup_{m\in \N} \Bignorm{\sum_{q\in\N}\sum_{n\in\N}2^{5(n+q)}\mathbf{1}_{E'_{n,m,q}}}_{L^1(\mu)}<\infty.
\end{align}
By Fatou's lemma,
\begin{align*}
        \liminf_{m\to\infty}\sum_{q\in\N}\sum_{n\in\N}2^{5(n+q)}\mathbf{1}_{E'_{n,m,q}}
\end{align*}
is in $L^1(\mu)$. Hence, for $\mu$-a.e. $x\in X$ there exists $C'_x>0$ such that
\begin{align*}
        \liminf_{m\to\infty}\sum_{q\in\N}\sum_{n\in\N}2^{5(n+q)}\mathbf{1}_{E'_{n,m,q}}(x)\leq C'_x.
\end{align*}
In particular, it follows that for $\mu$-a.e. $x\in X$ there exists an increasing sequence $(m_{x,k})_k$ of integers such that
\begin{align*}
    \sum_{q\in\N}\sum_{n\in\N}2^{5(n+q)}\mathbf{1}_{E'_{n,m_{x,k},q}}(x)\leq C'_x\quad \textrm{for\; all}\quad k\in\N.
\end{align*}
If $x\in E'_{n,m_{x,k},q}$ for some $n,k\in\N$, then $\sum_{n\in\N}2^{5(n+q)}\mathbf{1}_{E'_{n,m_{x,k},q}}(x)\geq 2^{5q}$ (simply because one of the indicators is nonzero), which is greater than $C_x'$ if $q$ is sufficiently large. Hence, there must exist a threshold $L_x\in\N$ such that for all $n,k\in\N$ and all $q\geq N_x$, we have $x\notin E'_{n,m_{x,k},q}$. {It follows from the definition of the set $E'_{n,m_{x,k},q}$  in \eqref{E:Enmx} that there exists $\psi'_{n,m,q,x}\in\CA_{n,q}$ such that}
\begin{align*}
    \sup_{N\in[N_{n,m}/2]}\norm{\CD_{n,m,x}-\psi'_{n,m,q,x}}_{L^2([\pm N]^\ell)}^2\leq 2^{-10(n+q)}
\end{align*}
for $\mu$-a.e. $x\in X$ and every $q\geq L_x$, $n\in\N$, and $m\in(m_{x,k})_k$. In combination with \eqref{E: second moment bound}, this gives
\begin{align*}
        \sup_{N\in[N_{n,m}/2]}{\E_{h\in[\pm N]^\ell}\abs{\CD \f_n(T^h x)-\psi'_{n,m,q,x}(h)}^2}\leq 2^{-10(n+q)} + C_x 2^{-9(n+m)} + 2^{-(n+m)}
\end{align*}
for $\mu$-a.e. $x\in X$ and every $q\geq L_x$, $n\in\N$, and $m\in(m_{x,k})_k$. Taking $m\to\infty$ along $m\in(m_{x,k})_k$, we get
\begin{align*}
    \sup_{N\in\N}\limsup_{\substack{m\to\infty,\\ m\in (m_{x,k})_k}}{\E_{h\in[\pm N]^\ell}\abs{\CD \f_n(T^h x)-\psi'_{n,m,q,x}(h)}^2}\leq 2^{-10(n+q)}
\end{align*}
for $\mu$-a.e. $x\in X$ and every $q\geq L_x$, $n\in\N$.
 By Lemma~\ref{L: sequential compactness}, for $\mu$-a.e. $x\in X$ and every $q\geq L_x$, $n\in\N$, we can find a  {1-bounded} $D_{n,q}$-structured sequence $\psi_{n,q,x}$ such that
\begin{align}\label{E: all scale approximation}
    \sup_{N\in\N}{\E_{h\in[\pm N]^\ell}\abs{\CD \f_n(T^h x)-\psi_{n,q,x}(h)}^2}\leq 2^{-10(n+q)}.
\end{align}
Thus, we showed that $\CD \f_n(T^h x)$ can be approximated by structured functions $\psi_{n,q,x}$ at all scales with arbitrary accuracy.

\smallskip
\textbf{Step 4: Constructing the structured extension.}
\smallskip
{\em From now on, we fix a generic point $x_0\in X$ that satisfies all of the above (such points form a full measure set). }

We will construct the system $Y$ as follows. First, for $M\in\N$, define
\begin{align*}
    Y_{M,j} := \begin{cases}
    (\D^{\Z^{\ell-1}})^M,\; &j\in[\ell]\\
    ((\S^1)^{\Z^\ell})^M,\; &j=\ell+1,\\
    \end{cases}
\end{align*}
together with the $\Z^\ell$-actions $\sigma_{M,j}$ on $Y_{M,j}$ given by
\begin{align}
    \label{E: sigma}
    \sigma_{M,j}^h(y_{i,k})_{i\in[M],\; k\in\Z^{\ell-1}} &:= (y_{i,k+\hat{h}_j})_{i\in[M],\; k\in\Z^{\ell-1}}\quad \textrm{for}\quad j\in[\ell]\\
    \nonumber \sigma_{M,\ell+1}^h(y_{i,k})_{i\in[M],\; k\in\Z^\ell} &:= (y_{i,k+h})_{i\in[M],\; k\in\Z^\ell}
\end{align}
for $h\in\Z^\ell$.
We then let
\begin{align}\label{E: Y_M}
    Y_M := Y_{M,1}\times \cdots \times Y_{M,\ell+1}
\end{align}
and define a $\Z^\ell$-action $R_M$ on $Y_M$ via
\begin{align*}
    R_M^h(y_1, \ldots, y_{\ell+1}) := (\sigma_{M,1}^h y_1, \ldots, \sigma_{M,\ell+1}^h y_{\ell+1}).
\end{align*}
We can then lift an $M$-structured function
\begin{align*}
        \psi(h) := \sum_{i=1}^{M}c_{i}\,  e(\phi_{i}(\hat h_\ell)h_\ell)\, b_{i,1}(\hat{h}_1)\cdots b_{i,\ell}(\hat{h}_\ell)
\end{align*}
(for $|c_i|\leq M$, $\phi_i\colon \Z^\ell\to\D$, $b_{i,j}\colon \Z^{\ell-1}\to\D$) to an element
$$
y_\psi:=(y_1,\ldots, y_{\ell+1})
$$
given by
\begin{gather*}
    y_j=(y_{j,i,\hat{h}_j})_{i\in[M],\; \hat{h}_j\in\Z^{\ell-1}} \quad \textrm{with}\quad y_{j,i,\hat{h}_j} :=b_{i,j}(\hat{h}_j)\quad \textrm{for}\quad j\in[\ell],\\
    y_{\ell+1} = (y_{\ell+1, i, h})_{i\in[M],\; h\in\Z^\ell}\quad \textrm{with}\quad y_{\ell+1,i,h} :=
    e(\phi_i(\hat{h}_\ell) h_\ell).
\end{gather*}
Then
\begin{align*}
    \psi(h) =
    \sum_{i\in[M]}c_i \, y_{1,i,\hat{h}_1}\cdots y_{\ell,i,\hat{h}_\ell}\, y_{\ell+1,i,h} = F_{\psi} (R_M^h y_\psi),
\end{align*}
where
\begin{align*}
    F_{\psi} (y) := \sum_{i\in[M]}c_i \, y_{1,i,0}\cdots y_{\ell+1,i,0}.
\end{align*}

If we only dealt with one $M$-structured function, we could define the structured extension directly on $Y_M$. However, we need to deal with the functions $(\psi_{n,q,x_0})_{n,q}$, each of which is $D_{n,q}$-structured. Therefore, we set
\begin{align*}
    Y := \prod_{n,q\in\N}Y_{n,q},
\end{align*}
where each $Y_{n,q}$ is an isomorphic copy of $Y_{D_{n,q}}$, endow $Y$ with the Borel $\sigma$-algebra $\CY$, and define a $\Z^\ell$-action $S$ on $Y$ by $S|_{Y_{n,q}} = R_{D_{n,q}}$. We then consider the point $y_0 := (y_{\psi_{n,q,x_0}})_{n,q}.$ By letting $F_{n,q}:Y\to\C$ equal $F_{\psi_{n,q,x_0}}$ on $Y_{n,q}$ and 0 elsewhere, we get that $F_{n,q}(S^h y_0) = \psi_{n,q,x_0}(h)$. Lastly, we take the measure $\nu$ on $Y$ to be a weak-* limit of the sequence of measures
\begin{align*}
    \nu_N:=\E_{h\in[\pm N]^\ell}\delta_{S^h y_0}
\end{align*}
(such a measure exists by the Banach-Alaoglu theorem).
{Note that since  $\psi_{n,q,x_0}$ is 1-bounded we get that $F_{n,q}$ is 1-bounded $\nu$-a.e.}
If $(N_l)_l$ is an increasing sequence of integers along which $\nu$ is realized, then we get that for any $r\in\N$, $n_1, q_1, \ldots, n_r, q_r\in\N$,
and $k_1, \ldots, k_r\in\Z^\ell$, we have
\begin{multline*}
    \lim_{l\to\infty}\E_{h\in[\pm N_l]^\ell} \psi_{n_1,q_1,x_0}(h+k_1) \cdots \psi_{n_r,q_r,x_0}(h+k_r)\\
    =\lim_{l\to\infty}\E_{h\in[\pm N_l]^\ell}F_{n_1,q_1}(S^{h+k_1}y_0) \cdots F_{n_r,q_r}(S^{h+k_r}y_0)\\
    =\int S^{k_1}F_{n_1,q_1} \cdots S^{k_r}F_{n_r,q_r}\; d\nu.
\end{multline*}

\smallskip
\textbf{Step 5: Properties of the structured extension.}
Recall \eqref{E: all scale approximation}. It follows that for every $q,n\in\N$ and $q'\geq q$, we have
\begin{align*}
    \sup_{N\in\N}\norm{\psi_{n,q',x_0}-\psi_{n,q,x_0}}_{L^2([\pm N]^\ell)}^2\ll 2^{-10(n+q)}.
\end{align*}
Taking the limit along $(N_l)_l$ and using the identification from the previous step, we conclude that
\begin{align*}
    \norm{F_{n,q}-F_{n,q'}}_{L^2(\nu)}^2\ll 2^{-10(n+q)}.
\end{align*}
Hence, the sequence $(F_{n,q})_q$ is Cauchy in $L^2(\nu)$. Let $\widetilde f_n$ be its limit.
{Note first that since  $F_{n,q}$ is 1-bounded  $\nu$-a.e. we get that $\widetilde f_n$ is 1-bounded $\nu$-a.e.} Moreover,
\begin{align*}
    \abs{\int_X \CD \f_n\; d\mu -\int_Y \widetilde f_n\; d\nu}&=\lim_{q\to\infty}\abs{\int_X \CD \f_n\; d\mu -\int_Y F_{n,q}\; d\nu}\\
    &= \lim_{q\to\infty}\lim_{l\to\infty}\abs{\E_{h\in[\pm N_l]^\ell}(\CD \f_n(T^h x_0) - F_{n,q}(S^hy_0))}\\
    &\leq \lim_{q\to\infty}\limsup_{l\to\infty}\norm{\CD \f_n(T^h x_0) - \psi_{n,q,x_0}(h)}_{L^2([\pm N_l]^\ell)}=0,
\end{align*}
where the ultimate vanishing of the expression follows from \eqref{E: all scale approximation}.
Repeating everything with $T^{h_1}\CD \f_{n_1}\cdots T^{h_r}\CD \f_{n_r}$ in place of $\CD \f_n$, we deduce \eqref{E: equal integrals}.

It remains to prove the structural property \eqref{E: structural property 2}.
For each $n,q\in\N$, we can write $Y_{n,q} = Y_{n,q,1}\times\cdots\times Y_{n,q,\ell+1}$ just like in \eqref{E: Y_M}.  We note that because of the way the action $S$ and the measure $\nu$ have been defined, all functions $F: Y\to\C$ that are independent of the coordinates indexed by $(n,q,\ell+1)_{n,q}$ are $\CI(S_1)\vee \cdots \vee \CI(S_{\ell})$-measurable (where we set $S_j:=S^{e_j}$ for all $j\in[\ell]$). {Indeed, each such function can be approximated by a linear combination of products of functions, each of which depends only on the coordinates indexed by $(n,q,j)_{n,q}$ for some fixed $j\in[\ell]$. Such a function then is $S_j$-invariant because for every $n,q\in\N$, the transformation $S_j$ reduces on $Y_{n,q,j}$ to $\sigma_{D_{n,q}, j}^{e_j}$ (the $j$-th coordinate of the $\Z^\ell$ action $\sigma_{D_{n,q},j}$ defined in \eqref{E: sigma}), which is the identity by definition.}

What remains to be shown is that the functions independent of the coordinates indexed by $(n,q,j)_{n,q\in\N,\; j\in[\ell]}$ are all $\CZ_1(S_\ell)$-measurable. Given $y\in Y$, let $$\tau:Y\to Y|_{\ell+1}:=\prod_{n,q\in\N}Y_{n,q,\ell+1}$$ be the natural projection. 
If $\tilde S:=\tau\circ S$ is the induced $\Z^\ell$-action on $Y|_{\ell+1}$, then for all $h\in \Z^{\ell+1}$ we have
\begin{align}\label{E: tau}
    \tilde S^{h} y_0 =
    \big( e(\phi_{i,n,q}(\hat{k}_\ell+\hat{h}_\ell) \, (k_\ell+h_\ell))\big)_{i\in[D_{n,q}],\; n,q\in\N,\; k\in\Z^\ell}.
\end{align}

For each fixed $n',q'\in\N$ and $i'\in D_{n',q'}$, consider the functions $G_{i',n',q'}: Y|_{\ell+1}\to\S^1$ given by
$$
G_{i',n',q'}((y_{i, n,q,k})_{i\in[D_{n,q}],\; n,q\in\N,\; k\in\Z^\ell}) := y_{i', n',q',0};
$$
these are variants of the usual ``projection onto the 0-th coordinate map''. Note that the algebra generated by the family $(\tilde S^h G_{i',n',q'})_{i', n',q',h}$ is dense in  $C(Y|_{\ell+1})$.

We will shortly show that the functions $\tilde S_\ell G_{i',n',q'}\cdot \overline G_{i',n',q'}$ are $\tilde S_\ell$-invariant.
 Then Lemma~\ref{L:NonErgodicEigen} and $|G_{i',n',q'}|=1$ imply that each $G_{i',n',q'}$ is $\CZ_1(\tilde S_\ell)$-measurable, hence, $Y|_{\ell+1}$ is a $\CZ_1(\tilde S_\ell)$-system, and  as a consequence  all functions on $Y$ independent of the coordinates indexed by $(n,q,j)_{n,q\in\N,\; j\in[\ell]}$ are $\CZ_1(S_\ell)$-measurable.

To prove the above claim, it suffices to show that
\begin{align*}
    \int_{Y|_{\ell+1}}\abs{\tilde S_\ell(\tilde S_\ell G_{i',n',q'}\cdot \overline G_{i',n',q'}) - \tilde S_\ell G_{i',n',q'}\cdot \overline G_{i',n',q'}}^2\; d\tilde\nu = 0,
\end{align*}
where $\tilde\nu := \tau_*\nu = \lim\limits_{l\to\infty}\E_{h\in[\pm N_l]^\ell}\delta_{\tilde S^h y_0}$.
Expanding the left-hand side and using that $|G_{i',n',q'}|~=~1$, we get that it is equal to
$$
     \int_{Y|_{\ell+1}} 2 - 2\Re\brac{\tilde S_\ell^2 G_{i',n',q'}\cdot \tilde S_\ell \overline G_{i',n',q'}\cdot \tilde S_\ell \overline G_{i',n',q'}\cdot G_{i',n',q'}}
    \; d\tilde\nu.
$$
Since the measure $\tilde\nu$ is a weak-* limit, the last integral is equal to
$$
  2-2  \lim\limits_{l\to\infty}\E_{h\in[\pm N_l]^\ell}
    \Re\brac{G_{i',n',q'}(\tilde S_\ell^2 S^h y_0)\cdot \overline G_{i',n',q'}(\tilde S_\ell S^h y_0)\cdot  \overline G_{i',n',q'}(\tilde S_\ell S^h y_0)\cdot G_{i',n',q'}(S^h y_0)}.
$$
By the definition of the function $G_{i',n',q'}$ and \eqref{E: tau}, this equals
\begin{align*}
  2-2\lim\limits_{l\to\infty}\E_{h_\ell\in[\pm N_l]}    \brac{  \Re\brac{e(\phi_{i',n',q'}(\hat{h}_\ell) ((h_\ell+2) - (h_\ell+1) - (h_\ell + 1) + h_\ell))}} = 0,
\end{align*}
and we are done.

\smallskip
\textbf{Step 6: Ergodicity.}
\smallskip
Lastly, we establish that the extension can be chosen to be ergodic.
Let $\pi\colon Y\mapsto X$ be the factor map and $\nu=\int \nu_y \, d\nu(y)$ be the ergodic decomposition of $\nu$ with respect to  the joint action $S_1,\ldots, S_\ell$. Then
$\mu=\int \pi(\nu_y)\, d\nu(y)$ and for $\nu$-a.e. $y\in Y$ the measures $\pi(\nu_y)$ are invariant under $T_1,\ldots, T_\ell$. Since the system $(X, \CZ, \mu,T_1,\ldots, T_\ell)$ is ergodic, we have $\mu=\pi(\nu_y)$ for $\nu$-a.e. $y\in Y$. Hence, $\pi\colon (Y,\CY, \nu_y,S_1,\ldots, S_\ell)\mapsto (X,\CZ, \mu,T_1,\ldots, T_\ell)$ is also a factor map   for $\nu$-a.e. $y\in Y$.
Moreover, by Lemma~\ref{L:magicergodeco},  for $\nu$-a.e. $y\in Y$ the extension
$(Y,\CY, \nu_y,S_1,\ldots, S_\ell)$ also has the asserted structural properties. This completes the proof.
\end{proof}
\subsection{Structured extension of the original system}\label{SS: extension}
In the remaining section, we upgrade Proposition \ref{P: structured extension} to \cref{T: structured extension}, i.e., we construct a structured extension of the original system $(X, \CZ, \mu, T_1, \ldots, T_\ell)$ rather than the factor
\begin{align*}
    \CZ := \CZ(T_1, \ldots, T_\ell, T_\ell).
\end{align*}
The general idea underlying this construction comes from Leng, who implemented it in a slightly different setting in \cite[Appendix C]{Leng25}.

 Before we give the full proof of \cref{T: structured extension}, we present its short outline that aims to shed light on the main issues arising in the construction. In Proposition \ref{P: structured extension}, we showed that the factor $(X, \CZ, \mu, T_1, \ldots, T_\ell)$ can be extended to $(\widetilde X, \widetilde\CZ, \widetilde\mu, \widetilde T_1, \ldots, \widetilde T_\ell)$ with 
\begin{align*}
    \widetilde\CZ := \CI(\widetilde T_1)\vee \cdots \vee \CI(\widetilde T_{\ell-1})\vee\CZ_1(\widetilde T_{\ell}).
\end{align*}
Unfortunately, this new system does not extend the original system $(X, \CX, \mu, T_1, \ldots, T_\ell)$, meaning that a non-$\CZ$-measurable function on $X$ cannot in general be lifted to a function on $\widetilde X$. To get the system that both extends the original system and on which elements of $L^\infty(X, \CZ, \mu)$ lift to ``structured'' functions, we pass to a relatively independent joining of $(X, \CX, \mu, T_1, \ldots, T_\ell)$ and $(\widetilde X, \widetilde\CZ, \widetilde\mu, \widetilde T_1, \ldots, \widetilde T_\ell)$ over $\CZ$, which we denote by $(X_1, \CX_1, \mu_1, T_{1,1}, \ldots, T_{\ell,1})$. Letting $\pi\colon  (\widetilde X, \widetilde\CZ, \widetilde\mu)\to (X, \CZ, \mu)$ be the factor map, any $f\in L^\infty(X, \CZ,\mu)$ lifts to an element  $f \otimes 1 = 1 \otimes (f\circ \pi)$ in $L^\infty(X_1, \CX_1, \mu_1)$, which is then measurable with respect to 
\begin{align}\label{E: 1,1}
    \CI(T_{1,1})\vee \cdots \vee \CI(T_{\ell-1, 1})\vee \CZ_1(T_{\ell,1}).
\end{align}
The problem, however, is that $\CZ_1:=\CZ(T_{1,1}, \ldots, T_{\ell, 1}, T_{\ell, 1})$ may contain more elements than just the lifts of elements of $\CZ$ and $\widetilde\CZ$ (the latter are already ``structured'', i.e., measurable with respect to \eqref{E: 1,1}). To amend this, we use \cref{P: structured extension} again to get a structured extension $(\widetilde X_1, \widetilde\CZ_1, \widetilde\mu_1, \widetilde T_{1,1}, \ldots, T_{\ell,1})$ of $(X_1, \CZ_1, \mu_1, T_{1,1}, \ldots, T_{\ell,1})$, and we pass to the bigger system $(X_2, \CX_2, \mu_2, T_{1,2}, \ldots, T_{\ell,2})$, a relatively independent joining of  $(X_1, \CX_1, \mu_1, T_{1,1}, \ldots, T_{\ell,1})$ and $(\widetilde X_1, \widetilde\CZ_1, \widetilde\mu_1, \widetilde T_{1,1}, \ldots, T_{\ell,1})$ over $\CZ_1$. Iterating this construction, we get an increasing sequence of extensions $(X_m, \CX_m, \mu_m, T_{1,m}, \ldots, T_{\ell,m})$ of $(X, \CZ, \mu, T_1, \ldots, T_\ell)$, and for each $m$, the $\CZ_m$-measurable functions become ``structured'' at the level $m+1$ of the construction. The system $(Y, \CY, \nu, S_1, \ldots, S_\ell)$ is then the inverse limit of this tower of extensions. 

\begin{proof}[Proof of \cref{T: structured extension}]
    We will take $(Y, \CY, \nu, S_1, \ldots, S_\ell)$ to be the inverse limit of ergodic systems $(X_m, \CX_m, \mu_m, T_{1,m}, \ldots, T_{\ell,m})$ constructed inductively as follows. For $m=0$, set 
    \begin{align*}
        (X_m, \CX_m, \mu_m, T_{1,m}, \ldots, T_{\ell,m}) := (X, \CX, \mu, T_1, \ldots, T_\ell).
    \end{align*}
    Given $m\in\N_0$, let
    \begin{align*}
        \CZ_m:=\CZ(T_{1,m}, \ldots, T_{\ell, m}, T_{\ell, m}),
    \end{align*}
    and let $(\widetilde X_m, \widetilde\CZ_m, \widetilde\mu_m, \widetilde T_{1,m}, \ldots, T_{\ell,m})$ be the ergodic structured extension of $\CZ_m$ constructed in \cref{P: structured extension}. Then define $(X_{m+1}, \CX_{m+1}, \mu^\dagger_{m+1}, T_{1,m+1}, \ldots, T_{\ell,m+1})$ to be the relatively independent joining of
    \begin{align*}
     (X_m, \CX_m, \mu_m, T_{1,m}, \ldots, T_{\ell,m})\quad \textrm{and}\quad    (\widetilde X_m, \widetilde\CZ_m, \widetilde\mu_m, \widetilde T_{1,m}, \ldots, T_{\ell,m})
    \end{align*}
    over $\CZ_m$,
    i.e.,  $X_{m+1}:=X_m\times \widetilde X_m$, $\CX_{m+1} = \CX_m\otimes\widetilde\CZ_m$, $T_{j,m+1}:=T_{j,m}\times \widetilde{T}_{j,m}$ for $j\in[\ell]$, and the measure $\mu_{m+1}^\dagger$ satisfies
    	\begin{equation}\label{E:rip}
    	\int f\otimes \widetilde{f}\, d\mu^\dagger_{m+1}:=	\int \E(f|\CZ_m)\cdot \E(\widetilde{f}|\CZ_m)\, d\mu_m
    	\end{equation}
        for all  $f\in L^\infty(X_m, \CX_m, \mu_m)$ and $\widetilde{f}\in L^\infty(\widetilde X_m, \widetilde\CX_m, \widetilde{\mu}_m)$, where by abuse of notation we let $\E(\widetilde{f}|\CZ_m)$ be the orthogonal projection on $L^2(X_m, \CZ_m, \mu_m)$ under the factor map  $\pi_m\colon (\widetilde X_m, \widetilde \CZ_m, \widetilde \mu_m)\to (X_m, \CZ_m, \mu_m)$.

    This is almost what we want except that to iterate the construction using \cref{P: structured extension}, we need the measure on $X_{m+1}$ to be ergodic. To this end, we consider the ergodic decomposition $\mu^\dagger_{m+1} = \int \mu^\dagger_{m+1, y}\; d\mu^\dagger_{m+1}(y)$. Arguing as in the last step of the proof of \cref{P: structured extension}, we deduce that for $\mu^\dagger_{m+1}$-a.e. $y\in X_{m+1}$, the measure $\mu^\dagger_{m+1, y}$ is ergodic, and its marginals on $X_m$ and $\widetilde X_m$ are $\mu_m, \widetilde \mu_m$ respectively. The measure $\mu_{m+1}$ will then equal $\mu^\dagger_{m+1, y}$ for some $y\in X_{m+1}$ satisfying additional properties specified at a later stage.

    For now, suppose that all the constructed systems $(X_m, \CX_m, \mu_m, T_{1,m}, \ldots, T_{\ell,m})$ are well-defined. Our goal then is to show that 
    \begin{align*}
        \CZ:=\CZ(S_1, \ldots, S_\ell, S_\ell)
    \end{align*}
    satisfies \eqref{E: structural property}. 

Let $\tau_m\colon (Y, \CY, \nu)\to (X_m, \CX_m, \mu_m)$ be the factor map; we first claim that 
    \begin{align}\label{E: inverse limit}
        \CZ = \bigvee_{m\in\N_0}\tau_m\inv(\CZ_m).
    \end{align}
    By the martingale convergence theorem, for every $f\in L^\infty(Y, \CZ, \nu)$ and $\veps>0$ we can find $m\in\N$ and $g\in L^\infty(X_m, \CX_m, \mu_m)$ such that $\norm{f-g\circ \tau_m}_{L^2(\nu)}\leq \veps$. If $\E(g|\CZ_m) = 0$. Then we have $\langle f, g\circ\tau_m\rangle = 0$ by approximating $f$ with linear combinations of dual functions $\CD_{S_1, \ldots, S_\ell, S_\ell}((f_\ueps)_\ueps)$ for some 1-bounded $(f_\ueps)_\ueps\subseteq L^\infty(Y, \CY,\nu)$ {(we can do this by the defining property of $\CZ$)}  and noting that
    \begin{align*}
        \abs{\langle \CD_{S_1, \ldots, S_\ell, S_\ell}((f_\ueps)_\ueps), g\circ \tau_m\rangle}\leq \nnorm{g\circ \tau_m}_{S_1, \ldots, S_\ell, S_\ell} = \nnorm{g}_{T_{1,m}, \ldots, T_{\ell,m}, T_{\ell,m}} = 0
    \end{align*}
    by applying the Gowers-Cauchy-Schwarz inequality and the factor property.
    Hence,  {by the Pythagorean theorem}, we can assume that the approximant $g$ is in fact $\CZ_m$-measurable, and the identity \eqref{E: inverse limit} follows on taking $\veps\to 0$.

    Because of \eqref{E: inverse limit},  property \eqref{E: structural property} will follow if we can show that 
    \begin{align}\label{E: dealing with level m}
        \tau_m\inv(\CZ_m) \subseteq \CI(S_1)\vee \cdots \vee \CI(S_{\ell-1})\vee\CZ_1(S_{\ell})
    \end{align}
    holds for every $m\in\N_0$. In fact, we will show something stronger: the $\CZ_m$-measurable functions attain the desired form at level $m+1$, in the sense that 
    \begin{align}\label{E: passing to level m+1}
        \tau_m\inv(\CZ_m) &\subseteq \tau_{m+1}\inv\brac{\CI(T_{1,m+1})\vee \cdots \vee \CI(T_{\ell-1, m+1})\vee \CZ_1(T_{\ell,m+1})}.
    \end{align}
{Since $\tau_{m+1}\colon (Y,\CY,\nu)\to (X_{m+1},\CX_{m+1},\mu_{m+1})$ is a factor map, the right-hand side of \eqref{E: passing to level m+1} is a subset of the right-hand side of \eqref{E: dealing with level m}. It follows that to prove \eqref{E: dealing with level m}, it suffices to establish \eqref{E: passing to level m+1}.}

    Fix $m\in\N_0$. To establish \eqref{E: passing to level m+1}, we need to describe further properties of the point $y\in X_{m+1}$ that defines $\mu_{m+1} := \mu^\dagger_{m+1, y}$.
    Consider a countable collection of functions $\CF\subseteq L^\infty(X_m,\CZ_m,\mu_m)$ that is $L^2(\mu_m)$-dense in $L^2(X_m, \CZ_m, \mu_m)$ {(such a set exists since we deal with standard  probability spaces).} 
    Then every $f\in \CF$ can be lifted to $f \otimes 1\in L^\infty(X_{m+1}, \CZ_{m+1}, \mu^\dagger_{m+1})$ in the sense that $f\otimes 1$ projects down to $f$ under the factor map. The defining property \eqref{E:rip} of $\mu^\dagger_{m+1}$ as a relatively independent joining over $\CZ_m$ gives\footnote{{Indeed,   one can immediately verify using \eqref{E:rip} that 
        	for all $g\in L^\infty(\mu_m)$, $\widetilde{g}\in L^\infty(\tilde{\mu}_m)$, both integrals 
          $\int (f\otimes 1) \cdot (g\otimes \widetilde{g})\, d\mu^\dagger_m$ and 
                 	$\int (1\otimes (f\circ\pi_m)) \cdot (g\otimes \widetilde{g})\, d\mu^\dagger_m$ equal  
                 	$\int f\cdot \E(g|\CZ_m)\cdot \E(\tilde{g}_m|\CZ_m)\, d\mu_m$, hence
                 	$\int (f\otimes 1 - 1\otimes (f\circ\pi_m)) \cdot g\otimes \widetilde{g}\, d\mu^\dagger_m=0$.}}
    \begin{equation}\label{E: lift 0}
        f \otimes 1 = 1 \otimes (f\circ \pi_m) \quad\textrm{in}\quad L^2(\mu^\dagger_{m+1}).
    \end{equation}
    Since $\CF$ is countable, we can find a  $\mu^\dagger_{m+1}$-full measure set $E\in \CX_{m+1}$ such that for any $f\in\CF$ and $y\in E$, we have
    \begin{align}\label{E: lift}
        f \otimes 1 = 1 \otimes (f\circ \pi_m) \quad\textrm{in}\quad L^2(\mu^\dagger_{m+1, y}),
    \end{align}
    the measure $\mu^\dagger_{m+1, y}$ is ergodic, and it projects down to $\mu_m$ on $X_m$ and $ \widetilde \mu_m$ on $\widetilde X_m$. Since for any $f\in L^\infty(X_m,\CZ_m,\mu_m)$ and $y\in E$, we have
    \begin{align*}
        \inf_{g\in\CF}\norm{f\otimes 1 - g\otimes 1}_{L^2\brac{\mu^\dagger_{m+1,y}}} &= \inf_{g\in\CF}\norm{1 \otimes (f\circ \pi_m) - 1 \otimes (g\circ \pi_m)}_{L^2\brac{\mu^\dagger_{m+1,y}}}\\
        &= \inf_{g\in\CF}\norm{f-g}_{L^2(\mu_m)} = 0,
    \end{align*}
where we use the facts that $\CF$ is dense in $L^2(X_m, \CZ_m, \mu_m)$ and 
     the measure $\mu^\dagger_{m+1, y}$  projects down to $\mu_m$ on $X_m$.
    We deduce that \eqref{E: lift} holds for all $f\in L^\infty(X_m,\CZ_m,\mu_m)$ and $y\in E$. We then set $\mu_{m+1}:=\mu^\dagger_{m+1, y}$ for any choice of $y\in E$ (different choices of $y$ may give a different measure $\mu_{m+1}$).

    The identity \eqref{E: lift} with $\mu_{m+1}=\mu^\dagger_{m+1, y}$ plays a crucial part in establishing \eqref{E: passing to level m+1}, as we are about to show. Recall first that the factor $\widetilde\CZ_m$ has the form
    \begin{align}\label{E: structure factor}
        \widetilde \CZ_m = \CI(\widetilde T_{1,m})\vee \cdots \vee \CI(\widetilde T_{\ell-1, m})\vee \CZ_1(\widetilde T_{\ell,m})
    \end{align}
    by \cref{P: structured extension}.
    Since $\pi_m\inv(\CZ_m)\subseteq \widetilde\CZ_m$, for any $f\in L^\infty(X_m,\CZ_m,\mu_m)$, the function $f\circ\pi_m$ is measurable with respect to \eqref{E: structure factor}, and so $1\otimes (f\circ\pi_m)$ is measurable with respect to the right-hand side of \eqref{E: passing to level m+1}. By \eqref{E: lift}, the same holds for $f\otimes 1$. This implies that \eqref{E: passing to level m+1} holds and  completes the proof. 
    \end{proof}

\section{Limiting formulas}\label{S: limiting formulas}
In this section, we establish Theorems \ref{T: limiting formula} and \ref{T: limiting formula 2} via a variant of the by now standard degree lowering argument. That is, assuming that the averages in \eqref{E:vanish} and \eqref{E:vanish'} are controlled by the seminorm $\nnorm{f_2}_{(T_2T_1\inv)^{\times s_1}, T_2^{\times s_2}}$ for some $s_1, s_2\in\N$ (which follows from our assumptions), we will iteratively lower $s_1, s_2$ one by one until we reach $s_1 = s_2 = 1$. We will then pass to an appropriate magic extension of the system and show that the claimed limiting formulas hold in the magic extension.

The proofs of Theorems \ref{T: limiting formula} and \ref{T: limiting formula 2} do not differ much; therefore we only give full details of the proof of the former and provide brief explanations of modifications needed to obtain the latter. The only meaningful difference in the proof of \cref{T: limiting formula 2} is the use of \cref{P: linear 2} below. Given this, the reader should have no problem in adapting our argument to recover the full proof of Theorem \ref{T: limiting formula 2}.

Moreover, we work with Ces\`aro averages only,
the arguments   adapt straightforwardly  to handle averages
   along  arbitrary F\o lner  sequences in $\N$.

This section is organized as follows. First, we prove Theorems \ref{T: limiting formula} and \ref{T: limiting formula 2} under the additional assumption that $f_2$ is a nonergodic eigenfunction of $T_2$ or $T_2T_1\inv$, in which case the proofs become significantly simpler. These results, labeled as Propositions \ref{P: base case} and \ref{P: base case 2}, will serve as the base cases in the proofs of Theorems \ref{T: limiting formula} and \ref{T: limiting formula 2} respectively. Next, we show that various linear averages along commuting transformations admit optimal box seminorm control. Since the inductive step of the degree lowering proceeds by comparing the average along $a(n)$ with an average along $n$, being able to control the linear average by a box seminorm of optimal degree plays a key part in reducing the degree of the box seminorm controlling the average along $a(n)$. Subsequently, we state versions of two standard technical lemmas, Proposition \ref{P: stashing} (stashing) and Proposition \ref{P: dual-difference} (dual-difference interchange), required for the forthcoming degree lowering argument. Section \ref{SS: degree lowering} is then dedicated to the inductive step of degree lowering, by far the main and most technically challenging argument in Section \ref{S: limiting formulas}. We conclude with deriving Theorems \ref{T: limiting formula} and \ref{T: limiting formula 2}.

\subsection{Base case of degree lowering: $f_2$ is a nonergodic eigenfunction}\label{SS:base}
We start with establishing the base cases of Theorems \ref{T: limiting formula} and \ref{T: limiting formula 2}, i.e., the variants of both results in which $f_2$ is a nonergodic eigenfunction of $T_2$ or $T_2T_1\inv$.

\begin{proposition}\label{P: base case}
	Let $a\colon \N\to\Z$ be a sequence that admits box seminorm control and equidistributes on nilsystems (see \cref{D: good properties}). Then the identity \eqref{E: equality of limits} holds for every system $(X, \CX, \mu, T_1, T_2)$ and all functions $f_0,f_1, f_2\in L^\infty(\mu)$ with $f_2\in\CE(T_2)\cup\CE(T_2T_1\inv)$.
\end{proposition}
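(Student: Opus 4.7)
The plan is to use the eigenfunction structure of $f_2$ to recast the correlation sequence in a form accessible to the non-ergodic Host-Kra machinery, and then conclude via the nilsystem equidistribution hypothesis on $(a(n))_n$.

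\textbf{Step 1: Reduction to $f_2\in\CE(T_2)$.} Suppose first that $f_2\in\CE(T_2T_1\inv)$. Composing each integrand with the measure-preserving map $T_1^{-a(n)}$ rewrites
\[
\E_{n\in[N]}\int f_0\cdot T_1^{a(n)}f_1\cdot T_2^{a(n)}f_2\,d\mu=\E_{n\in[N]}\int T_1^{-a(n)}f_0\cdot f_1\cdot (T_2T_1\inv)^{a(n)}f_2\,d\mu,
\]
and the right-hand side of \eqref{E: equality of limits} reparametrizes identically. The new average is of the same form in the system $(X,\CX,\mu,T_1\inv,T_2T_1\inv)$ (after renaming the roles of $f_0$ and $f_1$), with $f_2$ now a nonergodic eigenfunction of the second transformation $U_2:=T_2T_1\inv$. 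I may therefore assume $f_2\in\CE(T_2)$ from now on.

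\textbf{Step 2: Unfolding the eigenfunction.} From the defining identity $T_2f_2=\mathbf{1}_E\cdot e(\phi)\cdot f_2$, with $E\in\CX$ a $T_2$-invariant set and $\phi\colon X\to\T$ a $T_2$-invariant function vanishing on $E^c$, iteration yields $T_2^m f_2=\mathbf{1}_E\cdot e(m\phi)\cdot f_2$ for every $m\in\Z$. Since $f_2$ vanishes off $E$, I absorb $\mathbf{1}_E$ into $f_0$ and reduce the problem to
\[
\lim_{N\to\infty}\E_{n\in[N]}L(a(n))=\lim_{N\to\infty}\E_{n\in[N]}L(n)\quad\text{where}\quad L(n):=\int f_0\cdot T_1^n f_1\cdot T_2^n f_2\,d\mu,
\]
a triple correlation sequence for the commuting pair $(T_1,T_2)$.

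\textbf{Step 3: Nilsequence approximation.} The box seminorm control hypothesis, combined with the Host-Kra theory for $T_1$ (whose characteristic factors are inverse limits of nilsystems by \cref{T:HK}) and the Kronecker structure of $\CZ_1(T_2)$ in which $f_2$ sits, allows one to invoke the non-ergodic decomposition result \cite[Proposition~3.1]{CFH11}. This identifies $L$ as a uniform limit on $\N$ of $2$-step nilsequences.

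\textbf{Step 4: Equidistribution.} The hypothesis that $(a(n))_n$ equidistributes on every nilsystem gives
\[
\lim_{N\to\infty}\E_{n\in[N]}\psi(a(n))=\lim_{N\to\infty}\E_{n\in[N]}\psi(n)
\]
for every $2$-step nilsequence $\psi$. By linearity and the uniform approximation of Step~3, the same identity holds with $L$ in place of $\psi$, yielding \eqref{E: equality of limits}.

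\textbf{Main technical input.} The substantive step is Step~3: without any ergodicity assumption on $(X,\CX,\mu,T_1,T_2)$, recognizing $L$ as a uniform limit of nilsequences relies on the non-ergodic CFH11 decomposition, which may need to be supplemented by an ergodic decomposition of $\mu$ under the joint action of $\angle{T_1,T_2}$ so that one can apply the Host-Kra machinery fiberwise before integrating.
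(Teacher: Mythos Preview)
Your Steps 1 and 2 are correct and match the paper. The gap is in Step 3. The cited result \cite[Proposition~3.1]{CFH11} does not assert that the correlation sequence $L$ is a uniform limit of nilsequences; it decomposes the \emph{function} $f_1$ on $X$ as $g_1+g_2+g_3$, where $g_1(T_1^n x)=\psi_x(n)$ is an $(s-1)$-step nilsequence for $\mu$-a.e.\ $x$ (not $2$-step---the degree $s$ comes from the box seminorm control hypothesis and may be large), $\|g_2\|_{L^2(\mu)}\leq\varepsilon$, and $\nnorm{g_3}_{s,T_1}=0$. After disposing of $g_2$ and $g_3$, the surviving contribution
\[
L_1(n)=\int (f_0\,\mathbf{1}_E\, f_2)(x)\, e(n\phi(x))\,\psi_x(n)\,d\mu(x)
\]
is an integral over $x$ of an $x$-dependent family of nilsequences; since $\phi$ is only $T_2$-invariant and need not be $T_1$-invariant, there is no product-system trick that collapses this to a single nilsequence, and in general it is not one. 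For instance, on $X=\T^3$ with $T_1(x,y,z)=(x+\alpha,y,z)$, $T_2(x,y,z)=(x,y,z+y)$, $f_0=1$, $f_1=e(-z)$, $f_2=e(z)\in\CE(T_2)$, one computes $L(n)=\mathbf{1}_{n=0}$, which lies at positive $\ell^\infty$-distance from every nilsequence. So the passage in Step~4 from nilsequences to $L$ via uniform approximation fails.

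The paper's argument uses exactly this decomposition of $f_1$ but proceeds pointwise: for $\mu$-a.e.\ $x$, the sequence $n\mapsto e(n\phi(x))\,\psi_x(n)$ \emph{is} a single nilsequence, so the equidistribution hypothesis gives
\[
\lim_{N\to\infty}\E_{n\in[N]} e\bigl(a(n)\phi(x)\bigr)\,\psi_x(a(n))=\lim_{N\to\infty}\E_{n\in[N]} e\bigl(n\phi(x)\bigr)\,\psi_x(n),
\]
and one then integrates against $f_0$ by dominated convergence. Your closing remark about working fiberwise is in the right spirit, but the fibration needed is over points $x\in X$, not over ergodic components of $\mu$.
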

\begin{proof}
	Let $(X, \CX, \mu, T_1, T_2)$ be a system and $f_0, f_1, f_2\in L^\infty(\mu)$. Suppose first that $f_2\in\CE(T_2)$.
	Since $a$ admits box seminorm control, there exists $s\in\N$ such that
	\begin{align*}
		\lim_{N\to\infty}\E_{n\in[N]}\int f_0 \cdot T_1^{a(n)}f_1\cdot T_2^{a(n)}f_2\; d\mu = 0
	\end{align*}
	whenever $\nnorm{f_1}_{s, T_1} = 0$. By \cite[Proposition 3.1]{CFH11} (a nonergodic variant of the Host-Kra decomposition~\cite{HK05a}), for every $\veps>0$ we can decompose $f_1 = g_1 + g_2 + g_3$ into functions $g_1, g_2, g_3\in L^\infty(\mu)$ for which:
	\begin{enumerate}
		\item $g_1(T_1^n x) = \psi_x(n)$ is an $(s-1)$-step nilsequence for $\mu$-a.e. $x\in X$;
		\item $\norm{g_2}_{L^2(\mu)}\leq \veps$;
		\item $\nnorm{g_3}_{s, T_1} = 0$.
	\end{enumerate}
	Hence, it suffices to prove the claim with $g_1$ in place of $f_1$.
	
	Since  $f_2$ is a nonergodic eigenfunction of $T_2$ we get by \eqref{E:nonergodiceigen}  that  there exists
	a  $T$-invariant set $E\in \CX$ and a measurable $T$-invariant function $\phi \colon X\to \T$ such that
	for every $n\in\Z$ and $\mu$-a.e. $x\in X$, we have
	 $$
	 f_2(T_2^n x) = \mathbf{1}_{E}(x)\, e(\phi(x)n)\, f_2(x).
	 $$
Hence, for every $N\in\N$ and $\mu$-a.e. $x\in E$, we have
	\begin{align*}
		\E_{n\in[N]}g_1(T_1^{a(n)}x)\cdot f_2(T_2^{a(n)}x) = (f_2\cdot \mathbf{1}_E)(x)\cdot \E_{n\in[N]}\psi_x({a(n)}) \cdot e(\phi(x) a(n)).
	\end{align*}
	On taking $N\to\infty$, the assumption that $a$ equidistributes on nilsystems gives
	\begin{align*}
		\lim_{N\to\infty}\E_{n\in[N]}g_1(T_1^{a(n)}x)\cdot f_2(T_2^{a(n)}x)
            &=(f_2\cdot \mathbf{1}_E)(x)\cdot\lim_{N\to\infty}\E_{n\in[N]}\psi_x(a(n))\cdot e(\phi(x) a(n))\\
		&=(f_2\cdot \mathbf{1}_E)(x)\cdot\lim_{N\to\infty}\E_{n\in[N]}\psi_x(n)\cdot e(\phi(x) n)\\
		&= \lim_{N\to\infty}\E_{n\in[N]}g_1(T_1^n x)\cdot f_2(T_2^n x).
	\end{align*}
	The claimed identity \eqref{E: equality of limits} in the case $f_2\in\CE(T_2)$ follows, with $g_1$ in place of $f_1$,  on taking the inner product of the average above with $f_0$.

    If $f_2\in\CE(T_2T_1\inv)$ instead, then we compose the integral with $T_1^{-a(n)}$ to get
    \begin{align}\label{E: change of variables trick}
       \int f_0 \cdot T_1^{a(n)}f_1\cdot T_2^{a(n)}f_2\; d\mu = \int f_1 \cdot T_1^{-a(n)}f_0\cdot (T_2T_1\inv)^{a(n)}f_2\; d\mu
    \end{align}
for every $n\in\N$.
    The conclusion follows by applying the previous case to the system $(X, \CX, \mu, T_1\inv, T_2T_1\inv)$.
\end{proof}

 The trick \eqref{E: change of variables trick} via which we prove the second part of Proposition \ref{P: base case} gives one reason why we want our assumptions on box seminorm control and equidistribution on nilsystems to hold for all systems, not just for the original system and its nilfactors. Later on, we will also pass to structured extensions and ergodic components of the system under consideration, making even more substantial use of the global nature of our assumptions.

Just like Proposition \ref{P: base case} serves as the ``base case'' in the proof of Theorem \ref{T: limiting formula}, the following result plays a similar role in deriving Theorem \ref{T: limiting formula 2}.
\begin{proposition}\label{P: base case 2}
    Let $a\colon \N\to \Z$ be a sequence and $n_k\in \N_0$ for $k\in \N$ so that the sequence  $(a(k!n+n_k))_{n,k}$ admits box seminorm control and equidistributes on nilsystems. Then the identity \eqref{E: equality of limits'} holds for every system $(X, \CX, \mu, T_1, T_2)$ and all functions $f_0,f_1, f_2\in L^\infty(\mu)$ with $f_2\in\CE(T_2)\cup\CE(T_2T_1\inv)$.
\end{proposition}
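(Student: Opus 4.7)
The plan is to mimic the proof of \cref{P: base case} within the iterated-limit framework of \cref{D: good properties 2}, treating the two cases $f_2 \in \CE(T_2)$ and $f_2 \in \CE(T_2T_1\inv)$ separately. The second reduces to the first: compose both sides of \eqref{E: equality of limits'} with $T_1^{-a(k!n+n_k)}$ and $T_1^{-k!n}$ respectively, obtaining the analogous identity for the system $(X,\CX,\mu, T_1\inv, T_2T_1\inv)$ in which $f_2$ is a nonergodic eigenfunction of $T_2T_1\inv$; the hypotheses on $(a(k!n+n_k))_{n,k}$ transfer verbatim since they are stated system-independently.

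For $f_2 \in \CE(T_2)$, the stronger form of the box-seminorm-control hypothesis in \cref{D: good properties 2}\eqref{I:gp2i} yields some $s \in \N$ such that the iterated $\limsup$ over $k, N$ of the average in \eqref{E: equality of limits'} vanishes whenever the function in the $f_1$-slot has $\nnorm{\cdot}_{s,T_1} = 0$. Applying the nonergodic Host--Kra decomposition \cite[Proposition 3.1]{CFH11}, for each $\veps > 0$ I write $f_1 = g_1 + g_2 + g_3$ with $g_1$ restricting on $\mu$-a.e. $T_1$-orbit to an $(s-1)$-step nilsequence $m \mapsto \psi_x(m)$, $\norm{g_2}_{L^2(\mu)} \le \veps$, and $\nnorm{g_3}_{s,T_1} = 0$. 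The $g_3$-contribution is annihilated by the box seminorm control, the $g_2$-contribution is bounded by $\veps \norm{f_0}_{L^\infty(\mu)} \norm{f_2}_{L^\infty(\mu)}$ uniformly in $k, N$ via Cauchy--Schwarz, and sending $\veps \to 0$ reduces matters to establishing \eqref{E: equality of limits'} with $f_1 = g_1$.

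For this remaining piece, \eqref{E:nonergodiceigen} furnishes a $T_2$-invariant set $E \in \CX$ and a $T_2$-invariant function $\phi \colon X \to \T$ such that $f_2(T_2^n x) = \mathbf{1}_E(x) e(\phi(x) n) f_2(x)$ for $\mu$-a.e. $x$ and all $n \in \Z$. Writing $\psi_x(m) = F(b^m z_0)$ for a nilrotation $b$ on a nilmanifold $Z$ and some $F \in C(Z)$, the inner average at a point $x \in E$ factors as $(f_2 \mathbf{1}_E)(x) \cdot \E_{n\in[N]} \tilde F\bigl(\tilde b^{a(k!n+n_k)}(z_0,0)\bigr)$, where $\tilde b$ acts on the product nilsystem $Z \times \T$ by $(y,t) \mapsto (by, t + \phi(x))$ and $\tilde F := F \otimes e$. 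The equidistribution hypothesis \cref{D: good properties 2}\eqref{I:3.2ii}, applied to this product nilsystem, swaps the iterated $k, N$ limit of this expression with the analogous one along $k!n$; dominated convergence then integrates the resulting $\mu$-a.e. pointwise identity against $f_0$ to yield \eqref{E: equality of limits'}.

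I do not expect substantive obstacles: \cref{D: good properties 2} packages precisely the two ingredients needed (iterated-limit box seminorm control and iterated-limit equidistribution on nilsystems), and the only mild bookkeeping is ensuring that the $g_2, g_3$ error bounds remain uniform in $k, N$, which is automatic from the iterated-$\limsup$ form of the box-seminorm-control hypothesis.
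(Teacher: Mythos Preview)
The proposal is correct and follows precisely the adaptation of the proof of \cref{P: base case} that the paper itself indicates (invoking the box seminorm control hypothesis, applying the \cite{CFH11} decomposition, and using the equidistribution-on-nilsystems assumption on the product nilsystem $Z\times\T$). Your treatment of the reduction $f_2\in\CE(T_2T_1\inv)\Rightarrow f_2\in\CE(T_2)$ via the $T_1^{-a(k!n+n_k)}$-reparametrization and the passage from the pointwise identity to the integral via dominated convergence are exactly as in the paper's handling of \cref{P: base case}.
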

Proposition \ref{P: base case 2} follows from a straightforward adaptation of the proof of Proposition \ref{P: base case}. After invoking the seminorm control assumption and using the decomposition result from \cite{CFH11}, we obtain the claimed identity by directly applying the equidistribution on nilsystems assumption. We spare the reader the repetitive details of the argument.

\subsection{Optimal seminorm control for linear averages}\label{SS:lineark!}
Much like Proposition \ref{P: base case}, Theorem \ref{T: limiting formula 2} is proved by comparing averages along $a(n)$ with averages along $n$. A key fact about the former, proved by Host~\cite{H09}, is that they admit optimal box seminorm control. The existence of the limit has been proved by Tao~\cite{Ta08} beforehand, and it is not really needed for the inequality below if one is willing to replace limit by limsup.
\begin{proposition}[Optimal seminorm control for linear averages {\cite[Proposition 1]{H09}}]\label{P: linear}
	Let $(X, \CX, \mu, T_1, \ldots, T_\ell)$ be a system. For all 1-bounded $f_1, \ldots, f_\ell\in L^\infty(\mu)$, we have
	\begin{align*}
		\lim_{N\to\infty}\norm{\E_{n\in[N]}T_1^n f_1\cdots T_\ell^n f_\ell}_{L^2(\mu)} \leq\nnorm{f_\ell}_{T_\ell T_1\inv, \ldots, T_\ell T_{\ell-1}\inv,T_\ell}.
	\end{align*}
\end{proposition}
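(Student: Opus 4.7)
The plan is to prove the bound by induction on $\ell$, using a single van der Corput / Cauchy-Schwarz step to pass from $\ell$ to $\ell-1$ commuting transformations. Since existence of the limit is already known from \cite{Ta08}, it suffices to establish the inequality with $\limsup$ in place of $\lim$; the two then coincide automatically.

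For the base case $\ell=1$, the von Neumann mean ergodic theorem yields $\E_{n\in[N]} T_1^n f_1 \to \E(f_1\mid \CI(T_1))$ in $L^2(\mu)$, while unpacking definitions gives $\nnorm{f_1}_{T_1}^2 = \norm{\E(f_1\mid \CI(T_1))}_{L^2(\mu)}^2$, so the two sides match.

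For the inductive step, set $u_n := T_1^n f_1\cdots T_\ell^n f_\ell$. A standard van der Corput estimate in $n$ produces
\begin{equation*}
\limsup_N \Bignorm{\E_{n\in[N]} u_n}_{L^2(\mu)}^2 \leq \lim_{H\to\infty}\E_{h\in[H]}\,\limsup_N \Bigabs{\E_{n\in[N]} \int \prod_{i=1}^\ell T_i^n \Delta_{T_i^h} f_i\, d\mu}.
\end{equation*}
For each fixed $h$ I would compose the integral with $T_1^{-n}$ to rewrite the inner correlation as $\int \Delta_{T_1^h}f_1 \cdot \prod_{i=2}^\ell (T_i T_1\inv)^n \Delta_{T_i^h}f_i\, d\mu$, and then apply Cauchy-Schwarz in $L^2(\mu)$, using that $\Delta_{T_1^h}f_1$ is $1$-bounded, to bound the $\limsup$ in $N$ by the $L^2$-norm of the average of the remaining $\ell-1$ factors along the commuting transformations $T_2T_1\inv,\ldots,T_\ell T_1\inv$. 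Applying the inductive hypothesis to this $(\ell-1)$-linear average with $S_i:=T_iT_1\inv$ produces the bound $\nnorm{\Delta_{T_\ell^h}f_\ell}_{S_\ell S_2\inv,\ldots,S_\ell S_{\ell-1}\inv,\, S_\ell}$, which simplifies via $S_\ell S_i\inv = T_\ell T_i\inv$ to $\nnorm{\Delta_{T_\ell^h}f_\ell}_{T_\ell T_2\inv,\ldots,T_\ell T_{\ell-1}\inv,\,T_\ell T_1\inv}$.

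To finish, I would raise both sides to the $2^{\ell-1}$ power, apply Jensen's inequality to move the power inside the $h$-average, and invoke the inductive definition of box seminorms \eqref{ergodic identity} together with their permutation invariance to identify
\begin{equation*}
\lim_{H\to\infty}\E_{h\in[H]}\nnorm{\Delta_{T_\ell^h}f_\ell}_{T_\ell T_2\inv,\ldots,T_\ell T_{\ell-1}\inv,\,T_\ell T_1\inv}^{2^{\ell-1}} = \nnorm{f_\ell}_{T_\ell T_1\inv,\ldots,T_\ell T_{\ell-1}\inv,\,T_\ell}^{2^\ell}.
\end{equation*}
Extracting a $2^\ell$-th root then closes the induction. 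I do not expect any serious obstacle: this is the familiar Cauchy-Schwarz unwinding from Host's work, and the only care needed is bookkeeping --- verifying that composing with $T_1^{-n}$ reshuffles the transformations into exactly the commutator form required for the induction, and that the exponents $2^{\ell-1}$ and $2^\ell$ in the Jensen/seminorm step line up correctly with the recursion in \eqref{ergodic identity}.
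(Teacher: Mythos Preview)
Your proof is correct and follows the standard van der Corput plus induction argument that is essentially Host's original proof in \cite{H09}. The paper itself does not give a proof of this proposition; it simply states the result and cites \cite[Proposition~1]{H09}, noting that the existence of the limit comes from \cite{Ta08} and that one may work with $\limsup$ instead. Your reconstruction matches what one finds in Host's paper: the base case via the mean ergodic theorem, the van der Corput step, the composition with $T_1^{-n}$ to reduce the number of transformations, and the Jensen/inductive-formula bookkeeping to close the recursion.
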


For the proof of Theorem \ref{T: limiting formula 2}, we will also need a variant of the previous result that covers the expressions
	$$
	\lim_{k\to \infty}\lim_{N\to\infty}\abs{\E_{n\in[N]}\int f_0\cdot T_1^{k!n} f_1\cdot T_2^{k!n} f_2\; d\mu}.
	$$
	Unfortunately, the seminorm bounds we get for the inside average by combining Proposition \ref{P: linear} with the scaling property of box seminorms introduce a multiplicative factor on the right-hand side that blows up when we take $k\to \infty$. However, we can get the following soft quantitative result that suffices for our purposes (we will only use the case $\ell=2$).

	\begin{proposition}[Optimal seminorm control for linear averages, II]\label{P: linear 2}
		Let $\ell\geq 2$ be an integer and $(X, \CX, \mu, T_1, \ldots, T_\ell)$ be a system.  Then for every $\varepsilon>0$ there
		exists $\delta>0$ (depending on $\veps$ and the system) such that for all 1-bounded $f_1, \ldots, f_\ell\in L^\infty(\mu)$, we have
		\begin{align*}
			\nnorm{f_\ell}_{T_\ell T_1\inv, \ldots, T_\ell T_{\ell-1}\inv, T_\ell}\leq \delta\quad \implies 		\quad \lim_{k\to \infty}\lim_{N\to\infty}\norm{\E_{n\in[N]}T_1^{k!n} f_1\cdots T_\ell^{k!n} f_\ell}_{L^2(\mu)} \leq \varepsilon.
		\end{align*}
	\end{proposition}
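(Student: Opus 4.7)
The plan is to adapt the soft-quantitative upgrade strategy used in the proof of \cref{P: inverse theorem magic}. First I will establish the qualitative vanishing: if $\nnorm{f_\ell}^* := \nnorm{f_\ell}_{T_\ell T_1\inv, \ldots, T_\ell T_{\ell-1}\inv, T_\ell} = 0$, then by the scaling property \eqref{I:scaling} combined with \cref{P: linear} applied to the transformations $T_1^{k!}, \ldots, T_\ell^{k!}$, for every $k\in\N$ one has
\begin{equation*}
\lim_{N\to\infty}\Big\|\E_{n\in[N]}\prod_{j=1}^\ell T_j^{k!n}f_j\Big\|_{L^2(\mu)} \leq \nnorm{f_\ell}_{(T_\ell T_1\inv)^{k!}, \ldots, T_\ell^{k!}} \leq (k!)^{\ell/2^\ell}\,\nnorm{f_\ell}^* = 0,
\end{equation*}
which forces the iterated limit to vanish. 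In the soft-quantitative regime $\nnorm{f_\ell}^* = \delta > 0$, the same chain yields only the bound $(k!)^{\ell/2^\ell}\delta$, whose prefactor blows up as $k \to \infty$ and so cannot be used directly, explaining the obstruction mentioned just before the statement.

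To circumvent this blow-up I will linearize via duality. For each $k$, by \cite{Ta08} the functional
\begin{equation*}
B_k(f_0, f_1, \ldots, f_\ell) := \lim_{N\to\infty}\E_{n\in[N]}\int f_0 \cdot \prod_{j=1}^\ell T_j^{k!n}f_j\; d\mu
\end{equation*}
is well-defined and multilinear on $(L^\infty(\mu))^{\ell+1}$, and $L^2$-duality gives $\lim_{N\to\infty}\|\E_{n\in[N]}\prod_{j=1}^\ell T_j^{k!n}f_j\|_{L^2(\mu)} = \sup_{\|f_0\|_{L^2(\mu)} \leq 1}|B_k(f_0, f_1, \ldots, f_\ell)|$. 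A Banach-Alaoglu argument applied to $(B_k)_k$ (using separability of the predual) extracts along any subsequence a further subsequence converging pointwise-on-arguments to a multilinear form $A$ on $(L^\infty(\mu))^{\ell+1}$; the qualitative vanishing combined with the Cauchy-Schwarz inequality gives $A(f_0, f_1, \ldots, f_\ell) = 0$ whenever $\nnorm{f_\ell}^* = 0$. I then apply \cite[Proposition A.2]{FrKu22a} to $A$, exactly as in the proof of \cref{P: inverse theorem magic}, obtaining $\delta > 0$ depending only on $\veps$ and the system such that $|A(f_0, f_1, \ldots, f_\ell)| \leq \veps$ for every 1-bounded tuple with $\nnorm{f_\ell}^* \leq \delta$.

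The principal technical obstacle is to transfer this bound on the weak-* subsequential limit $A$ into a bound on $\lim_{k\to\infty} \sup_{\|f_0\|_{L^2(\mu)} \leq 1} |B_k(f_0, f_1, \ldots, f_\ell)|$: the supremum in $f_0$ and the limit in $k$ need not commute, as the classical example $f_n(x) = e^{inx}$ shows that weak $L^2$-convergence of the inner averages $F_k := \lim_N \E_n \prod_j T_j^{k!n} f_j$ is compatible with a persistent gap between $\|F_k\|_{L^2(\mu)}$ and the norm of its weak limit. I will resolve this via a diagonal contradiction argument: supposing the conclusion fails produces sequences $(f_i^{(j)}, k_j, f_0^{(j)})$ with $k_j \to \infty$, $\nnorm{f_\ell^{(j)}}^* \to 0$, and $|B_{k_j}(f_0^{(j)}, f_1^{(j)}, \ldots, f_\ell^{(j)})| > \veps$; joint Banach-Alaoglu extraction in the weak-*-metrizable product of unit balls of $L^\infty(\mu)$, the weak-* lower semicontinuity of box seminorms (which forces $\nnorm{f_\ell^\infty}^* = 0$ for the joint weak-* limit), and \cite[Proposition A.2]{FrKu22a} applied to the weak-* limit of $(B_{k_j})$ together yield the required contradiction.
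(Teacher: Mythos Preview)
Your qualitative step is fine and matches the paper. The gap is in the soft-quantitative upgrade. The diagonal contradiction argument in your last paragraph does not go through: you are trying to pass to the limit simultaneously in the multilinear form $B_{k_j}$ \emph{and} in the arguments $f_i^{(j)}$, but multilinear forms are not jointly continuous under weak-* convergence of both. Concretely, even if $B_{k_j}(g_0,\ldots,g_\ell)\to A(g_0,\ldots,g_\ell)$ for every fixed tuple and $f_i^{(j)}\to f_i^{\infty}$ weak-*, there is no reason for $B_{k_j}(f_0^{(j)},\ldots,f_\ell^{(j)})$ to converge to $A(f_0^{\infty},\ldots,f_\ell^{\infty})$, and so the inequality $|B_{k_j}(f_0^{(j)},\ldots,f_\ell^{(j)})|>\veps$ yields no contradiction with anything you can say about $A$. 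Applying \cite[Proposition~A.2]{FrKu22a} to $A$ only controls $A$ on \emph{fixed} tuples, which is irrelevant here. (The weak-* lower semicontinuity claim for box seminorms, while plausible, would also need justification.)

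The paper avoids this entirely by first proving that the iterated $L^2(\mu)$-limit
\[
A(f_1,\ldots,f_\ell):=\lim_{k\to\infty}\lim_{N\to\infty}\E_{n\in[N]}T_1^{k!n}f_1\cdots T_\ell^{k!n}f_\ell
\]
actually exists (\cref{L:convergence}), via induction on $\ell$ using magic extensions and the spectral theorem in the base case. Once $A$ is a single well-defined multilinear map, \cite[Proposition~A.2]{FrKu22a} applies directly and gives $\delta$ depending only on $\veps$ and the system. This convergence lemma is precisely the ingredient your argument is missing, and is what the paper had in mind when it said a purely scaling approach fails.
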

	\begin{remark}
		The important thing for us is that $\delta$ does not depend on the functions $f_1,\ldots, f_\ell$ as long as they are $1$-bounded.
	\end{remark}
	We will prove Proposition \ref{P: linear 2} by first establishing a qualitative version and then using the usual technique  of transferring qualitative results to soft quantitative ones. To perform the second of these steps, we first need to know the norm convergence of the relevant averages. This is provided by the result below.
	\begin{lemma}\label{L:convergence}
		Let $(X, \CX, \mu, T_1, \ldots, T_\ell)$ be a system and $f_1, \ldots, f_\ell\in L^\infty(\mu)$. Then the following iterated  limit exists in $L^2(\mu)$:
		$$
		\lim_{k\to \infty}\lim_{N\to\infty}\E_{n\in[N]}T_1^{k!n} f_1\cdots T_\ell^{k!n} f_\ell.
		$$
	\end{lemma}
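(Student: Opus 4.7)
The plan is to proceed by induction on $\ell$. The base case $\ell=1$ is immediate from the mean ergodic theorem and the martingale convergence theorem: the inner limit equals $\E(f_1\mid \CI(T_1^{k!}))$, and since the $\sigma$-algebras $\CI(T_1^{k!})$ are increasing in $k$, the outer limit exists in $L^2(\mu)$ and equals $\E(f_1\mid \bigvee_k \CI(T_1^{k!}))$. For the inductive step, assuming the statement for $\ell-1$ commuting transformations with $\ell \geq 2$, I would invoke \cref{P: magic extensions exist} to pass to a magic extension with respect to the commuting transformations $R_i := T_\ell T_i\inv$ for $i<\ell$ and $R_\ell := T_\ell$; these generate the same group as $T_1,\ldots,T_\ell$, since $T_\ell = R_\ell$ and $T_i = R_\ell R_i\inv$ for $i<\ell$. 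Because $L^2$-convergence in an extension descends to $L^2$-convergence in the base system via the factor map, I may henceforth assume that
\begin{equation*}
\CZ(R_1,\ldots,R_\ell) = \CI(R_1)\vee\cdots\vee\CI(R_\ell).
\end{equation*}

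Writing $f_\ell = f_\ell^{\CZ} + f_\ell^{\perp}$ with $f_\ell^{\CZ} := \E(f_\ell\mid \CZ(R_1,\ldots,R_\ell))$, I treat each piece separately. For the orthogonal part, since $\ell \geq 2$, the scaling property \eqref{I:scaling} of box seminorms yields
\begin{equation*}
\nnorm{f_\ell^{\perp}}_{R_1^{k!},\ldots,R_\ell^{k!}} \leq (k!)^{\ell/2^\ell}\, \nnorm{f_\ell^{\perp}}_{R_1,\ldots,R_\ell} = 0,
\end{equation*}
and so \cref{P: linear} applied to the commuting transformations $T_1^{k!},\ldots,T_\ell^{k!}$ (noting that $T_\ell^{k!}(T_i^{k!})\inv = R_i^{k!}$ for $i<\ell$) forces the contribution of $f_\ell^{\perp}$ to the inner limit to vanish identically in $k$. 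For the $\CZ$-measurable piece, the magic identity provides an $L^2$-dense family of finite linear combinations of products $g_1\cdots g_\ell$ with $g_i \in I(R_i)$; combining this with the uniform-in-$k$ bound
\begin{equation*}
\Bignorm{\lim_{N\to\infty}\E_{n\in[N]} T_1^{k!n}f_1\cdots T_\ell^{k!n}f_\ell}_{L^2(\mu)} \leq \prod_{i<\ell}\norm{f_i}_{L^\infty(\mu)}\cdot \norm{f_\ell}_{L^2(\mu)}
\end{equation*}
(which is immediate from the triangle and Cauchy-Schwarz inequalities) and a standard Cauchy-sequence argument reduces matters to proving convergence when $f_\ell$ is itself a product $g_1\cdots g_\ell$ of this form.

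For such a product, the defining relations $T_\ell g_i = T_i g_i$ (for $i<\ell$, equivalent to $g_i \in I(R_i)$) and $T_\ell g_\ell = g_\ell$ give $T_\ell^{k!n}f_\ell = g_\ell \cdot \prod_{i<\ell} T_i^{k!n}g_i$, so the full $\ell$-fold average collapses to
\begin{equation*}
g_\ell\cdot \lim_{N\to\infty}\E_{n\in[N]}\prod_{i=1}^{\ell-1} T_i^{k!n}(f_i g_i),
\end{equation*}
an average over only $\ell-1$ commuting transformations with $L^\infty(\mu)$ functions $f_i g_i$. The inductive hypothesis then supplies convergence in $L^2(\mu)$ as $k\to\infty$, and multiplication by the bounded factor $g_\ell$ preserves this. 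I expect the main technical points to be verifying the uniform $L^2$ bound cleanly and confirming that the passage to the magic extension does not damage either the hypotheses or the conclusion of the lemma; beyond these bookkeeping issues, the proof is a clean combination of the magic-extension machinery, the scaling property of box seminorms, and the mean/martingale convergence theorems.
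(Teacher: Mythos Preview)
Your proposal is correct and follows essentially the same approach as the paper: induction on $\ell$, passage to a magic extension with respect to $R_i=T_\ell T_i\inv$ and $R_\ell=T_\ell$, elimination of the $\CZ$-orthogonal part via the scaling property combined with \cref{P: linear}, and reduction of the structured part to a product of invariant functions that collapses the average to $\ell-1$ transformations. The only minor difference is the base case $\ell=1$: you invoke martingale convergence for the increasing tower $\CI(T_1^{k!})$, whereas the paper uses the spectral theorem and the elementary computation $\lim_{k\to\infty}\mathbf{1}_\Z(k!\alpha)=\mathbf{1}_\Q(\alpha)$; both are valid.
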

	\begin{proof} Suppose first that $\ell\geq 2$; we will reduce by induction to the case $\ell=1$, and then resolve the base case via the spectral theorem.
		
        By Proposition~\ref{P: linear} and the scaling property of box seminorms (property~\eqref{I:scaling} in \cref{SS:boxseminorms}), we have
        \begin{align*}
            \lim_{k\to \infty}\lim_{N\to\infty}\norm{\E_{n\in[N]}T_1^{k!n} f_1\cdots T_\ell^{k!n} f_\ell}_{L^2(\mu)} = 0
        \end{align*}
        whenever $\nnorm{f_\ell}_{T_\ell T_1\inv, \ldots, T_\ell T_{\ell-1}\inv,T_\ell} = 0$ (the scaling step fails for $\ell=1$). Passing to an appropriate magic extension (which exists by Proposition \ref{P: magic extensions exist}), we can assume that our system is magic with respect to $T_\ell T_1\inv, \ldots, T_\ell T_{\ell-1}\inv,T_\ell$. Hence, it suffices to assume that $f_\ell$ is measurable with respect to
        \begin{align*}
             \CZ(T_\ell T_1\inv, \ldots, T_\ell T_{\ell-1}\inv,T_\ell) = \CI(T_\ell T_1\inv)\vee\cdots\vee \CI(T_\ell T_{\ell-1}\inv)\vee\CI(T_\ell).
        \end{align*}
        By an $L^2(\mu)$ approximation argument, we can take $f_\ell = g_1\cdots g_\ell$, where $g_j\in I(T_\ell T_j\inv)$ for $j\in[\ell-1]$ and $g_\ell \in I(T_\ell)$. Rearranging terms, it thus suffices to show the (double) convergence of
        \begin{align*}
            \E_{n\in[N]}T_1^{k!n} (f_1 g_1)\cdots T_{\ell-1}^{k!n} (f_{\ell-1}g_{\ell-1}).
        \end{align*}
        We have thus reduced the length of the average from $\ell$ to $\ell-1$; an iteration of this procedure brings us to $\ell=1$. In this case, $L^2(\mu)$ convergence is easy to check using the spectral theorem. Indeed, it suffices to show that for every $\alpha\in[0,1)$, the limit
		\begin{equation}\label{E:k!alpha}
			\lim_{k\to\infty} \lim_{N\to\infty} \E_{n\in[N]} e(k! n\alpha)=
			\lim_{k\to\infty} {\bf 1}_{\Z}(k!\alpha)
		\end{equation}
        exists. For $\alpha\notin\Q$, the right-hand side is $0$ while for $\alpha\in\Q$ it is $1$.\footnote{The place of $k!$ can be taken any sequence $c_k$ such that
			$\lim\limits_{k\to\infty} {\bf 1}_{r\Z}(c_k)$ exists for every $r\in \N$, i.e., for every $r\in \N$ either $r\mid c_k$ for all large enough $k$ or $r\nmid c_k$ for all large enough $k$.}
	\end{proof}

We now combine all the ingredients to prove Proposition \ref{P: linear 2}.
\begin{proof}[Proof of Proposition \ref{P: linear 2}]
    Just like in the proof of Lemma \ref{L:convergence}, we infer from Proposition~\ref{P: linear} and the scaling property of box seminorms (property~\eqref{I:scaling} in  \cref{SS:boxseminorms}) that
        \begin{align*}
            \lim_{k\to \infty}\lim_{N\to\infty}\norm{\E_{n\in[N]}T_1^{k!n} f_1\cdots T_\ell^{k!n} f_\ell}_{L^2(\mu)} = 0
        \end{align*}
        whenever $\nnorm{f_\ell}_{T_\ell T_1\inv, \ldots, T_\ell T_{\ell-1}\inv,T_\ell} = 0$. By Lemma \ref{L:convergence}, the multilinear  functional
        \begin{align*}
            A(f_1, \ldots, f_\ell) := \lim_{k\to \infty}\lim_{N\to\infty}\E_{n\in[N]}T_1^{k!n} f_1\cdots T_\ell^{k!n} f_\ell
        \end{align*}
        is well-defined. The result then follows from \cite[Proposition A.2]{FrKu22a} applied with $X_j:=L^{\ell+1}(\mu)$, $X_j':=L^\infty(\mu)$, $\norm{\cdot}_{X_j'}:=\norm{\cdot}_{L^\infty(\mu)}$, $\norm{\cdot}_{X_j}:=\norm{\cdot}_{L^{\ell+1}(\mu)},$ for $j\in[\ell]$, and the seminorm $\nnorm{\cdot}:= \nnorm{\cdot}_{T_\ell T_1\inv, \ldots, T_\ell T_{\ell-1}\inv,T_\ell}$.
\end{proof}

\subsection{Preliminary lemmas}
Before performing the inductive step of degree lowering, we present two tricks that we shall use abundantly in the proof of Proposition \ref{P: degree lowering} below. While both of them are by now a standard part of the degree lowering toolbox, they have not appeared in the ergodic literature in this particular form before, therefore we opted to state the precise statements used in this work. Their proofs are completely straightforward modifications of existing arguments.

\begin{proposition}[Stashing]\label{P: stashing}
    Let $(X, \CX, \mu, T_1, \ldots, T_\ell)$ be a system, $a_k\colon \N\to\Z$ be a sequence for every $k\in\N$, and $f_0, \ldots, f_\ell\in L^\infty(\mu)$ be 1-bounded. If
    \begin{align*}
        \limsup_{k\to\infty}\limsup_{N\to\infty}\abs{\E_{n\in[N]}\int f_0\cdot T_1^{a_k(n)}f_1\cdots T_\ell^{a_k(n)}f_\ell\; d\mu}\geq\delta
    \end{align*}
    for some $\delta>0$, then there exist sequences of integers $(N_{k,l})_{k,l}$ (increasing as $l\to\infty$ for every $k\in\N$) and $(k_i)_i$ (increasing as $i\to\infty$) as well as a function
    \begin{align*}
        F_\ell := \lim_{i\to\infty}\lim_{l\to\infty}\E_{n\in[N_{k_i,l}]}T_\ell^{-a_{k_i}(n)}\overline{f}_0\cdot (T_1T_\ell\inv)^{a_{k_i}(n)}\overline{f}_1\cdots (T_{\ell-1}T_\ell\inv)^{a_{k_i}(n)}\overline{f}_{\ell-1}
    \end{align*}
    (where the limit is a weak limit) such that
    \begin{align*}
        \limsup_{k\to\infty}\limsup_{N\to\infty}\abs{\E_{n\in[N]}\int f_0\cdot T_1^{a_k(n)}f_1\cdots T_{\ell-1}^{a_k(n)}f_{\ell-1}\cdot T_\ell^{a_k(n)}F_\ell\; d\mu}\geq\delta^2.
    \end{align*}
    Moreover, if $a_k = a$ for every $k\in\N$, then $(N_{k,l})$ is chosen independently of $k$.
\end{proposition}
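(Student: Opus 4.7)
The plan is to exploit the measure-preserving property of $T_\ell$ together with the Cauchy-Schwarz inequality and weak compactness in $L^\infty(\mu)$. Composing the integral defining
$$A_{k,N}:=\E_{n\in[N]}\int f_0\cdot T_1^{a_k(n)}f_1\cdots T_\ell^{a_k(n)}f_\ell\; d\mu$$
with $T_\ell^{-a_k(n)}$, I would write $A_{k,N}=\int G_{k,N}\cdot f_\ell\; d\mu$, where
$$G_{k,N}:=\E_{n\in[N]}T_\ell^{-a_k(n)}f_0\cdot\prod_{j=1}^{\ell-1}(T_jT_\ell\inv)^{a_k(n)}f_j$$
is $1$-bounded in $L^\infty(\mu)$ and satisfies $G_{k,N}=\overline{F_{k,N}}$ with the $F_{k,N}$ of the statement. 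The same composition turns the target integral $\int f_0\cdots T_\ell^{a_k(n)}F_\ell\; d\mu$ into $\int G_{k,N}\cdot F_\ell\; d\mu$, so the problem reduces to producing a 1-bounded $F_\ell$ with $\limsup_{k}\limsup_{N}\bigabs{\int G_{k,N}\cdot F_\ell\; d\mu}\geq\delta^2$.

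The construction of $F_\ell$ proceeds by a diagonal extraction. First, by the hypothesis, I would pick $k_i\to\infty$ with $\limsup_N|A_{k_i,N}|\to\limsup_k\limsup_N|A_{k,N}|\geq\delta$, and then for each $i$ pick $N_{k_i,l}\to\infty$ so that $|A_{k_i,N_{k_i,l}}|\to\limsup_N|A_{k_i,N}|$ as $l\to\infty$. The crucial step is then to refine these subsequences using weak-$*$ sequential compactness of the unit ball of $L^\infty(\mu)$ (Banach-Alaoglu applied with the separable predual $L^1(\mu)$): along a further subsequence, $G_{k_i,N_{k_i,l}}\to\tilde G_{k_i}$ in weak-$*$ as $l\to\infty$, and then $\tilde G_{k_i}\to G_\infty$ in weak-$*$ as $i\to\infty$, with $G_\infty\in L^\infty(\mu)$ again $1$-bounded. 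Setting $F_\ell:=\overline{G_\infty}$ realizes the iterated weak limit of $F_{k_i,N_{k_i,l}}$ required by the statement, since conjugation is continuous in the weak topology. The main place requiring care is making both of these extractions compatible with both iterated limsups; everything else is standard Hilbert-space bookkeeping.

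To conclude, since $f_\ell,F_\ell\in L^\infty(\mu)\subseteq L^1(\mu)$, passing weak-$*$ limits inside the relevant integrals yields
$$\lim_i\lim_l A_{k_i,N_{k_i,l}}=\int G_\infty\cdot f_\ell\; d\mu, \quad \lim_i\lim_l\int G_{k_i,N_{k_i,l}}\cdot F_\ell\; d\mu=\int |G_\infty|^2\; d\mu=\|G_\infty\|_{L^2(\mu)}^2.$$
Cauchy-Schwarz combined with the 1-boundedness of $f_\ell$ gives $\|G_\infty\|_{L^2(\mu)}\geq\bigabs{\int G_\infty\cdot f_\ell\; d\mu}\geq\delta$, hence $\|G_\infty\|_{L^2(\mu)}^2\geq\delta^2$, which is the desired lower bound. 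The final assertion about $a_k=a$ is immediate: in that case $G_{k,N}$ does not depend on $k$ at all, and a single weak-$*$ extraction $G_{N_l}\to G_\infty$ (with $N_l$ independent of $k$) suffices.
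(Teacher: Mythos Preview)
Your proposal is correct and follows essentially the same route as the paper: the paper does not spell out the details either, merely pointing to \cite[Proposition~4.2]{Fr21} and noting that one applies weak compactness twice, once to extract $(N_{k,l})_l$ for each $k$ and once to extract $(k_i)_i$. Your write-up carries this out explicitly, with the Cauchy--Schwarz step $\bigabs{\int G_\infty\cdot f_\ell\,d\mu}\leq\norm{G_\infty}_{L^2(\mu)}$ yielding the $\delta^2$ bound exactly as intended; the only cosmetic difference is that you select $(k_i)_i$ before the inner subsequences rather than after, which is immaterial.
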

The proof of Proposition \ref{P: stashing} is completely analogous to the one of \cite[Proposition 4.2]{Fr21} except we need to use weak compactness twice, first to obtain the sequence $(N_{k,l})_{k,l}$ for which the weak limit
\begin{align*}
    A_k:=\lim_{l\to\infty}\E_{n\in[N_{k,l}]}T_\ell^{-a_{k}(n)}\overline{f}_0\cdot (T_1T_\ell\inv)^{a_{k}(n)}\overline{f}_1\cdots (T_{\ell-1}T_\ell\inv)^{a_{k}(n)}\overline{f}_{\ell-1}
\end{align*}
exists, and then to find the sequence $(k_i)_i$ such that $A_{k_i}\to F_\ell$ weakly as $i\to\infty$.

\begin{proposition}[Dual-difference interchange]\label{P: dual-difference}
    Let $(X, \CX, \mu, T_1, \ldots, T_\ell)$ be a system and $\CG\subseteq L^\infty(\mu)$ be a $T_1, \ldots, T_\ell$-invariant collection of 1-bounded functions closed under multiplication and complex conjugation. For a collection of 1-bounded functions $(f_{n,k})_{n,k}\subseteq L^\infty(\mu)$ and a sequence of integers $(N_{k,l})_{k,l}$ increasing as $l\to\infty$ for every $k\in\N$, let
    \begin{align*}
        f := \lim_{k\to\infty}\lim_{l\to\infty}\E_{n\in[N_{k,l}]}f_{n,k},
    \end{align*}
    where the limit is a weak limit. Suppose that the integrals below are real and
    \begin{align*}
        \liminf_{H\to\infty}\E_{h\in [H]^\ell}\int \Delta_{T_1, \ldots, T_\ell; h} f\cdot g_h\; d\mu \geq \delta
    \end{align*}
    for some $g_h\in\CG$ and $\delta>0$. Then the integrals below are real and
    \begin{align*}
        \liminf_{H\to\infty}\E_{h,h'\in [H]^\ell}\limsup_{k\to\infty}\limsup_{l\to\infty}\E_{n\in[N_{k,l}]}\int \Delta_{T_1, \ldots, T_\ell; h-h'} f_{n,k}\cdot g_{h,h'}\; d\mu \geq \delta^{2^\ell}
    \end{align*}
    for some $g_{h,h'}\in \CG$.
\end{proposition}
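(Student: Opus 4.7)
Proof plan. The argument combines a van der Corput--type dual-difference interchange with the weak convergence $f = \lim_{k}\lim_{l}\E_{n\in[N_{k,l}]} f_{n,k}$, proceeding by induction on $\ell$.

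For the base case $\ell=1$, rewrite the hypothesis as $\delta \leq \int f\cdot v\, d\mu$ with $v := \E_{h\in[H]} T^h\bar f\cdot g_h\in L^2(\mu)$; Cauchy--Schwarz in the spatial variable yields $\delta^2 \leq \norm{v}_{L^2(\mu)}^2$. The weak convergence gives $v = \lim_{k}\lim_{l}\E_n v_{n,k}$ weakly, with $v_{n,k} := \E_h T^h\bar f_{n,k}\cdot g_h$, so expanding $\norm{v}_{L^2(\mu)}^2 = \langle v, v\rangle$ via the weak limit on both slots produces a doubly-nested quantity $\lim_{k,l}\E_n \lim_{k',l'}\E_{n'}\langle v_{n,k}, v_{n',k'}\rangle$. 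The Cauchy--Schwarz bound $|\langle v_{n,k}, v_{n',k'}\rangle|\leq \norm{v_{n,k}}_{L^2(\mu)}\norm{v_{n',k'}}_{L^2(\mu)}$, combined with Jensen's inequality $(\E X)^2\leq \E X^2$, collapses the two independent pairs of indices into one, giving $\delta^2 \leq \limsup_{k,l}\E_n\norm{v_{n,k}}_{L^2(\mu)}^2$. A short computation identifies $\norm{v_{n,k}}_{L^2(\mu)}^2 = \E_{h,h'}\int \Delta_{T; h-h'} f_{n,k}\cdot G_{h,h'}\, d\mu$ for some $G_{h,h'}\in\CG$ (using that $\CG$ is shift-invariant and closed under products and complex conjugation), and the reverse Fatou lemma moves $\E_{h,h'}$ outside $\limsup_{k,l}\E_n$ to yield the desired conclusion.

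For the inductive step with $\ell\geq 2$, I would iterate the above one coordinate at a time. Writing $\Delta_{T_1,\ldots,T_\ell; h}f = \Delta_{T_1,\ldots,T_{\ell-1}; (h_1,\ldots, h_{\ell-1})}\psi$ with $\psi := \Delta_{T_\ell; h_\ell}f$ puts us in a structurally analogous position, though $\psi$ is now nonlinear in $f$. Each of the $\ell$ iterations would consist of: (a) applying Cauchy--Schwarz to double a single shift variable $h_i$ into an $(h_i,h_i')$ pair, picking up a square; (b) absorbing the resulting unpaired shifts of $f$ and $g$ into a new function in $\CG$; and (c) applying the diagonal Cauchy--Schwarz together with weak convergence, as in the base case, to replace the two factors of $f$ exposed at this stage by $f_{n,k}$'s sharing a common index. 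After $\ell$ iterations, each of the $2^\ell$ factors of $f$ in $\Delta_{T_1,\ldots,T_\ell; h-h'}$ has been replaced by $f_{n,k}$, and the accumulated squarings give the $\delta^{2^\ell}$ bound.

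The main obstacle is step (c) in the inductive phase. Since multiplication is not continuous in the weak topology, the intermediate function $\psi$ arising after the first iteration is not manifestly a weak limit of any natural $\psi_{n,k}$. The resolution is to observe that after each Cauchy--Schwarz the relevant integrand admits a bilinear decomposition $\langle u_h, u_{h'}\rangle$ analogous to the one in the base case, in which each $u_h$ is built from only a single shift of $f$ (not of $\psi$); such $u_h$ genuinely arises as a weak limit of the corresponding vectors built from $f_{n,k}$, which is precisely what enables the diagonal collapse to go through. Carefully iterating this alignment ensures that by the end all $2^\ell$ factors of $f$ have been replaced by $f_{n,k}$ with a single shared index $(n,k)$, as required.
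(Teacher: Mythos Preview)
Your base case $\ell=1$ is correct, though the route through the doubly-nested limit is unnecessarily elaborate; a single use of weak convergence followed by one Cauchy--Schwarz already gives $\delta^2\leq\limsup_{k}\limsup_{l}\E_n\|v_{n,k}\|^2$.

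The inductive step contains a genuine gap. For $\ell\geq 2$, after your Cauchy--Schwarz in $h_1$ the surviving vector $u_{h_1}$ is (up to a shift and the $g$-factor) $T_1^{h_1}\overline{\Delta_{T_2,\ldots,T_\ell;(h_2,\ldots,h_\ell)}f}$, which carries $2^{\ell-1}$ copies of $f$, not one. So the claim that ``each $u_h$ is built from only a single shift of $f$'' is false, and with it the assertion that $u_h$ is a weak limit of the corresponding expression built from $f_{n,k}$. Multiplication is not weakly continuous, so there is no cheap way to push the weak convergence through these $2^{\ell-1}$ factors simultaneously; your proposed diagonal collapse has nothing to act on. Trying to iterate the $\ell=1$ argument therefore stalls immediately.

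The approach the paper follows (via \cite[Proposition~4.3]{Fr21}) avoids this by using weak convergence only \emph{once}, at the very start: isolate the unshifted copy of $f$ in $\Delta_{T_1,\ldots,T_\ell;h}f$ and replace it by $f_{n,k}$. This yields
\[
\delta \leq \limsup_k\limsup_l\E_n\Bigl|\E_{h}\int f_{n,k}\cdot\prod_{\ueps\neq 0}\CC^{|\ueps|}T_1^{\eps_1 h_1}\cdots T_\ell^{\eps_\ell h_\ell}f\cdot g_h\,d\mu\Bigr|.
\]
One then performs $\ell$ successive Cauchy--Schwarz steps in $h_1,\ldots,h_\ell$, none of which involve weak convergence. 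At the $i$-th step, a shift by $T_i^{-h_i}$ makes the factors with $\eps_i=1$ independent of $h_i$; Cauchy--Schwarz eliminates this block (it carries only copies of $f$) and squares the surviving block, thereby \emph{doubling} the copies of $f_{n,k}$ while keeping them all at the single index $(n,k)$. After $\ell$ steps all $2^\ell-1$ copies of $f$ have been paired off and discarded, and the $2^\ell$ copies of $f_{n,k}$ assemble into $\Delta_{T_1,\ldots,T_\ell;h-h'}f_{n,k}$, with the accumulated squaring producing $\delta^{2^\ell}$. Reverse Fatou then pulls $\E_{h,h'}$ outside $\limsup_k\limsup_l\E_n$. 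The synchronization of indices you were trying to enforce via repeated diagonal collapses comes for free here, because the single $f_{n,k}$ introduced at the outset is simply duplicated by each Cauchy--Schwarz.
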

The proof of Proposition \ref{P: dual-difference} follows very closely the proof of \cite[Proposition 4.3]{Fr21}. If needed, one can obtain a precise description of the term $g_{h,h'}$ (see e.g. \cite[Proposition 4.3]{Fr21} and \cite[Proposition 5.6]{FrKu22a} for the formula); they were required for past applications of dual-difference interchange in \cite{Fr21,  FrKu22c, FrKu22a} but are not needed in this paper.

\subsection{Inductive step of degree lowering}\label{SS: degree lowering}

With all the preliminaries out of the way, we are ready to carry out the inductive step in the proof of Theorems \ref{T: limiting formula}, captured in the result below.

\begin{proposition}[Degree lowering]\label{P: degree lowering}
    Let $a\colon \N\to\Z$ be a sequence that admits box seminorm control and equidistributes on nilsystems. Let $s_1, s_2\in\N$.
    Suppose that for every system $(X, \CX, \mu, T_1, T_2)$ and all functions $f_0, f_1, f_2\in L^\infty(\mu)$,   we have \eqref{E:vanish} whenever $\nnorm{f_2}_{(T_2T_1\inv)^{\times s_1}, T_2^{\times s_2}} = 0$.
        Then for every system $(X, \CX, \mu, T_1, T_2)$ and all functions $f_0, f_1, f_2\in L^\infty(\mu)$, we have \eqref{E:vanish} whenever $\nnorm{f_2}_{T_2T_1\inv, T_2} = 0$.
\end{proposition}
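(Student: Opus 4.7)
The plan is to prove the proposition by induction on $s_1+s_2$. The base case $(s_1,s_2)=(1,1)$ is exactly the hypothesis. The inductive step uses two moves: (i) a direct degree-lowering argument which, when $s_2\geq 2$, shows that the hypothesized control by $\nnorm{f_2}_{(T_2T_1\inv)^{\times s_1}, T_2^{\times s_2}}$ implies control by $\nnorm{f_2}_{(T_2T_1\inv)^{\times s_1}, T_2^{\times s_2-1}}$; and (ii) a reparametrization trick which, together with permutation invariance, transfers the statement for parameters $(s_1,s_2)$ to the statement for $(s_2,s_1)$ on an auxiliary system. Iterating (i) reduces $s_2$ down to $1$; a single application of (ii) then swaps the indices; further iterations of (i) reduce the new $s_2$ to $1$, so that the base case is reached in $s_1+s_2-2$ steps.

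The reparametrization (ii) is a short bookkeeping argument. Composing the integrand with $T_1^{-a(n)}$ rewrites the average as
\[\E_{n\in[N]}\int f_1\cdot T_1^{-a(n)}f_0\cdot (T_2T_1\inv)^{a(n)}f_2\,d\mu,\]
which is a length-three average for the auxiliary system $(X,\CX,\mu,T_1\inv,T_2T_1\inv)$ along the same sequence $a$. Setting $S_1:=T_1\inv$ and $S_2:=T_2T_1\inv$, the identity $S_2S_1\inv=T_2$ shows that the controlling seminorm in the auxiliary system is $\nnorm{f_2}_{(S_2S_1\inv)^{\times s_1}, S_2^{\times s_2}}=\nnorm{f_2}_{T_2^{\times s_1},(T_2T_1\inv)^{\times s_2}}$, which by permutation invariance equals $\nnorm{f_2}_{(T_2T_1\inv)^{\times s_2}, T_2^{\times s_1}}$. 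Since both the box-seminorm-control and nilsystem-equidistribution hypotheses are global (they apply to \emph{all} systems), the auxiliary system inherits them, and the problem transfers to the auxiliary setup with parameter pair $(s_2,s_1)$.

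The degree-lowering step (i) is the technical heart of the argument, adapted from Case~3 in \cref{SS:proofsketch}. After passing --- via a relatively independent joining over the characteristic factor $\CZ:=\CZ((T_2T_1\inv)^{\times s_1},T_2^{\times s_2})$ --- to the structured extension guaranteed by \cref{T: structured extension}, the soft inverse theorem \cref{P: inverse theorem} becomes available. Arguing by contradiction, I assume the averages do not vanish and apply stashing (\cref{P: stashing}) to replace $f_2$ by the weak subsequential limit $F_2:=\lim_N\E_{n\in[N]}T_2^{-a(n)}\overline{f}_0\cdot(T_1T_2\inv)^{a(n)}\overline{f}_1$. The hypothesized seminorm control gives $\nnorm{F_2}_{(T_2T_1\inv)^{\times s_1},T_2^{\times s_2}}>0$; peeling off one outermost $T_2$ via \eqref{ergodic identity} and applying \cref{P: inverse theorem} fiberwise produces functions $g_h\in I(T_2T_1\inv)$ and $\chi_h\in\CE(T_2)$ (together with lower-order derivatives of $F_2$ along the remaining transformations) for which the corresponding averaged integral is bounded below. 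Dual-difference interchange (\cref{P: dual-difference}) transports this positivity into the innermost average, yielding positivity of
\[\E_{n\in[N]}\int f_{0,h,h'}\cdot T_1^{a(n)}f_{1,h,h'}\cdot T_2^{a(n)}(g_{h,h'}\chi_{h,h'})\,d\mu\]
on average over $h-h'$, with $f_{j,h,h'}:=\Delta_{T_2T_1\inv;\,h-h'}f_j$. Since the $T_2$-component now lies in $I(T_2T_1\inv)\cdot\CE(T_2)$, the base case \cref{P: base case} allows me to replace $a(n)$ by $n$; Host's optimal linear control (\cref{P: linear}) then forces positivity of a lower-degree seminorm of $f_{1,h,h'}$ on average, and unwinding \eqref{ergodic identity} yields positivity of a box seminorm of the ``wrong'' function $f_1$ along the ``wrong'' transformations $T_1,\,T_1T_2\inv$. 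A final seminorm-smoothing step --- a second round of stashing and inverse theorem, now at a magic extension and invoking \cref{P: inverse theorem magic} for the simpler seminorm $\nnorm{\cdot}_{T_1,T_1T_2\inv}$ --- converts this into the desired $\nnorm{f_2}_{(T_2T_1\inv)^{\times s_1},T_2^{\times s_2-1}}>0$, contradicting the inductive hypothesis and closing step (i).

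The main obstacle is the seminorm-smoothing step at the end of (i): the direct output of degree lowering controls an ostensibly unrelated seminorm of $f_1$ rather than the desired one of $f_2$, and only in the toy case $(s_1,s_2)=(2,2)$ can an ad hoc reparametrization close the gap, so for general parameters the smoothing step is indispensable. A secondary inconvenience is that \cref{T: structured extension} provides an extension only of the factor $\CZ$ rather than of the whole system (cf.\ \cref{Q: extension}), forcing the manipulations above to be carried out inside a relatively independent joining of the original system with the structured extension over $\CZ$; this is well understood but adds bookkeeping throughout.
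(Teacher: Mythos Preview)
Your proposal is correct and follows the paper's approach essentially verbatim --- induction on $s_1+s_2$ via the reparametrization trick plus a degree-lowering step combining stashing, the structured extension of \cref{T: structured extension}, \cref{P: inverse theorem}, dual-difference interchange, \cref{P: base case}, and a final seminorm-smoothing pass through a magic extension. One correction: the factor $\CZ$ over which you form the relatively independent joining should be the degree-three factor $\CZ(T_2T_1\inv,T_2,T_2)$ (matching \cref{T: structured extension} with $\ell=2$), not the degree-$(s_1+s_2)$ factor you wrote; accordingly one peels off $s_1+s_2-3$ derivatives via the inductive formula before applying \cref{P: inverse theorem}, so your $f_{j,h,h'}$ should carry $(s_1-1)+(s_2-2)$ derivatives rather than just one.
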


\begin{proof}
If $s_1 = s_2 = 1$, then there is nothing to show, so we assume without loss of generality that $s_1+s_2 \geq 3$. The claimed seminorm control then follows on iterating the following claim as long as $s_1 + s_2\geq 3$.

\begin{claim}\label{C: degree lowering}
Suppose that for every system $(X, \CX, \mu, T_1, T_2)$ and functions $f_0, f_1, f_2\in L^\infty(\mu)$,
\begin{align}\label{E: positivity}
        \limsup_{N\to\infty}\abs{\E_{n\in[N]}\int f_0\cdot T_1^{a(n)}f_1\cdot T_2^{a(n)}f_2\; d\mu} > 0
\end{align}
implies
$$
\nnorm{f_2}_{(T_2T_1\inv)^{\times s_1}, T_2^{\times s_2}}>0.
$$
Then also \eqref{E: positivity} implies
\begin{align*}
    &\nnorm{f_2}_{(T_2T_1\inv)^{\times s_1}, T_2^{\times (s_2-1)}}>0\quad \textrm{whenever}\quad s_2\geq 2\\
    \textrm{and}\quad &\nnorm{f_2}_{(T_2T_1\inv)^{\times (s_1-1)}, T_2^{\times s_2}}>0\quad \textrm{whenever}\quad s_1\geq 2.
\end{align*}
\end{claim}

It suffices to prove the second part of the claim. Indeed, on composing the integral in \eqref{E: positivity} with $T_1^{-a(n)}$, we have
\begin{align}\label{E: reparametrization}
            \limsup_{N\to\infty}\abs{\E_{n\in[N]}\int f_1\cdot T_1^{-a(n)}f_0\cdot (T_2T_1\inv)^{a(n)}f_2\; d\mu} > 0.
\end{align}
Hence, the second part of the claim follows from the first part on replacing the system $(X, \CX, \mu, T_1, T_2)$ with $(X, \CX, \mu, T_1\inv, T_2T_1\inv)$ and noticing that in the reparametrization \eqref{E: reparametrization}, $f_2$ is a nonergodic eigenfunction of the transformation $T_2T_1\inv$ acting on it.

We thus fix a system $(X, \CX, \mu, T_1, T_2)$ and 1-bounded functions $f_0, f_1, f_2\in L^\infty(\mu)$ satisfying \eqref{E: positivity}. We also assume Claim \ref{C: degree lowering} holds for some fixed $s_1, s_2\in\N$ with $s_2\geq 2$.

In several places, we will first run the argument in the simple case $s_1 = 1$, $s_2 = 2$ before delving into the general case. This will allow us to convey the main ideas of the proof without getting bogged down in distracting technicalities.

\smallskip
\textbf{Step 1: Reducing to the ergodic case.}
At the next stage of our argument, we will pass from the original system to its structured extension constructed in Theorem \ref{T: structured extension}. Since the extension is constructed for ergodic systems, it is desirable to work inside ergodic systems.
Let  $\mu = \int \mu_x\; d\mu(x)$ be the ergodic decomposition of $\mu$ with respect to the joint action of $T_1, T_2$.
Then our assumption and Fatou's lemma give
\begin{align*}
    \limsup_{N\to\infty}\abs{\E_{n\in[N]}\int f_0\cdot T_1^{a(n)}f_1\cdot T_2^{a(n)}f_2\; d\mu_x} > 0
\end{align*}
for all $x$ in a positive-measure set $E\in \CX$. Thus,  if we establish the desired conclusion $\nnorm{f_2}_{(T_2T_1\inv)^{\times s_1}, T_2^{\times (s_2-1)}; \mu_x} > 0$ for all $x\in E$, then it follows from \eqref{E: ergodic decomposition} that $$\nnorm{f_2}_{(T_2T_1\inv)^{\times s_1}, T_2^{\times (s_2-1)}; \mu}^{2^{s_1+s_2-1}} = \int\nnorm{f_2}_{(T_2T_1\inv)^{\times s_1}, T_2^{\times (s_2-1)}; \mu_x}^{2^{s_1+s_2-1}}\; d\mu(x) > 0.$$ Hence, we can assume without loss of generality that $\mu$ is ergodic. 

\smallskip
\textbf{Step 2: Passing to a structured extension.}
At this point, we pass to a structured extension on which a certain seminorm admits a nice inverse theorem. For the remainder of the proof, set $$\CZ:=\CZ(T_2 T_1\inv, T_2, T_2).$$
By Theorem \ref{T: structured extension}, the system $(X, \CX, \mu, T_1, T_2)$ admits an extension $(Y, \CY, \nu, S_1, S_2)$ for which
    \begin{align}\label{E: factor formula baby}
        \hat\CZ:=\CZ(S_2 S_1\inv, S_2, S_2)= \CI(S_2 S_1\inv)\vee \CZ_1(S_2).
    \end{align}
    For $j=0,1,2$, we lift $f_j$ to $\hat{f}_j\in L^\infty(\nu)$. Then
\begin{align*}
    \limsup_{N\to\infty}\abs{\E_{n\in[N]}\int \hat{f}_0\cdot S_1^{a(n)}\hat{f}_1\cdot S_2^{a(n)}\hat{f}_2\; d\nu} > 0.
\end{align*}

\smallskip
\textbf{Step 3: Replacing $\hat{f}_2$ by a structured function.}
Our next step is to replace $\hat{f}_2$ by a certain structured function that keeps track of the shape of the average under consideration. By Proposition \ref{P: stashing}, we can find an increasing integer sequence $(N_l)_l$ such that on setting
\begin{align}\label{E: F_2 baby}
    \hat{F}_2 := \lim_{l\to\infty}\E_{n\in[N_l]}S_2^{-a(n)}\overline{\hat f_0}\cdot (S_1S_2\inv)^{a(n)}\overline{\hat f_1}
\end{align}
(where the limit is a weak limit), we obtain the lower bound
\begin{align}\label{E: positivity on ergodic component baby}
    \limsup_{N\to\infty}\abs{\E_{n\in[N]} \int \hat f_0\cdot S_1^{a(n)}\hat{f}_1 \cdot S_2^{a(n)}\hat F_2\; d\nu} > 0.
\end{align}
Our assumption in \cref{C: degree lowering} then gives
\begin{align}\label{E: auxiliary with F_2}
    \nnorm{\hat{F}_2}_{(S_2 S_1\inv)^{\times s_1}, S_2^{\times s_2}}>0.
\end{align}
Hence, we have transferred the original control from an arbitrary function $f_2$ on $X$ to the more structured function $\hat{F}_2$ on $Y$.

\smallskip
\textbf{Interlude: Auxiliary seminorm control in the baby case $s_1 = 1$, $s_2 = 2$.}
Our next objective is to use the estimate \eqref{E: auxiliary with F_2} together with the inductive formula for box seminorms to reduce to the case where we can apply Proposition \ref{P: inverse theorem} for the seminorm $\nnorm{\cdot}_{S_1S_2\inv, S_2, S_2}$. After invoking the inverse theorem, we aim to reduce our average to one that can be handled using Proposition \ref{P: base case}. An application of Propositions \ref{P: base case} and \ref{P: linear} will then give us an auxiliary control over our original average by $\nnorm{f_1}_{T_1, T_1T_2\inv}$.

The maneuvers outlined above can be greatly simplified in the baby case  $s_1 = 1$, $s_2 = 2,$ as contrasted with the case $s_1 + s_2 \geq 4$. We therefore take a pause to explain how the aforementioned steps can be carried out in this baby case. If $s_1 = 1$, $s_2 = 2$, then \eqref{E: auxiliary with F_2} and Proposition \ref{P: inverse theorem} give $\hat g\in I(S_2 S_1\inv)$ and $\hat \chi\in\CE(S_2)$ such that
\begin{align*}
    \int \hat F_2 \cdot \overline{\hat g}\cdot\overline{\hat{\chi}}\; d\nu>0.
\end{align*}
Expanding the definition of $\hat F_2$, we get
\begin{align*}
	\limsup_{N\to\infty}\abs{\E_{n\in[N]} \int {\hat{f}_0}\cdot S_1^{a(n)}{\hat{f}_1}\cdot S_2^{a(n)}\bigbrac{\hat{g}\cdot\hat{\chi}}\; d\nu}>0.
\end{align*}
The $S_2 S_1\inv$-invariance of $\hat{g}$ allows us to rearrange terms so that
\begin{align*}
	\limsup_{N\to\infty}\abs{\E_{n\in[N]} \int {\hat{f}_0}\cdot S_1^{a(n)}\bigbrac{{\hat{f}_1}\cdot \hat{g}}\cdot S_2^{a(n)}\hat{\chi}\; d\nu}>0.
\end{align*}
The key observation is that since $\hat{\chi}$ is a nonergodic eigenfunction of $S_2$, this average is amenable to analysis via Proposition \ref{P: base case}. Indeed, this result allows us to swap $a(n)$ for $n$, yielding (the limit below exists by \cite{CL84})
\begin{align*}
    \lim_{N\to\infty}\abs{\E_{n\in[N]} \int {\hat{f}_0}\cdot S_1^{n}\bigbrac{{\hat{f}_1}\cdot \hat{g}}\cdot S_2^{n}\hat{\chi}\; d\nu}>0.
\end{align*}
Proposition \ref{P: linear} then implies that $\nnorm{\hat{f}_1\cdot \hat{g}}_{S_1, S_1S_2\inv}>0$. Using the $S_1S_2\inv$-invariance of $\hat{g}$, one  can upgrade this estimate to $\nnorm{\hat{f}_1}_{S_1, S_1S_2\inv}>0$ via a trick elucidated in Step 7 below (the point is that applying a multiplicative derivative along $S_1S_2\inv$ in the definition of the seminorm kills $\hat{g}$). On projecting down to $X$, we get an auxiliary seminorm control $\nnorm{{f}_1}_{T_1, T_1T_2\inv}>0$.

\smallskip
\textbf{Step 4: Applying the inverse theorem.}
These maneuvers become considerably more involved when $s_1 +s_2 \geq 4$, in which case we do not have any immediate inverse theorem for the seminorm \eqref{E: auxiliary with F_2}. To circumvent this difficulty, we use the inductive formula for box seminorms in order to recast \eqref{E: auxiliary with F_2} as an average of degree-3 box seminorms of multiplicative derivatives of $\hat{F}_2$. In preparation for that, set
\begin{align*}
    \hat{F}_{2,h} &:=\Delta_{(S_2 S_1\inv)^{\times (s_1-1)}, S_2^{\times (s_2-2)};h}\hat{F}_2.
\end{align*}
By the inductive formula for box seminorms, we get
\begin{align*}
    \nnorm{\hat{F}_2}_{(S_2 S_1\inv)^{\times s_1}, S_2^{\times s_2}}^{2^{s_1+s_2}} &=\lim_{H\to\infty}\E_{h\in[H]^{s_1+s_2-3}}\nnorm{\hat{F}_{2,h}}_{S_2 S_1\inv, S_2, S_2}^8,
\end{align*}
which is positive by \eqref{E: auxiliary with F_2}.
By Proposition \ref{P: inverse theorem} (we note that a purely qualitative inverse theorem does not suffice here), we can then find 1-bounded functions $\hat g_{h}\in \CI(S_2S_1\inv)$ and $\hat\chi_h\in\CE(S_2)$, such that
\begin{align*}
	 \liminf_{H\to\infty}\E_{h\in[H]^{s_1+s_2-3}}{\int \hat{F}_{2,h}\cdot \overline{\hat g_h}\cdot \overline{\hat{\chi}_h}\; d\nu}>0.
\end{align*}

\smallskip
\textbf{Step 5: Dual-difference interchange.}
By Proposition \ref{P: dual-difference} (dual-difference interchange) applied to the function $f:=\hat F_2$ and the collection
$$\CG := \{\hat{g}\cdot \hat\chi\colon \hat{g}\in I(S_2S_1\inv),\; \hat{\chi}\in\CE(S_2)\; \textrm{all 1-bounded}\}$$
(which is clearly $S_1, S_2$-invariant as well as closed under multiplication and complex conjugation),
we have
\begin{align}\label{E: after dual-difference}
    \liminf_{H\to\infty}\E_{h, h'\in[H]^{s_1+s_2-3}}\limsup_{N\to\infty}\abs{\E_{n\in[N]}\int \hat{f}_{0,h,h'}\cdot S_1^{a(n)}\hat{f}_{1,h,h'}\cdot S_2^{a(n)}\bigbrac{\hat g_{h,h'}\cdot \hat\chi_{h,h'}}\; d\mu}>0,
\end{align}
where for all $j=0,1$ and $h,h'$, we set
\begin{align*}
\hat{f}_{j,h,h'} &:=\Delta_{(S_2S_1\inv)^{\times (s_1-1)}, S_2^{\times (s_2-2)};h-h'}\hat{f}_j,
\end{align*}
while $\hat\chi_{h,h'}\in \CE(S_2)$ and $\hat g_{h,h'}\in I(S_2 S_1\inv)$ are some 1-bounded functions.

\smallskip
\textbf{Step 6: Invoking Proposition \ref{P: base case}.}
While not immediate, it is possible to use  Proposition \ref{P: base case} to   \eqref{E: after dual-difference}. Indeed, after regrouping the terms using the $S_2S_1\inv$-invariance of $\hat{g}_{h,h'}$, we get
\begin{align*}
        \liminf_{H\to\infty}\E_{h, h'\in[H]^{s_1+s_2-3}}\limsup_{N\to\infty}\abs{\E_{n\in[N]}\int \hat{f}_{0,h,h'}\cdot S_1^{a(n)}\bigbrac{\hat{f}_{1,h,h'}\cdot \hat{g}_{h,h'}}\cdot S_2^{a(n)}\hat{\chi}_{h,h'}\; d\mu}>0.
\end{align*}
Just like in the baby case above, the fact that $\hat\chi_{h,h'}$ is a nonergodic eigenfunction of $S_2$ allows us to apply Proposition \ref{P: base case} to the family of averages over $n$, yielding
\begin{align*}
        \liminf_{H\to\infty}\E_{h, h'\in[H]^{s_1+s_2-3}}\lim_{N\to\infty}\abs{\E_{n\in[N]}\int \hat{f}_{0,h,h'}\cdot S_1^n\bigbrac{\hat{f}_{1,h,h'}\cdot \hat{g}_{h,h'}}\cdot S_2^n\hat{\chi}_{h,h'}\; d\mu}>0,
\end{align*}
By  the $\ell=2$ case of Proposition \ref{P: linear}, we then have
\begin{align}\label{E: average of box norms}
    \liminf_{H\to\infty}\E_{h,h'\in[H]^{s_1+s_2-3}}\nnorm{\hat{f}_{1,h,h'}\cdot \hat{g}_{h,h'}}_{S_1, S_1S_2\inv} > 0.
\end{align}

\smallskip
\textbf{Step 7: Auxiliary seminorm control.}
Since $\hat{f}_{1,h,h'}$ is a multiplicative derivative of $\hat{f}_1$, we would like to apply the inductive formula for box seminorms to control \eqref{E: average of box norms} by a seminorm of $\hat{f}_1$. To make this work, we need to remove the superfluous term $\hat g_{h,h'}$'s first, which can be achieved with the help of its invariance property. Indeed, for every $h,h'$, we can write
\begin{align*}
    \nnorm{\hat{f}_{1,h,h'}\cdot \hat g_{h,h'}}_{S_1,S_1S_2\inv}^4 = \lim_{H\to\infty}\E_{h''\in[H]}\norm{\E(\Delta_{S_1; h''}\hat{f}_{1,h,h'} \cdot \Delta_{S_1; h''}\hat g_{h,h'}|\CI(S_1S_2\inv))}_{L^2(\nu)}^2.
\end{align*}
 Since $\hat g_{h,h'}$ are 1-bounded and $S_1S_2\inv$-invariant, we have
\begin{align*}
    \norm{\E(\Delta_{S_1; h''}\hat{f}_{1,h,h'} \cdot \Delta_{S_1; h''}\hat g_{h,h'}|\CI(S_1S_2\inv))}_{L^2(\nu)}\leq \norm{\E(\Delta_{S_1; h''}\hat{f}_{1,h,h'}|\CI(S_1S_2\inv))}_{L^2(\nu)},
\end{align*}
implying that
\begin{align*}
    \nnorm{\hat{f}_{1,h,h'}\cdot \hat g_{h,h'}}_{S_1,S_1S_2\inv}\leq \nnorm{\hat{f}_{1,h,h'}}_{S_1,S_1S_2\inv}.
\end{align*}
In conjunction with \eqref{E: average of box norms}, this bound gives
\begin{align*}
    \liminf_{H\to\infty}\E_{h,h'\in[H]^{s_1+s_2-3}}\nnorm{\hat{f}_{1,h,h'}}_{S_1, S_1S_2\inv} > 0.
\end{align*}
By \cite[Lemma 5.2]{FrKu22a}, the H\"older inequality, and the inductive formula for box seminorms, we get
\begin{align*}
    \nnorm{\hat{f}_1}_{S_1, (S_1S_2\inv)^{\times s_1}, S_2^{\times (s_2-2)}} > 0.
\end{align*}
Projecting down to $X$ gives
\begin{align}\label{E: auxiliary estimate}
    \nnorm{{f}_1}_{T_1, (T_1T_2\inv)^{\times s_1}, T_2^{\times (s_2-2)}} > 0.
\end{align}

\smallskip
\textbf{Step 8: Seminorm smoothing--passing to the magic extension.}
The outcome of our maneuvers so far can be summarized as follows: from controlling our average by the seminorm
$$\nnorm{f_2}_{(T_2T_1\inv)^{\times s_1}, T_2^{\times s_2}}$$ of degree $s_1+s_2$, we have inferred control by the seminorm $$\nnorm{f_1}_{T_1, (T_1T_2\inv)^{\times s_1}, T_2^{\times (s_2-2)}}$$ of degree $s_1+s_2-1$. The caveat is that the transformations appearing in this new seminorm are not of the desired form: specifically, the presence of $s_2-2$ copies of $T_2$ poses an issue, as no reasonable reparametrization of the original average makes $T_2$ act on $f_1$. Hence, we cannot simply invoke symmetry. That being said, we can use this auxiliary control to obtain control of our average by $$\nnorm{f_2}_{(T_2T_1\inv)^{\times s_1}, T_2^{\times (s_2-1)}}.$$ A lot of the maneuvers involved in this process will be similar to those already performed, therefore we will abbreviate some of the discussion.

    Let $(X^*, \CX^*, \mu^*, T_1^*, T_2^*)$ be a magic extension of the system $(X, \CX, \mu, T_1, T_2)$ with respect to the transformations $T_1, T_1T_2\inv$, meaning that
\begin{align*}
    \CZ^*:=\CZ(T_1^*, T_1^*{T_2^*}\inv) = \CI(T_1^*)\vee \CI(T_1^* {T_2^*}\inv)
\end{align*}
(it exists by Proposition \ref{P: magic extensions exist}).
We lift $f_0, f_1, f_2$ to $f^*_0, f^*_1, f^*_2\in L^\infty(\mu^*)$. Then
\begin{align}\label{E: positivity in magic}
        \limsup_{N\to\infty}\abs{\E_{n\in[N]} \int  f^*_0\cdot (T_1^*)^{a(n)}f^*_1\cdot (T_2^*)^{a(n)}f^*_2\; d\mu^*} > 0.
\end{align}
Arguing as in Step 2, we invoke Proposition \ref{P: stashing} to find an increasing integer sequence $(N_l)_l$ such that on defining
\begin{align*}
    F_1^* := \lim_{l\to\infty}\E_{n\in[N_l]}(T_1^*)^{-a(n)}\overline{f_0^*}\cdot (T_2^*{T_1^*}\inv)^{a(n)}\overline{f^*_2}
\end{align*}
as a weak limit much like $F_2$ before, we deduce from the auxiliary estimate \eqref{E: auxiliary estimate} that
\begin{align}\label{E: auxiliary for F_1}
    \nnorm{F^*_1}_{T^*_1, (T^*_1{T^*_2}\inv)^{\times s_1}, (T^*_2)^{\times (s_2-2)}} > 0.
\end{align}

\smallskip
\textbf{Interlude: Transferring control to $f_2$ in the baby case $s_1 = 1$, $s_2 = 2$.}
As we approach the end of the proof, the forthcoming maneuvers become once again significantly simpler whenever $s_1 = 1$, $s_2 = 2$. We therefore present the conclusion of the argument in this baby case first before delving into the general case. For $s_1 = 1$, $s_2 = 2$, the auxiliary seminorm control \eqref{E: auxiliary for F_1} reduces to
\begin{align*}
    \nnorm{F^*_1}_{T^*_1, T^*_1{T^*_2}\inv} > 0.
\end{align*}
Proposition \ref{P: inverse theorem magic} then gives us $g_1^*\in I(T_1^*)$ and $g_2^*\in I(T_1^*{T_2^*}\inv)$ for which
\begin{align*}
    \int F^*_1 \cdot \overline{g_1^*}\cdot\overline{g_2^*}\; d\mu^*>0.
\end{align*}
Expanding the definition of $F^*_1$, we get
\begin{align*}
    \limsup_{N\to\infty}\abs{\E_{n\in[N]} \int f^*_{0}\cdot (T_1^*)^{a(n)}\bigbrac{g_{1}^*\cdot g_{2}^*} \cdot (T^*_2)^{a(n)}f^*_{2}\; d\mu^*}>0.
\end{align*}
The invariance properties of $g^*_1, g^*_2$ allow us to rearrange terms so that
\begin{align*}
    \limsup_{N\to\infty}\abs{\E_{n\in[N]} \int (f^*_{0}\cdot g_1^*)\cdot (T^*_2)^{a(n)}(f^*_{2}\cdot g_2^*)\; d\mu^*}>0.
\end{align*}
By applying Proposition \ref{P: base case} in the degenerate case when the eigenfunction of $T^*_1$ is 1, we get that
\begin{align*}
    \limsup_{N\to\infty}\abs{\E_{n\in[N]} \int (f^*_{0}\cdot g_1^*)\cdot (T^*_2)^n(f^*_{2}\cdot g_2^*)\; d\mu^*}>0,
\end{align*}
and hence $\nnorm{f^*_{2}\cdot g_2^*}_{T_2^*}>0$ by the mean ergodic theorem. Once again, a simple argument relying on the $T_2^*{T_1^*}\inv$-invariance of $g_2^*$ gives
\begin{align*}
    \nnorm{f^*_{2}}_{T_2^*{T_1^*}\inv, T_2^*}>0.
\end{align*}
Projecting down to $X$ establishes Claim \ref{C: degree lowering} for $s_1 = 1$, $s_2 = 2$.

\smallskip
\textbf{Step 9: Seminorm smoothing--transferring control to $f_2$.}
When $s_1 + s_2\geq 4$, the maneuvers above become more complicated. Once again, we need to use the inductive formula for box seminorms and perform the dual-difference interchange in order to reduce to a degree-2 seminorm amenable to an application of \cref{P: inverse theorem magic}. Mimicking Step 4, we use the inductive formula for box seminorms to express
\begin{align*}
    \nnorm{F^*_1}_{T^*_1, (T^*_1{T^*_2}\inv)^{\times s_1}, (T^*_2)^{\times (s_2-2)}}^{2^{s_1+s_2-1}} = \lim_{H\to\infty}\E_{h\in[H]^{s_1+s_2-3}}\nnorm{F^*_{1,h}}_{T_1^*,{T_1^*T_2^*}\inv}^{2^{s_1+s_2-3}}
\end{align*}
for
\begin{align*}
        F^*_{1,h} &:= \Delta_{(T_2 T_1\inv)^{\times (s_1-1)}, T_2^{\times (s_2-2)};h}F^*_1.
\end{align*}
Then we use Proposition \ref{P: inverse theorem magic} to conclude that
\begin{align*}
    \liminf_{H\to\infty}\E_{h\in[H]^{s_1+s_2-3}}\int F^*_{1,h}\cdot \overline {g_{1,h}^*}\cdot \overline{g_{2,h}^*}\; d\mu^* > 0
\end{align*}
for some 1-bounded functions $g_{1,h}^*\in I(T_1^*)$ and $g_{2,h}^*\in I(T_1^*{T_2^*}\inv)$.

By Proposition \ref{P: dual-difference} (the dual-difference interchange) applied with $f:=F^*_1$ and
\begin{align*}
    \CG := \{g_1^*\cdot g_2^*\colon g_1^*\in I(T_1^*),\; g_2^*\in I(T_1^*{T_2^*}\inv)\; \textrm{all 1-bounded}\},
\end{align*}
we have
\begin{multline*}
    \liminf_{H\to\infty}\E_{h,h'\in[H]^{s_1+s_2-3}} \\ \limsup_{N\to\infty}\abs{\E_{n\in[N]} \int f^*_{0,h,h'}\cdot (T_1^*)^{a(n)}\bigbrac{g_{1,h,h'}^*\cdot g_{2,h,h'}^*} \cdot (T^*_2)^{a(n)}f^*_{2,h,h'}\; d\mu^*}>0,
\end{multline*}
where for all $h,h'\in \N^{s_1+s_2-3}$ and $j=0,2$, we set
\begin{align*}
f^*_{j,h,h'} &:=\Delta_{(T^*_1{T^*_2}\inv)^{\times (s_1-1)}, (T^*_2)^{\times (s_2-2)} ; h-h'}f^*_j,
\end{align*}
while $g_{1,h,h'}, g_{2,h,h'}$ are some 1-bounded elements of $I(T_1^*)$, $I(T_1^*{T_2^*}\inv)$ respectively. Using the invariance properties of $g^*_{j,h,h'}$'s and rearranging, we get
\begin{align*}
    \liminf_{H\to\infty}\E_{h,h'\in[H]^{s_1+s_2-3}}\limsup_{N\to\infty}\abs{\E_{n\in[N]} \int \bigbrac{f^*_{0,h,h'}\cdot g_{1,h,h'}^*}\cdot (T^*_2)^{a(n)}\bigbrac{f^*_{2,h,h'}\cdot g_{2,h,h'}^*}\; d\mu^*}>0.
\end{align*}
From Proposition \ref{P: base case} we infer that
\begin{align*}
    \liminf_{H\to\infty}\E_{h,h'\in[H]^{s_1+s_2-3}}\limsup_{N\to\infty}\abs{\E_{n\in[N]} \int \bigbrac{f^*_{0,h,h'}\cdot g_{1,h,h'}^*}\cdot (T^*_2)^n\bigbrac{f^*_{2,h,h'}\cdot g_{2,h,h'}^*}\; d\mu^*}>0.
\end{align*}
The mean ergodic theorem then yields
\begin{align*}
    \liminf_{H\to\infty}\E_{h,h'\in[H]^{s_1+s_2-3}}\nnorm{f^*_{2,h,h'}\cdot g_{2,h,h'}^*}_{T_2^*}>0.
\end{align*}
The monotonicity of box seminorms and the $T_1^*{T_2^*}\inv$-invariance of $g_{2,h,h'}$ allows us to bound
\begin{align*}
    \nnorm{f^*_{2,h,h'}\cdot g_{2,h,h'}^*}_{T_2^*}\leq \nnorm{f^*_{2,h,h'}\cdot g_{2,h,h'}^*}_{T_2^*{T_1^*}\inv, T_2^*}\leq \nnorm{f^*_{2,h,h'}}_{T_2^*{T_1^*}\inv, T_2^*}
\end{align*}
analogously as in Step 7. Hence,
\begin{align*}
    \liminf_{H\to\infty}\E_{h,h'\in[H]^{s_1+s_2-3}}\nnorm{f^*_{2,h,h'}}_{T_2^*{T_1^*}\inv, T_2^*}>0.
\end{align*}
Once again, \cite[Lemma 5.2]{FrKu22a}, the H\"older inequality, and the inductive formula for box norms, imply that
\begin{align*}
    \nnorm{f_2^*}_{(T^*_1{T^*_2}\inv)^{\times s_1}, (T^*_2)^{\times (s_2-1)}} >0.
\end{align*}
Claim \ref{C: degree lowering} follows on projecting this down to $X$.
\end{proof}

By adapting the proof of Proposition \ref{P: degree lowering} in obvious ways, i.e., carrying over the double averaging scheme and replacing the use of Proposition \ref{P: linear} with Proposition \ref{P: linear 2}, we establish the following inductive step in the proof of Theorem \ref{T: limiting formula 2}.
\begin{proposition}[Degree lowering, II]
    Let $a\colon \N\to \Z$ be a sequence and $n_k\in \N_0$ for $k\in \N$ so that $(a(k!n+n_k))_{n,k}$ admits box seminorm control and equidistributes on nilsystems. Let $s_1, s_2\in\N$.
    Suppose that for every system $(X, \CX, \mu, T_1, T_2)$ and all functions $f_0, f_1, f_2\in L^\infty(\mu)$,   we have \eqref{E:vanish'}
        whenever $\nnorm{f_2}_{(T_2T_1\inv)^{\times s_1}, T_2^{\times s_2}} = 0$.
  Then for every system $(X, \CX, \mu, T_1, T_2)$ and all functions $f_0, f_1, f_2\in L^\infty(\mu)$, we have \eqref{E:vanish'} whenever $\nnorm{f_2}_{T_2T_1\inv, T_2} = 0$.
\end{proposition}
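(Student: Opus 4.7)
The plan is to mirror the proof of Proposition \ref{P: degree lowering} essentially verbatim, with three structural changes dictated by passing from the Hardy setting to the polynomial setting: every $\limsup_{N\to\infty}$ appearing inside an absolute value is replaced by the iterated $\limsup_{k\to\infty}\limsup_{N\to\infty}$; Proposition \ref{P: base case} is replaced by its polynomial counterpart, Proposition \ref{P: base case 2}; and the pointwise linear estimate of Proposition \ref{P: linear} is replaced by the soft quantitative Proposition \ref{P: linear 2}. As in the Hardy case, it suffices to prove the polynomial analog of Claim \ref{C: degree lowering}: if positivity of
\[\limsup_{k\to\infty}\limsup_{N\to\infty}\Bigabs{\E_{n\in[N]}\int f_0\cdot T_1^{a(k!n+n_k)}f_1\cdot T_2^{a(k!n+n_k)}f_2\, d\mu}\]
forces $\nnorm{f_2}_{(T_2T_1\inv)^{\times s_1},\,T_2^{\times s_2}}>0$, then the same conclusion holds with either $s_1$ or $s_2$ decreased by one (whichever is at least two). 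The reduction from the $s_2$-case to the $s_1$-case is unchanged: compose the integrand with $T_1^{-a(k!n+n_k)}$ and work with $(X,\CX,\mu,T_1\inv,T_2T_1\inv)$.

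To establish the $s_2$-reduction, I first reduce to the ergodic case via ergodic decomposition, then apply Proposition \ref{P: stashing}---stated for a family $a_k$ depending on $k$---with $a_k(n):=a(k!n+n_k)$, producing an increasing subsequence $(k_i)_i$, an array $(N_{k,l})_{k,l}$, and a stashed weak limit $F_2$ with the corresponding positivity. Next I pass to the relatively independent joining over $\CZ:=\CZ(T_2T_1\inv,T_2,T_2)$ of the original system and the structured extension of $(X,\CZ,\mu,T_1,T_2)$ provided by Theorem \ref{T: structured extension}, yielding a system $(Y,\CY,\nu,S_1,S_2)$ in which $\hat\CZ:=\CZ(S_2S_1\inv,S_2,S_2)$ satisfies the hypothesis of Proposition \ref{P: inverse theorem}. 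Expanding $\nnorm{\hat F_2}_{(S_2S_1\inv)^{\times s_1},S_2^{\times s_2}}$ via the inductive formula into averages of the degree-$3$ seminorms $\nnorm{\hat F_{2,h}}_{S_2S_1\inv,S_2,S_2}$, invoking Proposition \ref{P: inverse theorem}, and applying the dual-difference interchange Proposition \ref{P: dual-difference}, I arrive at the polynomial version of \eqref{E: after dual-difference}, with $S_j^{a(k!n+n_k)}$ in place of $S_j^{a(n)}$ and functions $\hat g_{h,h'}\in I(S_2S_1\inv)$, $\hat\chi_{h,h'}\in \CE(S_2)$.

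The first genuinely new ingredient is invoked here: since $\hat\chi_{h,h'}\in\CE(S_2)$, I apply Proposition \ref{P: base case 2} to replace, for each fixed $(h,h')$, the iterated limit over $a(k!n+n_k)$ with one over $k!n$, producing a positive averaged expression of the form
\[\liminf_{H\to\infty}\E_{h,h'\in[H]^{s_1+s_2-3}}\limsup_{k\to\infty}\limsup_{N\to\infty}\Bigabs{\E_{n\in[N]}\int \hat f_{0,h,h'}\cdot S_1^{k!n}(\hat f_{1,h,h'}\hat g_{h,h'})\cdot S_2^{k!n}\hat\chi_{h,h'}\,d\nu}>0.\]
The second, and more delicate, new ingredient replaces Proposition \ref{P: linear}. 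By pigeonhole on $(h,h')$ there is a positive-lower-density set $E$ and a constant $c>0$ such that the inner expression exceeds $c$ for all $(h,h')\in E$; Cauchy--Schwarz then gives that on $E$ the $L^2(\nu)$-norm of $\E_{n\in[N]}S_1^{k!n}(\hat f_{1,h,h'}\hat g_{h,h'})\cdot S_2^{k!n}\hat\chi_{h,h'}$ exceeds $c$ in the iterated limit. Because the constant $\delta$ supplied by Proposition \ref{P: linear 2} depends on the system but not on the functions, its contrapositive produces the \emph{uniform} lower bound $\nnorm{\hat f_{1,h,h'}\hat g_{h,h'}}_{S_1,S_1S_2\inv}\geq \delta(c)$ on the whole of $E$, which averages to a positive $\liminf_H \E_{h,h'\in[H]^{s_1+s_2-3}}\nnorm{\hat f_{1,h,h'}\hat g_{h,h'}}_{S_1,S_1 S_2\inv}$.

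From here the argument proceeds identically to Steps~7--9 of the proof of Proposition \ref{P: degree lowering}: dispose of $\hat g_{h,h'}$ using its $S_1S_2\inv$-invariance, apply \cite[Lemma~5.2]{FrKu22a} together with H\"older and the inductive formula to extract $\nnorm{f_1}_{T_1,(T_1T_2\inv)^{\times s_1},T_2^{\times(s_2-2)}}>0$, pass via Proposition \ref{P: magic extensions exist} to a magic extension for the pair $(T_1,T_1T_2\inv)$, and rerun the full stashing/inverse-theorem/dual-difference/linear-control cycle (now with Proposition \ref{P: inverse theorem magic} in place of Proposition \ref{P: inverse theorem}, and at the very last step the spectral/mean-ergodic computation from the proof of Lemma \ref{L:convergence}, which handles the degenerate single-transformation version of Proposition \ref{P: linear 2}). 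The principal obstacle throughout is the shift from the pointwise control in Proposition \ref{P: linear} to the soft quantitative control in Proposition \ref{P: linear 2}; the crucial observation that neutralizes it is that the threshold $\delta$ in Proposition \ref{P: linear 2} is uniform in the functions, so pigeonholing on $(h,h')$ \emph{before} invoking the contrapositive yields a bound that survives the outer averaging.
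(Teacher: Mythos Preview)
Your proposal is correct and matches the paper's own approach, which simply instructs one to ``carry over the double averaging scheme and replace Proposition~\ref{P: linear} with Proposition~\ref{P: linear 2}.'' Your emphasis on the key point---that the threshold $\delta$ in Proposition~\ref{P: linear 2} depends only on the system and not on the $1$-bounded functions, so pigeonholing on $(h,h')$ before invoking the contrapositive yields a \emph{uniform} lower bound that survives the outer averaging---is exactly what makes the replacement work, and the paper leaves this implicit.

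One caution about your final move in Step~9. A literal ``single-transformation version of Proposition~\ref{P: linear 2}'' with the degree-$1$ seminorm $\nnorm{\cdot}_{T_2^*}$ is \emph{false}: if $\chi$ is a $T_2^*$-eigenfunction with a nontrivial rational eigenvalue then $\nnorm{\chi}_{T_2^*}=0$ while $\lim_k\lim_N\E_{n\in[N]}(T_2^*)^{k!n}\chi=\chi\neq 0$. A soft-quantitative bound with the degree-$2$ seminorm $\nnorm{\cdot}_{T_2^*,T_2^*}$ does hold, but that extra degree would propagate through the inductive assembly and spoil the lowering. The clean fix---and what the paper's blanket instruction presumably intends---is to retain the two-transformation form: write the one-transformation average as $\E_{n}(T_1^*)^{k!n}1\cdot(T_2^*)^{k!n}(f^*_{2,h,h'}g^*_{2,h,h'})$ and apply the $\ell=2$ case of Proposition~\ref{P: linear 2} with $f_1=1$, obtaining $\nnorm{f^*_{2,h,h'}g^*_{2,h,h'}}_{T_2^*T_1^{*-1},T_2^*}>\delta(c)$ directly. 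This is the same degree-$2$ bound the Hardy proof reaches after monotonicity, so the removal of $g^*_{2,h,h'}$ and the final assembly proceed unchanged.
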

\subsection{Concluding the proofs}\label{SS:end}
We start with proving the following qualitative version of Theorem \ref{T: optimal control}.
\begin{proposition}[Qualitative optimal seminorm control]\label{P: qualitative optimal control}
	Let $a\colon \N\to \Z$ be a sequence that admits box seminorm control and equidistributes on nilsystems. Then for every system $(X, \CX, \mu, T_1, T_2)$ and all functions $f_0,f_1, f_2\in L^\infty(\mu)$, we have \eqref{E:vanish}
    whenever $\nnorm{f_2}_{T_2 T_1\inv, T_2} = 0$.
\end{proposition}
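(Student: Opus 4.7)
The plan is to obtain Proposition \ref{P: qualitative optimal control} as an immediate corollary of Proposition \ref{P: degree lowering}. The assumption that $(a(n))_n$ admits box seminorm control (Definition \ref{D: good properties}\eqref{I:gp1i}) provides some integer $s \in \N$ such that, for every system $(X, \CX, \mu, T_1, T_2)$ and all $f_0, f_1, f_2 \in L^\infty(\mu)$, the vanishing \eqref{E:vanish} holds whenever $\nnorm{f_2}_{(T_2 T_1^{-1})^{\times s}, T_2^{\times s}} = 0$. This is precisely the hypothesis of Proposition \ref{P: degree lowering} specialized to $s_1 = s_2 = s$.

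Once the hypothesis of Proposition \ref{P: degree lowering} has been verified with $(s_1,s_2)=(s,s)$, its conclusion is exactly the desired statement: \eqref{E:vanish} holds whenever $\nnorm{f_2}_{T_2 T_1^{-1}, T_2} = 0$. All of the technical content — the reduction to an ergodic system, the passage to a structured extension supplied by Theorem \ref{T: structured extension}, the stashing step (Proposition \ref{P: stashing}), the dual-difference interchange (Proposition \ref{P: dual-difference}), the invocation of the soft quantitative inverse theorem (Proposition \ref{P: inverse theorem}), the seminorm smoothing step inside a magic extension, and the iterative reduction of $s_1+s_2$ by one at each application of Claim \ref{C: degree lowering} — is already encapsulated in Proposition \ref{P: degree lowering}. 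Both hypotheses on $a$ (box seminorm control and equidistribution on nilsystems) feed directly into that proposition; the equidistribution hypothesis enters only indirectly here, through the base case Proposition \ref{P: base case} that is consumed inside the degree lowering machinery.

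There is essentially no main obstacle at this stage: once Proposition \ref{P: degree lowering} is in hand, Proposition \ref{P: qualitative optimal control} is one line. The only thing to verify is that the two initial degrees $s_1,s_2$ used to feed Proposition \ref{P: degree lowering} can be taken equal (or, more generally, that the $s$ produced by Definition \ref{D: good properties}\eqref{I:gp1i} can be used for both), which is immediate since taking the same $s$ for both parameters is permissible. Thus the proof reduces to the two-sentence deduction above.
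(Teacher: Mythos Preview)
Your proposal is correct and matches the paper's own proof essentially verbatim: invoke the box seminorm control assumption to obtain some $s\in\N$, then apply Proposition~\ref{P: degree lowering} with $s_1=s_2=s$. The paper's proof is exactly this two-line deduction, so there is nothing to add.
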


\begin{proof}
    Since $a$ admits box seminorm control, we can find $s\in\N$ such that for every system $(X, \CX, \mu, T_1, T_2)$ and all $f_0, f_1, f_2\in L^\infty(\mu)$, we have \eqref{E:vanish} whenever $\nnorm{f_2}_{(T_2T_1\inv)^{\times s}, T_2^{\times s}}=0$. The claim then follows from Proposition \ref{P: degree lowering}.
\end{proof}
Next, we deduce Theorem \ref{T: limiting formula}.
\begin{proof}[Proof of Theorem \ref{T: limiting formula}]
    Let $(X^*, \CX^*, \mu^*, T_1^*, T_2^*)$ be a magic extension of $(X, \CX, \mu, T_1, T_2)$ (provided by  Proposition \ref{P: magic extensions exist} for $\ell=2$) with respect to the transformations $T_2T_1\inv, T_2$, meaning that
\begin{align*}
    \CZ(T_2^*{T_1^*}\inv, T_2^*) = \CI(T_2^* {T_1^*}\inv)\vee \CI(T_2^*).
\end{align*}

We lift $f_0, f_1, f_2$ to $f^*_0, f^*_1, f^*_2\in L^\infty(\mu^*)$. It suffices to prove that
\begin{align}\label{E: claimed identity final}
    \lim_{N\to\infty}\E_{n\in[N]} \int f_0^*\cdot (T_1^*)^{a(n)}f_1^*\cdot (T_2^*)^{a(n)}f_2^*\; d\mu^* = \lim_{N\to\infty}\E_{n\in[N]} \int f_0^*\cdot (T_1^*)^n f_1^*\cdot (T_2^*)^n f_2^*\; d\mu^*.
\end{align}
By Proposition \ref{P: qualitative optimal control}, the structure of $\CZ(T_2^*{T_1^*}\inv, T_2^*)$, and an $L^2(\mu^*)$ approximation argument, we can assume that  $f_2^* = g_1^*g_2^*$ for $g_1^*\in I(T_2^*{T_1^*}\inv)$ and $g_2^*\in I(T_2^*)$.
Then \eqref{E: claimed identity final} reduces to
\begin{align*}
        \lim_{N\to\infty}\E_{n\in[N]} \int (f_0^*\cdot g_2^*)\cdot (T_1^*)^{a(n)}(f_1^*\cdot g_1^*)\; d\mu^* = \lim_{N\to\infty}\E_{n\in[N]} \int (f_0^*\cdot g_2^*)\cdot (T_1^*)^n(f_1^*\cdot g_1^*)\; d\mu^*.
\end{align*}
This in turn follows from Proposition \ref{P: base case}. {Alternatively, this can be inferred directly from the spectral theorem and the equidistribution of $(a(n))_n$ for rotations on $\mathbb{T}$.}
\end{proof}
Theorem \ref{T: limiting formula} and Proposition \ref{P: linear} immediately give Theorem \ref{T: optimal control}.
\begin{proof}[Proof of Theorem \ref{T: optimal control}]
    By Theorem \ref{T: limiting formula} and Proposition \ref{P: linear}, we have
    \begin{multline*}
        \lim_{N\to\infty}\abs{\E_{n\in[N]} \int f_0\cdot T_1^{a(n)}f_1\cdot T_2^{a(n)}f_2\; d\mu}
        = \lim_{N\to\infty}\abs{\E_{n\in[N]} \int f_0\cdot T_1^n f_1\cdot T_2^n f_2\; d\mu}\\
        \leq \nnorm{f_2}_{T_2 T_1\inv, T_2}.
    \end{multline*}
    The bound by $\nnorm{f_1}_{T_1, T_2 T_1\inv}$ follows by symmetry while the bound by $\nnorm{f_0}_{T_1, T_2}$ can be deduced analogously after composing the original integral with $T_2^{-a(n)}$.
\end{proof}

Analogously to Proposition \ref{P: qualitative optimal control}, we can prove the following.
\begin{proposition}[Qualitative optimal seminorm control, II]\label{P: qualitative optimal control 2}
	Let $a\colon \N\to \Z$ be a sequence and $n_k\in \N_0$ for $k\in \N$ so that $(a(k!n+n_k))_{n,k}$ admits box seminorm control and equidistributes on nilsystems. Then for every system $(X, \CX, \mu, T_1, T_2)$ and all functions $f_0,f_1, f_2\in L^\infty(\mu)$, we have \eqref{E:vanish'}
    whenever $\nnorm{f_2}_{T_2 T_1\inv, T_2} = 0$.
\end{proposition}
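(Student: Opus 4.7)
The plan is to mirror the proof of Proposition \ref{P: qualitative optimal control} verbatim, merely swapping the single Ces\`aro limit for the iterated limit in $k$ and $N$, and using the ``Degree lowering, II'' proposition in place of Proposition \ref{P: degree lowering}. Since by assumption $(a(k!n+n_k))_{n,k}$ admits box seminorm control in the sense of \cref{D: good properties 2}\eqref{I:gp2i}, there exists $s\in\N$ such that for every system $(X,\CX,\mu,T_1,T_2)$ and every triple $f_0,f_1,f_2\in L^\infty(\mu)$, the vanishing statement \eqref{E:vanish'} holds whenever $\nnorm{f_2}_{(T_2T_1\inv)^{\times s},T_2^{\times s}}=0$.

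Next, the ``Degree lowering, II'' proposition, which was proved just above by adapting the degree lowering argument of Proposition \ref{P: degree lowering} to the $k!n+n_k$ setting (keeping the outer $\lim_{k\to\infty}$ throughout and replacing each appeal to Proposition \ref{P: linear} by its soft quantitative counterpart Proposition \ref{P: linear 2}), upgrades this high-degree seminorm control to optimal control: under the same two standing hypotheses on $(a(k!n+n_k))_{n,k}$, the vanishing \eqref{E:vanish'} already holds when only the minimal seminorm $\nnorm{f_2}_{T_2T_1\inv,T_2}$ vanishes. This is precisely the conclusion of Proposition \ref{P: qualitative optimal control 2}, so combining these two facts immediately gives the result.

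There is no genuine obstacle here, since the entire heavy lifting, including passage to ergodic components and structured extensions, invocation of the inverse theorem for $\nnorm{\cdot}_{T_1,T_2,T_2}$ in magic/structured extensions, the stashing and dual-difference interchange maneuvers, and the final reduction to the base case via \cref{P: base case 2}, is already packaged inside the ``Degree lowering, II'' proposition. The only mild point to keep in mind is that throughout the proof one works with the iterated limit $\lim_{k\to\infty}\limsup_{N\to\infty}$, so any application of Proposition \ref{P: base case} is replaced by \cref{P: base case 2} and any use of norm control for linear averages is replaced by Proposition \ref{P: linear 2}; both substitutions have been arranged precisely to make this formal deduction go through. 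Consequently, the proof of Proposition \ref{P: qualitative optimal control 2} reduces to a single sentence invoking the degree lowering statement, exactly as in the Ces\`aro case. The same argument applies verbatim when the Ces\`aro averages over $n\in\N$ are replaced by averages along an arbitrary F\o lner sequence in $\N$.
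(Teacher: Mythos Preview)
Your proposal is correct and matches the paper's approach exactly: the paper simply states that Proposition~\ref{P: qualitative optimal control 2} is proved ``analogously to Proposition~\ref{P: qualitative optimal control}'', i.e., one invokes the box seminorm control hypothesis to get control by $\nnorm{f_2}_{(T_2T_1\inv)^{\times s}, T_2^{\times s}}$ and then applies the ``Degree lowering, II'' proposition to reduce to $\nnorm{f_2}_{T_2T_1\inv, T_2}$. Your additional commentary on the substitutions (Proposition~\ref{P: base case 2} for Proposition~\ref{P: base case}, Proposition~\ref{P: linear 2} for Proposition~\ref{P: linear}) is accurate but already absorbed into the proof of the degree lowering statement itself.
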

Proposition \ref{P: qualitative optimal control 2} then implies Theorem \ref{T: limiting formula 2} just like Proposition \ref{P: qualitative optimal control} implies Theorem \ref{T: limiting formula}.

\end{document}